\begin{document}
\title[$E_1$-degeneration of the irregular Hodge filtration]{$E_1$-degeneration of\\ the irregular Hodge filtration\\\smaller\smaller(with an appendix by Morihiko Saito)}

\author[H.~Esnault]{Hélène Esnault}
\address[H.~Esnault]{Freie Universität Berlin\\
FB Mathematik und Informatik\\
\hbox{Arnimallee 3}, Zimmer~110\\
14195 Berlin\\
Germany}
\email{esnault@math.fu-berlin.de }
\urladdr{http://userpage.fu-berlin.de/esnault/}
\thanks{(H.E.) This work was partially supported by ERC Advanced Grant 226257, the SFB/TR 45, the Chaire d'Excellence 2011 and the Einstein Foundation.}

\author[C.~Sabbah]{Claude Sabbah}
\address[C.~Sabbah]{UMR 7640 du CNRS\\
Centre de Mathématiques Laurent Schwartz\\
\'Ecole polytechnique\\
F--91128 Palaiseau cedex\\
France} \email{sabbah@math.polytechnique.fr}
\urladdr{http://www.math.polytechnique.fr/~sabbah}
\thanks{(C.S.) This research was supported by the grant ANR-08-BLAN-0317-01 of the Agence nationale de la recherche.}

\author[J.-D.~Yu]{Jeng-Daw Yu}
\address[J.-D.~Yu]{Department of Mathematics\\
National Taiwan University\\
Taipei 10617\\
Taiwan}
\email{jdyu@math.ntu.edu.tw}
\urladdr{http://www.math.ntu.edu.tw/~jdyu/}
\thanks{(J.-D.Y.) This work was partially supported by the Golden-Jade fellowship of Kenda Foundation, the NCTS (Main Office) the NSC, Taiwan, and SFB/TR 45.}

\subjclass[2010]{14F40, 32S35, 32S40}
\keywords{Irregular Hodge filtration, twisted de~Rham complex}

\begin{abstract}
For a regular function $f$ on a smooth complex quasi-projective variety, J.-D.\,Yu introduced in \cite{Yu12} a filtration (the irregular Hodge filtration) on the de Rham complex with twisted differential $\mathrm d+\mathrm df$, extending a definition of Deligne in the case of curves. In this article, we show the degeneration at~$E_1$ of the spectral sequence attached to the irregular Hodge filtration, by using the method of \cite{Bibi08}. We also make explicit the relation with a complex introduced by M.\,Kontsevich and give details on his proof of the corresponding~$E_1$-degeneration, by reduction to characteristic $p$, when the pole divisor of the function is reduced with normal crossings. In Appendix E, M.\,Saito gives a different proof of the latter statement with a possibly non reduced normal crossing pole divisor. 
\end{abstract}

\maketitle
\thispagestyle{empty}

\enlargethispage{\baselineskip}%
{\let\\\relax\let\footnotemark\relax\tableofcontents}

\mainmatter
\section*{Introduction}
Let $U$ be a smooth complex algebraic curve and $X$ be a smooth projectivization of it. Let $f:X\to\PP^1$ be a rational function which is regular on $U$ and let $(V,{}^V\!\nabla)$ be an algebraic bundle with connection on $U$ such that ${}^V\!\nabla$ has a regular singularity at each point of the divisor $D=X\setminus U$. Assuming that the monodromy of $(V,{}^V\!\nabla)$ is unitary, Deligne defines in \cite{Deligne8406} a filtration on the twisted de~Rham cohomology $H^*_{\DR}(U,\nabla)$, where $\nabla:={}^V\!\nabla+\rd f\id_V$. He shows various properties of it, among which $E_1$-degeneration of the associated spectral sequence. This is the first occurrence of a filtration having some kind of Hodge properties in the realm of connections with irregular singularities.

Taking the opportunity of the recently defined notion of polarized twistor $\cD$\nobreakdash-module, the second author has extended in \cite{Bibi08} the construction of Deligne, and shown the $E_1$-degeneration property correspondingly, for the case where $(V,{}^V\!\nabla)$ underlies a variation of polarized complex Hodge structure on the curve~$U$.

On the other hand, the third author has extended in \cite{Yu12} the construction of such a filtration in higher dimensions for $(V,{}^V\!\nabla)=(\cO_U,\rd)$ and any regular function $f:U\to\Afu$, by using a projectivization of $U$ with a normal crossing divisor~$D$ at infinity. He succeeded to prove the degeneration at $E_1$ in various special cases.

It is then a natural question to ask whether or not the generalized Deligne filtration as defined by Yu has the property that the induced spectral sequence on the hypercohomology of the twisted de Rham complex  degenerates in $E_1$. 

This question has a positive answer. This is our main theorem (Theorem \ref{th:main}).

Approximately at the same time the preprint \cite{Yu12} was made public, and independently of Deligne's construction, but in the setting of a function $f:X\to\PP^1$, Kontsevich introduced in letters to Katzarkov and Pantev \cite{Kontsevichl12} a family of complexes $(\Omega_f^\cbbullet,u\rd+v\rd f)$, $(u,v)\in\CC^2$, and sketched a proof of the independence of the hypercohomology with respect to $u,v$, giving rise in particular to the $E_1$\nobreakdash-degeneration property (see also the recent preprint \cite{K-K-P14}). We give details on the proof sketched by Kontsevich in \S\ref{subsec:implies} and Appendix \ref{sec:Kontsevich}. However the method suggested by Kontsevich requests the pole divisor $P=f^{-1}(\infty)$ to be reduced, an annoyance, as this property is not stable under blow up along the divisor, while the result is. In \S\ref{subsec:Kontsevichcomplex}, we give details on the relation between two kinds of filtered complexes, namely $(\Omega_f^\cbbullet,\rd+\rd f)$ with the stupid filtration, and $(\Omega^\cbbullet_X(\log D)(*P),\rd+\rd f)$ equipped with the filtration introduced by the third author in \cite{Yu12}. In particular, due to the results in \cite{Yu12}, our Theorem \ref{th:main} implies the $E_1$-degeneration property for $(\Omega_f^\cbbullet,\rd+\rd f)$. The $E_1$-degeneration for other values $u,v$ follows by already known arguments. On the other hand, the degeneration property for the Kontsevich complex $(\Omega_f^\cbbullet,\rd+\rd f)$ would implies the $E_1$-degeneration of our Theorem \ref{th:main} for integral values of the filtration introduced in \cite{Yu12}. (\Cf\S\ref{subsec:implies}.)

More recently, M.\,Saito proposed a new proof of the $E_1$-degeneration for $(\Omega_f^\cbbullet,\rd)$ which relies on older results of Steenbrink \cite{Steenbrink76,Steenbrink77}. His proof gives a new proof of our Theorem \ref{th:main} for integral values of the filtration introduced in \cite{Yu12}. Moreover, when $P$ is reduced, he is also able to give an interpretation of the hypercohomology of this complex in term of the Beilinson functor applied to the complex $\bR j_*\CC_U$, $j:U\hto X$. This is explained in Appendix \ref{app:saito} by M.\,Saito.

In\marginpars{The introduction is changed starting form here} conclusion, the methods of Kontsevich and M.\,Saito focus first on the complex $(\Omega_f^\cbbullet,\rd)$, for which they extend known properties in Hodge theory. These methods give then results for the complex $(\Omega_f^\cbbullet,\rd+\rd f)$ by applying standard techniques not relying on Hodge theory, as explained in Appendix \ref{app:proof152}. On the other hand, by applying Theorem \ref{th:main}, the proof of which relies on the Fourier-Laplace transformation and twistor $\cD$-modules, one treats the complex $(\Omega_f^\cbbullet,\rd+\rd f)$ first, and specializes to the complex $(\Omega_f^\cbbullet,\rd)$.

Let us quote P.\,Deligne \cite[Note p.\,175]{Deligne8406}: ``Le lecteur peut se demander à quoi peut servir une filtration `de Hodge' ne donnant pas lieu à  une structure de Hodge.''

$\bbullet$
Deligne suggests that this Hodge filtration could control $p$-adic valuations of Frobenius eigenvalues. This is related to the work of Adolphson and Sperber \cite{A-S89} bounding from below the $L$-polygon of a convenient non-degenerate polynomial defined over $\ZZ$ by a Hodge polygon attach to it, that they expect to be related to the limiting mixed Hodge structure at infinity of the polynomial. Appendix \ref{app:saito} should give the relation between the expectation of Deligne and that of Adolphson and Sperber.

$\bbullet$
In \cite{Kontsevich09}, Kontsevich defines the category of extended motivic-exponential $\cD$-modules on smooth algebraic varieties over a field~$k$ of characteristic zero as the minimal class which contains all $\cD_X$-modules of the type $(\cO_X,\rd+\rd f)$ for $f\in\cO(X)$ and is closed under extensions, sub-quotients, push-forwards and pull-backs. When $k=\CC$, the natural question is to define a Hodge filtration. Our work may be seen as a first step towards it.

$\bbullet$
For some Fano manifolds (or orbifolds), one looks for the mirror object as regular function $f:X\to\PP^1$ (Landau-Ginzburg potential). Kontsevich \cite{Kontsevichl12} conjectures that the Hodge numbers of the mirror Fano manifold (or orbifold) can be read on some ``Hodge filtration'' of the cohomology $H^k_\dR(X,\rd+\rd f)$. This filtration should be nothing but the irregular Hodge filtration.

\subsubsection*{Acknowledgements}
We thank Maxim Kontsevich for his interest and for showing us his complex $\Omega^\cbbullet_f$ and relating it in discussions with Yu's filtration. In addition, the second author thanks Morihiko Saito and Takuro Mochizuki for useful discussions.

\section{The irregular Hodge filtration in the normal crossing case}\label{sec:irregHodgenc}

\subsection{Setup and notation}\label{subsec:setup}
Let $X$ be a smooth complex projective variety with its Zariski topology and let $f:X\to\PP^1$ be a morphism. We will denote by $\Afu_t$ (\resp $\Afu_{t'}$) an affine chart of $\PP^1$ with coordinate $t$ (\resp $t'$) so that $t'=1/t$ in the intersection of the two charts. Let $U$ be a nonempty Zariski open set of $X$ such that
\begin{itemize}
\item
$f$ induces a regular function $f_{|U}:U\to\Afu_t$,
\item
$D:=X\setminus U$ is a normal crossing divisor.
\end{itemize}
Let us denote by $P$ the pole divisor $f^{-1}(\infty)$. Then the associated reduced divisor $P_\red$ has normal crossings and is the union of some of the components of $D$. The union of the remaining components of $D$ is denoted by $H$ (``horizontal'' components). We have a commutative diagram
\[
\xymatrix{
U\ar[d]_{f_{|U}}\ar@<-2pt>@{^{ (}->}[r]^-j&X\ar[d]^f\\
\Afu\ar@<-2pt>@{^{ (}->}[r]&\PP^1
}
\]
For each $k\geq0$, we will denote by $\Omega^k_X$ the sheaf of differential $k$-forms on $X$, by $\Omega^k_X(\log D)$ that of differential $k$-forms with logarithmic poles along $D$ and by $\Omega^k_X(*D)$ that of differential $k$-forms with arbitrary poles along $D$. Given any real number $\alpha$, $[\alpha P]$ will denote the divisor supported on $P_\red$ having multiplicity~$[\alpha e_i]$ on the component $P_i$ of $P_\red$, if $e_i$ denotes the corresponding multiplicity of~$P$. We will then set $\Omega^k_X(\log D)([\alpha P]):=\cO_X([\alpha P])\otimes_{\cO_X}\Omega^k_X(\log D)$.

When considering the various de~Rham complexes on $X$, we will use the analytic topology and allow local analytic computations as indicated below. However, all filtered complexes are already defined in the Zariski topology, and standard GAGA results (\cf\cite[\S II.6.6]{Deligne70}) allow one to compare both kinds of  hypercohomology on the projective variety $X$, so the results we obtain concerning hypercohomology also hold in the Zariski topology. We will not be more explicit on this point later.

Given any complex point of $f^{-1}(\infty)$, there exist an analytic neighbourhood $\Delta^\ell\times\Delta^m\times\Delta^p$ of this point with coordinates $(x,y,z)=(x_1,\dots,x_\ell,y_1,\dots,y_m,z_1\dots,z_p)$ and $\bme=(e_1,\dots,e_\ell)\in(\ZZ_{>0})^\ell$ with the following properties:
\begin{itemize}
\item
$f(x,y,z)=x^{-\bme}:=\prod_{i=1}^\ell x_i^{-e_i}$,
\item
$D=\bigcup_{i=1}^\ell\{x_i=0\}\cup\bigcup_{j=1}^m\{y_j=0\}$, $P_\red=\bigcup_{i=1}^\ell\{x_i=0\}$.
\end{itemize}

In this local analytic setting, we will set $g(x,y,z)=1/f(x,y,z)=x^{\bme}$.
The divisor~$H$ has equation $\prod_{j=1}^m y_j=0$. Finally, we set $n=\dim X$.

Set $\cO=\CC\{x,y,z\}$ and $\cD=\cO\langle\partial_x,\partial_y,\partial_z\rangle$ to be the ring of linear differential operators with coefficients in $\cO$, together with its standard increasing filtration $F_\bbullet\cD$ by the total order w.r.t.\,$\partial_x,\partial_y,\partial_z$:
\begin{equation}\label{eq:FpD}
F_p\cD=\sum_{|\alpha|+|\beta|+|\gamma|\leq p}\cO\partial_x^\alpha\partial_y^\beta\partial_z^\gamma,
\end{equation}
where we use the standard multi-index notation with $\alpha\in\NN^\ell$,  etc. Similarly we will denote by $\cO[t']$ the ring of polynomials in $t'$ with coefficients in $\cO$ and by $\cD[t']\langle\partial_{t'}\rangle$ the corresponding ring of differential operators.

Consider the left $\cD$-modules
\begin{align*}
\cO(*P_\red)&=\cO[x^{-1}],\\
\cO(*H)&=\cO[y^{-1}],\\
\cO(*D)&=\cO[x^{-1},y^{-1}]
\end{align*}
with their standard left $\cD$-module structure. They are generated respectively by $1/\prod_{i=1}^\ell x_i$, $1/\prod_{j=1}^m y_j$ and $1/\prod_{i=1}^\ell x_i\prod_{j=1}^m y_j$ as $\cD$-modules. We will consider on these $\cD$-modules the increasing filtration $F_\bbullet$ defined as the action of $F_\bbullet\cD$ on the generator:
\begin{align*}
F_p\cO(*P_\red)&=\sum_{|\alpha|\leq p}\cO\cdot\partial_x^\alpha(1/\prod\nolimits_{i=1}^\ell x_i)=\sum_{|\alpha|\leq p}\cO x^{-\alpha-\bf1},\\
F_p\cO(*H)&=\sum_{|\beta|\leq p}\cO\cdot\partial_y^\beta(1/\prod\nolimits_{j=1}^m y_j)=\sum_{|\beta|\leq p}\cO y^{-\beta-\bf1},\\
F_p\cO(*D)&=\sum_{|\alpha|+|\beta|\leq p}\cO\cdot\partial_x^\alpha\partial_y^\beta(1/\prod\nolimits_{i=1}^\ell x_i\prod\nolimits_{j=1}^m y_j)=\sum_{|\alpha|+|\beta|\leq p}\cO x^{-\alpha-\bf1}y^{-\beta-\bf1},
\end{align*}
so that $F_p=0$ for $p<0$. These are the ``filtration by the order of the pole'' in \cite[(3.12.1) p.\,80]{Deligne70}, taken in an increasing way. Regarding $\cO(*H)$ as a $\cD$-submodule of $\cO(*D)$, we have $F_p\cO(*H)=F_p\cO(*D)\cap\cO(*H)$ and similarly for $\cO(*P_\red)$. On the other hand it clearly follows from the formulas above that
\begin{equation}\label{eq:FpPH}
F_p\cO(*D)=\sum_{q+q'\leq p}F_q\cO(*H)\cdot F_{q'}\cO(*P_\red),
\end{equation}
where the product is taken in $\cO(*D)$.

\subsection{The irregular Hodge filtration}\label{subsec:FYu}
Our main object is the twisted meromorphic de~Rham complex
\[
(\Omega_X^\cbbullet(*D),\nabla)=\{\cO_X(*D)\To{\nabla}\Omega^1_X(*D)\To{\nabla}\cdots\To{\nabla}\Omega^n_X(*D)\},\quad\nabla=\rd+\rd f.
\]
This complex is equipped with the irregular Hodge filtration defined in \cite{Yu12}: this is the decreasing filtration indexed by $\RR$ (with possible jumps only at rational numbers) defined by the formula
\begin{multline}\label{eq:FYu}
F^{\Yu,\lambda}(\Omega_X^\cbbullet(*D),\nabla)=F^\lambda(\nabla)\\
:=\Big\{\cO_X([-\lambda P])_+\To{\nabla}\Omega^1_X(\log D)([(1-\lambda)P])_+\To{\nabla}\cdots\\[-5pt]
\To{\nabla}\Omega^n_X(\log D)([(n-\lambda)P])_+\Big\},
\end{multline}
where, for $\mu\in\RR$,
\[
\Omega^k_X(\log D)([\mu P])_+=
\begin{cases}
\Omega^k_X(\log D)([\mu P])&\text{if }\mu\geq0,\\
0&\text{otherwise}.
\end{cases}
\]
We will also consider the associated increasing filtration
\[
F^\Yu_\mu(\Omega_X^\cbbullet(*D),\nabla):=F^{\Yu,-\mu}(\Omega_X^\cbbullet(*D),\nabla),
\]
and, for each $\alpha\in[0,1)$, the decreasing (\resp increasing) $\ZZ$-filtration $F^{\Yu,\bbullet}_\alpha$ (\resp $F^\Yu_{\alpha+\bbullet}$) defined by
\[
F^{\Yu,p}_\alpha(\Omega_X^\cbbullet(*D),\nabla):=F^{\Yu,-\alpha+p}(\Omega_X^\cbbullet(*D),\nabla)= F^\Yu_{\alpha-p}(\Omega_X^\cbbullet(*D),\nabla)
\]
(\resp $F^\Yu_{\alpha+p}(\Omega_X^\cbbullet(*D),\nabla)$). The filtration exhausts the subcomplex
\[
(\Omega_X^\cbbullet(\log D)(*P_\red),\nabla)\hto(\Omega_X^\cbbullet(*D),\nabla).
\]
The quotient complex is quasi-isomorphic to zero, according to \cite[Cor.\,1.4]{Yu12}. We refer to \cite[Cor.\,1.4]{Yu12} for a detailed study of this filtered complex and its hypercohomology. Let us only recall that, setting $\gr^\lambda_F=F^\lambda/F^{>\lambda}$, the graded $\cO_X$\nobreakdash-complex $\gr_{F^\Yu}^\lambda(\nabla)$ is supported on~$P_\red$ for $\lambda\not\in\ZZ$ and is quasi-isomorphic to~$0$ for $\lambda\leq0$ (\cf\cite[Cor.\,1.4]{Yu12}). Our main objective is to prove in general (\cf\S\ref{subsec:proofE1degen}) the conjecture made in \cite{Yu12}, and already proved there in various particular cases.

\begin{theorem}\label{th:main}
For each $\alpha\in[0,1)$, the spectral sequence of hypercohomology of the filtered complex $F^{\Yu,p}_\alpha(\Omega_X^\cbbullet(*D),\nabla)$ ($p\in\ZZ$) degenerates at $E_1$, that is, for each $\lambda\in\RR$ and $k\in\NN$, the morphism
\[
\bH^k\big(X,F^{\Yu,\lambda}(\nabla)\big)\to\bH^k\big(X,(\Omega_X^\cbbullet(*D),\nabla)\big)=:H^k_\dR(U,\nabla)
\]
is injective.
\end{theorem}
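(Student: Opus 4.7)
The plan is to adapt the strategy of~\cite{Bibi08}, used there for curves, to the higher-dimensional setting considered here, reducing the $E_1$-degeneration of Yu's filtration to the strictness property of polarized twistor $\cD$-modules via Fourier-Laplace transform. To this end I would first package the twisted de~Rham complex into a $\cD$-module picture: form the proper pushforward $M:=f_+\cO_X(*D)$ as a holonomic $\cD_{\PP^1}$-module, regular at finite distance, and equip it with the structure of a polarized (mixed) twistor $\cD$-module coming from M.\,Saito's theory of mixed Hodge modules together with Mochizuki's correspondence to twistor $\cD$-modules.

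Next, apply the algebraic Fourier-Laplace transform to obtain $\widehat{M}$ on $\Afu_\tau$, a holonomic $\cD$-module regular at finite distance and with an exponentially irregular singularity at $\tau=\infty$ whose slope-$1$ part is controlled by the critical values of $f$. Results extending those of~\cite{Bibi08} to higher dimensions show that $\widehat{M}$ underlies an integrable (mixed) twistor $\cD$-module, and that the algebraic restriction $i_{\tau=1}^+\widehat{M}$ computes the twisted de~Rham cohomology $H^k_\dR(U,\rd+\rd f)$.

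The technical heart of the argument is to identify Yu's filtration $F^{\Yu,\lambda}$ on $H^k_\dR(U,\nabla)$ with the filtration induced on this fiber by the Kashiwara-Malgrange $V$-filtration of $\widehat{M}$ along $\tau=\infty$. In the local model $f(x,y,z)=x^{-\bme}$ of~\S\ref{subsec:setup}, the jumps of the $V$-filtration occur at rational numbers whose denominators divide the multiplicities $e_i$, matching the shape of the divisors $[\alpha P]$ appearing in~\eqref{eq:FYu}; the $\nabla$-twisted logarithmic complex of Yu must be identified, filtration step by filtration step, with the de~Rham realization of the $V$-filtration on $\widehat{M}$. Granting this dictionary, $E_1$-degeneration follows from the strictness property built into the definition of a polarized (mixed) twistor $\cD$-module, which is preserved by proper pushforward: at the level of hypercohomology of the filtered de~Rham complex, strictness says exactly that $\bH^k(X,F^{\Yu,\lambda}(\nabla))\to\bH^k(X,(\Omega_X^\cbbullet(*D),\nabla))$ is injective for every $\lambda\in\RR$.

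The main obstacle is the filtration-identification step. One must produce a filtered quasi-isomorphism between Yu's pole-order filtered logarithmic model and the de~Rham realization of the $V$-filtration on $\widehat{M}$, uniformly in the real parameter $\lambda$ and compatibly both with the horizontal divisor $H$ (which is disjoint from $P_\red$ but still carries logarithmic poles) and with non-reduced components of $P$. The local analytic computation in the model $f=x^{-\bme}$, where the divisors $[(k-\lambda)P]$ and the pole-order filtration genuinely enter, is the key place where the one-dimensional arguments of~\cite{Bibi08} need to be strengthened; once this dictionary is established, the strictness machinery of (mixed) twistor $\cD$-modules does the remaining work.
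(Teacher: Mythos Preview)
Your strategy is in the right spirit---Fourier--Laplace, $V$-filtrations, and strictness of the (mixed) Hodge/twistor pushforward are exactly the ingredients---but the architecture has a genuine gap in the order of operations. You propose to push forward by $f$ to $\PP^1$ first, form $\widehat{M}$ on $\Afu_\tau$, and then compare Yu's filtration with the $V$-filtration of $\widehat{M}$. But Yu's filtration lives on $X$, and the theorem asserts injectivity of $\bH^k(X,F^{\Yu,\lambda}(\nabla))\to H^k_\dR(U,\nabla)$ \emph{before} any pushforward. Identifying the \emph{image} filtration on $H^k_\dR(U,\nabla)$ with a $V$-filtration on $\widehat{M}$ does not by itself give that injectivity; and ``the de~Rham realization of the $V$-filtration on $\widehat{M}$'' is a filtered complex on $\Afu_\tau$, not on $X$, so there is no filtered complex on $X$ to which you can compare Yu's step by step. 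To transport strictness back to the source you need a filtered object on $X$ (or $X\times\PP^1$) whose filtered pushforward you control.

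This is precisely what the paper builds. It works on $X\times\PP^1$ via the graph inclusion $i_f$, defines a Deligne filtration $F^\Del_\bbullet$ on $\ccM\otimes\ccE^F$ for any mixed Hodge module $\ccM$ (\S\ref{subsec:defDel}), and proves by an explicit local computation in the normal-crossing model (Propositions~\ref{prop:comparisonFE} and~\ref{prop:YuDel}, Corollary~\ref{cor:comparison}) that Yu's filtered de~Rham complex is filtered quasi-isomorphic to $F^\Del_\bbullet\DR(i_{f,+}\cO_X(*D)\otimes\ccE^F)$. That is your ``filtration-identification step,'' but carried out on $X\times\PP^1$ \emph{before} pushing forward. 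The Deligne filtration is then expressed through the $V$-filtration along $\tau=0$ (not $\tau=\infty$) of the partial Laplace twist $\cFcM$ on $X\times\PP^1\times\Afuh$ (Proposition~\ref{prop:RFV}); strictness of the projective pushforward (Proposition~\ref{prop:F+strict}) follows because each $\gr_\alpha^{V^\tau}\!\cFcM$ underlies, up to shift, a mixed Hodge module, and one finally reduces to the curve case of~\cite{Bibi08} (Lemma~\ref{lem:E1degen}, Theorem~\ref{th:E1degen}). The moral: the local Yu--Deligne comparison must come first, on $X\times\PP^1$; only then can the strictness machinery be applied to the pushforward.
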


The image of this morphism is denoted by $F^{\Yu,\lambda}H^k_\dR(U,\nabla)$ and does not depend on the choice of the projective morphism~$f:X\to\PP^1$ extending~$f_{|U}:U\to\Afu$ satisfying the properties of the setup above (\cf\cite[Th.\,1.8]{Yu12}). We thus have
\begin{equation}\label{eq:grH}
\gr^\lambda_{F^\Yu}H^k_\dR(U,\nabla)=\bH^k\big(X,\gr_{F^\Yu}^\lambda(\nabla)\big),
\end{equation}
and $F^{\Yu,\lambda}H^k_\dR(U,\nabla)=H^k_\dR(U,\nabla)$ for $\lambda\leq0$.

Let us recall that this filtration was introduced (and the corresponding $E_1$\nobreakdash-degeneration was proved) by Deligne \cite{Deligne8406}, in the case where $U$ is a curve and where the twisted de~Rham complex is also twisted by a unitary local system. The generalization to the case of a variation of a polarized Hodge structure on a curve was considered in \cite{Bibi08}.

\subsection{The Kontsevich complex}\label{subsec:Kontsevichcomplex}
M.\,Kontsevich has considered in \cite{Kontsevichl12} the complexes $(\Omega_f^\cbbullet,u\rd+v\rd f)$ for $u,v\in\CC$, where
\begin{align*}
\Omega^p_f&=\big\{\omega\in\Omega^p_X(\log D)\mid\rd f\wedge\omega\in\Omega^{p+1}_X(\log D)\big\}\\
&=\ker\Big\{\Omega^p_X(\log D)\To{\rd f}\Omega^{p+1}_X(\log D)(P)/\Omega^{p+1}_X(\log D)\Big\}.
\end{align*}
Note that we have
\[
\Omega^0_f=\cO_X(-P),\quad\Omega^n_f=\Omega^n_X(\log D),
\]
and
\[
\Omega^p_f=\ker\Big\{\Omega^p_X(\log D)\To{\nabla}\Omega^{p+1}_X(\log D)(P)/\Omega^{p+1}_X(\log D)\Big\},
\]
since $\rd\Omega^p_X(\log D)\subset\Omega^{p+1}_X(\log D)$, hence $\nabla(\Omega^p_f)\subset\Omega^{p+1}_f$. Moreover, in the local analytic setting of \S\ref{subsec:setup}, using that $\rd f=f\cdot\rd\log f$ and setting $x'=(x_2,\dots,x_\ell)$, one checks that
\begin{equation}\label{eq:Omegafloc}
\Omega^p_f=\CC\{x,y,z\}\cdot\frac{\rd x^{\bme}}{x^{\bme}}\wedge\bigwedge^{p-1}\Big\{\frac{\rd x'}{x'},\frac{\rd y}{y},\rd z\Big\}+\CC\{x,y,z\}x^{\bme}\cdot\bigwedge^p\Big\{\frac{\rd x'}{x'},\frac{\rd y}{y},\rd z\Big\},
\end{equation}
so in particular $\Omega^p_f$ is $\cO_X$-locally free of finite rank.

The following result was conjectured by M.\,Kontsevich \cite{Kontsevichl12}:

\begin{theorem}\label{th:Kontsevich}
For each $k\geq0$, the dimension of $\bH^k\big(X,(\Omega_f^\cbbullet,u\rd+v\rd f)\big)$ is independent of $u,v\in\CC$ and is equal to $\dim H^k_\dR(U,\nabla)$. In particular,
\numstareq\begin{multline*}\tag{\theequation}\label{eq:Kontsevich}
\text{the spectral sequence $E_1^{p,q}=H^q(X,\Omega_f^p) \Rightarrow \bH^{p+q}\big(X,(\Omega_f^\cbbullet,\rd)\big)$}\\ \text{degenerates at $E_1$.}
\end{multline*}
\end{theorem}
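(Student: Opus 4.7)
The plan is to deduce Theorem~\ref{th:Kontsevich} from Theorem~\ref{th:main} via a filtered comparison between Kontsevich's complex and Yu's meromorphic de~Rham complex at integer filtration levels, followed by a rescaling plus upper semi-continuity argument in the parameter~$(u,v)$.

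First, I would use the local description \eqref{eq:Omegafloc} to verify that the natural inclusion of filtered complexes
\[
\bigl(\Omega_f^\cbbullet,\nabla,\sigma^{\ge\bbullet}\bigr)\hto\bigl(\Omega_X^\cbbullet(\log D)(*P_\red),\nabla,F^{\Yu,\bbullet}_0\bigr)
\]
makes sense --- in degree $p+k$ one has $\Omega_f^{p+k}\subset\Omega_X^{p+k}(\log D)\subset\Omega_X^{p+k}(\log D)([kP])$ --- and is a quasi-isomorphism strong enough to identify the two $E_1$-spectral sequences, with common abutment $H^\cbbullet_\dR(U,\nabla)$ thanks to~\cite[Cor.\,1.4]{Yu12}; this is the content of~\S\ref{subsec:Kontsevichcomplex}. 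Granting this comparison, Theorem~\ref{th:main} with $\alpha=0$ yields the $E_1$-degeneration of the integer-indexed filtration $F^{\Yu,\bbullet}_0$, hence of the stupid filtration on $(\Omega_f^\cbbullet,\rd+\rd f)$, and therefore
\[
h^k(1,1):=\dim\bH^k\bigl(X,(\Omega_f^\cbbullet,\rd+\rd f)\bigr)=\sum_{p+q=k}\dim H^q(X,\Omega_f^p)=:S_k=\dim H^k_\dR(U,\nabla).
\]
This proves the case $(u,v)=(1,1)$.

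For an arbitrary $(u,v)\in(\CC^*)^2$, set $g:=(v/u)f$. Then $\rd g=(v/u)\rd f$ differs from~$\rd f$ by a nonzero scalar, so the defining condition of~$\Omega_f^p$ is unchanged, i.e.\ $\Omega_g^p=\Omega_f^p$, and multiplication by~$u$ gives an isomorphism of complexes $(\Omega_f^\cbbullet,u\rd+v\rd f)\xrightarrow{\sim}(\Omega_g^\cbbullet,\rd+\rd g)$; applying the $(1,1)$-case to the function~$g$ (whose geometric data $X,U,D,P$ are the same as those of~$f$, and which therefore satisfies Theorem~\ref{th:main}) gives $h^k(u,v)=S_k$ for every $(u,v)$ with $uv\neq 0$. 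The boundary locus $\{uv=0\}$ follows by semi-continuity: the spectral sequence inequality gives $h^k(u,v)\le S_k$ on all of $\CC^2$, and viewing the Kontsevich complex as a complex of coherent sheaves on $X\times\CC^2_{u,v}$ endowed with differential $u\rd+v\rd f$, upper semi-continuity (Grauert) shows that $\{h^k=S_k\}=\{h^k\ge S_k\}$ is Zariski closed in~$\CC^2$; since it contains the dense open $\{uv\neq 0\}$, it is all of $\CC^2$. This gives the independence of~$(u,v)$ and~\eqref{eq:Kontsevich}.

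The main obstacle is the filtered comparison of the first step: while the sheaf-level inclusion $\Omega_f^\cbbullet\hto\Omega_X^\cbbullet(\log D)(*P_\red)$ is visible from the definitions, matching the stupid filtration on the source with Yu's integer-indexed filtration on the target at the level of $E_1$ requires an explicit local calculation of graded pieces around~$P_\red$, based on the generators $\rd x^{\bme}/x^{\bme}$ and~$x^{\bme}$ of~$\Omega_f^p$ in~\eqref{eq:Omegafloc}. It is this comparison that imports the twistor-$\cD$-module content of Theorem~\ref{th:main} into the concrete Kontsevich setup.
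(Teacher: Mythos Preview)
Your argument is correct and follows the paper's route in \S\ref{subsec:implies}: one first uses the filtered comparison of \S\ref{subsec:comparisonYuKontsevich} (Corollary~\ref{Cor:Kont-log} with $\alpha=0$) together with Theorem~\ref{th:main} to get $h^k(1,1)=S_k=\dim H^k_\dR(U,\nabla)$, and then rescales to cover all of $(\CC^*)^2$ via $\Omega_f^\cbbullet=\Omega_{vf/u}^\cbbullet$, exactly as in the proof of Proposition~\ref{prop:nablad}. (A minor point: ``multiplication by~$u$'' should read ``multiplication by~$u^p$ in degree~$p$'', or equivalently one just observes that scaling the differential by a nonzero constant does not change hypercohomology.)

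The one genuine difference is how you treat the axes $\{uv=0\}$. The paper's Proposition~\ref{prop:nablad} introduces the one-parameter complex $(\Omega_f^\cbbullet[\tau],\rd_X+\tau\rd f)$, uses that the $\bH^k_\tau$ are finite $\CC[\tau]$-modules, and runs a descending induction $(1)_k\Rightarrow(2)_k\Rightarrow(3)_k\Rightarrow(1)_{k-1}$ to force $h^k(1,0)=S_k$ (and symmetrically for $(0,1)$). You instead combine the a priori bound $h^k(u,v)\le S_k$ from the stupid-filtration spectral sequence with upper semi-continuity of $h^k$ for the perfect complex $\bR\pi_*(\Omega_f^\cbbullet\boxtimes\cO_{\CC^2},\,u\rd+v\rd f)$ on $\CC^2$, so that $\{h^k=S_k\}=\{h^k\ge S_k\}$ is closed and contains $\{uv\neq0\}$. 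Both arguments are valid; yours is shorter and more conceptual, while the paper's is self-contained and avoids invoking semi-continuity for hypercohomology of a complex (which needs the perfect-complex/base-change formulation rather than the classical single-sheaf Grauert statement).
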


The finite dimensionality of $H^q(X,\Omega_f^p)$ for each pair $p,q$ implies that \eqref{eq:Kontsevich} is equivalent to $\dim\bH^k\big(X,(\Omega_f^\cbbullet,\rd)\big)=\dim\bH^k\big(X,(\Omega_f^\cbbullet,0)\big)$ for each~$k$. We will explain two kinds of proofs of this theorem in \S\ref{subsec:implies}.\marginpars{\hbox{Explanation shortened}}

\begin{itemize}
\item
The argument sketched by Kontsevich in \cite{Kontsevichl12,Kontsevichl12b} starts by reducing the proof to \eqref{eq:Kontsevich} (\cf Proposition \ref{prop:Kontsevich2b} whose proof is detailed in Appendix \ref{app:proof152}). Then, when $P=P_\red$, the method of Deligne-Illusie \cite{D-I87} is used for proving \eqref{eq:Kontsevich} (\cf Appendix~\ref{sec:Kontsevich} for details on this proof).
\item
In Appendix \ref{app:saito}, Morihiko Saito provides a direct proof of  \eqref{eq:Kontsevich} without the restricting assumption $P=P_\red$.\marginpars{Reference to strict NC deleted.}

\item
On the other hand, we will apply Theorem \ref{th:main} (with $\alpha=0$ but without the restricting assumption $P=P_\red$) in order to get the $E_1$-degeneration for the differential $\nabla=\rd+\rd f$. We then apply Proposition \ref{prop:nablad}.
\end{itemize}

\begin{remark}
A consequence\marginpars{\hbox{Barannikov-Kontsevich} is not used any more in the proof. It was used in Old Prop.\,1.5.1.} of Theorem \ref{th:main} for $\alpha=0$, or equivalently of Theorem \ref{th:Kontsevich}, is the equality $\dim\bH^k\big(X,(\Omega_f^\cbbullet(*P_\red), \rd f)\big)=\dim H^k_\dR(U,\nabla)$, which is due to Barannikov and Kontsevich (\cf \cite[Cor.\,0.6]{Bibi97b} and \cite[Cor.\,4.27]{O-V07}).
\end{remark}

\subsection{Comparison of the filtered twisted meromorphic de~Rham complex and the filtered Kontsevich complex}\label{subsec:comparisonYuKontsevich}

For any coherent sheaf $\cF$ and for $\mu\in\RR$, we set $\cF([\mu P])=\cO_X([\mu P])\otimes\cF$. We\marginpars{Formulation changed.} define then
\begin{equation}\label{eq:omegafalpha}
\begin{aligned}
\Omega^k_f(\mu)&:=\ker\Big(\Omega^k_X(\log D)([\mu P])\\[-5pt]
&\hspace*{1.8cm}\To{\rd f}\Omega^{k+1}_X(\log D)([(\mu+1)P])/\Omega^{k+1}_X(\log D)([\mu P])\Big)\\
&\phantom{:}=\ker\Big(\Omega^k_X(\log D)([\mu P])\\[-5pt]
&\hspace*{1.8cm}\To{\nabla}\Omega^{k+1}_X(\log D)([(\mu+1)P])/\Omega^{k+1}_X(\log D)([\mu P])\Big),
\end{aligned}
\end{equation}
where the second equality follow from $\rd\big(\Omega^k_X(\log D)([\mu P])\big)\subset\nobreak\Omega^{k+1}_X(\log D)([\mu P])$. Since $\nabla(\Omega^k_f(\mu))\subset\Omega^{k+1}_f(\mu)$, we can also consider the complex
\[
(\Omega^\cbbullet_f(\mu),\nabla):=\big(\Omega_f^0(\mu)\To{\nabla}\Omega^1_f(\mu)\To{\nabla}\cdots\big)
\]
together with its stupid filtration
\[
\sigma^p(\Omega^\cbbullet_f(\mu),\nabla):=\big(\Omega_f^p(\mu)\To{\nabla}\Omega^{p+1}_f(\mu)\To{\nabla}\cdots\big)[-p].
\]
For $\mu\leq\mu'$ we thus have natural morphisms of filtered complexes $(\Omega^\cbbullet_f(\mu),\nabla,\sigma^p)\to(\Omega^\cbbullet_f(\mu'),\nabla,\sigma^p)$. For any $\lambda\in\RR$, define
\begin{multline*}
(F^0(\lambda),\nabla)=\Big( \cO_X([-\lambda P]) \To{\nabla} \Omega^1_X(\log D)([(1-\lambda)P])\to\cdots\\
\to \Omega^k_X(\log D)([(k-\lambda)P])\to\cdots \Big).
\end{multline*}
Then for any $\lambda\leq\lambda'$ the natural inclusion
\[
(F^0(\lambda'),\nabla) \to (F^0(\lambda),\nabla)
\]
is a quasi-isomorphism (\cite[Prop.1.3]{Yu12}).

\begin{proposition}\label{Prop:Kont-log}
Fix $\mu\in\RR$.
Consider the filtration $\tau^p$ on $F^0(-\mu)$ defined by
\[
\tau^p=\begin{cases}
F^0(-\mu) & \text{if $p\leq 0$,} \\
\big(\Omega^p_X(\log D)([\mu P]) \To{\nabla} \Omega^{p+1}_X(\log D)([(1+\mu)P])\to\cdots\big)[-p]& \text{if $p\geq 0$}.
\end{cases}
\]
Then the natural inclusion
\[
\big(\Omega_f^\cbbullet(\mu),\nabla,\sigma^p\big) \to\big(F^0(-\mu),\nabla,\tau^p\big)\quad(p\in\ZZ)
\]
is a filtered quasi-isomorphism. The same holds true if one replaces the connection $\nabla$ by the left multiplication with $\rd f$ in both complexes.
\end{proposition}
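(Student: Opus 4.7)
The plan is to verify the claim by checking that the inclusion induces a quasi-isomorphism on each associated graded $\gr^p$, which is the standard criterion for a filtered quasi-isomorphism. For $p\leq-1$ both filtrations coincide with the whole complex, so $\gr^p=0$; only $p\geq 0$ is non-trivial. On the source, $\gr^p_\sigma(\Omega_f^\cbbullet(\mu))=\Omega_f^p([\mu P])$ placed in cohomological degree $p$. On the target, $\gr^p_\tau(F^0(-\mu))$ is the complex in degrees $\geq p$ whose degree-$p$ term is $\Omega^p_X(\log D)([\mu P])$ and whose degree-$(p+k)$ term, for $k\geq 1$, is the pole-order quotient
\[
\Omega^{p+k}_X(\log D)([(k+\mu)P])/\Omega^{p+k}_X(\log D)([(k-1+\mu)P]).
\]
Since $\rd$ preserves each sheaf $\Omega^q_X(\log D)([\nu P])$, it acts trivially on these graded pieces, and the induced differential on $\gr^p_\tau$ is simply $\rd f\wedge\cdot$. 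In particular the associated graded complexes attached to $\nabla$ and to $\rd f$ coincide, which also handles the ``$\rd f$'' variant claimed at the end of the proposition.

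The question is local; away from $P$ the pole quotients vanish and the inclusion is the identity in degree $p$. Near $P$ one uses the local coordinates of \S\ref{subsec:setup}, so that $f=x^{-\bme}$ and $\rd f=-f\,\alpha$ with $\alpha=\sum_{i=1}^\ell e_i\,\rd\log x_i\in\Omega^1_X(\log D)$. A key observation is that $e_i\in\ZZ_{>0}$ forces
\[
[(k+\mu)\bme]-[(k-1+\mu)\bme]=\bme\qquad \text{for every }\mu\in\RR,\ k\geq 1.
\]
Hence, rescaling the degree-$(p+k)$ term by the function $x^{[(k+\mu)\bme]}$ produces a canonical identification of $\gr^p_\tau$ with the complex
\[
\tilde A^\cbbullet:\ \Omega^p_X(\log D)\To{-\alpha\wedge}\Omega^{p+1}_X(\log D)/x^\bme\Omega^{p+1}_X(\log D)\To{-\alpha\wedge}\Omega^{p+2}_X(\log D)/x^\bme\Omega^{p+2}_X(\log D)\to\cdots
\]
placed in degrees $p,p+1,\ldots$. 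Under this identification, the inclusion $\gr^p_\sigma\hto\gr^p_\tau$ becomes $\{\omega\in\Omega^p_X(\log D)\mid \alpha\wedge\omega\in x^\bme\Omega^{p+1}_X(\log D)\}\hto\Omega^p_X(\log D)$, i.e.\,the inclusion of the kernel of the first differential of $\tilde A^\cbbullet$.

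It remains to show that $\tilde A^\cbbullet$ is acyclic in degrees $>p$. The main input is the Koszul-type acyclicity of $(\Omega^\cbbullet_X(\log D),\alpha\wedge)$: since the coefficient $e_1\in\CC^\times$ of $\rd\log x_1$ in $\alpha$ is a unit, $\alpha$ may replace $\rd\log x_1$ in the local $\cO_X$-basis of $\Omega^1_X(\log D)$, so $\Lambda^\cbbullet\Omega^1_X(\log D)\cong\Lambda^\cbbullet V'\otimes_{\cO_X}\Lambda^\cbbullet(\cO_X\alpha)$ and wedge with $\alpha$ becomes the standard contractible Koszul differential. Multiplication by $x^\bme$ commutes with $\alpha\wedge$, so $x^\bme\Omega^\cbbullet_X(\log D)$ is equally acyclic, and the long exact sequence attached to $0\to x^\bme\Omega^\cbbullet_X(\log D)\to\Omega^\cbbullet_X(\log D)\to\Omega^\cbbullet_X(\log D)/x^\bme\to 0$ shows that $(\Omega^\cbbullet_X(\log D)/x^\bme,\alpha\wedge)$ is acyclic as well. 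Finally, $\tilde A^\cbbullet$ fits into the short exact sequence of complexes
\[
0\to K^\cbbullet\to \tilde A^\cbbullet\to \sigma^p(\Omega^\cbbullet_X(\log D)/x^\bme)\to 0,
\]
where $K^\cbbullet$ is $x^\bme\Omega^p_X(\log D)$ placed in degree $p$ (with zero differentials); the corresponding long exact sequence then gives $H^{p+k}(\tilde A^\cbbullet)=0$ for $k\geq 1$, while $H^p(\tilde A^\cbbullet)$ is the kernel of the first differential, i.e.\,$\Omega_f^p([\mu P])$. The principal subtlety is the asymmetry between degree $p$ (unquotiented) and the higher degrees (quotiented by $x^\bme$), which must be tracked carefully in the long exact sequence; once this is done everything reduces to the Koszul acyclicity attached to $\alpha$.
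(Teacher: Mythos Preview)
Your proof is correct and follows essentially the same route as the paper: both reduce to the associated graded pieces (using that the filtrations are bounded above and constant for $p\leq 0$), identify the induced differential on $\gr^p_\tau$ with $\rd f\wedge$, and then show the resulting complex is acyclic in degrees $>p$. The only difference is that the paper dispatches this last acyclicity by quoting \cite[Prop.\,1.3]{Yu12}, whereas you prove it directly via the Koszul argument for $\alpha=\sum_i e_i\,\rd\log x_i$ and the short exact sequence $0\to x^{\bme}\Omega^\cbbullet_X(\log D)\to\Omega^\cbbullet_X(\log D)\to\Omega^\cbbullet_X(\log D)/x^{\bme}\to0$. Your version is thus more self-contained; the paper's is shorter by delegation.
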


\begin{proof}
Since both filtrations satisfy $\sigma^p,\tau^p=0$ for $p>\dim X$ and are constant for $p\leq0$, it is enough to prove the isomorphism at the graded level, that is, to prove that the vertical morphism of complexes below is a quasi-isomorphism:
\[
\xymatrix@C=.6cm{
0\ar[r]&\Omega_f^p(\mu)\ar[d]\ar[r]&0\ar[d]\ar[r]&\cdots\\
0\ar[r]&\Omega^p_X(\log D)([\mu P])\ar[r]^-\nabla&\Omega^{p+1}_X(\log D)([(\mu+1) P])/\Omega^{p+1}_X(\log D)([\mu P])\ar[r]&\cdots
}
\]
According to \eqref{eq:omegafalpha}, this amounts to showing that the second row has zero cohomology in degrees $\geq p+1$. This follows from \cite[Prop.\,1.3]{Yu12}, which implies that the complex
\begin{multline*}
\cdots\to\Omega^p_X(\log D)([\mu P])\Big/\Omega^p_X(\log D)([(\mu-1)P])\\
\To{\nabla}\Omega^{p+1}_X(\log D)([(\mu+1) P])\Big/\Omega^{p+1}_X(\log D)([\mu P])\to\cdots
\end{multline*}
is quasi-isomorphic to zero.

Since $\nabla=\rd f$ on the graded objects $\gr_\sigma^p$ and $\gr_\tau^p$, the second assertion can be proved by the same argument.
\end{proof}

\begin{corollary}\label{cor:Kontsevich2}
The\marginpars{Old Cor.\,1.4.6. The remaining part of \S\ref{subsec:comparisonYuKontsevich} now contains statements of Old \S1.5.} two inclusions
\[
\big(\Omega_f^\cbbullet,\nabla\big) \to \big(\Omega_f^\cbbullet(*P_\red),\nabla\big)
\quad\text{and}\quad
\big(\Omega_f^\cbbullet, \rd f\big) \to \big(\Omega_f^\cbbullet(*P_\red), \rd f\big)
\]
are quasi-isomorphisms of complexes on $X$, as well as the inclusion $\big(\Omega_f^\cbbullet(*P_\red),\nabla\big)\to\big(\Omega_X^\cbbullet(*D),\nabla\big)$.\marginpars{This supplementary statement was used in Old Prop.\,1.5.1.}
\end{corollary}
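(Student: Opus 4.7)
The plan is to prove the three quasi-isomorphisms separately, reducing the $\nabla$-statements to Proposition~\ref{Prop:Kont-log} and \cite[Prop.\,1.3]{Yu12}, and the $\rd f$-statement to a direct local Koszul computation. The key preliminary observation is that $\Omega_f^p(*P_\red):=\cO_X(*P_\red)\otimes_{\cO_X}\Omega_f^p$ coincides with $\Omega_X^p(\log D)(*P_\red)$: in the local chart of \S\ref{subsec:setup}, the formula \eqref{eq:Omegafloc} shows that, after inverting $x_1,\dots,x_\ell$, the system $\{\rd x^{\bme}/x^{\bme},\rd x_2/x_2,\dots,\rd x_\ell/x_\ell,\rd y/y,\rd z\}$ spans the same $\cO_X(*P_\red)$-module as the full logarithmic basis, the $\rd x_1/x_1$-coefficient of $\rd x^{\bme}/x^{\bme}$ being the unit $e_1$. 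With this identification the third inclusion becomes $(\Omega_X^\cbbullet(\log D)(*P_\red),\nabla)\hto(\Omega_X^\cbbullet(*D),\nabla)$, which is a quasi-isomorphism by \cite[Cor.\,1.4]{Yu12}, as recalled just after \eqref{eq:FYu}.

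For the first inclusion I would combine Proposition~\ref{Prop:Kont-log} at $\mu=0$ with a colimit of the quasi-isomorphisms from \cite[Prop.\,1.3]{Yu12}. The former yields that $(\Omega_f^\cbbullet,\nabla)\to(F^0(0),\nabla)$ is a quasi-isomorphism; the latter yields that $(F^0(0),\nabla)\to(F^0(-N),\nabla)$ is a quasi-isomorphism for every integer $N\geq0$. Passing to the colimit over $N$ (which preserves quasi-isomorphisms of sheaves of abelian groups) and observing that $\varinjlim_N F^0(-N)=\Omega_X^\cbbullet(\log D)(*P_\red)=\Omega_f^\cbbullet(*P_\red)$, one deduces that $(F^0(0),\nabla)\to(\Omega_f^\cbbullet(*P_\red),\nabla)$ is a quasi-isomorphism; composition then gives the first assertion.

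For the second inclusion, \cite[Prop.\,1.3]{Yu12} is not available (being formulated only for $\nabla$), and I would instead argue that both $(\Omega_f^\cbbullet,\rd f)$ and $(\Omega_f^\cbbullet(*P_\red),\rd f)$ are locally acyclic at every point of $P_\red$. Locally $\rd f=-x^{-\bme}\eta$ with $\eta:=\rd x^{\bme}/x^{\bme}$ and $x^{-\bme}$ a unit in $\cO_X(*P_\red)$; hence $(\Omega_X^\cbbullet(\log D)(*P_\red),\rd f)$ is, up to this unit, the Koszul complex for $\eta\wedge$ on $\bigwedge^\cbbullet$ of the free $\cO_X(*P_\red)$-module of logarithmic $1$-forms, and since $\eta$ belongs to an $\cO_X(*P_\red)$-basis of that module it is acyclic. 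For the source, the direct-sum decomposition $\Omega_f^p=\cO\cdot\eta\wedge\bigwedge^{p-1}S\oplus\cO\cdot x^{\bme}\cdot\bigwedge^p S$ following from \eqref{eq:Omegafloc} (with $S=\{\rd x'/x',\rd y/y,\rd z\}$) shows that $\rd f$ annihilates the first summand and maps the second isomorphically onto the first summand of $\Omega_f^{p+1}$, whence local acyclicity. Away from $P_\red$ the inclusion is an equality, so it is a stalkwise quasi-isomorphism. The main obstacle is precisely the absence of an off-the-shelf analog of \cite[Prop.\,1.3]{Yu12} for $\rd f$; the Koszul/direct-sum calculation sidesteps it.
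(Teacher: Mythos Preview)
Your argument is correct. For the $\nabla$-inclusion and the third inclusion it coincides with the paper's proof (the paper also identifies $\Omega_f^\cbbullet(*P_\red)=\Omega_X^\cbbullet(\log H)(*P_\red)=\Omega_X^\cbbullet(\log D)(*P_\red)$ and then cites the known quasi-isomorphism into $\Omega_X^\cbbullet(*D)$).

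For the $\rd f$-inclusion you take a genuinely different route, and the difference is worth noting. Your stated obstacle---that \cite[Prop.\,1.3]{Yu12} is formulated only for $\nabla$---is in fact illusory, and the paper exploits this. Proposition~\ref{Prop:Kont-log} already contains the $\rd f$-statement in its last sentence, so the vertical arrows in the square
\[
\xymatrix{
(\Omega_f^\cbbullet,\star)\ar[r]\ar[d]_\wr & (\Omega_f^\cbbullet(mP),\star)\ar[d]^\wr\\
(F^0(0),\star)\ar[r] & (F^0(-m),\star)
}
\]
are quasi-isomorphisms for $\star=\nabla$ \emph{and} $\star=\rd f$. The bottom arrow is a quasi-isomorphism for $\nabla$ by \cite[Prop.\,1.3]{Yu12}; but its cokernel, the complex with terms $\Omega_X^k(\log D)([(k+m)P])/\Omega_X^k(\log D)([kP])$, carries the \emph{same} differential whether one uses $\nabla$ or $\rd f$, since $\rd$ preserves each $\Omega_X^k(\log D)([jP])$. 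Hence the bottom arrow is also a quasi-isomorphism for $\rd f$, and the paper handles both cases uniformly by the same square, then passes to $\bigcup_m\Omega_f^\cbbullet(mP)=\Omega_f^\cbbullet(*P_\red)$. Your direct Koszul computation (both sides acyclic on $P_\red$) is perfectly valid and perhaps more self-contained, but it duplicates work already encoded in Proposition~\ref{Prop:Kont-log}; the paper's approach is shorter precisely because it recognises that the $\rd f$-case comes for free from the $\nabla$-case on the graded pieces.
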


\begin{proof}
Forgetting the filtrations in Proposition \ref{Prop:Kont-log}, one obtains the vertical quasi-isomorphisms in the commutative diagram
\[\xymatrix{
(\Omega_f^\cbbullet,\star) \ar[r]\ar[d]_\wr &
(\Omega_f^\cbbullet(mP),\star) \ar[d]^\wr \\
(F^0(0),\star) \ar[r]^(.45)\sim & (F^0(-m),\star). }\]
Here the arrows are natural inclusions, $\star=\nabla$ or $\rd f$, and $m$ is any non-negative integer. Since $\Omega_f^k(*P_\red)=\bigcup_m \Omega_f^k(mP)$, the first assertion follows. The second assertion is proved as in \cite[\S3.2]{Bibi97b}, since $\Omega_f^\cbbullet(*P_\red)=\Omega_X^\cbbullet(\log H)(*P_\red)$, as follows from the expression \eqref{eq:Omegafloc}.\marginpars{Argument added.}
\end{proof}

\begin{remark}
We\marginpars{This remark gathers \hbox{Old Rems 1.5.2 and 1.5.4.}} have $\bH^k\big(X,\Omega_X^\cbbullet(*D),\nabla\big)=H^k_\dR(U,\nabla)$. Let us recall (\cf\cite[Rem.\,04]{Bibi97b}) that, if we set $Y=f^{-1}(\Afu)=X\setminus P_\red$, then $\dim H^k_\dR(U,\nabla)$ is equal to the dimension of the $(k-1)$st hypercohomology of the vanishing cycles of $f_{|Y}:Y\to\nobreak\Afu$ with coefficients in the complex $\bR\iota_*\CC_U$, where $\iota:U\hto Y$ denotes the inclusion.

On the other hand, in Theorem \ref{th:C3} of Appendix \ref{app:saito}, M.\,Saito gives, when $P=P_\red$, an identification of the complex $\big(\Omega_f^\cbbullet, \rd\big)$ with the Beilinson complex attached to $\bR j_*\CC_U$, and therefore with the nearby cycle complex $\psi_g\bR j_*\CC_U$ (recall that $g=\nobreak1/f$) of $f$ along $f^{-1}(\infty)$. 
\end{remark}

\begin{corollary}\label{Cor:Kont-log}
For\marginpars{Old Cor.\,1.4.3.} any $\alpha\in[0,1)$, the natural inclusions
\[
\big(\Omega_f^\cbbullet(\alpha),\nabla,\sigma^p\big) \to
\big(F^0(\nabla), F_\alpha^p\big) \to
F_\alpha^{\Yu,p}(\Omega^\cbbullet(*D),\nabla)
\quad(p\in\ZZ)
\]
are quasi-isomorphisms of filtered complexes on $X$. Here (\cf\eqref{eq:FYu})
\[
F_\alpha^p\big(F^0(\nabla)\big)=\begin{cases}
F^0(\nabla) & \text{if $p\leq 0$,} \\
F^{-\alpha+p}(\nabla) & \text{if $p\geq1$}.
\end{cases}
\]
\end{corollary}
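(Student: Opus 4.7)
My plan is to split the composition into its two constituent inclusions and reduce each to a result already in place.

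First, I will identify the middle term with the complex $F^0(-\alpha)$ equipped with the filtration $\tau^\bbullet$ of Proposition \ref{Prop:Kont-log} taken for $\mu=\alpha$. Since $-\alpha\le 0$ for $\alpha\in[0,1)$, the plus-truncation in \eqref{eq:FYu} never triggers, so $F^{\Yu,-\alpha}(\Omega_X^\cbbullet(*D),\nabla)=F^0(-\alpha)$; this is what the shorthand $F^0(\nabla)$ must denote here. For $p\ge 1$ the value $-\alpha+p$ is positive, and a direct unpacking of \eqref{eq:FYu} shows that $F^{\Yu,-\alpha+p}(\nabla)$ is the subcomplex of $F^0(-\alpha)$ starting in degree $p$ with $k$-th term $\Omega^k_X(\log D)([(k-p+\alpha)P])$, which matches $\tau^p$ termwise; for $p\le 0$ both $F_\alpha^p$ and $\tau^p$ are the full ambient complex. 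With this dictionary, the first inclusion is exactly the content of Proposition \ref{Prop:Kont-log}.

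For the second inclusion I will verify the filtered quasi-isomorphism piece-by-piece in $p$. When $p\ge 1$, both the source $F_\alpha^p(F^0(\nabla))$ and the target $F_\alpha^{\Yu,p}(\Omega_X^\cbbullet(*D),\nabla)$ are by construction the same subcomplex $F^{\Yu,-\alpha+p}$ of $\Omega_X^\cbbullet(*D)$, so the induced map is literally the identity. When $p\le 0$, the inclusion reduces to $F^0(-\alpha)\hookrightarrow F^0(-\alpha+p)$, which is a quasi-isomorphism by \cite[Prop.\,1.3]{Yu12} (stated in the paragraph just before Proposition \ref{Prop:Kont-log}). Biregularity of both filtrations ensures that filtered and graded quasi-isomorphy coincide, so this is enough.

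I do not expect any serious obstacle beyond clean book-keeping. The only delicate points are (i) reading $F^0(\nabla)$ as $F^0(-\alpha)$ rather than as $F^0(0)$, and (ii) checking that for $p\ge 1$ the two natural realizations of $F^{\Yu,-\alpha+p}$ --- once as a piece of the filtration $F_\alpha^\bbullet$ on $F^0(-\alpha)$, once as a piece of the Yu filtration on the ambient complex $\Omega_X^\cbbullet(*D)$ --- coincide as subcomplexes. Once these identifications are made, the corollary is nothing more than the combination of Proposition \ref{Prop:Kont-log} with a single application of \cite[Prop.\,1.3]{Yu12}.
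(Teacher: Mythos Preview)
Your approach is essentially the paper's, with one misreading to fix. In the corollary, $F^0(\nabla)$ is \emph{not} $F^0(-\alpha)$: it is $F^{\Yu,0}(\Omega_X^\cbbullet(*D),\nabla)$, which coincides with $F^0(0)$ (the plus-truncation in \eqref{eq:FYu} is vacuous at $\lambda=0$, not at $\lambda=-\alpha$). The paper's own proof makes this explicit by writing $\tau^p = F^0(-\alpha)\xleftarrow{\sim} F^0(\nabla)$ for $p\le 0$: the arrow is the inclusion $F^0(0)\hookrightarrow F^0(-\alpha)$, quasi-isomorphic by \cite[Prop.\,1.3]{Yu12}. Once you adopt the correct reading, your dictionary still works: for $p\ge 1$ one has $\tau^p=F_\alpha^p$ on the nose, and for $p\le 0$ the extra quasi-isomorphism $F^0(\nabla)\to F^0(-\alpha)$ absorbs the discrepancy. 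Thus the first arrow follows from Proposition~\ref{Prop:Kont-log} with $\mu=\alpha$ together with this one additional quasi-isomorphism, exactly as the paper does.

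For the second arrow, your piece-by-piece verification (identity for $p\ge 1$, inclusion $F^0(0)\hookrightarrow F^0(-\alpha+p)$ for $p\le 0$) is correct and amounts to what the paper packages as a single reference to \cite[Cor.\,1.4]{Yu12}. So apart from the point about $F^0(\nabla)$, your argument and the paper's coincide.
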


\begin{proof}
That the second arrow is a quasi-isomorphism is the statement of
\cite[Cor.\,1.4]{Yu12}. The first one follows from Proposition \ref{Prop:Kont-log}. Indeed in this case we have
\[
\tau^p=\begin{cases}
F^0(-\alpha) \xleftarrow{\sim} F^0(\nabla) & \text{if $p\leq 0$} \\
F_\alpha^p & \text{if $p\geq 1$}.
\end{cases}\qedhere
\]
\end{proof}

\begin{corollary}\label{cor:omegafdegen1}
For\marginpars{Old Cor.\,1.4.4.} $\alpha\in[0,1)$, the hypercohomology $\bH^k\big(X,(\Omega^\cbbullet_f(\alpha),\nabla)\big)$ does not depend on the choice of the smooth compactification $X$ of $U$ such that $X\setminus U$ has normal crossings.
\end{corollary}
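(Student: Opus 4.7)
The plan is to identify $\bH^k\bigl(X,(\Omega_f^\cbbullet(\alpha),\nabla)\bigr)$ with the subspace $F^{\Yu,-\alpha}H^k_\dR(U,\nabla)$ of the intrinsic vector space $H^k_\dR(U,\nabla)$, and then to invoke \cite[Th.\,1.8]{Yu12}, according to which $F^{\Yu,-\alpha}H^k_\dR(U,\nabla)$ depends only on $U$, $f_{|U}$ and $\alpha$, not on~$X$.

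First I would forget the filtrations in Corollary \ref{Cor:Kont-log} and compose its two quasi-isomorphisms at filtration index $p=0$. There, the stupid filtration $\sigma^0$ is the whole complex $\Omega_f^\cbbullet(\alpha)$, and the right-hand term equals $F^{\Yu,0}_\alpha(\Omega_X^\cbbullet(*D),\nabla)=F^{\Yu,-\alpha}(\Omega_X^\cbbullet(*D),\nabla)$. This produces a quasi-isomorphism of complexes on~$X$
\[
(\Omega_f^\cbbullet(\alpha),\nabla)\To{\sim}F^{\Yu,-\alpha}(\Omega_X^\cbbullet(*D),\nabla),
\]
hence a canonical isomorphism
\[
\bH^k\bigl(X,(\Omega_f^\cbbullet(\alpha),\nabla)\bigr)\cong\bH^k\bigl(X,F^{\Yu,-\alpha}(\Omega_X^\cbbullet(*D),\nabla)\bigr).
\]

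Next, I would apply Theorem \ref{th:main} with $\lambda=-\alpha$: the $E_1$\nobreakdash-degeneration states that the natural morphism from $\bH^k\bigl(X,F^{\Yu,-\alpha}(\Omega_X^\cbbullet(*D),\nabla)\bigr)$ to $H^k_\dR(U,\nabla)$ is injective, with image $F^{\Yu,-\alpha}H^k_\dR(U,\nabla)$ by the very definition of the latter. Composing with the previous isomorphism, one obtains a canonical identification
\[
\bH^k\bigl(X,(\Omega_f^\cbbullet(\alpha),\nabla)\bigr)\cong F^{\Yu,-\alpha}H^k_\dR(U,\nabla),
\]
and the independence statement of \cite[Th.\,1.8]{Yu12} then concludes.

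The argument is essentially a bookkeeping assembly of results already in place in the text, so no step should offer a serious obstacle. The only slightly delicate point is to match the underlying total complexes on the two sides of Corollary \ref{Cor:Kont-log} at $p=0$, which succeeds because the hypothesis $\alpha\in[0,1)$ makes the $(-)_+$ truncation in \eqref{eq:FYu} inactive in every relevant degree; the real work has been done upstream, in Theorem \ref{th:main} and in \cite[Th.\,1.8]{Yu12}.
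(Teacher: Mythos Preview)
Your argument is correct, but it takes a heavier route than the paper's. Both proofs begin identically: forgetting the filtration in Corollary~\ref{Cor:Kont-log} at $p=0$ yields
\[
\bH^k\bigl(X,(\Omega_f^\cbbullet(\alpha),\nabla)\bigr)\simeq\bH^k\bigl(X,F^{\Yu,-\alpha}(\nabla)\bigr).
\]
At this point the paper simply invokes \cite[Th.\,1.8]{Yu12}, which already asserts that $\bH^k\bigl(X,F^{\Yu,\lambda}(\nabla)\bigr)$ itself is independent of the compactification~$X$, and the proof ends. You instead pass through Theorem~\ref{th:main} to identify this hypercohomology with the subspace $F^{\Yu,-\alpha}H^k_\dR(U,\nabla)$ of $H^k_\dR(U,\nabla)$, and only then appeal to \cite[Th.\,1.8]{Yu12} for the independence of that subspace.

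The difference matters for the logical architecture: the paper places this corollary \emph{before} Corollary~\ref{cor:omegafdegen2} (which is explicitly labeled a corollary of Theorem~\ref{th:main}) precisely because the present statement does not require the $E_1$-degeneration. Your route makes the corollary contingent on the main theorem, whereas the paper's route keeps it an elementary consequence of the filtered comparison in Corollary~\ref{Cor:Kont-log} together with Yu's independence result. What your approach buys is a canonical realization of the hypercohomology as a specific subspace of the intrinsic $H^k_\dR(U,\nabla)$; what the paper's approach buys is a shorter proof with lighter dependencies.
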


\begin{proof}
Forgetting the filtration in Corollary \ref{Cor:Kont-log}, we have $\bH^k\big(X,(\Omega^\cbbullet_f(\alpha),\nabla)\big)\simeq\bH^k\big(X,F^{\Yu,-\alpha}(\nabla)\big)$ and the assertion follows from \cite[Th.\,1.8]{Yu12}.
\end{proof}

\begin{remark}
The statement of Corollary \ref{cor:omegafdegen1} is also a consequence of Proposition \ref{prop:F+strict} below, through the various identifications that we make in \S\S\ref{subsec:implies}--\ref{subsec:compYuDel}.
\end{remark}

From the properties of the filtration $F^\Yu$ (\cf\S\ref{subsec:FYu}) we obtain, as a consequence of Theorem \ref{th:main}:

\begin{corollary}[of Theorem \ref{th:main}]\label{cor:omegafdegen2}
For\marginpars{Old Cor.\,1.5.6.} $\alpha\in[0,1)$ fixed, and for $p,q\geq0$, we have
$$
\bigoplus_{\lambda\in[-\alpha+p,-\alpha+p+1)}\gr^\lambda_{F^\Yu}H^{p+q}_\dR(U,\nabla)\simeq
\gr_{F_\alpha^\Yu}^pH^{p+q}_\dR(U,\nabla)\simeq H^q(X,\Omega^p_f(\alpha))
$$
and therefore a decomposition $H^k_\dR(U,\nabla)\simeq \bigoplus_{p+q=k}H^q(X,\Omega^p_f(\alpha))$.
\end{corollary}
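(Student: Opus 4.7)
The strategy is to combine Theorem~\ref{th:main} (the $E_1$-degeneration for $F^{\Yu,p}_\alpha$) with the filtered quasi-isomorphism from Corollary~\ref{Cor:Kont-log} and then read off the graded pieces. First, Theorem~\ref{th:main} says precisely that the filtration $F^{\Yu,p}_\alpha$ on $\bH^\bbullet\big(X,(\Omega_X^\cbbullet(*D),\nabla)\big)=H^\bbullet_\dR(U,\nabla)$ is strict, so that the natural map
\[
\gr^p_{F_\alpha^\Yu}H^{p+q}_\dR(U,\nabla)\ \xrightarrow{\ \sim\ }\ \bH^{p+q}\bigl(X,\gr^p_{F_\alpha^\Yu}(\nabla)\bigr)
\]
is an isomorphism for all $p,q$.

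Next, Corollary~\ref{Cor:Kont-log} asserts that the inclusion $(\Omega_f^\cbbullet(\alpha),\nabla,\sigma^\bbullet)\hto F^{\Yu,\bbullet}_\alpha(\Omega_X^\cbbullet(*D),\nabla)$ is a filtered quasi-isomorphism, so passing to the associated graded we obtain a quasi-isomorphism of complexes on $X$
\[
\gr^p_\sigma(\Omega_f^\cbbullet(\alpha),\nabla)\ \xrightarrow{\ \sim\ }\ \gr^p_{F_\alpha^\Yu}(\nabla).
\]
Since $\sigma$ is the stupid filtration, its $p$-th graded piece is the single sheaf $\Omega^p_f([\alpha P])=\Omega^p_f(\alpha)$ placed in degree $p$; hence
\[
\bH^{p+q}\bigl(X,\gr^p_{F_\alpha^\Yu}(\nabla)\bigr)\simeq H^q\bigl(X,\Omega^p_f(\alpha)\bigr).
\]
Chaining the two isomorphisms gives the middle $\simeq$ right-hand identification of the corollary.

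For the leftmost identification, one unwinds the definitions of \S\ref{subsec:FYu}: by construction, $F^{\Yu,p}_\alpha=F^{\Yu,-\alpha+p}$ for $p\ge1$ and $F^{\Yu,0}_\alpha$ agrees with $F^{\Yu,\lambda}$ for all $\lambda\le -\alpha$, so the integer-indexed quotient $\gr^p_{F_\alpha^\Yu}H^{p+q}_\dR(U,\nabla)$ coincides with $F^{\Yu,-\alpha+p}H^{p+q}/F^{\Yu,-\alpha+p+1}H^{p+q}$; since the real filtration $F^{\Yu,\lambda}$ has only finitely many jumps, all occurring at rationals, this quotient decomposes canonically as
\[
\bigoplus_{\lambda\in[-\alpha+p,-\alpha+p+1)}\gr^\lambda_{F^\Yu}H^{p+q}_\dR(U,\nabla),
\]
which is the first isomorphism in the statement.

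Finally, for the global decomposition, the $E_1$-degeneration applied to the exhaustive integer-indexed filtration $F_\alpha^{\Yu,\bbullet}$ on $H^k_\dR(U,\nabla)$ (which is finite since $\Omega_f^p(\alpha)=0$ for $p>n$ and $F_\alpha^{\Yu,p}$ is constant for $p\le0$) implies the dimension equality
\[
\dim H^k_\dR(U,\nabla)=\sum_{p+q=k}\dim \gr^p_{F_\alpha^\Yu}H^k_\dR(U,\nabla)=\sum_{p+q=k}\dim H^q(X,\Omega^p_f(\alpha)),
\]
and combined with the isomorphism on graded pieces yields the (non-canonical) splitting $H^k_\dR(U,\nabla)\simeq\bigoplus_{p+q=k}H^q(X,\Omega^p_f(\alpha))$. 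There is no real obstacle here: all the substance is contained in Theorem~\ref{th:main} and Corollary~\ref{Cor:Kont-log}; the only point requiring care is the bookkeeping between the real-indexed filtration $F^{\Yu,\lambda}$ and its integer-indexed avatar $F^{\Yu,p}_\alpha$, ensuring that the grading intervals $[-\alpha+p,-\alpha+p+1)$ in the statement are the correct ones.
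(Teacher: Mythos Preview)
Your proof is correct and follows essentially the same approach as the paper: identify $H^q(X,\Omega^p_f(\alpha))$ with the $E_1$-term of the spectral sequence via Corollary~\ref{Cor:Kont-log}, and then invoke the $E_1$-degeneration of Theorem~\ref{th:main} to equate it with the graded piece of the filtration on cohomology. The paper's proof is a single sentence and does not spell out the leftmost identification or the final decomposition; your more detailed treatment of those points is accurate bookkeeping and adds nothing beyond what is implicit in the statement.
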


\begin{proof}
The right-hand term is the $E_1$-term in the spectral sequence attached to $\big(\Omega_f^\cbbullet(\alpha),\nabla,\sigma^p\big)$, hence that of the spectral sequence attached to $F_\alpha^{\Yu,p}(\Omega^\cbbullet(*D),\nabla)$, according to Corollary \ref{Cor:Kont-log}, which is equal to the middle term, according to Theorem \ref{th:main}.
\end{proof}

\begin{remark}
It\marginpars{Old Rem.\,1.5.7.} is a natural question to ask for a geometric interpretation of $H^q(X,\Omega^p_f(\alpha))$. When $\alpha=0$ and $P=P_\red$, such an interpretation is furnished by Theorem \ref{th:C3} in Appendix \ref{app:saito}. On the other hand, when $D=P_\red$, it is natural to expect that the complex $(\Omega_f,\rd+\rd f)$ is quasi-isomorphic to the $L^2$ complex on $X\setminus P_\red$ with the same differential and relative to a complete metric on $X\setminus D$ which is equivalent to the Poincaré metric near each point of $D$. The corresponding Hodge decomposition should be proved as in \cite{Fan11} (we owe this~$L^2$ interpretation to T.\,Mochizuki).
\end{remark}

\subsection{Relation between Theorem \ref{th:main} for $\alpha=0$ and Theorem \ref{th:Kontsevich}}\label{subsec:implies}

\begin{proof}[Proof that Theorem \ref{th:main} for $\alpha=0$ implies Theorem \ref{th:Kontsevich}]

\begin{proposition}\label{prop:nablad}
If\marginpars{Old Cor.\,1.5.5 and Rem.\,1.5.8.} $\dim\bH^k\big(X,(\Omega^\cbbullet_{cf},\rd+\rd (cf))\big)=\dim\bH^k\big(X,(\Omega^\cbbullet_{cf},0)\big)$ for all~$k$ and any $c\in\CC^*$, then the conclusion of Theorem \ref{th:Kontsevich} holds.
\end{proposition}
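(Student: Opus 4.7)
My plan is to reduce Theorem \ref{th:Kontsevich} to the hypothesis via a scaling argument combined with an upper semicontinuity argument. First I would rewrite the hypothesis in a more convenient form. Since replacing $f$ by $cf$ with $c \in \CC^*$ does not alter the defining condition $\rd f \wedge \omega \in \Omega^{p+1}_X(\log D)$, we have $\Omega^p_{cf} = \Omega^p_f$, and the hypothesis becomes
\[
\dim \bH^k\bigl(X, (\Omega_f^\cbbullet, \rd + c\rd f)\bigr) \;=\; \sum_{p+q=k} \dim H^q(X, \Omega_f^p) \;=:\; M_k
\]
for every $c \in \CC^*$ and every $k$. Note that $M_k$ is precisely the upper bound furnished by the $E_1$-page of the spectral sequence associated with the stupid filtration $\sigma^p$ on the complex $(\Omega_f^\cbbullet, u\rd + v\rd f)$, for any values of $u,v$, since the $E_1$ term depends only on the graded pieces $\Omega_f^p$.

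Second, I would exploit the invariance of hypercohomology under rescaling of the differential: for any $\lambda \in \CC^*$, the complexes $(\Omega_f^\cbbullet, D)$ and $(\Omega_f^\cbbullet, \lambda D)$ have identical cocycles and coboundaries, hence the same cohomology. For $(u,v) \in \CC^2$ with $u \neq 0$, dividing the differential by $u$ yields
\[
\bH^k\bigl(X, (\Omega_f^\cbbullet, u\rd + v\rd f)\bigr) \;\cong\; \bH^k\bigl(X, (\Omega_f^\cbbullet, \rd + (v/u)\rd f)\bigr),
\]
and when moreover $v \neq 0$ the hypothesis gives dimension $M_k$. Thus the equality $\dim \bH^k = M_k$ holds on the Zariski dense open subset $\{(u,v) : uv \neq 0\}$ of $\CC^2$.

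Third, I would extend this equality to all of $\CC^2$ by upper semicontinuity. The differentials on every page $E_r^{p,q}$ of the stupid-filtration spectral sequence depend polynomially on $(u,v)$; since the $E_1$ term is constant in $(u,v)$ and finite dimensional, the function $(u,v) \mapsto \dim \bH^k(X, (\Omega_f^\cbbullet, u\rd + v\rd f))$ is upper semicontinuous on $\CC^2$ and bounded above by $M_k$. Hence the locus where this function attains the value $M_k$ is Zariski closed, and containing the dense set $\{uv \neq 0\}$ it must equal $\CC^2$.

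Finally, specializing the resulting equality $\dim \bH^k(X, (\Omega_f^\cbbullet, u\rd + v\rd f)) = M_k$ at $(u,v) = (1,0)$ yields the $E_1$-degeneration \eqref{eq:Kontsevich}, while at $(u,v) = (1,1)$ combined with Corollary \ref{cor:Kontsevich2} (which supplies quasi-isomorphisms $(\Omega_f^\cbbullet, \nabla) \to (\Omega_X^\cbbullet(*D), \nabla)$) it identifies $M_k$ with $\dim H^k_\dR(U, \nabla)$. I expect the main technical subtlety to be the semicontinuity step, but this is a standard consequence of the semicontinuity theorem applied to a family of bounded complexes of coherent sheaves with an algebraically varying differential, so it should present no real difficulty.
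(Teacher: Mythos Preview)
Your argument is correct and reaches the same conclusion as the paper, but the route you take for the boundary cases differs from the paper's.

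For $(u,v)$ with $uv\neq0$ you do exactly what the paper does: rescale and apply the hypothesis. The divergence is in how the equality is extended to the axes $\{u=0\}$ and $\{v=0\}$. You invoke a two-parameter upper semicontinuity argument: the universal bound $\dim\bH^k\leq M_k$ from the $E_1$-page, combined with upper semicontinuity of hypercohomology dimensions for a family of complexes of locally free sheaves with algebraically varying differential, forces equality everywhere. The paper instead treats the case $v=0$, $u\neq0$ (and symmetrically $u=0$, $v\neq0$) by a one-parameter argument: it introduces the $\CC[\tau]$-complex $(\Omega_f^\cbbullet[\tau],\rd_X+\tau\rd f)$, establishes that the generic fiber of $\bH^k_\tau$ has dimension $\dim\bH^k(X,(\Omega_f^\cbbullet,\nabla))$, and then runs a descending induction on $k$ using the long exact sequence associated with multiplication by $\tau$ to prove simultaneously that $\tau$ acts injectively on $\bH^k_\tau$, that $\bH^k_\tau/\tau\bH^k_\tau\simeq\bH^k(X,(\Omega_f^\cbbullet,\rd))$, and that the dimensions agree. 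Your approach is shorter and more conceptual; the paper's approach is more explicit and in fact yields the extra information that each $\bH^k_\tau$ is $\CC[\tau]$-torsion-free.

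One small remark on your write-up: the sentence ``the differentials on every page $E_r^{p,q}$ depend polynomially on $(u,v)$'' is not a clean justification for semicontinuity, since for $r\geq2$ the pages themselves are subquotients whose identification with a fixed vector space need not be algebraic in $(u,v)$. The correct justification is the one you give at the end---semicontinuity for the fibers of a perfect complex over $\Spec\CC[u,v]$---and you should lead with that rather than the spectral-sequence remark. The identification with $\dim H^k_\dR(U,\nabla)$ via Corollary~\ref{cor:Kontsevich2} at $(u,v)=(1,1)$ is exactly right.
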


\begin{proof}
For $u,v\neq0$, we have $\Omega_f^k=\Omega_{vf/u}^k$, and thus
\begin{align*}
\dim\bH^k\big(X,(\Omega_f^\cbbullet,u\rd+v\rd f)\big)
&=\dim\bH^k\big(X,(\Omega_{vf/u}^\cbbullet,\rd+\rd (vf/u))\big)\\
&=\dim\bH^k\big(X,(\Omega_{vf/u}^\cbbullet,0)\big)\quad\text{(assumption for $vf/u$)}\\
&=\dim\bH^k\big(X,(\Omega_f^\cbbullet,0)\big).
\end{align*}

We will now treat the case $v=0$, $u\neq0$, that is, we will prove the equality $\dim\bH^k\big(X,(\Omega_f^\cbbullet,\rd)\big)=\dim\bH^k\big(X,(\Omega_f^\cbbullet,0)\big)$. The case $v\neq0$, $u=0$ is obtained similarly. We will use a standard semi-continuity argument.

On the one hand, $H^k\big(X,(\Omega_f^\cbbullet,\rd)\big)$ being the abutment of the spectral sequence attached to the filtered complex $\sigma^p(\Omega_f^\cbbullet,\rd)$, we have
\begin{equation}\label{eq:ineqk}
\dim\bH^k\big(X,(\Omega_f^\cbbullet,\rd)\big)\leq\dim\bH^k\big(X,(\Omega_f^\cbbullet,0)\big)\quad \forall k,
\end{equation}
the latter term being equal to $\dim\bH^k\big(X,(\Omega_f^\cbbullet,\nabla)\big)$, according to the assumption for~$f$. Let us show the equality by considering the complex $(\Omega_f^\cbbullet[\tau],\rd_X+\nobreak\tau\rd f)$, where~$\tau$ is a new variable and $\rd_X$ differentiates with respect to~$X$ only. Since each~$\Omega^p_f$ is $\cO_X$-coherent (even locally free), each $H^q(X,\Omega^p_f[\tau])=\CC[\tau]\otimes_\CC H^q(X,\Omega^p_f)$ is a free $\CC[\tau]$-module of finite type, and thus each $\bH^k_\tau:=\bH^k\big(X,(\Omega_f^\cbbullet[\tau],\rd_X+\nobreak\tau\rd f)\big)$ is a $\CC[\tau]$-module of finite type, by a spectral sequence argument with respect to the stupid filtration~$\sigma^p$. We claim first:
\begin{itemize}
\item
$\dim_{\CC(\tau)}\CC(\tau)\otimes_{\CC[\tau]}\bH^k_\tau=\dim\bH^k\big(X,(\Omega_f^\cbbullet,\nabla)\big)$.
\end{itemize}
Indeed, since $\Omega_f^\cbbullet[\tau]$ is $\CC[\tau]$-free and since $\bH^k_\tau$ has finite type over $\CC[\tau]$, we have \hbox{$\CC[\tau]/(\tau-v)\otimes_{\CC[\tau]}\bH^k_\tau=\bH^k\big(X,(\Omega_f^\cbbullet,\rd+v\rd f)\big)$} for $v$ general enough. We know that the dimension of the latter space is independent of $v\neq0$ and equal to $\bH^k\big(X,(\Omega_f^\cbbullet,\nabla)\big)$ by the first part of the proof, hence the assertion.

Let us now consider the long exact sequence
\[
\cdots\to\bH^k_\tau\To{\tau}\bH^k_\tau\to\bH^k\big(X,(\Omega_f^\cbbullet,\rd)\big)\to\cdots
\]
We will prove for all $k$:
\begin{enumerate}
\item[$(1)_k$]
``$\tau$ at the level $\geq k$ is injective'',
\item[$(2)_k$]
$\bH^k\big(X,(\Omega_f^\cbbullet,\rd)\big)=\bH^k_\tau/\tau\bH^k_\tau$,
\item[$(3)_k$]
$\dim\bH^k\big(X,(\Omega_f^\cbbullet,\rd)\big)=\dim\bH^k\big(X,(\Omega_f^\cbbullet,\nabla)\big)$,
\end{enumerate}
by showing $(1)_{k+1}\implique(2)_k\implique(3)_k\implique(1)_k$. Note that $(3)_k$ is the desired equality.

The assertion $(1)_{k+1}$ implies that $\bH^k_\tau\to\bH^k\big(X,(\Omega_f^\cbbullet,\rd)\big)$ is onto, and thus $(2)_k$ holds, so
\[
\dim\bH^k\big(X,(\Omega_f^\cbbullet,\rd)\big)\geq\dim_{\CC(\tau)}\CC(\tau)\otimes_{\CC[\tau]}\bH^k_\tau=\dim\bH^k\big(X,(\Omega_f^\cbbullet,\nabla)\big),
\]
where the latter equality holds by the claim above. Hence \eqref{eq:ineqk} implies $(3)_k$ and thus, localizing at $\tau=0$, $\CC[\tau]_{(0)}\otimes_{\CC[\tau]}\bH^k_\tau$ is $\CC[\tau]_{(0)}$-free, so  $(1)_k$ holds.

Since $(1)_k$ holds for $k$ large, it holds for all $k$, as well as $(3)_k$.
\end{proof}

In order to obtain Theorem \ref{th:Kontsevich} from Theorem \ref{th:main} for $\alpha=0$, it remains to apply Corollary \ref{cor:omegafdegen2} with $\alpha=0$ (which uses Theorem \ref{th:main}), together with Corollary \ref{cor:Kontsevich2}, to $cf$ for any $c\in\CC^*$.
\end{proof}

\begin{proof}[Proof that \eqref{eq:Kontsevich} implies Theorem \ref{th:Kontsevich} and Theorem \ref{th:main} for $\alpha=0$]

\begin{proposition}[Kontsevich \cite{Kontsevichl12}]\label{prop:Kontsevich2b}
For all $k$ we have
\[
\dim\bH^k\big(X,(\Omega_f^\cbbullet, \rd)\big)=\dim H^k_\dR(U,\nabla).
\]
\end{proposition}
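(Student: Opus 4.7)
The plan is to mimic the deformation argument used in the proof of Proposition \ref{prop:nablad}, but now running the induction from the information available at $\tau=0$. Consider the family of complexes on $X$
\[
(\Omega_f^\cbbullet[\tau],\,\rd+\tau\,\rd f)\qquad(\tau\in\CC),
\]
which specializes to $(\Omega_f^\cbbullet,\rd)$ at $\tau=0$ and to $(\Omega_f^\cbbullet,\nabla)$ at $\tau=1$. Set $s_k:=\sum_{p+q=k}\dim H^q(X,\Omega_f^p)$.

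First, each $\Omega_f^p$ is $\cO_X$-locally free of finite rank by \eqref{eq:Omegafloc}, so $\Omega_f^p[\tau]$ is $\CC[\tau]$-flat; the spectral sequence attached to the stupid filtration $\sigma^\bbullet$ then shows that $\bH^k_\tau:=\bH^k\big(X,(\Omega_f^\cbbullet[\tau],\,\rd+\tau\,\rd f)\big)$ is a finitely generated $\CC[\tau]$-module admitting at most $s_k$ generators. Next, the short exact sequences of complexes
\[
0\to\Omega_f^\cbbullet[\tau]\xrightarrow{\;\tau-v\;}\Omega_f^\cbbullet[\tau]\to(\Omega_f^\cbbullet,\,\rd+v\,\rd f)\to 0\qquad(v\in\CC)
\]
give rise to long exact sequences in hypercohomology, yielding
\[
\dim\bH^k\big(X,(\Omega_f^\cbbullet,\,\rd+v\,\rd f)\big)=\dim\big(\bH^k_\tau/(\tau-v)\bH^k_\tau\big)+\dim\bH^{k+1}_\tau[\tau-v].
\]
At $v=0$ the left-hand side equals $s_k$ by hypothesis \eqref{eq:Kontsevich}; at $v=1$ it equals $\dim H^k_\dR(U,\nabla)$ by Corollary \ref{cor:Kontsevich2}.

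I would then run a descending induction on $k$ (base case $k>\dim X$, where $\bH^k_\tau=0$) to prove the three equivalent assertions of Proposition \ref{prop:nablad} transposed to our setting: $(1)_k$ $\tau$ acts injectively on $\bH^k_\tau$; $(2)_k$ $\bH^k(X,(\Omega_f^\cbbullet,\rd))=\bH^k_\tau/\tau\bH^k_\tau$; $(3)_k$ $\bH^k_\tau$ is $\CC[\tau]$-free of rank $s_k$. The implication $(1)_{k+1}\Rightarrow(2)_k$ is the usual long-exact-sequence argument; $(2)_k$ combined with $\dim\bH^k(X,(\Omega_f^\cbbullet,\rd))=s_k$ and the generator bound forces $\dim\bH^k_\tau/\tau\bH^k_\tau=s_k$, which pinned against the generic rank bound gives $(3)_k$ (since a finitely generated $\CC[\tau]$-module whose minimal number of generators equals its generic rank is free); and $(3)_k$ trivially implies $(1)_k$. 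Freeness of every $\bH^k_\tau$ then implies that the specialization dimension at $v\in\CC$ is constant, and at $v=1$ this gives $\dim H^k_\dR(U,\nabla)=s_k=\dim\bH^k(X,(\Omega_f^\cbbullet,\rd))$, as required.

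The main obstacle is that, in contrast with Proposition \ref{prop:nablad}, the hypothesis provides the value of the cohomology dimension only at the special point $\tau=0$, where it attains the \emph{maximum} permitted by the stupid-filtration bound, rather than at a generic point. Translating saturation at $\tau=0$ into saturation of the generic rank, which is what is needed to pass from $(2)_k$ to $(3)_k$, is the delicate numerical step: one must combine the identity $\dim\bH^k_\tau/\tau\bH^k_\tau+\dim\bH^{k+1}_\tau[\tau]=s_k$ with the inductive vanishing of $\bH^{k+1}_\tau[\tau]$ and with the stupid-filtration bound on the generic rank so that, simultaneously for all $k$, the generic rank of $\bH^k_\tau$ coincides with its minimal generator count $s_k$, thereby forcing $\CC[\tau]$-freeness.
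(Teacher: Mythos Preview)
Your approach has two issues.

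First, Proposition~\ref{prop:Kontsevich2b} is an \emph{unconditional} statement: it does not assume~\eqref{eq:Kontsevich}. The paper proves it in Appendix~\ref{app:proof152} by a purely topological argument, identifying both sides with the cohomology of explicit constructible sheaves $\cF,\cF'$ on the real oriented blow-up $\varpi:\wt X\to X$ along the components of~$D$; no degeneration or Hodge-theoretic input is used. You invoke~\eqref{eq:Kontsevich} at the first step (``At $v=0$ the left-hand side equals $s_k$ by hypothesis~\eqref{eq:Kontsevich}''), so at best you are proving the conditional implication \eqref{eq:Kontsevich}~$\Rightarrow$~Proposition~\ref{prop:Kontsevich2b}.

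Second, even granting~\eqref{eq:Kontsevich}, the step $(2)_k\Rightarrow(3)_k$ does not go through, and your last paragraph correctly locates the obstruction without resolving it. Under~\eqref{eq:Kontsevich} the proposition is precisely the claim that the generic rank $r_k:=\dim_{\CC(\tau)}\CC(\tau)\otimes_{\CC[\tau]}\bH^k_\tau$ equals~$s_k$. You establish $\dim\bH^k_\tau/\tau\bH^k_\tau=s_k$ and, from the spectral sequence, that $\bH^k_\tau$ is generated by $\leq s_k$ elements; together these say only that the minimal number of generators equals~$s_k$. For a finitely generated $\CC[\tau]$-module the minimal generator count is always $\geq$ the generic rank, with equality \emph{iff} the module is free---so freeness is exactly the sought conclusion, not an input. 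Concretely, nothing you write rules out $\bH^k_\tau\simeq\CC[\tau]^{\,s_k-1}\oplus\CC[\tau]/(\tau^2)$: here $(1)_{k+1}$, $(2)_k$, the generator bound and $\dim M/\tau M=s_k$ all hold, yet $r_k=s_k-1$. The numerical identities you propose yield only $r_k\leq s_k$ for each~$k$ together with the Euler-characteristic constraint $\sum_k(-1)^kr_k=\sum_k(-1)^ks_k$, which do not force $r_k=s_k$. The contrast with Proposition~\ref{prop:nablad} is structural: there the hypothesis fixes the \emph{generic} value of the family (via the assumption on $cf$ for $c\neq0$), and semicontinuity then forces agreement at the special point; here \eqref{eq:Kontsevich} fixes only the value at the special point $\tau=0$, and semicontinuity runs the wrong way.
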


The proof of Proposition \ref{prop:Kontsevich2b} is postponed to Appendix \ref{app:proof152}. It does not use any Hodge-theoretic argument. If we assume \eqref{eq:Kontsevich} for $f$, it holds for $cf$ for all $c\in\CC^*$ since $\Omega_{cf}^\cbbullet=\Omega_f^\cbbullet$. Then, according to Proposition \ref{prop:Kontsevich2b} and Corollary \ref{cor:Kontsevich2}, we obtain $\dim\bH^k\big(X,(\Omega_{cf}^\cbbullet, \rd+\rd(cf))\big)=\dim\bH^k\big(X,(\Omega_{cf}^\cbbullet, 0)\big)$, which is equivalent to Theorem \ref{th:main} for $\alpha=0$, according to Corollary \ref{Cor:Kont-log}, and on the other hand implies the other statements of Theorem \ref{th:Kontsevich} according to Proposition \ref{prop:nablad}.
\end{proof}

\subsection{Deligne's filtration: the $\cD$-module approach}\label{subsec:Dapproach}
In this subsection, the results will be of a local nature, and we will make use of the local setting of \S\ref{subsec:setup}.

\subsubsection*{First construction}
Let us denote by $E^{1/g}$ the $\cO$-module $\cO(*P_\red)$ with the twisted $\cD$-module structure, so that the corresponding flat connection is~\hbox{$\nabla=\rd+\rd(1/g)$}. We will denote by $\reg$ the generator $1$, in order to make clear the twist of the connection on the $\cO$-module $\cO(*P_\red)$. The behaviour of the connection with respect to the filtration $F_\bbullet\cO(*P_\red)$ (defined in \S\ref{subsec:setup}) is as follows:
\[
\nabla(F_p\cO(*P_\red))\subset\Omega_{x,y,z}^1(\log P_\red)\otimes \big(F_p\cO(*P_\red)\big)(P)\subset\Omega_{x,y,z}^1\otimes \big(F_{p+1}\cO(*P_\red)\big)(P).
\]

For each $\alpha\in[0,1)$ and all $p\in\NN$, we consider the increasing filtration by coherent $\cO$-submodules indexed by $\NN$ defined as
\begin{equation}\label{eq:FEfprime}
F_{\alpha+p}(E^{1/g}):=F_p\cO(*P_\red)([(\alpha+p)P])\otimes\reg,
\end{equation}
where $[(\alpha+p)P]$ is the integral part of $(\alpha+p)P$, that is, locally defined by $x^{[\bme(\alpha+p)]}=x^{[\bme\alpha]}g^{p}$, and we set $F_{\alpha+p}(E^{1/g})=0$ for $p\in\ZZ_{<0}$. We therefore get a filtration $F_\mu(E^{1/g})$ indexed by $\RR$, with $F_\mu(E^{1/g})=0$ for $\mu<0$ and jumps for $\mu\in\QQ_{\geq0}$ at most.

We will mainly work with $E^{1/g}(*H)$, which is equal to $\cO(*D)$ as an $\cO$-module. Its filtration is defined, in a way analogous to \eqref{eq:FpPH}, by
\begin{equation}\label{eq:FEfprimeT}
F_{\alpha+p}(E^{1/g}(*H)):=\sum_{q+q'\leq p}F_{q}\cO(*H)\cdot F_{\alpha+q'}(E^{1/g}),
\end{equation}
where the product is taken in $\cO(*D)$.

In both cases, these filtrations satisfy the Griffiths transversality property with respect to the connection $\nabla$ on $E^{1/g}$ or on $E^{1/g}(*H)$, that is, they are $F$-filtrations with respect to the standard order filtration on $\cD$.

\subsubsection*{Second construction, step one: adjunction of a variable}
Denote by $M'$ the $\cD$\nobreakdash-module-theoretic push-forward of $(\cO(*H),\rd)$ by the graph inclusion of $g$. If~$t'$ denotes the new coordinate produced by this inclusion, $M'$ is a left $\cD[t']\langle\partial_{t'}\rangle$-module. Let us make it explicit. We consider a new variable $\tau'$, and we have by definition
\[
M'=\CC\{x,y,z\}[y^{-1},\tau']\quad\text{as a $\CC\{x,y,z\}$-module}.
\]
It will be convenient to denote by $\delta$ the element $1/\prod_{j=1}^m y_j$ of $M'$. The remaining part of the left action of $\cD[t']\langle\partial_{t'}\rangle$ is defined as follows on $\delta$ (and extended to $M'$ by using Leibniz rule):
\begin{equation}\label{eq:action}
\left\{\begin{aligned}
\partial_{t'}\delta&=\tau'\delta,\\
t'\delta&=g\delta=x^{\bme}\delta,\\
\partial_{x_i}\delta&=-\frac{\partial g}{\partial x_i}\,\tau'\delta=-e_ix^{\bme-1_i}\tau'\delta,\\
\partial_{y_j}\delta&=-\frac{1}{y_j}\,\delta,\\
\partial_{z_k}\delta&=0.
\end{aligned}\right.
\end{equation}
We note that $(\cO(*H),\rd)$, as a left $\cD$-module, is recovered as the cokernel of the injective morphism of left $\cD$-modules $\partial_{t'}:M'\to M'$ with the induced $\cD$-module structure.

We denote by $E^{1/t'}$ the left $\cD[t']\langle\partial_{t'}\rangle$-module $\CC\{x,y,z,t'\}[\tpm]\retp$ whose generator $\retp$ satisfies $\partial_{t'}\retp=-(1/t^{\prime2})\retp$. The twisted $\cD[t']\langle\partial_{t'}\rangle$-module $M'\otimes E^{1/t'}$ is the left $\cD[t']\langle\partial_{t'}\rangle$-module
\[
M'\otimes E^{1/t'}=\CC\{x,y,z\}[x^{-1},y^{-1},\tau'](\delta\otimes\retp)\quad\text{as a $\CC\{x,y,z\}$-module}.
\]
We have used here that, with respect to the $\CC[t']$-action on $M'$ defined above, we have $\CC[t',\tpm]\otimes_{\CC[t']}M'=M'[1/g]=\CC\{x,y,z\}[x^{-1},y^{-1},\tau']$. The remaining part of the left action of $\cD[t']\langle\partial_{t'}\rangle$ is defined as follows on the generator $\delta\otimes\retp$ (and extended to $M'\otimes E^{1/t'}$ by using Leibniz rule):
\begin{equation}\label{eq:twaction}
\left\{\begin{aligned}
\partial_{t'}(\delta\otimes\retp)&=(\tau'-g^{-2})(\delta\otimes\retp)=(\tau'-x^{-2\bme})(\delta\otimes\retp),\\
t'(\delta\otimes\retp)&=g(\delta\otimes\retp)=x^{\bme}(\delta\otimes\retp),\\
\partial_{x_i}(\delta\otimes\retp)&=-\frac{\partial g}{\partial x_i}\,\tau'(\delta\otimes\retp)=-e_ix^{\bme-1_i}\tau'(\delta\otimes\retp),\\
\partial_{y_j}(\delta\otimes\retp)&=-\frac{1}{y_j}\,(\delta\otimes\retp),\\
\partial_{z_k}(\delta\otimes\retp)&=0.
\end{aligned}\right.
\end{equation}
Due to the previous formulas, the decomposition
\[
M'\otimes E^{1/t'}=\bigoplus_{k\geq0}\cO(*D)\tau^{\prime k}(\delta\otimes\retp)
\]
can be transformed to a decomposition
\[
M'\otimes E^{1/t'}=\bigoplus_{k\geq0}\cO(*D)\partial_{t'}^k(\delta\otimes\retp),
\]
which shows that $M'\otimes E^{1/t'}$ is a free $\cO(*D)[\partial_{t'}]$-module of rank one with generator $\delta\otimes\retp$. 

Let $\iota$ denote the inclusion associated to the graph of $g$. The $\cD[t']\langle\partial_{t'}\rangle$-module $\iota_+ E^{1/g}(*H):=\bigoplus_kE^{1/g}(*H)\otimes\partial_{t'}^k\delta$ is also a free $\cO(*D)[\partial_{t'}]$-module of rank one with generator $(1\otimes\reg\otimes\delta)$. 

The unique $\cO(*D)[\partial_{t'}]$-linear isomorphism $\iota_+ E^{1/g}(*H)\isom M'\otimes E^{1/t'}$ sending $(1\otimes\reg\otimes\delta)$ to $\delta\otimes\retp$ is in fact $\cD[t']\langle\partial_{t'}\rangle$-linear. Let us check for instance that it is $\partial_{x_i}$-linear:
\begin{align*}
\partial_{x_i}(1\otimes\reg\otimes\delta)&=-\frac{\partial g/\partial x_i}{g^2} \otimes\reg\otimes\delta-(\partial g/\partial x_i) \otimes\reg\otimes\partial_{t'}\delta\\
&\mto -\frac{\partial g/\partial x_i}{g^2}\delta\otimes\retp-\partial_{t'}[(\partial g/\partial x_i)\delta \otimes\retp]\\
&=-\tau'(\partial g/\partial x_i)\delta \otimes\retp=\partial_{x_i}(\delta\otimes\retp),
\end{align*}
according to \eqref{eq:twaction}.

It is then clear that, on the other hand, one recovers $E^{1/g}(*H)$ from $M'\otimes E^{1/t'}$ as its push-forward by the projection $\pi$ along the $t'$ variable. So we find
\begin{align}
M'\otimes E^{1/t'}&\simeq\iota_+ E^{1/g}(*H),\label{eq:iotaE}\\
E^{1/g}(*H)&=\pi_+(M'\otimes E^{1/t'})=\coker\big[\partial_{t'}:M'\otimes E^{1/t'}\to M'\otimes E^{1/t'}\big].\label{eq:cokerdtp}
\end{align}

\subsubsection*{Second construction, step two: the Deligne filtration}
For the sake of simplicity, the filtrations will be taken increasing. One can consider them as decreasing by changing the sign of the indices.

Let $F_\bbullet M'$ be the filtration $F_\bbullet\cD[t']\langle\partial_{t'}\rangle\cdot \delta$ on $M'$. It is the filtration by $\deg_{\tau'}+\ord_H$, where $\ord_H$ is the order of the pole along $H$ such that $\ord_H\delta=0$. Let $V_\bbullet M'$ be the Kashiwara-Malgrange $V$-filtration with respect to the function~$t'$ (see \eg \cite[\S3.1]{MSaito86} or \S\ref{subsec:strictspe} below). We will only consider the steps~$V_\alpha M'$ for~$\alpha\in\nobreak[0,1)$ (the jumps possibly occur at most at $\alpha\in[0,1)\cap\QQ$). The normalization condition is that $t'\partial_{t'}+\alpha$ is nilpotent when induced on $\gr_\alpha^VM'$.

The natural generalization of the $\cD$-module-theoretic Deligne filtration defined in \cite[\S6.b]{Bibi08} is (in the increasing setting), for each $\alpha\in[0,1)$, and any $p\in\ZZ$:
\begin{equation}\label{eq:Fdelalpha}
F_{\alpha+p}^\Del(M'\otimes E^{1/t'})=\sum_{k=0}^p\partial_{t'}^k\tpm \big((F_{p-k}M'\cap V_\alpha M')\otimes\retp\big).
\end{equation}
We have $F_{\alpha+p}^\Del(M'\otimes\nobreak E^{1/t'})=0$ if $\alpha+p<0$. Note also that
\[
F_\alpha^\Del(M'\otimes E^{1/t'})=\tpm(F_0 M'\cap V_\alpha(M')\otimes\retp).
\]
For each $\alpha\in[0,1)$, it is easily checked that the filtration $F_{\alpha+p}^\Del(M'\otimes E^{1/t'})$ ($p\in\ZZ$) is a $F$-filtration for $\cD[t']\langle\partial_{t'}\rangle$, \ie satisfies
\[
F_q\cD[t']\langle\partial_{t'}\rangle\cdot F_{\alpha+p}^\Del(M'\otimes E^{1/t'})\subset F_{\alpha+p+q}^\Del(M'\otimes E^{1/t'}),
\]
where $F_q\cD[t']\langle\partial_{t'}\rangle$ consists of operators of total order $\leq q$ (w.r.t.\,$\partial_x,\partial_y,\partial_z,\partial_{t'}$). The jumps possibly occur at most at $\alpha+p\in\QQ_{\geq0}$ and we have
\begin{equation}\label{eq:FdF}
F_{\alpha+p}^\Del(M'\otimes E^{1/t'})=\partial_{t'}F_{\alpha+p-1}^\Del(M'\otimes E^{1/t'})+\tpm(F_pM'\cap V_\alpha M')\otimes\retp.
\end{equation}
Indeed by definition,
\begin{align*}
\partial_{t'}F_{\alpha+p-1}^\Del(M'\otimes E^{1/t'})&=\sum_{k\geq0}\partial_{t'}^{k+1}\tpm\big((F_{p-1-k}M'\cap V_\alpha M')\otimes\retp\big)\\
&=\sum_{k\geq1}\partial_{t'}^k\tpm\big((F_{p-k}M'\cap V_\alpha M')\otimes\retp\big).
\end{align*}

\begin{proposition}\label{prop:strictshift}
The filtration $F^\Del_\bbullet(M'\otimes E^{1/t'})$ is exhaustive and the injective $\cD$-linear morphism $\partial_{t'}:M'\otimes E^{1/t'}\to M'\otimes E^{1/t'}$ strictly shifts the Deligne filtration~\eqref{eq:Fdelalpha} by one, that is,
\[
\forall\alpha\in[0,1),\;\forall p\in\ZZ,\quad F^\Del_{\alpha+p+1}(M'\otimes E^{1/t'})\cap\partial_{t'}(M'\otimes E^{1/t'})=\partial_{t'}F^\Del_{\alpha+p}(M'\otimes E^{1/t'}).
\]
\end{proposition}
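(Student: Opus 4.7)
My plan is to treat exhaustiveness and the strictness assertion separately, since the former is relatively easy and the latter carries the technical content of the statement. For exhaustiveness, I would use the fact that $M'\otimes E^{1/t'}$ is free of rank one over $\cO(*D)[\partial_{t'}]$, with generator $\nu=\delta\otimes\retp$. Every element is thus a finite sum $\sum_{k=0}^{N}a_k\partial_{t'}^k\nu$ with $a_k\in\cO(*D)$. Writing $a_k=g^{-1}\cdot(ga_k)$ and noting that both $F_\bbullet M'$ and $V_\bbullet M'$ are exhaustive on $M'$ (the latter because $V_\alpha M'$ is a filtration by $\cO$-coherent subsheaves whose union contains every element), one sees that for $p$ large enough each summand $a_k\partial_{t'}^k\nu$ lies in $\partial_{t'}^k\tpm\bigl((F_{p-k}M'\cap V_\alpha M')\otimes\retp\bigr)$, hence in $F^\Del_{\alpha+p}(M'\otimes E^{1/t'})$.

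For the strictness assertion, the inclusion $\partial_{t'}F^\Del_{\alpha+p}\subseteq F^\Del_{\alpha+p+1}\cap\partial_{t'}(M'\otimes E^{1/t'})$ is immediate from the definition \eqref{eq:Fdelalpha} by shifting the summation index $k\mapsto k+1$, and the result manifestly lies in the image of $\partial_{t'}$. For the converse, suppose $\partial_{t'}m\in F^\Del_{\alpha+p+1}$. By the recurrence \eqref{eq:FdF} one may write
\[
\partial_{t'}m=\partial_{t'}m_1+\tpm(n\otimes\retp),\qquad m_1\in F^\Del_{\alpha+p},\ n\in F_{p+1}M'\cap V_\alpha M'.
\]
Since $\partial_{t'}$ is injective on $M'\otimes E^{1/t'}$, the Proposition reduces to producing $m_2\in F^\Del_{\alpha+p}$ satisfying $\partial_{t'}m_2=\tpm(n\otimes\retp)$; one then recovers $m=m_1+m_2\in F^\Del_{\alpha+p}$.

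The construction of such an $m_2$ is the main obstacle. I would again work in the $\cO(*D)[\partial_{t'}]$-basis $\nu$, in which $\partial_{t'}$ acts as the pure shift $\partial_{t'}^k\nu\mapsto\partial_{t'}^{k+1}\nu$. The hypothesis that $\tpm(n\otimes\retp)$ lies in the image of $\partial_{t'}$ forces the vanishing of its $\partial_{t'}^0$-component, so the preimage $m_2$ is unique and can be written down explicitly by formally dividing by $\partial_{t'}$. The delicate point is to check that this $m_2$ belongs to the correct step of $F^\Del$; this rests on the compatibility between the pole-and-degree filtration $F_\bbullet M'$ and the Kashiwara--Malgrange filtration $V_\bbullet M'$, normalized so that $t'\partial_{t'}+\alpha$ is nilpotent on $\gr^V_\alpha M'$. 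Concretely, I would exploit the fact that $\partial_{t'}$ induces an isomorphism $\gr^V_\beta M'\isom\gr^V_{\beta-1}M'$ whenever $\beta\neq 0$, so that the ``integration'' step stays within $V_\alpha M'$ after shifting appropriately by $t'$; the critical value $\alpha=0$ must be treated separately via the nilpotent part of $t'\partial_{t'}$ on $\gr^V_0 M'$. An induction on $p$, combined with the recurrence \eqref{eq:FdF} read in the reverse direction, then produces the required expression of $m_2$ as a sum of terms of the form $\partial_{t'}^k\tpm\bigl((F_{p-k}M'\cap V_\alpha M')\otimes\retp\bigr)$, finishing the proof.
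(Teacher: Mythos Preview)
Your reduction of the strictness statement via \eqref{eq:FdF} is the same as the paper's, but the two crucial verifications you leave as sketches are where the content lies, and as written both contain gaps.

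\textbf{Exhaustiveness.} Your claim that $a_k\partial_{t'}^k\nu\in\partial_{t'}^k\tpm\bigl((F_{p-k}M'\cap V_\alpha M')\otimes\retp\bigr)$ amounts (since $a_k$ commutes with $\partial_{t'}$) to asking that $t'a_k\delta=g\,a_k\delta$ lie in $M'$. But $a_k\in\cO(*D)$ may have arbitrarily high poles along $P_\red$, so $g\,a_k\delta$ need not belong to $M'=\cO(*H)[\tau']$ at all; invoking exhaustiveness of $F_\bbullet M'$ and $V_\bbullet M'$ on $M'$ does not help if the element is not in $M'$. The paper instead uses the $V$-filtration on $M'[\tpm]$ and the fact that the untwisted operator $\partial_{t'}-t^{\prime-2}$ induces an isomorphism $\gr^V_{\mu}M'[\tpm]\isom\gr^V_{\mu+2}M'[\tpm]$, which lets one write any element as a finite sum $\sum_k(\partial_{t'}-t^{\prime-2})^k\tpm m_k$ with $m_k\in V_{\alpha}M'$; this is exactly the shape required by \eqref{eq:Fdelalpha}.

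\textbf{Strictness.} After your reduction, the problem is: given $n\in F_{p+1}M'\cap V_\alpha M'$ with $\tpm(n\otimes\retp)\in\partial_{t'}(M'\otimes E^{1/t'})$, show that the (unique) preimage lies in $F^\Del_{\alpha+p}$. Your proposed mechanism, via ``$\partial_{t'}:\gr^V_\beta M'\isom\gr^V_{\beta-1}M'$'', has the shift in the wrong direction ($\partial_{t'}$ raises $V$-degree by one), and in any case it addresses the untwisted $\partial_{t'}$ on $M'$, not the twisted one on $M'\otimes E^{1/t'}$ that the Deligne filtration is built from; the special treatment of $\alpha=0$ is a symptom of this mismatch. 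The paper's key move is to \emph{untwist}: the action of $\partial_{t'}$ on $M'\otimes E^{1/t'}$ corresponds to $\partial_{t'}-t^{\prime-2}$ on $M'[\tpm]$, and after conjugation by $\tpm$ to $\partial_{t'}-\tpm-t^{\prime-2}$. This operator is a graded isomorphism $\gr^V_{\mu-2}M'[\tpm]\isom\gr^V_{\mu}M'[\tpm]$ for every $\mu$ (no exceptional value), which forces the preimage into $V_{\alpha-2}M'\subset V_\alpha M'$. Separately, in the explicit description $M'[\tpm]=\cO(*D)[\tau']\cdot\delta$ with $F$ given by $\deg_{\tau'}+\ord_H$, the same operator is unit upper-triangular in $\tau'$-degree, so a decreasing induction shows the preimage lies in $F_pM'$. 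Combining the two gives $m_2\in\tpm(F_pM'\cap V_\alpha M')\otimes\retp\subset F^\Del_{\alpha+p}$. Your outline never isolates this untwisting step, and without it the ``integration'' you describe does not visibly respect $F^\Del$.
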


\begin{proof}
For the first point, let us denote by $V_\bbullet M'[\tpm]$ the Kashiwara-Malgrange filtration of $M'[\tpm]$ (without twist) with respect to the function $t'$. For $\mu<1$ we have $V_\mu M'[\tpm]=V_\mu M'$, while for $\mu\geq1$ we have $V_\mu M'[\tpm]=t^{\prime -[\mu]}V_{\mu-[\mu]}M'$. For each $\mu$, $\partial_{t'}-t^{\prime -2}$ sends $V_\mu M'[\tpm]$ to $V_{\mu+2} M'[\tpm]$ and the graded morphism is also that induced by~$-t^{\prime -2}$, hence is an isomorphism. It follows that any $m\in M'[\tpm]$ can be written as a finite sum $\sum_k(\partial_{t'}-t^{\prime -2})^k\tpm m_k$ with $m_k\in V_{\alpha_k}M'$, and $\alpha_k\in[0,1)$. Set $\alpha=\max_k\alpha_k$ and replace each $\alpha_k$ with $\alpha$. Since the filtration $F_\bbullet M'$ is exhaustive, there exists~$p$ such that $m_k\in F_{p-k}M'\cap V_\alpha M'$ for each $k$. Therefore, $m\otimes\retp$ belongs to $F_{\alpha+p}^\Del(M'\otimes E^{1/t'})$.

For the strictness assertion, according to \eqref{eq:FdF} and forgetting the $E^{1/t'}$ factor, it is enough to prove that, for all $\alpha\in[0,1)$ and all $p\in\ZZ$,
\[
\tpm(F_{p+1}M'\cap V_\alpha M')\cap(\partial_{t'}-t^{\prime-2})(M'[\tpm])\subset(\partial_{t'}-t^{\prime-2})\tpm(F_pM'\cap V_\alpha M'),
\]
or, by using the standard commutation rule, that
\[
(F_{p+1}M'\cap V_\alpha M')\cap(\partial_{t'}-\tpm-t^{\prime-2})(M'[\tpm])\subset(\partial_{t'}-\tpm-t^{\prime-2})(F_pM'\cap V_\alpha M').
\]
We will check separately that
\begin{itemize}
\item
$m'\in V_\alpha M'$ and $m'=(\partial_{t'}-\tpm-t^{\prime-2})m$ implies $m\in V_{\alpha-2}(M')\subset V_\alpha M'$,
\item
$m'\in F_{p+1}M'$ and $m'=(\partial_{t'}-\tpm-t^{\prime-2})m$ implies $m\in F_p M'$.
\end{itemize}

On the one hand, the operator $\partial_{t'}-\tpm-t^{\prime-2}$ induces for each $\mu$ an isomorphism $\gr_{\mu-2}^V(M'[\tpm])\isom\gr_\mu^V(M'[\tpm])$, so the first assertion is clear.

On the other hand, using the identification $M'[\tpm]=\CC\{x,y,z\}[x^{-1},y^{-1},\tau']1/y$ as a left $\CC\{x,y,z\}$-module, with the $F$-filtration induced by ``degree in $\tau'$ $+$ pole order in $y$'', the operator $\partial_{t'}-\tpm-t^{\prime-2}$ sends a term $\varphi_k(x,y,z)\tau^{\prime k}$ ($k\geq0$) to
\[
\varphi_k(x,y,z)\cdot\Big(\tau^{\prime(k+1)}+\sum_{j\leq k}\big(\textstyle\sum_{\ell\geq0}c_{j,\ell}x^{-\ell\bme}\big)\tau^{\prime j}\Big)
\]
for some coefficients $c_{j,\ell}\in\NN$ (due to the commutation rule between $\tpm$ and $\partial_{t'}$). If $m=\sum_{k=0}^q\varphi_k\tau^{\prime k}\in M'[\tpm]$ is such that $m':=(\partial_{t'}-\tpm-t^{\prime-2})m=\sum_{k=0}^{q+1}\varphi'_k\tau^{\prime k}$ belongs to $F_{p+1}M'$, then $\varphi_q=\varphi'_{q+1}$ belongs to $\CC\{x,y,z\}[y^{-1}]$ and its pole order relative to $y$ is $\leq(p+1)-(q+1)=p-q$, so $\varphi_q\tau^{\prime q}\in F_pM'$. By decreasing induction on $q$, one concludes that $m\in F_pM'$.
\end{proof}

\begin{definition}[of $F_\bbullet^\Del(E^{1/g}(*H))$]\label{def:FDelE}
The Deligne filtration $F_\bbullet^\Del(E^{1/g}(*H))$ (indexed by $\RR$) on $E^{1/g}(*H)$ is the image filtration of $F_\bbullet^\Del(M'\otimes E^{1/t'})$ by \eqref{eq:cokerdtp}.
\end{definition}

Here are some properties of $F_\mu^\Del(E^{1/g}(*H))$:
\begin{itemize}
\item
We have $F_\mu^\Del(E^{1/g}(*H))=0$ for $\mu<0$ and the jumps possibly occur at most at $\mu\in\QQ_{\geq0}$.
\item
For a fixed $\alpha\in[0,1)$ and $p\in\NN$, $F_{\alpha+p}^\Del(E^{1/g}(*H))$ is a $F_\bbullet\cD$-filtration, \ie
\[
F_q\cD\cdot F_{\alpha+p}^\Del(E^{1/g}(*H))\subset F_{\alpha+p+q}^\Del(E^{1/g}(*H)).
\]
\item
$F_{\alpha+p}^\Del(E^{1/g}(*H))=\mathrm{image}\big(\tpm (F_pM'\cap V_\alpha M')\otimes\retp\big)$ by \eqref{eq:cokerdtp}. Indeed, this directly follows from \eqref{eq:FdF}.
\end{itemize}

On the other hand, the push-forward $\iota_+E^{1/g}(*H)$ comes naturally equipped with a push-forward filtration
\begin{equation}\label{eq:iotaF}
F_{\alpha+p}\big(\iota_+E^{1/g}(*H)\big):=\bigoplus_{k\geq0}F_{\alpha+p-k}(E^{1/g}(*H))\otimes\partial_{t'}^k\delta.
\end{equation}
with $F_\bbullet(E^{1/g}(*H))$ defined by \eqref{eq:FEfprimeT}. This defines a filtration $F_\bbullet(M'\otimes E^{1/t'})$ according to \eqref{eq:iotaE}.

\subsubsection*{Comparison of both filtrations on $M'\otimes E^{1/t'}$ and $E^{1/g}(*H)$}

\begin{proposition}\label{prop:comparisonFE}
For each $\alpha\in[0,1)$ and each $p\in\ZZ$, we have
\begin{align*}
F_{\alpha+p}(M'\otimes E^{1/t'})&=F_{\alpha+p}^\Del(M'\otimes E^{1/t'})\quad\text{\cf \eqref{eq:iotaF} and \eqref{eq:Fdelalpha}},\\
F_{\alpha+p}(E^{1/g}(*H))&=F_{\alpha+p}^\Del(E^{1/g}(*H)) \quad\text{\cf \eqref{eq:FEfprimeT} and Def.\,\ref{def:FDelE}}.
\end{align*}
\end{proposition}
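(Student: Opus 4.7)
The two equalities in the proposition are in fact equivalent via the isomorphism \eqref{eq:cokerdtp}. Indeed, applying $\pi_+=\coker(\partial_{t'})$ to the first identity would yield the second: on the right-hand side this is precisely the content of Definition~\ref{def:FDelE}; on the left-hand side, the summands $F_{\alpha+p-k}(E^{1/g}(*H))\otimes\partial_{t'}^k\delta$ of \eqref{eq:iotaF} with $k\geq 1$ lie in $\partial_{t'}(M'\otimes E^{1/t'})$ and hence die in the cokernel, while the $k=0$ summand $F_{\alpha+p}(E^{1/g}(*H))\otimes\delta$ maps isomorphically onto $F_{\alpha+p}(E^{1/g}(*H))$. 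Conversely, the first identity is recovered from the second by the same bookkeeping together with the fact that $\partial_{t'}$ is strict with respect to both filtrations (for $F^\Del$, by Proposition~\ref{prop:strictshift}; for the push-forward filtration, by construction \eqref{eq:iotaF}). So it is enough to focus on the first equality, on $M'\otimes E^{1/t'}$.

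I would prove the first equality by induction on $p\geq 0$, matching the recursion \eqref{eq:FdF} against the direct-sum decomposition \eqref{eq:iotaF}. By the inductive hypothesis, $F^\Del_{\alpha+p-1}(M'\otimes E^{1/t'})=F_{\alpha+p-1}(M'\otimes E^{1/t'})$, and applying $\partial_{t'}$ to the right-hand side produces exactly the part $\bigoplus_{k\geq 1}F_{\alpha+p-k}(E^{1/g}(*H))\otimes\partial_{t'}^k\delta$ of $F_{\alpha+p}(M'\otimes E^{1/t'})$. It therefore suffices to show that $t'^{-1}(F_pM'\cap V_\alpha M')\otimes\retp$ contributes, modulo this $k\geq 1$ part, exactly the $k=0$ summand $F_{\alpha+p}(E^{1/g}(*H))\otimes\delta$ described by \eqref{eq:FEfprimeT}.

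This local matching rests on an explicit description of $V_\alpha M'$. Since $M'$ is the $\cD$-module push-forward of $(\cO(*H),\rd)$ under the graph embedding of $g=x^{\bme}$, the Kashiwara-Malgrange filtration along $t'=0$ is controlled by the Bernstein-Sato polynomial of the monomial $g$, whose roots are rationals of the form $-j/e_i$; this yields, in combination with the action formulas \eqref{eq:action}, the description
\[
F_pM'\cap V_\alpha M'=\sum_{q+q'+k=p}F_q\cO(*H)\cdot x^{\lceil-\alpha\bme\rceil+q'\bme}\,\partial_{t'}^k\delta.
\]
Multiplication by $t'^{-1}$ in $M'\otimes E^{1/t'}$ corresponds under \eqref{eq:iotaE} to multiplication by $g^{-1}=x^{-\bme}$ on the image $\delta\otimes\retp$, which converts the exponent $\lceil-\alpha\bme\rceil$ into $-[\alpha\bme]-\mathbf{1}$ after accounting for the $y^{-\mathbf{1}}$ pole of $\delta$. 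The resulting generators are precisely those of $F_q\cO(*H)\cdot F_{\alpha+q'}(E^{1/g})$ from \eqref{eq:FEfprime} and \eqref{eq:FEfprimeT}, with $q+q'\leq p$, establishing the required equality of the $k=0$ piece.

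The main obstacle lies in this third step: justifying the explicit description of $V_\alpha M'$ when $P$ is non-reduced ($e_i>1$), and in particular verifying that the integral-part combinatorics $[(\alpha+p)\bme]$ built into \eqref{eq:FEfprime} is exactly the one produced by the Bernstein-Sato roots of $g=x^{\bme}$. Once this description of $V_\alpha M'$ is in hand, the inductive step is a direct manipulation of generators using \eqref{eq:twaction} and the $F$-filtration property recorded below \eqref{eq:Fdelalpha}.
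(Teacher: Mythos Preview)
Your overall architecture---reduce the first equality to matching the new term
$t'^{-1}(F_pM'\cap V_\alpha M')\otimes\retp$ against $F_{\alpha+p}(E^{1/g}(*H))\otimes\delta$
modulo $\partial_{t'}F_{\alpha+p-1}^{\Del}$---is exactly the paper's route. The gap is in your
explicit description of $F_pM'\cap V_\alpha M'$ and in the mechanism that converts it into the
$F_{\alpha+p}(E^{1/g}(*H))$ generators.

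The formula you propose,
\[
F_pM'\cap V_\alpha M'=\sum_{q+q'+k=p}F_q\cO(*H)\cdot x^{\lceil-\alpha\bme\rceil+q'\bme}\,\partial_{t'}^k\delta,
\]
is not the correct shape. An element of $V_\alpha M'$ is not an $\cO$-combination of monomials
$x^{\text{stuff}}\partial_{t'}^k\delta$; rather (see \cite{Bibi87},~\cite[Prop.~4.19]{Bibi96a}, and
the paper's display preceding \eqref{eq:FpV}) one must write it, after embedding $M'\hookrightarrow M'[x^{-1}]$,
as a sum of terms
\[
(t'\partial_{t'}+\alpha)^\ell\,P_{\alpha,\bma-\mathbf1}(t'\partial_{t'})\,h_{\bma,\bmc,\ell}\,y^{-\bmc}x^{-\bma}\,t'\delta,
\qquad
P_{\alpha,\bma}(s)=c\prod_{i}\prod_{k=[e_i\alpha]+1}^{a_i}(s+k/e_i),
\]
with $\bma\geq\lceil\bme\alpha\rceil$ and $h$ depending only on the ``active'' variables. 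The
$F$-grading is then by $\ell+|\bmc|_++\deg P_{\alpha,\bma-\mathbf1}$. In particular it is the
\emph{Euler operator} $t'\partial_{t'}$, not $\partial_{t'}$, that organises $V_\alpha$; your
version loses precisely the polynomial $P_{\alpha,\bma}$ which encodes the fractional Bernstein--Sato
roots $k/e_i$.

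Because of this, the conversion step you sketch (``multiplication by $t'^{-1}$ turns the exponent
$\lceil-\alpha\bme\rceil$ into $-[\alpha\bme]-\mathbf1$'') does not do the job. What the paper
actually uses is the algebraic identity \eqref{eq:partialtprime},
\[
(\partial_{t'}t')^k=\sum_{j=0}^{k}(\partial_{t'}-t'^{-2})^{\,j}\,t'^{\,j-k}\,a_j^{(k)}(t'),
\qquad a_0^{(k)}(0)\neq0,
\]
applied to the polynomials $(\partial_{t'}t'+\alpha)^\ell P_{\alpha,\bma-\mathbf1}(\partial_{t'}t')$
after multiplying by $t'^{-1}$. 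The $j=0$ term produces the factor $g^{-k}$ (with $k=p-|\bmc|_+$)
that, combined with $x^{-\bma}=x^{-[\bme\alpha]-\mathbf1-\bmb}$, gives exactly the generators
\eqref{eq:FEFalphap} of $F_{\alpha+p}(E^{1/g}(*H))$; the $j\geq1$ terms land in
$\partial_{t'}F_{\alpha+p-1}(M'\otimes E^{1/t'})$. This identity and the nonvanishing
$a_0^{(k)}(0)\neq0$ are the missing ingredients in your argument; without them the matching of
the ``integral-part combinatorics'' you flag as the obstacle cannot be carried out.
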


\begin{proof}
We will prove the first equality, since the second one obviously follows. We need here an explicit expression of $F_pM'\cap V_\alpha M'$. We will recall the computation already made in \cite{Bibi87} for the pure case ($H=\emptyset$) and recalled and generalized to the mixed case in \cite[Prop.\,4.19]{Bibi96a}, where the notation $\delta'$ is used for the present notation $\delta$. It would also be possible to use \cite[Prop.\,3.5]{MSaito87}, but the computation is written there for right $\cD$-modules, so one should first express this computation for left modules. Note also that it is enough to consider $p\in\NN$, since both filtrations are identically zero if $p\leq-1$.

We will use the notation
\begin{align*}
\bmb&=(b_1,\dots,b_\ell)\in\ZZ^\ell,&|\bmb|_+&=\sum_i\max\{0,b_i\},\\ \bmc&=(c_1,\dots,c_m)\in\ZZ^m,&|\bmc|_+&=\sum_j\max\{0,c_j\}.
\end{align*}
Then, by \eqref{eq:FEfprimeT}, we have
\begin{equation}\label{eq:FEFalphap}
F_{\alpha+p}(E^{1/g}(*H))=\sum_{|\bmb|_++|\bmc|_+\leq p} y^{-\bmc-\bf1}x^{-\bmb-\bf1}x^{-[\bme\alpha]}g^{-p+|\bmc|_+}\cdot\cO.
\end{equation}

On the other hand, for $\bma=(a_1,\dots,a_\ell)\in\ZZ^\ell$, let us set
\[
P_{\alpha,\bma}(s)=c_{\bma,\alpha}\prod_{i=1}^\ell\prod_{k=[e_i\alpha]+1}^{a_i}(s+k/e_i),\qquad \text{with }c_{\bma,\alpha}\in\CC\text{ such that }P_{\alpha,\bma}(-\alpha)=1,
\]
taking into account the convention that a product indexed by the empty set is equal to $1$. Let us also set (recall that $\lceil\beta\rceil:=-[-\beta]$)
\[
I_\alpha(\bma)=\{i\mid a_i=\lceil e_i\alpha\rceil\}\subset\{1,\dots,r\},\quad J(\bmc)=\{j\mid c_j=0\}.
\]
Then, by embedding $M'$ in $M'[x^{-1}]$, an element of $V_\alpha M'$ can be written in a unique way as the result of the action of some polynomials in $t'\partial_{t'}$ on some elements of $M'[x^{-1}]$ as follows:
\[
\sum_{\bma\geq\lceil\bme\alpha\rceil}\sum_{\bmc\geq0}\sum_{\ell\geq0}(t'\partial_{t'}+\alpha)^\ell P_{\alpha,\bma-{\bf1}}(t'\partial_{t'})h_{\bma,\bmc,\ell}(x_{I_\alpha(\bma)},y_{J(\bmc)},z)y^{-\bmc}x^{-\bma}t'\delta,
\]
with $x_{I_\alpha(\bma)}=(x_i)_{i\in I_\alpha(\bma)}$, $y_{J(\bmc)}=(y_j)_{j\in J(\bmc)}$, $h_{\bma,\bmc,\ell}\in\cO$ only depends on the indicated variables.\footnote{A condition on $h_{\bma,\bmc,\ell}$ is mistakenly added in \loccit; it is irrelevant.} The filtration $F_\bbullet M'\cap V_\alpha M'$ is the filtration by the degree in $t'\partial_{t'}$ plus the pole order in $y$. In other words, an element of $V_\alpha M'$ written as above belongs to $F_pM'\cap V_\alpha M'$ if and only if
\[
h_{\bma,\bmc,\ell}\not\equiv0\Longrightarrow \ell+|\bmc|_++\deg P_{\alpha,\bma-{\bf1}}\leq p,
\]
that is, if we set $b_i=a_i-1-[e_i\alpha]$,
\[
\ell+|\bmc|_++|\bmb|_+\leq p.
\]
Note also that the condition $a_i\geq\lceil e_i\alpha\rceil$ implies $a_i\geq[e_i\alpha]\geq0$. By using the standard commutation relations, an element of $\tpm(F_pM'\cap\nobreak V_\alpha M')$ can thus be written in a unique way as
\begin{equation}\label{eq:FpV}
\sum_{\bma\geq\lceil\bme\alpha\rceil}\sum_{\bmc\geq0}\sum_{\ell\geq0}(\partial_{t'}t'+\alpha)^\ell P_{\alpha,\bma-{\bf1}}(\partial_{t'}t')h_{\bma,\bmc,\ell}(x_{I_\alpha(\bma)},y_{J(\bmc)},z)y^{-\bmc}x^{-\bma}\delta,
\end{equation}
with the same conditions on $\bma,\bmc$ and $h_{\bma,\bmc,\ell}$.

We will use the following identity in $\cD_X[t',t^{\prime-1}]\langle\partial_{t'}\rangle$:
\begin{multline}\label{eq:partialtprime}
(\partial_{t'}t')^k=a_0^{(k)}(t')t^{\prime-k}+\cdots+(\partial_{t'}-t^{\prime-2})^jt^{\prime j-k}a_j^{(k)}(t')\\
+\cdots+(\partial_{t'}-t^{\prime-2})^ka_k^{(k)}(t'),
\end{multline}
for some polynomials $a_j^{(k)}(t')\in\CC[t']$, with $a_0^{(k)}(0)=1$. This identity can be checked easily. Then a similar identity, with coefficients still denoted by $a_j^{(k)}(t')$, holds for any polynomial of degree $k$ in $\partial_{t'}t'$, and moreover $a_0^{(k)}(0)\neq\nobreak0$.

Let us first prove the inclusion $\tpm(F_pM'\cap\nobreak V_\alpha M')\otimes\retp\subset F_{\alpha+p}(M'\otimes E^{1/t'})$. According to \eqref{eq:partialtprime} we have
\[
\big[(\partial_{t'}t')^ky^{-\bmc}x^{-\bma}\delta\big]\otimes\retp=\sum_ja_j^{(k)}(g)g^{j-k}y^{-\bmc}x^{-\bma}\otimes\reg \otimes\partial_{t'}^j\delta.
\]
If such a term occurs in \eqref{eq:FpV}, we have $k+|\bmc|_+\leq p$, hence \hbox{$j-k\geq -(p-j)+|\bmc|_+$} and, recalling that $\delta=y^{-\bf1}$, we conclude that the $j$-th coefficient belongs to $F_{\alpha+p-j}E^{1/g}(*H)$, after \eqref{eq:FEFalphap}, hence the desired inclusion, according to \eqref{eq:iotaF}.

Conversely, let us prove that
\[
F_{\alpha+p}E^{1/g}(*H)\otimes\delta\subset \tpm(F_pM'\cap\nobreak V_\alpha M')\otimes\retp+\partial_{t'}F_{\alpha+p-1}(M'\otimes E^{1/t'}).
\]
Set $\bmb=\bma-{\bf1}-[\bme\alpha]$ as above. Then $F_{\alpha+p}E^{1/g}(*H)\otimes\delta$ is generated by elements of the form $m=y^{-\bmc}x^{-\bma}g^{-p+|\bmc|_+}\reg\otimes\delta$ with $|\bmb|_++|\bmc|_+\leq p$. Setting $\ell=p-(|\bmb|_++|\bmc|_+)$, Formula \eqref{eq:partialtprime} applied to the polynomial $(\partial_{t'}t'+\nobreak\alpha)^\ell P_{\alpha,\bma-{\bf1}}(\partial_{t'}t')$ of degree $p-|\bmc|_+$ gives
\[
m\otimes\retp=c\big[(\partial_{t'}t'+\alpha)^\ell P_{\alpha,\bma-{\bf1}}(\partial_{t'}t')y^{-\bmc}x^{-\bma}\delta\big]\otimes\retp\bmod\partial_{t'}F_{\alpha+p-1}(M'\otimes E^{1/t'}),
\]
for some nonzero constant $c$.
\end{proof}

\subsection{Comparison of Yu's filtration and Deligne's filtration on the twisted de~Rham complex}\label{subsec:compYuDel}
We will introduce three definitions \eqref{eq:FDRYu}, \eqref{eq:FDelDRE} and \eqref{eq:FDelDRME} of a filtered twisted meromorphic de~Rham complex. Corollary \ref{cor:comparison} will show that they give filtered quasi-isomorphic complexes, by using Propositions \ref{prop:YuDel} and \ref{prop:strictshift}.

\subsubsection*{The filtered twisted logarithmic de~Rham complex \protect\cite{Yu12}}
Let $(\Omega^\cbbullet_{x,y,z}(\log D),\rd)$ be the logarithmic de~Rham complex (logarithmic with respect to $D$), so that in particular $\Omega^0_{x,y,z}(\log D)=\cO$. We set, for any $\alpha\in[0,1)$ and $p\in\ZZ$,
\begin{multline}\label{eq:FDRYu}
F_{\alpha+p}^\Yu\DR(E^{1/g}(*H))\\
:=\{0\to\Omega^0_{x,y,z}(\log D)([(\alpha+p)P])_+\To{\nabla}\Omega^1_{x,y,z}(\log D)([(\alpha+p+1)P])_+\To{\nabla}\cdots\}
\end{multline}
(this is the increasing version of \eqref{eq:FYu}.)

\subsubsection*{The filtered twisted meromorphic de~Rham complex}
Let us consider the usual twisted de Rham complex
\[
\{0\ra E^{1/g}(*H)\To{\nabla}\Omega^1_{x,y,z}\otimes E^{1/g}(*H)\To{\nabla}\cdots\To{\nabla}\Omega^n_{x,y,z}\otimes E^{1/g}(*H)\ra0\}.
\]
The filtration naturally induced by $F_\mu^\Del(E^{1/g}(*H))$ (as defined by \eqref{eq:FEfprimeT} or equivalently by Definition \ref{def:FDelE}, according to Proposition \ref{prop:comparisonFE}) is by definition
\begin{multline}\label{eq:FDelDRE}
F_\mu^\Del(\DR(E^{1/g}(*H)))\\
:=\big\{0\ra F_\mu^\Del E^{1/g}(*H)\ra\Omega^1_{x,y,z}\otimes F_{\mu+1}^\Del E^{1/g}(*H)\ra\cdots\\
{}\ra\Omega^n_{x,y,z}\otimes F_{\mu+n}^\Del E^{1/g}(*H)\ra0\big\}.
\end{multline}

\subsubsection*{The filtered twisted meromorphic de~Rham complex with a variable added}
We define the filtration $F_\mu^\Del\DR(M'\otimes E^{1/t'})$ on the twisted de~Rham complex $\DR(M'\otimes E^{1/t'})$ by a formula analogous to \eqref{eq:FDelDRE}, by using basically $F_\mu^\Del(M'\otimes E^{1/t'})$ as defined by \eqref{eq:Fdelalpha}:
\begin{multline}\label{eq:FDelDRME}
F_\mu^\Del(\DR(M'\otimes E^{1/t'}))\\
:=\{0\ra F_\mu^\Del(M'\otimes E^{1/t'})\ra\Omega^1_{x,y,z,t'}\otimes F_{\mu+1}^\Del(M'\otimes E^{1/t'})\ra\cdots\}.
\end{multline}

\subsubsection*{Comparison of the filtered complexes}

Notice first that, for all three complexes, we have $F_\mu\DR=0$ for $\mu<-n$ ($n={}$dimension of the underlying space), so that in the decreasing setting, $F^\lambda\DR=0$ for $\lambda>n$.

\begin{proposition}\label{prop:YuDel}
For each $\alpha\in[0,1)$ and each $p_o\in\ZZ$, the natural inclusion of complexes $F_{\alpha+p_o}^\Yu\DR(E^{1/g}(*H))\hto F_{\alpha+p_o}^\Del\DR(E^{1/g}(*H))$ is a quasi-isomorphism.
\end{proposition}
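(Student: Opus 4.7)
\textbf{Proof plan for Proposition \ref{prop:YuDel}.}
The plan is to regard both $F_{\alpha+p_o}^\Yu\DR(E^{1/g}(*H))$ and $F_{\alpha+p_o}^\Del\DR(E^{1/g}(*H))$ as subcomplexes of $(\Omega^\cbbullet_{x,y,z}(*D),\nabla)$ and to prove that the quotient $F_{\alpha+p_o}^\Del/F_{\alpha+p_o}^\Yu$ is acyclic. Since the question is of a local analytic nature (by the paragraph on GAGA in \S\ref{subsec:setup}), I would work throughout in the local coordinates of \S\ref{subsec:setup}. First, I would use the explicit formula \eqref{eq:FEFalphap} obtained via Proposition~\ref{prop:comparisonFE} in order to present the Deligne terms $F^\Del_{\alpha+p_o+k}(E^{1/g}(*H))$ as an $\cO$\nobreakdash-module and compare generator by generator with the corresponding Yu term $\Omega^k_{x,y,z}(\log D)([(\alpha+p_o+k)P])_+$. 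The inclusion is proper in general because $F^\Del$ admits generators with higher pole order along $P$ compensated by higher pole order along $H$, and with non-logarithmic $dx_i$ or $dy_j$.

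\textbf{Reduction to a graded statement.} Because both filtered complexes are exhaustive when $p_o\to+\infty$ (for $F^\Del$ this is the first assertion of Proposition~\ref{prop:strictshift}; for $F^\Yu$ it follows from \cite[Prop.\,1.3]{Yu12}) and both recover the meromorphic de~Rham complex $(\Omega^\cbbullet(*D),\nabla)$ in the limit, it is enough to prove that the induced inclusion $\gr^\Yu_{\alpha+p_o}\DR\hto\gr^\Del_{\alpha+p_o}\DR$ is a quasi-isomorphism of $\cO_X$-complexes for every $p_o\in\ZZ$: the general statement then follows by a standard finite induction on $p_o$, using the five lemma on the short exact sequences defining the associated graded.

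\textbf{Computation of the graded pieces.} On either graded complex, the differential induced by $\nabla=\rd+\rd f$ reduces to $\rd f\wedge$. Indeed, $\rd$ preserves both the logarithmic pole order (so it does not shift the Yu level) and the filtration $F^\Del E^{1/g}(*H)$; only $\rd f$ realizes the shift by one unit of the level. Locally $\rd f=-g^{-1}\sum_i e_i\,dx_i/x_i$, so the induced map on the graded is, up to the unit~$-g^{-1}$, wedge product with the 1-form $\omega:=\sum_{i=1}^\ell e_i\,dx_i/x_i$. Both $\gr^\Yu_{\alpha+p_o}\DR$ and $\gr^\Del_{\alpha+p_o}\DR$ are then identified with Koszul-type complexes associated to $\omega$ on $\cO_X$-coherent sheaves supported on $P_\red$ (with coefficients encoding the mixed pole orders along $H$ and the integer part $[\bme\alpha]$). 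The desired quasi-isomorphism reduces to the acyclicity of the cokernel Koszul complex built from the extra generators of $F^\Del$ modulo $F^\Yu$ with respect to $\omega$.

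\textbf{Main obstacle.} The hard part will be the Koszul acyclicity on the quotient when $P$ is non-reduced (i.e.\ $\bme\neq\mathbf1$). In the reduced case the sequence $(e_i\,dx_i/x_i)_{i=1}^\ell$ is regular in the logarithmic de Rham algebra in the standard sense, and acyclicity is classical. In the general case one must keep track of the combinatorics of the terms $y^{-\bmc-\mathbf1}x^{-\bmb-\mathbf1-[\bme\alpha]}g^{-p+|\bmc|_+}$ appearing in \eqref{eq:FEFalphap} and check that, when paired against wedge product by~$\omega$, those generators of $F^\Del$ that do not belong to $F^\Yu$ lie in the image of $\omega\wedge$ from one degree lower in the quotient complex. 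This verification is precisely the analogue, in the present irregular setting, of the classical Koszul computation used in the comparison of Deligne's pole order filtration with the Hodge filtration on a logarithmic de Rham complex \cite[\S3.1]{Deligne70}, and the multiplicities $e_i$ are exactly absorbed by the normalization $P_{\alpha,\bma-\mathbf1}$ appearing in \eqref{eq:FpV}.
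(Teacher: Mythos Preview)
Your overall strategy---reduce to the local analytic setting, pass to the associated graded, and identify a Koszul-type complex with respect to $\omega=\sum_i e_i\,\rd x_i/x_i$---is exactly the shape of the paper's argument. But one step is wrong as stated, and it is not a detail.

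You claim that on the graded pieces the differential reduces to $\rd f\wedge$ on \emph{both} sides because ``$\rd$ preserves $\dots$ the filtration $F^\Del E^{1/g}(*H)$''. This is false for the Deligne side. Already when there are no $x$-variables at all ($\ell=0$, so $P=\emptyset$ and $\rd f=0$), the Deligne filtration is the pole-order filtration $F_\bbullet\cO(*H)$ of \S\ref{subsec:setup}, and $\rd$ applied to $y^{-\beta-\mathbf1}$ with $|\beta|=p$ produces $y^{-\beta-1_j-\mathbf1}\rd y_j$, which lies in $\Omega^1\otimes F_{p+1}$ and \emph{not} in $\Omega^1\otimes F_p$. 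So $\rd$ genuinely shifts $F^\Del$ by one, and on $\gr^\Del$ the induced differential is the full $\nabla$, not just $\rd f\wedge$. Your Koszul picture for the quotient therefore does not apply as written: in the pure-$y$ example the graded Deligne differential is $\rd$ while the graded Yu differential is zero, and the comparison is precisely Deligne's classical statement \cite[Prop.~II.3.13]{Deligne70}, not a Koszul acyclicity for $\omega\wedge$.

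The paper's proof absorbs this by first writing both filtered complexes as an external tensor product in the variables $(x)$ and $(y,z)$: the $(y,z)$-factor is then literally Deligne's comparison of the pole-order and logarithmic filtrations, and only the $(x)$-factor requires a new argument. For the $(x)$-factor, the paper renormalises degree $k$ by $x^{[(\alpha+p_o+k)\bme]}$, which turns the Yu filtration into the stupid truncation $\sigma^{\geq -p}$ (graded differential zero) and the Deligne filtration into the usual pole-order filtration on $\Omega_x^\cbbullet(*P_\red)$ (graded differential $\rd\log x^{-\bme}=\omega\wedge$). At that point your Koszul intuition is correct and the reduction to $\ell=1$ goes through. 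So the ``main obstacle'' is not the multiplicities~$e_i$; it is isolating the $y$-contribution to $\rd$ and recognising it as the classical case.
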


\begin{proof}[Sketch of proof]
The question is local, and we can use the local setting of \S\ref{subsec:setup}. According to Proposition \ref{prop:comparisonFE}, we can use the expression \eqref{eq:FEfprimeT} to compute $F_{\alpha+\bbullet}^\Del\DR(E^{1/g}(*H))$, that we will then simply denote by $F_{\alpha+\bbullet}\DR(E^{1/g}(*H))$. By expressing both filtered complexes as the external tensor product of complexes with respect to the variables $x$ on the one hand and $y,z$ on the other hand, we are reduced to consider both cases separately. Moreover, the $y,z$-case is that considered by Deligne \cite[Prop.\,II.3.13]{Deligne70}, so we will focus on the $x$-case, assuming that there is no $y,z$ variables. We are therefore led to proving that the following morphism of complexes is a quasi-isomorphism:
\[
\xymatrix{
\Omega^0_x(\log P_\red)([(\alpha+p_o)P])_+\ar[r]^-\nabla\ar[d]&\cdots\Omega^k_x(\log P_\red)([(\alpha+p_o+k)P])_+\cdots\ar[d]\\
(F_{p_o}\cO_x(*P_\red))\Omega^0_x([(\alpha+p_o)P])\ar[r]^-\nabla&\cdots (F_{p_o+k}\cO_x(*P_\red))\Omega_x^k([(\alpha+p_o+k)P])\cdots
}
\]
By multiplying the $k$th degree term of each complex by $x^{[(\alpha+p_o+k)\bme]}$, the differential~$\nabla$ from the $k$th to the $(k+1)$st degree is replaced by $\delta_k(p_o)=x^{\bme}\rd+\rd\log x^{-\bme}+x^{\bme}\rd\log x^{[(\alpha+p_o+k)e]}$, and we are reduced to showing the quasi-isomorphism when~$p=p_o$:
\[
\xymatrix{
\displaystyle\sigma^{\geq-p}\Big(\Omega^0_x(\log P_\red)\To{\delta_0(p_o)}\cdots\Omega^k_x(\log P_\red)\To{\delta_k(p_o)}\cdots\Big)\ar[d]^\wr\\
\displaystyle F_p\cO_x(*P_\red)\To{\delta_0(p_o)}\cdots F_{p+k}\Omega^k_x(*P_\red)\To{\delta_k(p_o)}\cdots
}
\]
where we now use the standard (increasing) pole order filtration on $\Omega^\cbbullet_x(*P_\red)$, and~$\sigma^{\geq\cbbullet}$ denotes the stupid filtration. We will show the quasi-isomorphism for all~$p$, and for that purpose it will be enough to show that the graded complexes are quasi-isomorphic. For the upper complex, the graded differential is zero, while for the lower complex, it is equal to $\rd\log x^{-\bme}$. We can then argue as in the proof of \cite[Prop.\,II.3.13]{Deligne70} (second reduction) to reduce to the case $\ell=1$, where the graded quasi-isomorphism is easy to check.
\end{proof}

\begin{remark}
Moreover (\cf\cite[Cor.\,1.4]{Yu12}), one can consider the sub-complex $F_0^\Yu\DR(E^{1/g}(*H))$ with the induced filtration. Then the natural inclusion
\[
F_{\min(\mu,0)}^\Yu\DR(E^{1/g}(*H))\hto F_\mu^\Yu\DR(E^{1/g}(*H)).
\]
is a quasi-isomorphism for each $\mu\in\QQ$. This reduces to considering $F_\mu^\Yu\DR(E^{1/g}(*H))$ with $\mu\leq0$, that is, $F^\lambda(\nabla)$ with $\lambda\geq0$.
\end{remark}

The identification of the Koszul complex $K^\cbbullet(E^{1/g}(*H),\partial_x,\partial_y,\partial_z)$ as the cokernel of the termwise injective morphism
\[
\partial_{t'}:K^\cbbullet(M'\otimes E^{1/t'},\partial_x,\partial_y,\partial_z)\to K^\cbbullet(M'\otimes E^{1/t'},\partial_x,\partial_y,\partial_z)
\]
gives a quasi-isomorphism
\begin{multline*}
\DR(M'\otimes E^{1/t'})\simeq K^\cbbullet(M'\otimes E^{1/t'},\partial_x,\partial_y,\partial_z,\partial_{t'})\\
\isom K^\cbbullet(E^{1/g}(*H),\partial_x,\partial_y,\partial_z)[-1]\simeq \DR(E^{1/g}(*H))[-1].
\end{multline*}

Given a filtered complex $F_\bbullet C^\bbullet$, we denote by $(F_\bbullet C^\cbbullet)[k]$ the filtered complex $F_{\bbullet-k}C^{\cbbullet+k}$. From Propositions \ref{prop:YuDel} and \ref{prop:strictshift} we get:

\begin{corollary}\label{cor:comparison}
We have
$$
F_\bbullet^\Yu\DR(E^{1/g}(*H))\simeq F_\bbullet^\Del\DR(E^{1/g}(*H))\simeq \big(F_\bbullet^\Del\DR(M'\otimes E^{1/t'})\big)[1].\eqno\qed
$$
\end{corollary}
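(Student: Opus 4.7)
The corollary is essentially a bookkeeping statement that assembles Propositions \ref{prop:YuDel} and \ref{prop:strictshift}. My plan is to prove the two filtered quasi-isomorphisms separately.

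For the first, $F_\bbullet^\Yu\DR(E^{1/g}(*H))\simeq F_\bbullet^\Del\DR(E^{1/g}(*H))$, I will invoke Proposition \ref{prop:YuDel} at each filtration step: for every $\alpha\in[0,1)$ and every $p_o\in\ZZ$, that proposition produces a quasi-isomorphism $F_{\alpha+p_o}^\Yu\DR(E^{1/g}(*H))\hto F_{\alpha+p_o}^\Del\DR(E^{1/g}(*H))$ of the individual subcomplexes. These are natural inclusions, compatible as $p_o$ varies, so they glue into a filtered quasi-isomorphism without further work.

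For the second, the plan is to upgrade the (unfiltered) quasi-isomorphism
$$
\DR(M'\otimes E^{1/t'})\isom \DR(E^{1/g}(*H))[-1]
$$
displayed immediately before the corollary into a filtered one. That unfiltered quasi-isomorphism comes from identifying the Koszul complex in $(\partial_x,\partial_y,\partial_z,\partial_{t'})$ as the mapping cone of the injective operator $\partial_{t'}$ acting on the Koszul complex in $(\partial_x,\partial_y,\partial_z)$, whose cokernel is $K^\cbbullet(E^{1/g}(*H),\partial_x,\partial_y,\partial_z)$. By Definition \ref{def:FDelE}, the Deligne filtration $F_\bbullet^\Del(E^{1/g}(*H))$ is by definition the image filtration of $F_\bbullet^\Del(M'\otimes E^{1/t'})$ under the cokernel projection, and the shift by $[1]$ reconciles the Koszul indexing: the $k$-th term of $F_\mu^\Del\DR(M'\otimes E^{1/t'})$, with coefficient in $F_{\mu+k}^\Del(M'\otimes E^{1/t'})$, is sent to the $(k-1)$-st term of $F_{\mu+1}^\Del\DR(E^{1/g}(*H))$, with coefficient in $F_{\mu+k}^\Del E^{1/g}(*H)$, which is exactly the convention of \eqref{eq:FDelDRE}.

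The only substantive point — and the place where I expect the argument to rest — is that Proposition \ref{prop:strictshift} makes $\partial_{t'}$ strictly compatible with $F_\bbullet^\Del$, with shift one. Strictness is precisely what is needed for the short exact sequences
$$
0\to F_{\mu-1}^\Del(M'\otimes E^{1/t'})\xrightarrow{\partial_{t'}} F_\mu^\Del(M'\otimes E^{1/t'})\to F_\mu^\Del E^{1/g}(*H)\to 0
$$
to remain exact at each $\mu$, which in turn guarantees that the filtered cone of $\partial_{t'}$ is filtered-quasi-isomorphic to the filtered cokernel. Once strictness is granted, the filtered comparison reduces to matching indices termwise. Since the genuine work has already been packaged into Proposition \ref{prop:strictshift}, the proof of the corollary itself should be short.
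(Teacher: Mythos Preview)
Your proposal is correct and matches the paper's approach exactly: the paper simply states ``From Propositions \ref{prop:YuDel} and \ref{prop:strictshift} we get'' and ends with a \qed, and you have correctly identified that Proposition \ref{prop:YuDel} gives the first filtered quasi-isomorphism while Proposition \ref{prop:strictshift} (strictness of $\partial_{t'}$) is what upgrades the Koszul/mapping-cone identification to a filtered quasi-isomorphism for the second. Your unpacking of the index conventions and of why strictness is the essential ingredient is more explicit than what the paper writes, but the underlying argument is identical.
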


\section{Preliminaries to a generalization of Deligne's~filtration}\label{sec:prelimDel}

\subsection{Filtered $\cD$-modules and $R_F\cD$-modules}\label{subsec:filtDRFD}

\subsubsection*{Filtered $\cD$-modules}\label{subsec:filtDmodRmod}
Let $Y$ be a complex manifold and let $F_\bbullet\cD_Y$ denote the filtration of $\cD_Y$ by the order of differential operators. Let $(\ccM,F_\bbullet\ccM)$ be a filtered holonomic $\cD_Y$-module, that is, a holonomic $\cD_Y$-module $\ccM$ equipped with a \emph{good} filtration, \ie $F_k\cD_YF_p\ccM\subset F_{k+p}\ccM$ with equality for $p$ sufficiently large (locally on~$Y$) and any $k$. Let $\hb$ denote a new variable and let $R_F\cD_Y:=\nobreak\bigoplus_{p\in\NN}F_p\cD_Y\hb^p$ denote the Rees ring of the filtered ring $(\cD_Y,F_\bbullet\cD_Y)$. This is a sheaf of $\cO_Y[\hb]$-algebras generated by $\hb\Theta_Y$. In any coordinate chart, the coordinate vector fields~$\hb\partial_y$ will be denoted by $\partiall_y$.

Given a $\cD_Y$-module $\ccM$ equipped with a $F_\bbullet\cD_Y$-filtration $F_\bbullet\ccM$, the Rees module $R_F\ccM:=\bigoplus_pF_p\ccM\cdot\hb^p$ is a graded\marginpars{Added.} $R_F\cD_Y$-module. The filtration $F_\bbullet\ccM$ is \emph{good} if and only if $R_F\ccM$ is $R_F\cD_Y$-coherent. Conversely, given a coherent $R_F\cD_Y$-module, it is of the form $R_F\ccM$ for some coherent $\cD_Y$-module equipped with a good filtration $(\ccM,F_\bbullet\ccM)$ if and only if it is a graded $R_F\cD_Y$-module and it has no $\CC[\hb]$\nobreakdash-torsion (the latter property is called \emph{strictness}). If we regard a $\cD_Y$-module as a $\cO_Y$-module equipped with an integrable connection $\nabla$, we can regard a $R_F\cD_Y$-module as a $\cO_Y[\hb]$-module equipped with an integrable $\hb$-connection~$\hb\nabla$.

\subsubsection*{Exponential twist}
In the following, $X$ will be a complex manifold and we will consider holonomic $\cD$-modules $\ccM$ on $Y=X\times\PP^1$. For example, given a holomorphic function $f:X\to\PP^1$ and a holonomic $\cD_X$-module~$\ccN$, we will consider the push-forward $\ccM=i_{f,+}\ccN$ of $\ccN$ by the graph inclusion \hbox{$i_f:X\hto X\times\PP^1$}. As in \S\ref{subsec:setup}, we will consider $\PP^1$ as covered by two charts $\Afu_t$ and $\Afu_{t'}$ in such a way that $\infty=\{t'=0\}$.

We denote by
$$
q:X\times\PP^1\to\PP^1
$$
the projection and we will simply denote by $\infty$ the divisor $X\times\{\infty\}$ in $X\times\PP^1$. Let us consider the localization $\wt\ccM=\cO_{X\times\PP^1}(*\infty)\otimes_{\cO_{X\times\PP^1}}\nobreak\ccM$ of $\ccM$, which is a holonomic $\cD_{X\times\PP^1}$-module by a theorem of Kashiwara. We also regard it as a $\cO_{X\times\PP^1}(*\infty)$-module with integrable connection~$\nabla$. We denote by $\ccM\otimes\ccE^q$ the $\cO_{X\times\PP^1}(*\infty)$-module~$\wt\ccM$ equipped with the integrable connection $\nabla+\rd q$ (\cf \S\ref{subsec:Dapproach} for the similar notation $E^{1/g}$). It is also a holonomic $\cD_{X\times\PP^1}$-module.

Let us now consider these constructions for a filtered $\cD_{X\times\PP^1}$-module $(\ccM,F_\bbullet\ccM)$. We set $F_p\wt\ccM=\cO_{X\times\PP^1}(*\infty)\otimes_{\cO_{X\times\PP^1}}F_p\ccM$ (this is not $\cO_{X\times\PP^1}$-coherent). We then have
\[
R_F\wt\ccM=\cO_{X\times\PP^1}(*\infty)[\hb]\otimes_{\cO_{X\times\PP^1}[\hb]}R_F\ccM=:(R_F\ccM)(*\infty).
\]

We consider the $\cO_{X\times\PP^1}[\hb]$-module $\cO_{X\times\PP^1}(*\infty)[\hb]$ equipped with the integrable $\hb$\nobreakdash-connec\-tion $\hb\rd+\hb\rd q$, that we still denote by $\ccE^q$ (although it is equal to $\ccE^q[\hb]$). We then define $R_F\ccM\otimes\ccE^q$ as $R_F\wt\ccM$ equipped with the integrable $\hb$-connection $\hb\nabla+\hb\rd q$.

\subsection{Strict specializability along a hypersurface}\label{subsec:strictspe}
The notion of $V$-filtration will play an important role for the construction of the Deligne filtration. We will distinguish two notions for a filtered $\cD_{X\times\PP^1}$-module $\ccM$: the notion of strict specializability of $(\ccM,F_\bbullet\ccM)$ as a filtered $\cD_{X\times\PP^1}$-module \cite{MSaito86}, and that of strict specializability of $R_F\ccM$ as a $R_F\cD_{X\times\PP^1}$-module \cite{Bibi01c}. If one uses the definition as stated in \cite[\S3.2]{MSaito86} for $(\ccM,F_\bbullet\ccM)$, one does not recover exactly that given in \cite[Def.\,3.3.8]{Bibi01c} for $R_F\ccM$. This is why we will strengthen that of \cite[\S3.2]{MSaito86}, and we will show that mixed Hodge modules in the sense of \cite{MSaito87} also satisfy the strengthened condition.

\subsubsection*{Specialization of a filtered $\cD_{X\times\PP^1}$-module}
Let $X$ be a complex manifold and let $(\ccM,F_\bbullet\ccM)$ be a filtered holonomic $\cD$-module on $X\times\PP^1$. Since $\ccM$ is holonomic, it admits a Kashiwara-Malgrange filtration $V_\bbullet\ccM$ along $X\times\{\infty\}$ indexed by $A+\ZZ$, for some finite set $A\subset\CC$ equipped with some total order. We will not care about the choice of such an order by assuming that $A\subset\RR$, and equipped with the induced order. This assumption will be enough for our purpose. We can extend in a trivial way the filtration as a filtration indexed by~$\RR$, with only possible jumps at $A+\ZZ$ at most. The normalization we use for the Kashiwara-Malgrange filtration is that $t'\partial_{t'}+\alpha$ is nilpotent on $\gr_\alpha^V\ccM:=V_\alpha\ccM/V_{<\alpha}\ccM$, for each $\alpha\in A+\ZZ$ (so there will be a shift with the convention in \cite{MSaito86} and \cite{Bibi01c}).

\begin{definition}[{\cf\cite[(3.2.1)]{MSaito86}}]\label{def:strictspe}
Let $(\ccM,F_\bbullet\ccM)$ be a filtered holonomic $\cD_{X\times\PP^1}$-module. We say that $(\ccM,F_\bbullet\ccM)$ is \emph{strictly specializable and regular} along $X\times\{\infty\}$~if
\begin{enumerate}
\item\label{def:strictspe2}
Compatibility conditions in \cite[\S3.2.1]{MSaito86}:
\begin{enumerate}
\item\label{def:strictspe2a}
for each $\alpha<1$ and each $p$, $t':F_p\ccM\cap V_\alpha\ccM\isom F_p\ccM\cap V_{\alpha-1}\ccM$,
\item\label{def:strictspe2b}
for each $\alpha>0$, $\partial_{t'}:F_p\gr_\alpha^V\ccM\isom F_{p+1}\gr_{\alpha+1}^V\ccM$.
\end{enumerate}
\item\label{def:strictspe1}
For each $\alpha\in\RR$, the filtration $F_\bbullet\ccM$ induces on each $\gr^V_\alpha\ccM$ a good filtration (with respect to $F_\bbullet\cD_{X\times\{\infty\}}$).
\end{enumerate}
\end{definition}

We refer to \cite[\S3.2]{MSaito86} for the consequences of \eqref{def:strictspe2}. By definition, for a polarizable Hodge module \cite{MSaito86} or more generally a (graded-polarizable) mixed Hodge module \cite{MSaito87}, $(\ccM,F_\bbullet\ccM)$ is strictly specializable and regular along $X\times\{\infty\}$ in the sense of Definition~\ref{def:strictspe}.

\subsubsection*{Specialization of a $R_F\cD_{X\times\PP^1}$-module}
Consider the increasing filtration $V_\bbullet(R_F\cD_{X\times\PP^1})$ indexed by $\ZZ$, which is constant equal to $R_F\cD_{X\times\Afu}$ in the chart with coordinate $t$, and for which, in the chart with coordinate $t'$, the function $t'$ has degree $-1$, the vector field $\partiall_{t'}$ has degree $1$, and $\cO_X[\hb]$ and $\hb\Theta_X$ have degree zero.

\begin{definition}[{\cf\cite[Def.\,3.3.8]{Bibi01c}}]\label{def:strictspeR}
Let $\cM$ be a coherent $R_F\cD_{X\times\PP^1}$-module (\eg $\cM=R_F\ccM$ as above).
\begin{enumerate}
\item\label{def:strictspeRa}
We say that $\cM$ is \emph{strictly specializable} along $X\times\{\infty\}$ if there exists a finite set $A\subset\RR$ and a good $V$-filtration of $\cM$ (good with respect to $V_\bbullet(R_F\cD_{X\times\PP^1})$) indexed by $A+\ZZ$, such that
\begin{enumerate}
\item\label{def:strictspeR1}
each graded module $\gr_\alpha^V\cM$ is strict, \ie it has no $\CC[\hb]$-torsion,
\item\label{def:strictspeR2}
On each $\gr_\alpha^V\cM$, the operator $t'\partiall_{t'}+\hb \alpha=\hb(t'\partial_{t'}+\alpha)$ is nilpotent (the normalization is shifted by one with respect to \cite{Bibi01c}, for later convenience),
\item\label{def:strictspeR3}
the map $t':V_{\alpha}\cM\to V_{\alpha-1}\cM$ is an isomorphism for $\alpha<1$,
\item\label{def:strictspeR4}
the map $\partiall_{t'}:\gr_\alpha^V\cM\to\gr_{\alpha+1}^V\cM$ is an isomorphism for $\alpha>0$.
\end{enumerate}
\item\label{def:strictspeRb}
We then say (\cf\cite[\S3.1.d]{Bibi01c}) that $\cM$ is \emph{regular} along $X\times\{\infty\}$ if, for any $\alpha\in\RR$, the restriction of $V_\alpha\cM$ to some neighbourhood of $X\times\{\infty\}$ is coherent over $R_F\cD_{X\times\PP^1/\PP^1}$ (and not only over $R_FV_0\cD_{X\times\PP^1}=V_0R_F\cD_{X\times\PP^1}$).
\end{enumerate}
\end{definition}

\begin{remarks}\mbox{}
\begin{enumerate}
\item
This definition gives a subcategory of that considered in \cite[\S3.3]{Bibi01c} (\cf also \cite[Chap.\,14]{Mochizuki07}, \cite[Chap.\,22]{Mochizuki08}), as we implicitly assume that $t'\partiall_{t'}$ acting on $\gr_\alpha^V\cM$ has the only eigenvalue $\hb\alpha$ with $\alpha\in\RR$, so that we will in fact implicitly assume that $\alpha\in\RR$ if $\gr_\alpha^V\cM\neq0$. However, this subcategory is enough for our purpose.
\item
Such a filtration is unique (\cf \cite[Lem.\,3.3.4]{Bibi01c}).
\item
Conditions \ref{def:strictspeR}\eqref{def:strictspeR3} and \eqref{def:strictspeR4} are not the conditions given in \cite[Def.\,3.3.8(1b,c)]{Bibi01c}, but are equivalent to them, according to \cite[Rem.\,3.3.9(2)]{Bibi01c}.
\end{enumerate}
\end{remarks}

\begin{proposition}\label{prop:Hodgestrictspe}
Assume that $(\ccM,F_\bbullet\ccM)$ underlies a polarizable Hodge module \cite{MSaito86} or more generally a (graded-polarizable) mixed Hodge module \cite{MSaito87}. Then $R_F\ccM$ is strictly specializable and regular along $X\times\{\infty\}$ in the sense of Definition~\ref{def:strictspeR}. Moreover, the $V$-filtration $V_\bbullet(R_F\ccM)$ of $R_F\ccM$ as a $R_F\cD_{X\times\PP^1}$-module is equal to $R_FV_\bbullet\ccM$, where we have set $R_FV_\alpha\ccM=\bigoplus_p(F_p\ccM\cap V_\alpha\ccM)\hb^p$.
\end{proposition}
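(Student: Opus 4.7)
My plan is to take the candidate filtration $V_\alpha(R_F\ccM) := R_F V_\alpha\ccM = \bigoplus_p (F_p\ccM \cap V_\alpha\ccM)\hb^p$ (extended by convention to $\alpha\in\RR$ in the same trivial way as $V_\bullet\ccM$) and verify that it satisfies the four conditions of Definition~\ref{def:strictspeR}\eqref{def:strictspeRa}, as well as the regularity condition \eqref{def:strictspeRb}. By the uniqueness of a good $V$-filtration with those properties (\cf\cite[Lem.\,3.3.4]{Bibi01c}), this will automatically give the identification $V_\bullet(R_F\ccM) = R_F V_\bullet\ccM$ claimed in the statement.

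First I would check that $R_F V_\bullet\ccM$ is a good $V$-filtration with respect to $V_\bullet R_F\cD_{X\times\PP^1}$. This is a local question. The key input is condition \eqref{def:strictspe1} of Definition~\ref{def:strictspe}, which guarantees that $F_\bullet\ccM$ induces a good $F$-filtration on each $\gr^V_\alpha\ccM$; combined with the $V_\bullet\cD_{X\times\PP^1}$-goodness of the classical $V_\bullet\ccM$, this yields, locally on $X\times\PP^1$, finitely many bi-homogeneous generators of $R_F V_\alpha\ccM$ over $V_0 R_F\cD_{X\times\PP^1}$, whose translates by $\partiall_{t'}$ generate the whole Rees module.

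The crucial computation is then the identification
\[
\gr^V_\alpha(R_F\ccM) \;\simeq\; R_F(\gr^V_\alpha\ccM) \;=\; \bigoplus_p F_p(\gr^V_\alpha\ccM)\,\hb^p.
\]
This follows from the strict compatibility of $F_\bullet\ccM$ with $V_\bullet\ccM$ encoded in conditions \eqref{def:strictspe2a}--\eqref{def:strictspe2b} of Definition~\ref{def:strictspe}, which together force $(F_p\ccM\cap V_\alpha\ccM)/(F_p\ccM\cap V_{<\alpha}\ccM) = F_p(\gr^V_\alpha\ccM)$ in each degree $p$, so that the short exact sequence $0 \to R_F V_{<\alpha}\ccM \to R_F V_\alpha\ccM \to R_F\gr^V_\alpha\ccM \to 0$ remains exact. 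Strictness \eqref{def:strictspeR1} is then automatic, since the Rees module of a filtered $\CC$-vector space carries no $\hb$-torsion. The nilpotency \eqref{def:strictspeR2} is immediate from the identity $t'\partiall_{t'}+\hb\alpha = \hb(t'\partial_{t'}+\alpha)$ and the corresponding nilpotency on $\gr^V_\alpha\ccM$. Conditions \eqref{def:strictspeR3} and \eqref{def:strictspeR4} are direct translations of \eqref{def:strictspe2a} and \eqref{def:strictspe2b}; note that the factor of $\hb$ hidden in $\partiall_{t'}=\hb\partial_{t'}$ accounts precisely for the degree shift $F_p \to F_{p+1}$ appearing in \eqref{def:strictspe2b}.

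Finally, regularity \eqref{def:strictspeRb} is inherited from the regularity of the underlying $\cD_{X\times\PP^1}$-module (each $V_\alpha\ccM$ is $\cD_{X\times\PP^1/\PP^1}$-coherent near $X\times\{\infty\}$ for a mixed Hodge module, \cf \cite{MSaito86,MSaito87}), upgraded to $R_F\cD_{X\times\PP^1/\PP^1}$-coherence by the goodness of the induced $F$-filtration on $V_\alpha\ccM$. The main obstacle I anticipate is precisely the justification of $\gr^V_\alpha(R_F\ccM) \simeq R_F\gr^V_\alpha\ccM$: this strict compatibility is exactly why the paper strengthens Saito's original formulation of \cite[(3.2.1)]{MSaito86}, since the weaker form alone does not ensure exactness after passage to the Rees construction. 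Once this point is secured, every remaining verification is a bookkeeping exercise comparing Definitions~\ref{def:strictspe} and \ref{def:strictspeR} degree by degree in $\hb$.
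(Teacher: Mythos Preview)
Your approach is essentially the paper's: set $U_\alpha R_F\ccM := R_F V_\alpha\ccM$, verify the axioms of Definition~\ref{def:strictspeR}, and invoke uniqueness. However, you have misidentified where the difficulty lies.

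The identification $\gr^U_\alpha R_F\ccM = R_F\gr^V_\alpha\ccM$ that you flag as ``the main obstacle'' is in fact automatic: by definition the induced filtration on $\gr^V_\alpha\ccM$ is $F_p\gr^V_\alpha\ccM = (F_p\ccM\cap V_\alpha\ccM)/(F_p\ccM\cap V_{<\alpha}\ccM)$, and this is exactly the $p$-th graded piece of $\gr^U_\alpha R_F\ccM$. No strict compatibility is needed here; the paper states this identification in one line.

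The genuine technical point, which you glide over, is the \emph{coherence} of $R_F V_\alpha\ccM$ over $V_0 R_F\cD_{X\times\PP^1}$ (needed for goodness of the $V$-filtration) and over $R_F\cD_{X\times\PP^1/\PP^1}$ (needed for regularity). Condition \ref{def:strictspe}\eqref{def:strictspe1} only gives goodness of $F_\bbullet$ on each $\gr^V_\alpha\ccM$, not on $V_\alpha\ccM$ itself, and your sketch ``combined with the $V_\bbullet\cD$-goodness of the classical $V_\bbullet\ccM$, this yields finitely many bi-homogeneous generators'' does not obviously work: lifting generators from graded pieces requires a base case and does not a priori control the $F$-filtration on $V_\alpha\ccM$. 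The paper closes this gap by invoking \cite[Cor.\,3.4.7]{MSaito86}, which shows that for a mixed Hodge module the conditions of Definition~\ref{def:strictspe} imply the stronger property~(\ref{def:strictspe1}$'$): $F_\bbullet\ccM$ induces a \emph{good} $F_\bbullet\cD_{X\times\PP^1/\PP^1}$-filtration on each $V_\alpha\ccM$ near $X\times\{\infty\}$. This single citation delivers both the coherence and the regularity at once; it is the input you should name in place of your anticipated obstacle.
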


\begin{proof}
We\marginpars{The proof given in Old App.\,\ref{app:B} is much simplified by using \cite[Cor.\,3.4.7]{MSaito86}.} first note that, according to \cite[Cor.\,3.4.7]{MSaito86}, if $(\ccM,F_\bbullet\ccM)$ is as in Definition \ref{def:strictspe}, then it also satisfies
\begin{enumerate}
\item[(\ref{def:strictspe1}$'$)]
for each $\alpha\in\RR$, the filtration $F_\bbullet\ccM$ induces in the neighbourhood of $X\times\{\infty\}$ on each $V_\alpha\ccM$ a good filtration with respect to $F_\bbullet\cD_{X\times\PP^1/\PP^1}$.
\end{enumerate}

For each $\alpha\in\RR$, let us set $U_\alpha R_F\ccM:=R_F(V_\alpha\ccM)$ with $F_pV_\alpha\ccM:=F_p\ccM\cap\nobreak V_\alpha\ccM$. This is a $V$-filtration since $R_FV_0\cD_{X\times\PP^1}=V_0(R_F\cD_{X\times\PP^1})$. Moreover, we have $\gr_\alpha^UR_F\ccM=R_F\gr_\alpha^V\ccM$, hence $\gr_\alpha^UR_F\ccM$ is strict, \ie has no $\CC[\hb]$-torsion. So \ref{def:strictspeR}\eqref{def:strictspeR1} is fulfilled.

Note that $R_FV_\alpha\ccM$ is left invariant by $\hb t'\partial_{t'}=:t'\partiall_{t'}$, and that $t'\partiall_{t'}+\alpha\hb$ is nilpotent on $R_F\gr_\alpha^V\ccM$ since $t'\partial_{t'}+\alpha$ is so on $\gr_\alpha^V\ccM$. Therefore, we get \ref{def:strictspeR}\eqref{def:strictspeR2}.

We also note that (\ref{def:strictspe1}$'$) implies that each $U_\alpha R_F\ccM$ is $V_0(R_F\cD_{X\times\PP^1})$-coherent. Moreover, \ref{def:strictspe}\eqref{def:strictspe2} implies the conditions \ref{def:strictspeR}\eqref{def:strictspeR3}--\eqref{def:strictspeR4} which, together with the coherence of $U_\alpha R_F\ccM$, implies the goodness property of this $V$-filtration. By the uniqueness of the $V$-filtration, we conclude that $U_\bbullet R_F\ccM=V_\bbullet R_F\ccM$.

Lastly, (\ref{def:strictspe1}$'$) means that the restriction of $V_\alpha R_F\ccM$ to a neighbourhood of $X\times\{\infty\}$ is $R_F\cD_{X\times\PP^1/\PP^1}$-coherent for each $\alpha$, which is the regularity property \ref{def:strictspeR}\eqref{def:strictspeRb}.
\end{proof}

\subsection{Partial Laplace exponential twist of $R_F\cD_{X\times\PP^1}$-modules}\label{subsec:partialLaplace}
Let $(\ccM,F_\bbullet\ccM)$ be a filtered $\cD_{X\times\PP^1}$-module. We will usually denote by $(M,F_\bbullet M)$ its restriction to $X\times\Afu_t$, that we regard as a filtered $\cD_X[t]\langle\partial_t\rangle$-module, and by $(M',F_\bbullet M')$ its restriction to $X\times\Afu_{t'}$, that we regard as a filtered $\cD_X[t']\langle\partial_{t'}\rangle$-module. The Laplace exponential twist $\cFcM$ of $R_F\ccM$ that we define below is an intermediate step to define the partial Laplace transform of $R_F\ccM$, but we will not need to introduce the latter.

We consider the affine line $\Afuh$ with coordinate $\tau$. The varieties $X\times\PP^1$ and $Z=X\times\PP^1\times\Afuh$ are equipped with a divisor (still denoted by) $\infty$. We denote by~$p$ the projection $Z\to X\times\PP^1$.

Let $\cM$ be a left $R_F\cD_{X\times\PP^1}$-module, \eg $\cM=R_F\ccM$. We denote by $\wt\cM$ the localized module $R_F\cD_{X\times\PP^1}(*\infty)\otimes_{R_F\cD_{X\times\PP^1}}\cM$, \eg $\wt\cM=R_F\wt\ccM$ as defined in \S\ref{subsec:filtDmodRmod}. Then $p^+\wt\cM$ is a left $R_F\cD_Z(*\infty)$-module. We denote by $p^+\wt\cM\otimes\cE^{t\tau/\hb}$ or, for short, by $\cFcM$, the $R_F\cO_Z(*\infty)$-module $p^+\wt\cM$ equipped with the twisted action of $R_F\cD_Z$ described by the exponential factor: the $R_F\cD_X$-action is unchanged, and, for any local section $m$ of~$\cM$,
\begin{itemize}
\item
on $X\times\Afu_t\times\Afuh$,
\begin{equation}\label{eq:ttau}
\begin{split}
\partiall_t(m\otimes\cE^{t\tau/\hb})&=[(\partiall_t+\tau)m]\otimes\cE^{t\tau/\hb},\\
\partiall_\tau(m\otimes\cE^{t\tau/\hb})&=tm\otimes\cE^{t\tau/\hb},
\end{split}
\end{equation}

\item
on $X\times\Afu_{t'}\times\Afuh$,
\begin{equation}\label{eq:tprimetau}
\begin{split}
\partiall_{t'}(m\otimes\cE^{t\tau/\hb})&=[(\partiall_{t'}-\tau/t^{\prime2})m]\otimes\cE^{t\tau/\hb},\\
\partiall_\tau(m\otimes\cE^{t\tau/\hb})&=m/t'\otimes\cE^{t\tau/\hb},
\end{split}
\end{equation}
\end{itemize}

\begin{lemma}\label{lem:injcFcM}
The left multiplication by $\tau-\hb$ is injective on $\cFcM$ and the cokernel is identified with $\cM\otimes\ccE^t$ (that is, $\cM\otimes_{\cO_X[\hb]}\ccE^t[\hb]$ equipped with its natural $\hb$-connection or, equivalently, $\wt\cM$ equipped with the twisted $\hb$-connection).
\end{lemma}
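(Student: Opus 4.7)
As an $R_F\cO_Z(*\infty)$-module, $\cFcM = p^+\wt\cM$ is simply the polynomial extension $\wt\cM[\tau]$ of $\wt\cM$, the twist by $\cE^{t\tau/\hb}$ affecting only the $R_F\cD_Z$-action. The plan is therefore to treat $\tau-\hb$ first at the level of the underlying $\cO$-module, where injectivity is immediate, and then to transport the induced $R_F\cD_{X\times\PP^1}$-structure on the quotient via formulas \eqref{eq:ttau}--\eqref{eq:tprimetau}.

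For injectivity, any local section of $\cFcM$ writes uniquely as a finite sum $\sum_{j=0}^N m_j\tau^j$ with $m_j$ local sections of $\wt\cM$; multiplication by the monic polynomial $\tau-\hb$ sends this to $-\hb m_0+\sum_{j=1}^{N}(m_{j-1}-\hb m_j)\tau^j+m_N\tau^{N+1}$, and the leading coefficient $m_N$ is nonzero as soon as the original section was nonzero. The evaluation map $\tau\mapsto\hb$ yields an $R_F\cO_{X\times\PP^1}(*\infty)$-linear surjection $\cFcM\to\wt\cM$ whose kernel contains $(\tau-\hb)\cFcM$; by a triangular argument on the degree in $\tau$, this kernel is exactly $(\tau-\hb)\cFcM$, giving the identification of the cokernel with $\wt\cM$ as $R_F\cO_{X\times\PP^1}(*\infty)$-module.

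It then remains to check that the $R_F\cD_{X\times\PP^1}(*\infty)$-action induced on the quotient matches the twisted action of $\cM\otimes\ccE^t$. On the chart $\Afu_t$, formula \eqref{eq:ttau} gives $\partiall_t\cdot \overline m=\overline{(\partiall_t+\tau)m}=\overline{(\partiall_t+\hb)m}$, which is exactly the $\hb$-connection on $\wt\cM\otimes\ccE^t$ in the direction~$\partial_t$ (since $\partial_t$ applied to the exponential factor contributes $1$). On the chart $\Afu_{t'}$, formula \eqref{eq:tprimetau} gives $\partiall_{t'}\cdot\overline m=\overline{(\partiall_{t'}-\tau/t^{\prime2})m}=\overline{(\partiall_{t'}-\hb/t^{\prime2})m}$, which matches the twist by $\ccE^t$ since $\rd t=-\rd t'/t^{\prime2}$. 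The $R_F\cD_X$-action is unchanged by the twist, so no further check is needed. No real obstacle arises: the only substantive point is the coherence of the formulas on the overlap of the two charts, which is automatic from the fact that $\cE^{t\tau/\hb}$ is a globally defined invertible module on $X\times\Afu\times\Afuh$ (with its $\infty$-pole), and its restriction under $\tau=\hb$ is $\ccE^t$.
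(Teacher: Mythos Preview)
Your proof is correct and follows essentially the same approach as the paper: realize $\cFcM$ as $\wt\cM[\tau]$, deduce injectivity from the degree-in-$\tau$ filtration (your leading-term argument), and identify the cokernel by evaluating $\tau\mapsto\hb$ in formulas \eqref{eq:ttau}--\eqref{eq:tprimetau}. The paper's proof is simply a terser version of exactly this.
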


\begin{proof}
We can realize $p^+\wt\cM$ algebraically as $\CC[\tau]\otimes_\CC \wt\cM$ as a $\cO_X[\hb,\tau]$-module, with the twisted $\partiall_t,\partiall_{t'},\partiall_\tau$-action as above. Then the first statement is obvious by considering the filtration with respect to the degree in $\tau$, and the second statement is obtained by replacing $\tau$ with $\hb$ in \eqref{eq:ttau} and \eqref{eq:tprimetau}.
\end{proof}

\subsection{Partial Laplace exponential twist and specialization of $R_F\cD_{X\times\PP^1}$-modules}\label{subsec:partialLaplacespe}

Let $(\ccM,F_\bbullet\ccM)$ be a filtered holonomic $\cD_{X\times\PP^1}$-module which underlies a~mixed Hodge module and let $\cM$ be the $R_F\cD_{X\times\PP^1}$-module defined as $\cM=R_F\ccM$. Then $\cM$ is a strictly specializable and regular along $X\times\{\infty\}$ in the sense of Definition \ref{def:strictspeR}, according to Proposition \ref{prop:Hodgestrictspe}. It follows from \cite[Prop.\,4.1(ii)]{Bibi05b} that $\cFcM$ is strictly specializable and regular along $\tau=0$. We will denote by $V_\bbullet^\tau\cFcM$ the corresponding $V$-filtration. Let us recall the main steps for proving the strict specializability and the regularity in the present setting, which is simpler than the general one considered in \loccit, since the eigenvalues of the monodromy at infinity have absolute value equal to one (and more precisely, are roots of unity).

On the one hand, for $\alpha\in(0,1]$, one identifies $\gr_\alpha^{V^\tau}\!\cFcM$ (a $R_F\cD_{X\times\PP^1}$-module) with the push-forward, by the inclusion $i_\infty:X\times\{\infty\}\hto X\times\PP^1$, of $\gr_{\alpha-1}^VR_F\ccM$ (\cf \loccit, proof of (ii)(6) and (ii)(7)), whose strictness follows from the first part of Proposition \ref{prop:Hodgestrictspe}.

On the other hand, let us denote by $\ccM_{\min}$ the minimal extension of $\ccM$ along $X\times\{\infty\}$. This is also a mixed Hodge module: if $j:X\times\Afu\hto X\times\PP^1$ denote the inclusion, we have $\ccM_{\min}=\image[j_!j^*\ccM\to j_*j^*\ccM]$ in the category of mixed Hodge modules (\cf \cite{MSaito87}). Moreover, $R_F\ccM_{\min}$ corresponds to the minimal extension of $R_F\ccM$, in the sense of \cite[Def.\,3.4.7]{Bibi01c}. Then the proof of (ii)(8) in \loccit identifies $\gr_0^{V^\tau}\!\cFcM$ to a successive extension of the objects $\gr_1^{V^\tau}\!\cFcM$, $R_F\ccM_{\min}$, $i_{\infty,+}\ker\rN_{t'}$ and $i_{\infty,+}\coker\rN_{t'}$, where $\rN_{t'}$ is the nilpotent part of the monodromy on $\gr_0^VR_F\ccM_{\min}=\gr_0^VR_F\ccM$, and all these components are known to be strict, by the discussion of first part for the first one, and by the very definition of mixed Hodge modules for the rest.

Let us emphasize at this point that, according to the previous result and the second part of Proposition \ref{prop:Hodgestrictspe}, for each $\alpha\in(0,1]$, $\gr_\alpha^{V^\tau}\!\cFcM$ is identified with the Rees module of a filtered $\cD$-module underlying, up to a shift of the filtration, a direct summand of a mixed Hodge module (recall that, for $(\ccM,F_\bbullet\ccM)$ underlying a mixed Hodge module, $\bigoplus_{\alpha\in(0,1]}(\gr_\alpha^V\ccM,F_\bbullet\gr_\alpha^V\ccM)$ also underlies a mixed Hodge module). The same property holds if $\alpha=0$, as explained in Appendix \ref{app:B}.\marginpars{App.\,\ref{app:B} is changed and gives now the complement needed here.}

Lastly, the $V$-filtration of $\cFcM$ along $\tau=0$ is given by an explicit formula from the $V$-filtration of $\cM$ along $t'=0$ (see the proof of 4.1(ii) in \cite{Bibi05b}). For our purpose, we have the formula already used in the proof of \cite[Prop.\,6.10]{Bibi08}, when considering the chart $\Afu_{t'}$:
\[
\forall \alpha\in[0,1),\quad V^\tau_\alpha(\cFcM)_{|X\times\Afu_{t'}\times\Afuh}=\sum_{k\geq0}(1\otimes\hb\partial_{t'}-\tau\otimes t^{\prime-2})^k(\CC[\tau]\otimes_\CC t^{\prime-1}R_FV^{t'}_\alpha M'),
\]
where we have indicated as an exponent the variable with respect to which the $V$-filtration is taken. In the chart $\Afu_t$, we simply have
\[
\forall \alpha\in[0,1),\quad V^\tau_\alpha(\cFcM)_{|X\times\Afu_t\times\Afuh}=\cFcM_{|X\times\Afu_t\times\Afuh}.
\]

We note that, according to Lemma \ref{lem:injcFcM}, left multiplication by $\tau-\hb$ is injective on $V^\tau_\alpha(\cFcM)$.

\section{A generalization of Deligne's filtration}\label{sec:defDel}
We keep the notation as in \S\ref{subsec:filtDRFD}. Our purpose in this section is to prove:

\begin{theorem}\label{th:main2}
For each filtered holonomic $\cD_{X\times\PP^1}$-module $(\ccM,F_\bbullet\ccM)$ one can define canonically and functorially a $F_\bbullet\cD_{X\times\PP^1}$-filtration $F^\Del_\bbullet(\ccM\otimes\ccE^q)$.
\begin{enumerate}
\item\label{th:main21}
If $(\ccM,F_\bbullet\ccM)$ underlies a mixed Hodge module, then $F^\Del_\bbullet(\ccM\otimes\ccE^q)$ is a good $F_\bbullet\cD_{X\times\PP^1}$-filtration.
\item\label{th:main22}
For each morphism $\varphi:(\ccM_1,F_\bbullet\ccM_1)\to(\ccM_2,F_\bbullet\ccM_2)$ underlying a morphism of mixed Hodge modules, the corresponding morphism
\[
\varphi^q:(\ccM_1\otimes\ccE^q,F^\Del_{\cbbullet}(\ccM_1\otimes\ccE^q))\to(\ccM_2\otimes\ccE^q,F^\Del_{\cbbullet}(\ccM_2\otimes\ccE^q))
\]
is strictly filtered.
\item\label{th:main23}
For $(\ccM,F_\bbullet\ccM)$ underlying a mixed Hodge module, the spectral sequence attached to the hypercohomology of the filtered de~Rham complex $F^\Del_\bbullet\DR(\ccM\otimes\nobreak\ccE^q)$ degenerates at~$E_1$.
\end{enumerate}
\end{theorem}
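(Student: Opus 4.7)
My plan is to define $F^\Del_\bbullet(\ccM\otimes\ccE^F)$ globally by pushing the $V^\tau$-filtration of the partial Laplace exponential twist $\cFcM$ of \S\ref{subsec:partialLaplace} through the cokernel isomorphism of Lemma~\ref{lem:injcFcM}. More precisely, since $\tau-\hb$ is injective on $\cFcM$ with cokernel $R_F\ccM\otimes\ccE^F$, and since $\tau-\hb$ maps $V^\tau_{\alpha-1}\cFcM$ into $V^\tau_\alpha\cFcM$, I set
\[
R_{F^\Del}(\ccM\otimes\ccE^F)\;:=\;\bigoplus_{\alpha\in[0,1)}V^\tau_\alpha\cFcM\big/(\tau-\hb)V^\tau_{\alpha-1}\cFcM,
\]
and extract $F^\Del_\bbullet$ from the grading by powers of $\hb$. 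The explicit description of $V^\tau_\alpha\cFcM$ in the chart $\Afu_{t'}$ recalled at the end of \S\ref{subsec:partialLaplacespe} matches the local formula \eqref{eq:Fdelalpha}, so this definition globalizes the one of \cite[\S6.b]{Bibi08} and of \S\ref{subsec:Dapproach}, giving canonicity and functoriality for free.

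For part \eqref{th:main21}, the goodness assertion reduces to two points: (a) $(\tau-\hb)$ acts strictly with respect to the $V^\tau$-filtration on $\cFcM$, and (b) each quotient $V^\tau_\alpha\cFcM/(\tau-\hb)V^\tau_{\alpha-1}\cFcM$ is $\hb$\nobreakdash-torsion free and $R_F\cD$-coherent. Assertion~(a) follows from the nilpotency of $t'\partiall_{t'}+\alpha\hb$ on $\gr^{V^\tau}_\alpha\cFcM$ together with Lemma~\ref{lem:injcFcM}. Assertion~(b) is the heart of the matter: using the analysis recalled in \S\ref{subsec:partialLaplacespe}, each $\gr^{V^\tau}_\alpha\cFcM$ is, for $\alpha\in(0,1]$, the pushforward by $i_\infty$ of the Rees module $R_F\gr^V_{\alpha-1}\ccM$ of a mixed Hodge module, while $\gr^{V^\tau}_0\cFcM$ is the successive extension involving $R_F\ccM_{\min}$, $i_{\infty,+}\ker\rN_{t'}$ and $i_{\infty,+}\coker\rN_{t'}$; strictness of each piece is then Proposition~\ref{prop:Hodgestrictspe} supplemented by Appendix~\ref{app:B}. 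Coherence of the quotient follows from regularity along $\tau=0$ together with the coherence of $V^\tau_\alpha\cFcM$ over $R_F\cD_{X\times\PP^1/\PP^1}[\tau,\partiall_\tau]$.

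For part \eqref{th:main22}, a morphism $\varphi$ of mixed Hodge modules induces a morphism $\cF\varphi$ of Rees modules that is strict with respect to the $V^\tau$-filtrations by the uniqueness of $V^\tau$ and the $V$-strictness of morphisms of mixed Hodge modules (\cite{MSaito86,MSaito87}). A snake-lemma argument using the strict injectivity of $\tau-\hb$ established in part~\eqref{th:main21} then yields strictness of $\varphi^F$ with respect to~$F^\Del_\bbullet$.

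For part \eqref{th:main23}, the $E_1$-degeneration, I would identify the filtered de~Rham complex $F^\Del_\bbullet\DR(\ccM\otimes\ccE^F)$ with a shift of the cone of $(\tau-\hb)$ acting on $V^\tau_\bbullet\DR(\cFcM)$, and reduce $E_1$-degeneration to strictness of the $\hb$-action on the hypercohomology of this cone. Via the $V^\tau$-graded description of \S\ref{subsec:partialLaplacespe}, each graded piece $\gr^{V^\tau}_\alpha\DR(\cFcM)$ is, up to a Tate shift, the filtered de~Rham complex of a direct summand of a mixed Hodge module on the projective variety $X\times\PP^1$. Saito's $E_1$-degeneration theorem \cite{MSaito86,MSaito87} applied to each such summand yields strictness at the graded level, and the long exact sequence associated with $(\tau-\hb)$ then propagates this to strictness of hypercohomology of the full filtered complex. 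The main obstacle is this last propagation step: controlling simultaneously the $V^\tau$-grading, the Koszul differential in $\hb\rd+\hb\rd F$, and the action of $\tau-\hb$, so that strict exactness at each graded piece really assembles into strictness for the abutment. This is where the careful compatibility of the $V^\tau$-filtration with both the de~Rham differential and the operator $\tau-\hb$, established via the formulas of \S\ref{subsec:partialLaplacespe}, will be essential.
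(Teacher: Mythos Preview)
Your definition has an index slip: the paper (Proposition~\ref{prop:RFV}) proves
\[
R_{F^\Del_{\alpha+\bbullet}}(\ccM\otimes\ccE^F)=V^\tau_\alpha(\cFcM)\big/(\tau-\hb)V^\tau_\alpha(\cFcM),
\]
with the \emph{same} index $\alpha$ in numerator and denominator, because the key point is $V^\tau_\alpha\cFcM\cap(\tau-\hb)\cFcM=(\tau-\hb)V^\tau_\alpha\cFcM$, obtained from strictness of $\gr_\gamma^{V^\tau}\!\cFcM$ for all $\gamma$. Your $(\tau-\hb)V^\tau_{\alpha-1}$ is too small. Apart from this, your treatment of parts \eqref{th:main21} and \eqref{th:main22} is essentially the paper's: goodness comes from regularity of $\cFcM$ along $\tau=0$ (so $V^\tau_\alpha\cFcM$ is $R_F\cD_{Z/\Afuh}$-coherent) together with the identification $R_F\cD_{Z/\Afuh}/(\tau-\hb)\simeq R_F\cD_{X\times\PP^1}$; strict filteredness of $\varphi^F$ comes from $V^\tau$-strictness of $\cFvarphi$ via \cite[Lem.\,3.3.10]{Bibi01c} and then passing to the quotient by $(\tau-\hb)$.

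For part \eqref{th:main23} your route diverges from the paper's and the ``main obstacle'' you flag is a genuine gap. Strictness of the hypercohomology of each $\gr_\alpha^{V^\tau}\!\cFcM$ does not formally propagate to strictness of the hypercohomology of $V^\tau_\alpha\cFcM/(\tau-\hb)V^\tau_\alpha\cFcM$: you would need the connecting maps in the long exact sequences to be strict, and nothing you have written supplies that. The paper avoids this entirely by a two-step factorization. First (Proposition~\ref{prop:F+strict}), for a projective $h:X\to Y$ one shows $\cH^jh_+R_{F^\Del_{\alpha+\bbullet}}(\ccM\otimes\ccE^F)=R_{F^\Del_{\alpha+\bbullet}}\cH^jh_+(\ccM)\otimes\ccE^G$; this uses \cite[Th.\,3.1.8]{Bibi01c} (push-forward preserves strict specializability when $h_+\gr_\alpha^{V^\tau}\!\cFcM$ is strict, which holds by MHM theory) together with the injectivity of $\tau-\hb$ on each $\cH^jh_+(V^\tau_\alpha\cFcM)$. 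Second (Lemma~\ref{lem:E1degen} and Theorem~\ref{th:E1degen}), one factors $a=a_{\PP^1}\circ F$, applies the push-forward step to $F$, and is reduced to the case $X=\{\mathrm{pt}\}$. There the pure case is \cite[Th.\,6.1]{Bibi08}, proved via twistor $\cD$-module theory (this is the deep input you are missing), and the mixed case follows by induction on the length of the weight filtration using part~\eqref{th:main22}. Your proposal never invokes the one-dimensional result of \cite{Bibi08}, and without it---or an independent argument resolving your propagation step---the proof of \eqref{th:main23} does not go through.
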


\subsection{Definition of Deligne's filtration}\label{subsec:defDel}
Let $(\ccM,F_\bbullet\ccM)$ be a filtered holonomic $\cD_{X\times\PP^1}$-module. We recall that $\ccM\otimes\ccE^q=\wt\ccM$ as an $\cO_{X\times\PP^1}$-module. We will implicitly use the description of $(\ccM,F_\bbullet\ccM)$ as a pair consisting of a filtered $\cD_X[t]\langle\partial_t\rangle$-module $(M,F_\bbullet M)$ and a filtered $\cD_X[t']\langle\partial_{t'}\rangle$-module $(M',F_\bbullet M')$ with the standard identification. We define, for $\alpha\in[0,1)$ and $p\in\ZZ$,
\begin{equation}\label{eq:FDel}
\begin{split}
F^\Del_{\alpha+p}(\ccM\otimes\ccE^q)_{|X\times\Afu_t}&=F_p\ccM_{|X\times\Afu_t}=F_pM,\\[3pt]
F^\Del_{\alpha+p}(\ccM\otimes\ccE^q)_{|X\times\Afu_{t'}}&=\sum_{k\geq0}\partial_{t'}^k\tpm \big[(F_{p-k}M'\cap V_\alpha M')\otimes E^{1/t'}\big],
\end{split}
\end{equation}
and the last term is included in $F_pM'(*\infty)$: recall from the general properties of the Kashiwara-Malgrange filtration (\cf\eg \cite{MSaito86,Bibi87}) that, for $\alpha\in[0,1)$, the restriction of the localization morphism $M'\to M'(*\infty)$ to $V_\alpha M'$ is injective. We can therefore regard each $F_{p-k}M'\cap V_\alpha M'$ ($k\geq0$) as being contained in $F_{p-k}M'(*\infty)$, hence its image by the operator on its left is contained in $F_pM'(*\infty)$.

The Deligne filtration satisfies properties similar to those of its special case \eqref{eq:Fdelalpha}. It is a $F_\bbullet\cD_{X\times\PP^1}$-filtration (this is clear on $X\times\Afu_t$ and this is proved as for \eqref{eq:Fdelalpha} on $X\times\Afu_{t'}$). Similarly, \eqref{eq:FdF} holds on $X\times\Afu_{t'}$. Note also that, since $F_pM'=0$ for $p\ll0$, each $F_{\alpha+p}^\Del(\ccM\otimes\ccE^q)$ is $\cO_{X\times\PP^1}$-coherent.

If we set $F_{<\mu}^\Del(\ccM\otimes\ccE^q)=\sum_{\mu'<\mu}F_{\mu'}^\Del(\ccM\otimes\ccE^q)$ and $\gr_\mu^{F^\Del}(\ccM\otimes\ccE^q)=F_\mu^\Del/F_{<\mu}^\Del$, then $\gr_\mu^{F^\Del}(\ccM\otimes\ccE^q)$ is supported on $X\times\{\infty\}$ if $\mu\not\in\ZZ$.

\begin{proposition}\label{prop:RFV}
Assume that $(\ccM,F_\bbullet\ccM)$ underlies a mixed Hodge module. Then, for each $\alpha\in[0,1)$, the Rees module $R_{F^\Del_{\alpha+\bbullet}}(\ccM\otimes\ccE^q)$ is obtained (up to forgetting the grading)\marginpars{This is added to make the argument clearer.} by the formula
\[
R_{F^\Del_{\alpha+\bbullet}}(\ccM\otimes\ccE^q)=V^\tau_\alpha(\cFcM)/(\tau-\hb)V^\tau_\alpha(\cFcM).
\]
\end{proposition}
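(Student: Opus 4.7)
The plan is to verify the asserted equality of $R_F\cO_{X\times\PP^1}[\hb]$-modules chart by chart on the two affine opens $X\times\Afu_t$ and $X\times\Afu_{t'}$ of $X\times\PP^1$, using the explicit formulas for $V^\tau_\bbullet\cFcM$ recorded at the end of \S\ref{subsec:partialLaplacespe} together with the definition \eqref{eq:FDel} of $F^\Del_{\alpha+\bbullet}(\ccM\otimes\ccE^F)$. The fact that $\tau-\hb$ acts injectively on $V^\tau_\alpha\cFcM$, already noted in \S\ref{subsec:partialLaplacespe}, ensures that the right-hand side is indeed a graded module of the expected shape, and Lemma \ref{lem:injcFcM} identifies $\cFcM/(\tau-\hb)\cFcM$ with $\cM\otimes\ccE^t$ as an $R_F\cO$-module with twisted $\hb$-connection.

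On $X\times\Afu_t$, the $V$-filtration along $\tau=0$ is trivial: $V^\tau_\alpha\cFcM_{|\Afu_t\times\Afuh}=\cFcM_{|\Afu_t\times\Afuh}$. Reducing modulo $\tau-\hb$ therefore yields, by Lemma \ref{lem:injcFcM}, the module $R_FM$ (with the exponentially twisted $\partiall_t$-action). This matches $R_{F^\Del_{\alpha+\bbullet}}(\ccM\otimes\ccE^F)_{|\Afu_t}$, since $F^\Del_{\alpha+p}(\ccM\otimes\ccE^F)_{|\Afu_t}=F_pM$ by \eqref{eq:FDel}.

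On $X\times\Afu_{t'}$, the key input is the explicit formula recalled in \S\ref{subsec:partialLaplacespe},
\[
V^\tau_\alpha(\cFcM)_{|\Afu_{t'}\times\Afuh}=\sum_{k\geq0}(1\otimes\partiall_{t'}-\tau\otimes t^{\prime-2})^k\bigl(\CC[\tau]\otimes_\CC t^{\prime-1}R_FV^{t'}_\alpha M'\bigr).
\]
Setting $\tau=\hb$, the operator $1\otimes\partiall_{t'}-\tau\otimes t^{\prime-2}=\hb\partial_{t'}-\hb t^{\prime-2}$ becomes exactly $\hb$ times the twisted $\partial_{t'}$-action of \eqref{eq:tprimetau} on the exponentially twisted module $M'\otimes E^{1/t'}$; in other words, it becomes the Rees-variable operator $\partiall_{t'}$ acting on $R_F\wt\ccM(*\infty)_{|\Afu_{t'}}$ via the exponential twist. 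Hence the quotient $V^\tau_\alpha(\cFcM)/(\tau-\hb)V^\tau_\alpha(\cFcM)$ on $\Afu_{t'}$ equals
\[
\sum_{k\geq0}\partiall_{t'}^k\cdot\bigl(t^{\prime-1}R_FV^{t'}_\alpha M'\bigr)\subset R_F\wt\ccM(*\infty)_{|\Afu_{t'}},
\]
where the action is the twisted one.

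It remains to identify the last expression with $R_{F^\Del_{\alpha+\bbullet}}(\ccM\otimes\ccE^F)_{|\Afu_{t'}}$. Unfolding the Rees construction on \eqref{eq:FDel}, an element of $F^\Del_{\alpha+p}$ of the form $\partial_{t'}^k t^{\prime-1}m$ with $m\in F_{p-k}M'\cap V^{t'}_\alpha M'$ contributes, in Rees degree $p$, the element $(\hb\partial_{t'})^k t^{\prime-1}(m\hb^{p-k})=\partiall_{t'}^k t^{\prime-1}(m\hb^{p-k})$, and $m\hb^{p-k}$ runs through the degree-$(p-k)$ piece of $R_FV^{t'}_\alpha M'$ as $p\geq k$ varies. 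Summing over $k\geq0$ and all $p$, this reproduces exactly the right-hand side displayed above. This finishes the proof on $\Afu_{t'}$.

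The main obstacle is bookkeeping: one must check that the exponential-twist commutation rule \eqref{eq:tprimetau} really turns the $V$-filtration generating operator into (a power of) the twisted $\partiall_{t'}$, and that the assignment of $\hb$-weights in the Rees module on the Deligne side matches the natural grading induced on the quotient $V^\tau_\alpha\cFcM/(\tau-\hb)V^\tau_\alpha\cFcM$. Both reduce to the identity $\tau\equiv\hb$, combined with injectivity of $\tau-\hb$ on $V^\tau_\alpha\cFcM$ so that no spurious relations appear when computing the cokernel.
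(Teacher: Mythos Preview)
Your chart-by-chart strategy and the use of the explicit formula for $V^\tau_\alpha\cFcM$ match the paper's proof, and your identification on the chart $X\times\Afu_t$ is fine. On $X\times\Afu_{t'}$, however, there is a genuine gap.

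When you ``set $\tau=\hb$'' in the explicit generators of $V^\tau_\alpha\cFcM$, what you are actually computing is the \emph{image} of $V^\tau_\alpha\cFcM$ inside $\cFcM/(\tau-\hb)\cFcM$, i.e.\ the quotient
\[
V^\tau_\alpha\cFcM\big/\bigl[V^\tau_\alpha\cFcM\cap(\tau-\hb)\cFcM\bigr].
\]
This equals the left-hand side $V^\tau_\alpha\cFcM/(\tau-\hb)V^\tau_\alpha\cFcM$ only if the saturation property
\[
V^\tau_\alpha\cFcM\cap(\tau-\hb)\cFcM=(\tau-\hb)V^\tau_\alpha\cFcM
\]
holds. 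You invoke ``injectivity of $\tau-\hb$ on $V^\tau_\alpha\cFcM$'' for this, but that is a different and strictly weaker statement: injectivity says that $(\tau-\hb)m=0$ with $m\in V^\tau_\alpha$ forces $m=0$, whereas saturation says that $(\tau-\hb)m\in V^\tau_\alpha$ with $m\in\cFcM$ forces $m\in V^\tau_\alpha$. The paper proves saturation explicitly, and the argument uses the \emph{strictness of every} $\gr_\gamma^{V^\tau}\!\cFcM$ (not merely for $\gamma=\alpha$): if $m\in V^\tau_\gamma\cFcM$ with $\gamma>\alpha$ and $(\tau-\hb)m\in V^\tau_\alpha\cFcM$, then since $\tau m\in V^\tau_{\gamma-1}\cFcM\subset V^\tau_{<\gamma}\cFcM$ one gets $\hb m\in V^\tau_{<\gamma}\cFcM$; strictness of $\gr_\gamma^{V^\tau}\!\cFcM$ then forces $m\in V^\tau_{<\gamma}\cFcM$, and one descends. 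This is precisely where the mixed Hodge module hypothesis enters, via the strictness established in \S\ref{subsec:partialLaplacespe}. Without this step, your computation only identifies $R_{F^\Del_{\alpha+\bbullet}}(\ccM\otimes\ccE^F)$ with a quotient of $V^\tau_\alpha\cFcM/(\tau-\hb)V^\tau_\alpha\cFcM$, not with that module itself.
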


(Recall that we denote by $\cM$ the Rees module $R_F\ccM$ and by $\cFcM$ its Laplace exponential twist, \cf\S\ref{subsec:partialLaplace}.)

\begin{proof}
We will use the expression of $V^\tau_\alpha(\cFcM)$ given in \S\ref{subsec:partialLaplacespe}. The equality is easy in the chart $X\times\Afu_t\times\Afuh$, and we will consider the chart $X\times\Afu_{t'}\times\Afuh$. Then it follows from the expression of $V^\tau_\alpha(\cFcM)$ that
\begin{align*}
V^\tau_\alpha\cFcM/\big[(\tau-\hb)\cFcM\cap V^\tau_\alpha\cFcM\big]
&=
\sum_{k\geq0}(\partial_{t'}-t^{\prime-2})^kt^{\prime-1}\hb^k\CC[\hb]R_FV_\alpha^{t'}\ccM\\
&=R_{F^\Del_{\alpha+\bbullet}}(\ccM\otimes\ccE^q).
\end{align*}
It remains to show that $V^\tau_\alpha\cFcM\cap(\tau-\hb)\cFcM=(\tau-\hb)V^\tau_\alpha\cFcM$. This is a consequence of the strictness of $\gr_\gamma^{V^\tau}\!\cFcM$ for any $\gamma$, as recalled in \S\ref{subsec:partialLaplacespe}. Indeed, assume that $m\in V^\tau_\gamma\cFcM$ is such that $(\tau-\hb)m\in V^\tau_\alpha\cFcM$. If $\gamma>\alpha$ and the class of $m$ in $\gr_\gamma^{V^\tau}\!\cFcM$ is not zero, then the class of $(\tau-\hb)m$ is zero in $\gr_\gamma^{V^\tau}\!\cFcM$, and this is nothing but the class of $-\hb m$. By strictness, the multiplication by $\hb$ is injective on $\gr_\gamma^{V^\tau}\!\cFcM$, which leads to a contradiction.
\end{proof}

\begin{remark}
The natural inclusion
\[
R_{F^\Del_{\alpha+\bbullet}}(\ccM\otimes\ccE^q)=V^\tau_\alpha\cFcM/\big[(\tau-\hb)\cFcM\cap V^\tau_\alpha\cFcM\big]\hto\cFcM/(\tau-\hb)\cFcM
\]
can be understood as follows. Recall that $\cFcM/(\tau-\hb)\cFcM$ is identified with $R_F\wt\ccM$ (forgetting its grading)\marginpars{Added.} with twisted action of $\partiall_t,\partiall_{t'}$ (Lemma \ref{lem:injcFcM})\marginpars{Reference corrected.}, where the filtration $F_p\wt\ccM$ is defined as $(F_p\ccM)(*\infty)$. We then remark that, if $\alpha\in[0,1)$ is fixed, we have a natural inclusion $F_{\alpha+p}^\Del\wt\ccM\subset (F_p\ccM)(*\infty)$, since both coincide on $X\times\Afu_t$. This is the natural inclusion above.
\end{remark}

\begin{proof}[Proof of \ref{th:main2}\eqref{th:main21}]
As already mentioned in \S\ref{subsec:partialLaplacespe}, $\cFcM$ is strictly specializable and regular along $\tau=0$. This implies that $V_\alpha^\tau\cFcM$ is $R_F\cD_{Z/\Afuh}$-coherent. By Proposition \ref{prop:RFV}, $R_{F^\Del_{\alpha+\bbullet}}(\ccM\otimes\ccE^q)$ is then $R_F\cD_{Z/\Afuh}/(\tau-\hb)R_F\cD_{Z/\Afuh}$-coherent. In order to conclude, it remains to identify the latter quotient with $R_F\cD_{X\times\PP^1}$, which is straightforward.
\end{proof}

\begin{proof}[Proof of \ref{th:main2}\eqref{th:main22}]
Let\marginpars{A few changes here in order to make clear that the role of the grading.} us denote by $\ccK,\ccI,\ccC$ the kernel, image and cokernel of $\varphi$, with the induced filtration~$F_\bbullet$, which underlie mixed Hodge modules, according to \cite{MSaito87} (due to the strictness of $\varphi$, both natural filtrations on $\ccI$ coincide). By the strictness of $\varphi$, $R_F\ccK,R_F\ccI,R_F\ccC$ are the kernel, image and cokernel of $R_F\varphi$. On the other hand, $\varphi$ is strictly compatible with the filtration $V_\bbullet^{t'}$. It is therefore compatible with the filtrations $F^\Del_{\alpha+\bbullet}$, and for each $\alpha\in[0,1)$ we get sequences of graded $R_F\cD_{X\times\PP^1}$-modules
\begin{equation}\label{eq:exactRF}
\begin{gathered}
0\to R_{F^\Del_{\alpha+\bbullet}}(\ccK\otimes\ccE^q)\to R_{F^\Del_{\alpha+\bbullet}}(\ccM_1\otimes\ccE^q)\to R_{F^\Del_{\alpha+\bbullet}}(\ccI\otimes\ccE^q)\to0,\\
0\to R_{F^\Del_{\alpha+\bbullet}}(\ccI\otimes\ccE^q)\to R_{F^\Del_{\alpha+\bbullet}}(\ccM_2\otimes\ccE^q)\to R_{F^\Del_{\alpha+\bbullet}}(\ccC\otimes\ccE^q)\to0,
\end{gathered}
\end{equation}
that we will prove to be exact, in order to get the desired exact sequence of graded $R_F\cD_{X\times\PP^1}$-modules
\[
0\ra R_{F^\Del_{\alpha+\bbullet}}(\ccK\otimes\ccE^q)\ra R_{F^\Del_{\alpha+\bbullet}}(\ccM_1\otimes\ccE^q)\ra R_{F^\Del_{\alpha+\bbullet}}(\ccM_2\otimes\ccE^q)\ra R_{F^\Del_{\alpha+\bbullet}}(\ccC\otimes\ccE^q)\ra0.
\]
In order to prove exactness, we can now forget the grading and express the $R_F\cD_{X\times\PP^1}$-module $R_{F_{\alpha+\bbullet}^\Del}\ccM_i$ as in Proposition \ref{prop:RFV}.

For each $\alpha\in[0,1]$, the morphism $\gr_\alpha^{V^{t'}}\!\!\varphi:\gr_\alpha^{V^{t'}}(\ccM_1,F_\bbullet\ccM_1)\to\gr_\alpha^{V^{t'}}(\ccM_2,F_\bbullet\ccM_2)$ is strict, since it underlies a morphism of mixed Hodge modules, according to \cite{MSaito87}. Using \cite[Prop.\,4.1]{Bibi05b}, we find that $\varphi$ induces a strictly specializable morphism (in the sense of \cite[Def.\,3.3.8(2)]{Bibi01c}) $\cFvarphi:\cFcM_1\to\cFcM_2$. Applying \cite[Lem.\,3.3.10]{Bibi01c}, we obtain that $\cFvarphi$ is $V$-strict.

 Since the construction~$\cF$ (pull-back $p^+$ followed by the twist $\cE^{t\tau/\hb}$) is exact, $\cFcK,\cFcI,\cFcC$ are the kernel, image and cokernel of $\cFvarphi$.

Then, by the $V$-strictness of $\cFvarphi$, we get exact sequences for each $\alpha\in[0,1)$,
\begin{gather*}
0\to V_\alpha^\tau\cFcK\to V_\alpha^\tau\cFcM_1\to V_\alpha^\tau\cFcI\to0,\\
0\to V_\alpha^\tau\cFcI\to V_\alpha^\tau\cFcM_2\to V_\alpha^\tau\cFcC\to0.
\end{gather*}
Since $\tau-\hb$ is injective on each term, we can apply Proposition \ref{prop:RFV} to get the desired exactness of \eqref{eq:exactRF}.\end{proof}

\subsection{Behaviour with respect to push-forward}\label{subsec:push-fwd}
Let $h:X\to Y$ be a projective morphism. Let us also denote by $h$ the projective morphism $h\times\id:X\times\PP^1\to Y\times\PP^1$ and by $r$ the projection $Y\times\PP^1\to\PP^1$, so that $q=r\circ h$. Let $(\ccM,F_\bbullet\ccM)$ be a filtered holonomic $\cD_{X\times\PP^1}$-module which underlies a mixed Hodge module \cite{MSaito87}. Its push-forward $h_+(\ccM,F_\bbullet\ccM)$ is then strict (see Appendix \ref{sec:degenerationstrictness}), that~is,
\begin{equation}\label{eq:imdirstrict}
\forall j,\quad\cH^jh_+R_F\ccM=R_F\cH^jh_+\ccM,
\end{equation}
which is equivalent to asking that the left-hand term has no $\CC[\hb]$-torsion. Moreover, each $(\cH^jh_+\ccM,F_\bbullet\cH^jh_+\ccM)$ underlies a mixed Hodge module. If $(\ccM,F_\bbullet\ccM)$ is pure (and polarizable), then each $(\cH^jh_+\ccM,F_\bbullet\cH^jh_+\ccM)$ is also pure (and polarizable).

On the other hand, we clearly have $h_+(\ccM\otimes\ccE^q)=(h_+\ccM)\otimes\ccE^r$. Therefore, $\cH^jh_+(\ccM\otimes\ccE^q)=\cH^jh_+(\ccM)\otimes\ccE^r$. Similarly, considering the twisted objects by $\cE^{t\tau/\hb}$ (\cf\S\ref{subsec:partialLaplace}) and according to \eqref{eq:imdirstrict}, we have, with obvious notation,
\begin{equation}\label{eq:imdirLaplacestrict}
\forall j,\quad\cH^jh_+\cFcM={}^{{}^{\mathcal{F}}\!\!}(R_F\cH^jh_+\ccM).
\end{equation}

\begin{proposition}\label{prop:F+strict}
If $(\ccM,F_\bbullet\ccM)$ underlies a mixed Hodge module then, for each \hbox{$\alpha\in[0,1)$}, we have strictness of the push-forward of the Deligne filtration:
\[
\cH^j\big(h_+R_{F^\Del_{\alpha+\bbullet}}(\ccM\otimes\ccE^q)\big)=R_{F^\Del_{\alpha+\bbullet}}\cH^jh_+(\ccM)\otimes\ccE^r,\quad\forall j.
\]
\end{proposition}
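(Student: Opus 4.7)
The plan is to reduce the statement to the strict compatibility of the $V$-filtration along $\tau=0$ with the projective pushforward $h_+$, via the presentation of Deligne's filtration furnished by Proposition \ref{prop:RFV}.

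First I would rewrite both sides of the desired equality using Proposition \ref{prop:RFV}. The short exact sequence of $R_F\cD_Z$-modules
\[
0\to V^\tau_\alpha\cFcM\xrightarrow{\tau-\hb} V^\tau_\alpha\cFcM\to R_{F^\Del_{\alpha+\bbullet}}(\ccM\otimes\ccE^F)\to 0
\]
(the first map is injective by Lemma \ref{lem:injcFcM}) is the key presentation of the left-hand side. Similarly, for each $j$, Proposition \ref{prop:RFV} applied to the filtered $\cD_{Y\times\PP^1}$-module $\cH^jh_+\ccM$ (which underlies a mixed Hodge module by Saito's direct image theorem, giving the strict pushforward \eqref{eq:imdirstrict}) presents the right-hand side as
\[
V^\tau_\alpha\,{}^{\mathcal{F}}(R_F\cH^jh_+\ccM)/(\tau-\hb)V^\tau_\alpha\,{}^{\mathcal{F}}(R_F\cH^jh_+\ccM),
\]
and by \eqref{eq:imdirLaplacestrict} this Laplace transform equals $\cH^jh_+\cFcM$.

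Second, I would invoke the strict compatibility of the $V^\tau$-filtration with~$h_+$. As recalled in \S\ref{subsec:partialLaplacespe}, each graded piece $\gr^{V^\tau}_\alpha\cFcM$ is identified with the Rees module of a filtered $\cD$-module underlying (a direct summand of) a mixed Hodge module (the case $\alpha=0$ uses the complement proved in Appendix \ref{app:B}). Applying Saito's direct image theorem to these mixed Hodge modules, together with the strictness result \eqref{eq:imdirstrict}, one obtains
\[
\cH^jh_+V^\tau_\alpha\cFcM=V^\tau_\alpha\cH^jh_+\cFcM\quad\text{and}\quad \cH^jh_+\gr^{V^\tau}_\alpha\cFcM=\gr^{V^\tau}_\alpha\cH^jh_+\cFcM.
\]
In particular the long exact sequence obtained by applying $h_+$ to the displayed short exact sequence above degenerates into short exact sequences in each cohomological degree $j$.

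Third, it remains to identify the quotient. Injectivity of $\tau-\hb$ on $\cH^jh_+V^\tau_\alpha\cFcM=V^\tau_\alpha\cH^jh_+\cFcM$ follows from Lemma~\ref{lem:injcFcM} applied to $\cH^jh_+\ccM$ via \eqref{eq:imdirLaplacestrict} (a $V$-submodule of a module on which $\tau-\hb$ is injective inherits injectivity). Thus the cohomology short exact sequence reads
\[
0\to V^\tau_\alpha\cH^jh_+\cFcM\xrightarrow{\tau-\hb}V^\tau_\alpha\cH^jh_+\cFcM\to \cH^jh_+R_{F^\Del_{\alpha+\bbullet}}(\ccM\otimes\ccE^F)\to 0,
\]
and the quotient is, by Proposition \ref{prop:RFV} applied to $\cH^jh_+\ccM$, exactly $R_{F^\Del_{\alpha+\bbullet}}(\cH^jh_+\ccM\otimes\ccE^G)$. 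The main obstacle, and the only point that is not a formal consequence of the definitions, is the strict compatibility of $V^\tau_\alpha\cFcM$ with $h_+$; this rests on the fact that the graded pieces $\gr^{V^\tau}_\alpha\cFcM$ underlie mixed Hodge modules, for which the strict direct image theorem of Saito is available, together with the identification of the $V$-filtration along $\tau=0$ with the one inherited from the $V$-filtration of $\ccM$ along~$t'=0$ as recalled in \S\ref{subsec:partialLaplacespe}.
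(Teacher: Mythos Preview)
Your argument is correct and follows essentially the same route as the paper: present $R_{F^\Del_{\alpha+\bbullet}}$ via Proposition~\ref{prop:RFV}, use that each $\gr_\alpha^{V^\tau}\cFcM$ underlies a mixed Hodge module so that its pushforward is strict, deduce the compatibility $\cH^jh_+V^\tau_\alpha\cFcM=V^\tau_\alpha\cH^jh_+\cFcM$, and combine with injectivity of $\tau-\hb$ to split the long exact sequence. Two small points of presentation: the ``In particular'' at the end of your second step is premature (degeneration needs the injectivity of $\tau-\hb$ you only establish in the third step), and the passage from strictness of $h_+\gr_\alpha^{V^\tau}\cFcM$ to the displayed identity $\cH^jh_+V^\tau_\alpha=V^\tau_\alpha\cH^jh_+$ is not purely Saito's theorem but rather \cite[Th.\,3.1.8]{Bibi01c}, which the paper invokes explicitly.
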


\begin{proof}
As we have seen in \S\ref{subsec:partialLaplacespe}, $\cFcM$ is strictly specializable along $\tau=0$ and each $\gr_\alpha^{V^\tau}\!\cFcM$ is a direct summand of the Rees module of a mixed Hodge \hbox{module} (up~to a shift of the filtration). \hbox{It follows} from \cite{MSaito87} that $h_+\gr_\alpha^{V^\tau}\!\cFcM$ is strict for each such~$\alpha$ (\cf Appendix \ref{sec:degenerationstrictness}). According to \cite[Th.\,3.1.8]{Bibi01c}, each $\cH^jh_+(\cFcM)$ is strictly specializable along $\tau=0$ and we have
\[
\cH^j(h_+V_\alpha^\tau\cFcM)=V_\alpha^\tau\cH^jh_+(\cFcM).
\]
According to \eqref{eq:imdirLaplacestrict}, we can apply the results of \S\ref{subsec:partialLaplacespe} to $\cH^j(h_+\cFcM)$. Therefore, for each $j$, $\tau-\hb$ is then injective on $\cH^jh_+(V_\alpha^\tau\cFcM)$ since it is injective on $V_\alpha^\tau\cH^jh_+(\cFcM)$. Arguing by decreasing induction on $j$, we find that for each $j$ the sequence
\[
0\ra\cH^jh_+(V_\alpha^\tau\cFcM)\To{\tau-\hb}\cH^jh_+(V_\alpha^\tau\cFcM)\to\cH^jh_+\big(V_\alpha^\tau\cFcM/(\tau-\hb)V_\alpha^\tau\cFcM\big)\ra0
\]
is exact and is identified with the sequence
\begin{multline*}
0\ra V_\alpha^\tau\cH^jh_+(\cFcM)\To{\tau-\hb}V_\alpha^\tau\cH^jh_+(\cFcM)\\
\to V_\alpha^\tau\cH^jh_+(\cFcM)\big/\big[(\tau-\hb)V_\alpha^\tau\cH^jh_+(\cFcM) \big]\ra0.
\end{multline*}
The proposition now follows from Proposition \ref{prop:RFV} that we apply to both $\cFcM$ and $\cH^jh_+\cFcM$, according to \eqref{eq:imdirLaplacestrict}.
\end{proof}

\subsection{$E_1$-degeneration}
We keep the setting as in \S\ref{subsec:push-fwd}, and we consider the twisted de~Rham complex $\DR(\ccM\otimes\ccE^q)$. For each $\alpha\in[0,1)$, the filtration $F^\Del_{\alpha+\bullet}(\ccM\otimes\ccE^q)$ is a $F_\bbullet\cD_{X\times\PP^1}$-filtration, hence the twisted de~Rham complex is filtered by
\[
F^\Del_{\alpha+\bullet}\DR(\ccM\otimes\ccE^q)=\big\{0\ra F^\Del_{\alpha+\bullet}(\ccM\otimes\ccE^q)\To{\nabla}\Omega^1_{X\times\PP^1}\otimes F^\Del_{\alpha+\bullet+1}(\ccM\otimes\ccE^q)\ra\cdots\big\}.
\]

\begin{theorem}\label{th:E1degen}
For each $\alpha\in[0,1)$, the filtered complex
\[
\bR\Gamma\big(X\times\PP^1,F^\Del_{\alpha+\bullet}\DR(\ccM\otimes\ccE^q)\big)
\]
is strict.
\end{theorem}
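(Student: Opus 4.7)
The claim is that for each $j$ the $\CC[\hb]$-module $\cH^j a_+\,R_{F^\Del_{\alpha+\bullet}}(\ccM\otimes\ccE^F)$ is $\hb$-torsion free, where $a:X\times\PP^1\to\mathrm{pt}$ is the (projective) structure map. The plan is to adapt the proof method of Proposition \ref{prop:F+strict} to $a$ itself, using the partial Laplace picture to push everything along the extended map $\wt a=a\times\id_{\Afuh}:Z=X\times\PP^1\times\Afuh\to\Afuh$, and then to conclude with an explicit $V$-filtration manipulation to pass $\hb$-strictness through the quotient by $\tau-\hb$.

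The starting point is the exact sequence provided by Proposition \ref{prop:RFV} and Lemma \ref{lem:injcFcM},
\[
0\to V^\tau_\alpha\cFcM\xrightarrow{\tau-\hb}V^\tau_\alpha\cFcM\to R_{F^\Del_{\alpha+\bullet}}(\ccM\otimes\ccE^F)\to0,
\]
which I will push forward along $\wt a$. By \S\ref{subsec:partialLaplacespe} and Appendix \ref{app:B}, each $\gr^{V^\tau}_\alpha\cFcM$ is (up to a Tate shift) a direct summand of the Rees module of a mixed Hodge module on $X\times\PP^1$, so by the strictness of projective pushforward of MHM (Appendix \ref{sec:degenerationstrictness}) each $\cH^j\wt a_+\gr^{V^\tau}_\alpha\cFcM$ is $\hb$-torsion free. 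By \cite[Th.\,3.1.8]{Bibi01c} this graded strictness forces $\cH^j\wt a_+\cFcM$ to be strictly specializable along $\tau=0$ and to satisfy $V^\tau_\alpha\cH^j\wt a_+\cFcM=\cH^j\wt a_+V^\tau_\alpha\cFcM$. Injectivity of $\tau-\hb$ on $V^\tau_\alpha\cFcM$ (Lemma \ref{lem:injcFcM}) combined with decreasing induction on $j$ makes $\tau-\hb$ injective on each $\cH^j\wt a_+V^\tau_\alpha\cFcM$, producing the short exact sequence
\[
0\to\cH^j\wt a_+V^\tau_\alpha\cFcM\xrightarrow{\tau-\hb}\cH^j\wt a_+V^\tau_\alpha\cFcM\to\cH^j a_+\,R_{F^\Del_{\alpha+\bullet}}(\ccM\otimes\ccE^F)\to0.
\]

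Writing $\cN=\cH^j\wt a_+\cFcM$, it then suffices to verify that the rightmost term has no $\hb$-torsion. Suppose $m\in V^\tau_\alpha\cN$ satisfies $\hb m=(\tau-\hb)n$ for some $n\in V^\tau_\alpha\cN$. Setting $q=m+n$, one has $\hb q=\tau n\in V^\tau_{\alpha-1}\cN$. Strictness of each $\gr^{V^\tau}_\beta\cN$ (from strict specializability of $\cN$) implies, by descending through the finitely many $V$-jumps between $\alpha$ and $\alpha-1$, that $q\in V^\tau_{\alpha-1}\cN$. Since $\alpha<1$, Definition \ref{def:strictspeR}\eqref{def:strictspeR3} says that $\tau:V^\tau_\alpha\cN\to V^\tau_{\alpha-1}\cN$ is an isomorphism, hence $n=\tau^{-1}(\hb q)=\hb\tau^{-1}(q)\in\hb V^\tau_\alpha\cN$. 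Writing $n=\hb p$ and invoking $\hb$-injectivity on $V^\tau_\alpha\cN$ gives $m=(\tau-\hb)p$, as required.

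The principal obstacle is the preservation of strict specializability under projective pushforward, i.e.\ the use of \cite[Th.\,3.1.8]{Bibi01c}, which is the engine translating graded strictness (coming from the Hodge theory of the $\gr^{V^\tau}_\alpha\cFcM$, treated in \S\ref{subsec:partialLaplacespe} and Appendix \ref{app:B}) into strictness of the full $V$-filtration on the pushforward. The final $V$-filtration manipulation, though elementary, is genuinely necessary and crucially exploits that $\alpha\in[0,1)$, so that the shift isomorphism $\tau:V^\tau_\alpha\to V^\tau_{\alpha-1}$ of Definition \ref{def:strictspeR}\eqref{def:strictspeR3} is available.
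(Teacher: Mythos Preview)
Your argument tracks the method of Proposition~\ref{prop:F+strict} correctly up through the short exact sequence
\[
0\to\cH^j\wt a_+V^\tau_\alpha\cFcM\To{\tau-\hb}\cH^j\wt a_+V^\tau_\alpha\cFcM\to\cH^j a_+\,R_{F^\Del_{\alpha+\bbullet}}(\ccM\otimes\ccE^F)\to0
\]
and the identification $\cH^j\wt a_+V^\tau_\alpha\cFcM=V^\tau_\alpha\cH^j\wt a_+\cFcM$. The gap is in your last paragraph: you assert ``$\hb$-injectivity on $V^\tau_\alpha\cH^j\wt a_+\cFcM$'' without proof. Strictness of each $\gr^{V^\tau}_\beta$ only shows that an $\hb$-torsion element lies in $\bigcap_\beta V^\tau_\beta$; concluding it vanishes requires the $V$-filtration on $\cH^j\wt a_+\cFcM$ to be separated, and this does not follow formally from Definition~\ref{def:strictspeR}, nor is it supplied by \cite[Th.\,3.1.8]{Bibi01c}. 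Since you push all the way to a point, Proposition~\ref{prop:RFV} is no longer available on the target to identify the quotient as a Rees module, so you genuinely need this extra input.

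The paper closes precisely this gap by factoring $a=a_{\PP^1}\circ F$. For the projection $F:X\times\PP^1\to\PP^1$ the target retains the $\PP^1$ factor, so \eqref{eq:imdirLaplacestrict} gives $\cH^jF_+\cFcM={}^{{}^{\mathcal{F}}\!\!}(R_F\cH^jF_+\ccM)$ with $\cH^jF_+\ccM$ again underlying a mixed Hodge module; Proposition~\ref{prop:RFV} then identifies $V^\tau_\alpha/(\tau-\hb)V^\tau_\alpha$ as a genuine Rees module, which is visibly $\hb$-torsion free---no separatedness argument is needed. The remaining step $a_{\PP^1}:\PP^1\to\mathrm{pt}$ is Lemma~\ref{lem:E1degen}, reduced by induction on the weight filtration to the pure curve case of \cite[Th.\,6.1]{Bibi08}. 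Your shortcut would bypass the appeal to \cite{Bibi08}, but at the cost of the unproved $\hb$-injectivity; contrary to your closing remark, the principal obstacle is not \cite[Th.\,3.1.8]{Bibi01c} but this final step.
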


\begin{remark}
From \cite[Prop.\,(1.3.2)]{DeligneHII} we deduce that the corresponding spectral sequence degenerates at $E_1$, hence \ref{th:main2}\eqref{th:main23}.
\end{remark}

\begin{lemma}\label{lem:E1degen}
The theorem holds if $(\ccM,F_\bbullet\ccM)$ is a mixed Hodge module on $\PP^1$, that~is, if we assume $X=\{\mathrm{pt}\}$ in the theorem.
\end{lemma}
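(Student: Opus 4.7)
The plan is to reduce the statement to the strictness of the partial Laplace push-forward of $\cFcM$ along $\tau=0$, and to exploit Proposition~\ref{prop:RFV} together with Lemma~\ref{lem:injcFcM} to identify the Rees module of the filtered de~Rham cohomology with a $(\tau-\hb)$-quotient of that push-forward. Concretely, I would let $\pi\colon\PP^1\times\Afuh\to\Afuh$ denote the projection and study the $R_F\cD_{\Afuh}$-modules $\cH^j\pi_+\cFcM$ together with their $V$-filtration along $\tau=0$.

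The first step is to recall from \S\ref{subsec:partialLaplacespe} that $\cFcM$ is strictly specializable and regular along $\tau=0$, and that each graded piece $\gr_\alpha^{V^\tau}\!\cFcM$ is (up to a shift and up to taking direct summands and iterated extensions) the Rees module of a mixed Hodge module on $\PP^1$, or of one supported at $\{\infty\}$. Saito's theorem on the strictness of push-forwards of mixed Hodge modules by projective morphisms then yields that each $\cH^j\pi_+\gr_\alpha^{V^\tau}\!\cFcM$ is strict. Applying \cite[Th.\,3.1.8]{Bibi01c}, I conclude that $\cH^j\pi_+\cFcM$ is strictly specializable along $\tau=0$ with the identification $V_\alpha^\tau\cH^j\pi_+\cFcM=\cH^j\pi_+V_\alpha^\tau\cFcM$. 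As in the proof of Proposition~\ref{prop:F+strict}, strict specializability then forces $\tau-\hb$ to act injectively on every $V_\alpha^\tau\cH^j\pi_+\cFcM$ for $\alpha\in[0,1)$.

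Next I would apply $\pi_+$ to the short exact sequence $0\to V_\alpha^\tau\cFcM\To{\tau-\hb}V_\alpha^\tau\cFcM\to V_\alpha^\tau\cFcM/(\tau-\hb)V_\alpha^\tau\cFcM\to0$ and argue by descending induction on $j$, starting from $j>\dim\PP^1=1$ where the push-forward vanishes. The injectivity of $\tau-\hb$ at each level cuts the resulting long exact sequence into short exact sequences, giving
\[
\cH^j\pi_+\bigl[V_\alpha^\tau\cFcM/(\tau-\hb)V_\alpha^\tau\cFcM\bigr]=\cH^j\pi_+V_\alpha^\tau\cFcM\,\big/\,(\tau-\hb)\cH^j\pi_+V_\alpha^\tau\cFcM.
\]
By Proposition~\ref{prop:RFV}, the left-hand side is the Rees module of $\cH^j\bR\Gamma\bigl(\PP^1,F_{\alpha+\bullet}^\Del\DR(\ccM\otimes\ccE^F)\bigr)$, which is therefore $\hb$-torsion-free provided the right-hand side is. For that last strictness I would mimic the argument of Proposition~\ref{prop:RFV}: the strictness of each $\gr_\gamma^{V^\tau}\cH^j\pi_+\cFcM$ yields $V_\alpha^\tau\cH^j\pi_+\cFcM\cap(\tau-\hb)\cH^j\pi_+\cFcM=(\tau-\hb)V_\alpha^\tau\cH^j\pi_+\cFcM$, so the quotient embeds into $\cH^j\pi_+\cFcM/(\tau-\hb)\cH^j\pi_+\cFcM$, and the latter is strict because $\cFcM/(\tau-\hb)\cFcM=R_F\wt\ccM$ by Lemma~\ref{lem:injcFcM}, whose push-forward by $\pi$ is once more strict by Saito's theorem.

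The main obstacle I anticipate is the careful bookkeeping of the $V$-filtration through the Laplace twist and the push-forward, in particular the injectivity of $\tau-\hb$ on each $\cH^j\pi_+V_\alpha^\tau\cFcM$ (which is what allows the long exact sequences to split) and the compatibility of strictness with passing to the $(\tau-\hb)$-quotient. All of this ultimately rests on Saito's strictness theorem for the push-forward of mixed Hodge modules, invoked both for the graded pieces $\gr_\alpha^{V^\tau}\cFcM$ and for $R_F\wt\ccM$.
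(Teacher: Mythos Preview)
Your plan of adapting the proof of Proposition~\ref{prop:F+strict} to the constant map $a_{\PP^1}:\PP^1\to\mathrm{pt}$ is natural, and the first part of your argument---strict specializability of $\cFcM$ along $\tau=0$, compatibility of the $V$-filtration with push-forward via \cite[Th.\,3.1.8]{Bibi01c}, injectivity of $\tau-\hb$ on each $V_\alpha^\tau\cH^j\pi_+\cFcM$, the descending induction splitting the long exact sequence, and the identification through Proposition~\ref{prop:RFV}---is essentially correct and is indeed how the paper establishes Proposition~\ref{prop:F+strict}. The gap is in your final step.

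At the end you need the strictness of $\cH^j\pi_+\cFcM/(\tau-\hb)\cH^j\pi_+\cFcM$, and you justify it by invoking Saito's theorem on the push-forward of $\cFcM/(\tau-\hb)\cFcM$. But by Lemma~\ref{lem:injcFcM} this quotient is $\cM\otimes\ccE^t$, that is, $\wt\cM$ equipped with the \emph{twisted} $\hb$-connection $\hb\nabla+\hb\,\rd t$. The underlying $\cD_{\PP^1}$-module $\ccM\otimes\ccE^t$ has an irregular singularity at $\infty$ and is not a mixed Hodge module; moreover the filtration $F_p\wt\ccM=(F_p\ccM)(*\infty)$ is not even $\cO_{\PP^1}$-coherent, so $R_F\wt\ccM$ is not a coherent $R_F\cD_{\PP^1}$-module. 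Saito's strictness theorem simply does not apply here. What you would be asserting is precisely a strictness statement for an irregular twisted de~Rham complex---which is the content of Theorem~\ref{th:E1degen} itself. The argument is circular at this point.

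The paper's proof is organized differently and avoids this trap. For \emph{pure} polarizable Hodge modules on $\PP^1$ the lemma is not re-proved but imported from \cite[Th.\,6.1 and Prop.\,6.10]{Bibi08}, where the stronger statements \eqref{eq:Hia}--\eqref{eq:H0a} are obtained via the Fourier--Laplace transform and twistor $\cD$-module techniques; this is the genuinely non-formal input. The passage to the \emph{mixed} case is then carried out by induction on the length of the weight filtration: one takes a short exact sequence of mixed Hodge modules, applies Theorem~\ref{th:main2}\eqref{th:main22} to obtain an exact sequence of the Deligne--Rees modules, and concludes \eqref{eq:Hia}--\eqref{eq:H0a} for the middle term from the extremes by the long exact sequence for $H^*a_{\PP^1,+}$ together with a two-out-of-three argument for $\CC[\hb]$-torsion-freeness. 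In short, the substance of the lemma lies in the reduction to the pure case; the pure case itself requires the external input from \cite{Bibi08}, which your argument would need as well.
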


\begin{proof}
According to \cite[Th.\,6.1]{Bibi08}, the theorem holds if $X=\{\mathrm{pt}\}$ and $(\ccM,F_\bbullet\ccM)$ is a polarizable Hodge module. Let $a_{\PP^1}:\PP^1\to\mathrm{pt}$ denote the constant map. More precisely, it follows from the proof of \cite[Prop.\,6.10]{Bibi08} that
\begin{equation}\label{eq:Hia}
H^ia_{\PP^1,+}\big(R_{F^\Del_{\alpha+\bbullet}}(\ccM\otimes\ccE^q)\big)=0\quad \text{for }i\neq0,
\end{equation}
and
\begin{equation}\label{eq:H0a}
H^0a_{\PP^1,+}\big(R_{F^\Del_{\alpha+\bbullet}}(\ccM\otimes\ccE^q)\big)=R_{F^\Del_{\alpha+\bbullet}}H^0a_{\PP^1,+}(\ccM\otimes\ccE^q).
\end{equation}
Let us prove by induction on the length of the weight filtration that \eqref{eq:Hia} and \eqref{eq:H0a} hold for a mixed Hodge module. If the length is $>1$, we find a short exact sequence of mixed Hodge modules whose underlying filtered exact sequence is
\[
0\to(\ccM_1,F_\bbullet\ccM_1)\to(\ccM,F_\bbullet\ccM)\to(\ccM_2,F_\bbullet\ccM_2)\to0,
\]
and \eqref{eq:Hia} and \eqref{eq:H0a} hold for the extreme terms. Theorem \ref{th:main2}\eqref{th:main22} gives an exact sequence
\[
0\to R_{F^\Del_{\alpha+\bbullet}}(\ccM_1\otimes\ccE^q)\to R_{F^\Del_{\alpha+\bbullet}}(\ccM\otimes\ccE^q)\to R_{F^\Del_{\alpha+\bbullet}}(\ccM_2\otimes\ccE^q)\to 0.
\]
Then the long exact sequence for $H^*a_{\PP^1,+}$ shows that \eqref{eq:Hia} holds for the middle term and we have an exact sequence
\begin{multline*}
0\to H^0a_{\PP^1,+}\big(R_{F^\Del_{\alpha+\bbullet}}(\ccM_1\otimes\ccE^q)\big)\to H^0a_{\PP^1,+}\big(R_{F^\Del_{\alpha+\bbullet}}(\ccM\otimes\ccE^q)\big)\\
\to H^0a_{\PP^1,+}\big(R_{F^\Del_{\alpha+\bbullet}}(\ccM_2\otimes\ccE^q)\big)\to 0.
\end{multline*}
According to \eqref{eq:H0a}, the extreme terms have no $\CC[\hb]$-torsion, that is, ``$R_{F^\Del_{\alpha+\bbullet}}$ commutes with $ H^0a_{\PP^1,+}$''. Then the same property holds for the middle term.
\end{proof}

\begin{proof}[\proofname\ of Theorem \ref{th:E1degen}]
We regard $\bR\Gamma\big(X\times\PP^1, R_{F^\Del_{\alpha+\bbullet}}\DR(\ccM\otimes\ccE^q)\big)$ as the complex $a_+R_{F^\Del_{\alpha+\bbullet}}(\ccM\otimes\ccE^q)$ up to a shift, where $a:X\times\PP^1\to\mathrm{pt}$ is the constant map. The theorem is a consequence of the strictness (\ie no $\CC[\hb]$-torsion) of this complex. Let us set $a=a_{\PP^1}\circ F$. Then, according to Lemma \ref{lem:E1degen}, \eqref{eq:Hia} and \eqref{eq:H0a} hold for $R_{F^\Del_{\alpha+\bbullet}}\cH^jF_+(\ccM\otimes\ccE^q)$ for each $j$. Using now the strictness given by Proposition \ref{prop:F+strict}, we have for each $j$
\[
H^j\big(a_+R_{F^\Del_{\alpha+\bbullet}}(\ccM\otimes\ccE^q)\big)=H^0a_{\PP^1,+}\big(R_{F^\Del_{\alpha+\bbullet}}\cH^jF_+(\ccM\otimes\ccE^q)\big),
\]
and therefore
\begin{align*}
H^j\big(a_+R_{F^\Del_{\alpha+\bbullet}}(\ccM\otimes\ccE^q)\big)&=R_{F^\Del_{\alpha+\bbullet}}H^0a_{\PP^1,+}\big(\cH^jF_+(\ccM\otimes\ccE^q)\big)\\
&=R_{F^\Del_{\alpha+\bbullet}}H^j\big(a_+(\ccM\otimes\ccE^q)\big),
\end{align*}
which is the desired result.
\end{proof}

\subsection{Proof of Theorem \ref{th:main}}\label{subsec:proofE1degen}

It follows from \cite{MSaito87} that the filtered $\cD_X$\nobreakdash-module $(\cO_X(*D),F_\bbullet\cO_X(*D))$ underlies a mixed Hodge module. We first remark that Corollary \ref{cor:comparison}, which is proved for $(\cO_X(*D),F_\bbullet\cO_X(*D))$ in the chart $X\times\Afu_{t'}$, also holds in the chart $X\times\Afu_t$ in a standard way. Therefore, in the statement of Theorem \ref{th:main}, we can replace $F^{\Yu,\lambda}(\nabla)$ with $F_{-\lambda}^\Del\DR(\ccE^f(*H))$, due to the first isomorphism in Corollary \ref{cor:comparison}.

Using now the second quasi-isomorphism in Corollary \ref{cor:comparison}, we are reduced to proving the injectivity of
\[
\bH^q\big(X\times\PP^1,F_{-\lambda-1}^\Del\DR(i_{f,+}\cO_X(*D)\otimes \ccE^q)\big)\to\bH^q\big(X\times\PP^1,\DR(i_{f,+}\cO_X(*D)\otimes \ccE^q)\big)
\]
for each $q$, \ie the strictness of $\bR\Gamma\big(X\times\PP^1,F_\bbullet^\Del\DR(i_{f,+}\cO_X(*D)\otimes \ccE^q)\big)$. This follows from Theorem \ref{th:E1degen} applied to $i_{f,+}\cO_X(*D)$.\qed

\appendix

\numberwithin{equation}{section}

\section{$E_1$-degeneration and strictness}\label{sec:degenerationstrictness}
Let $X$ be a complex manifold and let $(\ccM,F_\bbullet\ccM)$ be a coherent $\cD_X$-module equipped with a good filtration. It defines a coherent graded $R_F\cD_X$-module $R_F\ccM$ and this correspondence induces an equivalence of categories consisting of the corresponding objects.

In \cite[\S2]{MSaito86} (\cf also \cite{Laumon83}) is constructed the bounded derived category $D^\rb F(\cD_X)$ of filtered complexes of $\cD_X$-modules together with an equivalence $R_F$ with the bounded derived category $D^\rb(\gr\text{-}R_F\cD_X)$ of graded $R_F\cD_X$-modules. The subcategory $D^\rb_\coh F(\cD_X)$ is by definition $R_F^{-1}$ of the subcategory $D^\rb_\coh(\gr\text{-}R_F\cD_X)$ (graded $R_F\cD_X$-coherent cohomology). We have a commutative diagram of functors:
\[
\xymatrix{
D^\rb_\coh(\gr^F\cD_X)&\ar[l]_-{\gr} D^\rb_\coh F(\cD_X)\ar[r]^-{\text{forget}}\ar[d]^{\wr}&D^\rb_\coh(\cD_X)\\
&\ar[ul]^{\bL i_{\hb=0}^*}D^\rb_\coh(\gr\text{-}R_F\cD_X)\ar[ur]_{\bL i_{\hb=1}^*}&
}
\]
A bounded complex $(K^\cbbullet,F_\bbullet)$ of filtered $\cD_X$-modules is said to be \emph{strict} if for each $j$ and each $p$, the morphism $\cH^j(F_pK^\cbbullet)\to\cH^j(F_{p+1}K^\cbbullet)$ is injective (this corresponds to the definition given in \cite[\S1.2.1, p.\,865, line $-5$]{MSaito86} with $\#I=1$). Equivalently, $\cH^j(R_FK^\cbbullet)$ has no $\hb$-torsion for all $j$ and, since this is a graded module, this is equivalent to $\cH^j(R_FK^\cbbullet)$ having no $\CC[\hb]$-torsion (\ie being $\CC[\hb]$-flat) for all $j$.

For an object $(K^\cbbullet,F_\bbullet)$ of $D^\rb_\coh F(\cD_X)$, strictness implies that $\cH^j(R_FK^\cbbullet)$ is graded $R_F\cD_X$-coherent and without $\CC[\hb]$-torsion, hence takes the form $R_F\cH^j(K^\cbbullet)$ for some good filtration on $\cH^j(K^\cbbullet)$. This filtration is nothing but the filtration
\[
F_p\cH^j(K^\cbbullet)=\image\Big[\cH^j(F_pK^\cbbullet)\to\cH^j(K^\cbbullet)\Big].
\]

Let $h:X\to Y$ be a proper morphism. The direct image functor $h_+:D^\rb_\coh F(\cD_X)\to D^\rb_\coh F(\cD_Y)$ (we use right $\cD$-modules here) is constructed in \cite{MSaito86} by using the equivalence of categories with induced filtered $\cD$\nobreakdash-modules. Through the $R_F$ functor, it corresponds to the direct image $h_+:D^\rb_\coh(\gr\text{-}R_F\cD_X)\to D^\rb_\coh(\gr\text{-}R_F\cD_Y)$ constructed by using the equivalence of categories with induced graded $R_F\cD_X$-modules. On the other hand, a functor $h_+:D^\rb_\coh(\gr\text{-}R_F\cD_X)\to D^\rb_\coh(\gr\text{-}R_F\cD_Y)$ can be defined directly as $\bR h_*\big(\cbbullet\otimes_{R_F\cD_X}R_F(\cD_{X\to Y})\big)$ and both coincide since $h$ is proper (\cf\cite{MSaito89b} for the analogue for $\cD$-modules).

As a consequence, given a coherent $\cD_X$-module with good filtration $(\ccM,F_\bbullet\ccM)$, the push-forward $h_+(\ccM,F_\bbullet\ccM)$ is strict if and only if $\cH^jh_+(R_F\ccM)$ is strict (\ie has no $\CC[\hb]$-torsion) for any $j$, and in such a case $\cH^jh_+(R_F\ccM)=R_F\cH^jh_+(\ccM)$, where $F_\bbullet\cH^jh_+(\ccM)$ is the filtration defined as $\image(\cH^jF_\bbullet h_+\ccM\to\cH^jh_+\ccM)$, and these morphisms are injective because, $h$ being proper, $\cH^jh_+\ccM=\varinjlim_p\cH^jF_ph_+\ccM$. It also follows that the corresponding spectral sequence degenerates at~$E_1$.

\section{A complement to \cite{Bibi05b}}\label{app:B}
We\marginpars{Appendix \ref{app:B} has been changed} keep the notation of \S\ref{subsec:partialLaplacespe}. Since the construction of $\cFcM$ only depends on $j_*j^*(\ccM,F_\bbullet\ccM)$, we may assume that $(\ccM,F_\bbullet\ccM)=j_*j^*(\ccM,F_\bbullet\ccM)$. Then $\cM$ is the $R_F\cD_{X\times\PP^1}$-submodule of $\wt\cM$ generated by $V_0\wt\cM$. Using the notation of \cite{Bibi05b} up to the shift already indicated in \S\ref{subsec:strictspe}, we have an exact sequence
\begin{equation}\label{eq:tau}
0\to\gr_1^{V^\tau}\!\cFcM\To{\tau}\gr_0^{V^\tau}\!\cFcM\to \cM\to0,
\end{equation}
which is essentially the horizontal line of \cite[(4.7)]{Bibi05b}, together with the identification $U_0/\tau U_1\simeq\cM$ (notation of \loccit), identification which is obtained through the equality $U_0\cap\tau\cFcM=\tau U_1$ proved exactly in the same way as \cite[(4.12)]{Bibi05b}. In the exact sequence above, we have an identification of the extreme terms as the Rees modules of filtered $\cD_{X\times\PP^1}$-modules underlying mixed Hodge modules (up to a shift of the filtration), but we have to make precise why the intermediate term is of the same kind, as stated in \S\ref{subsec:partialLaplacespe}.

Let us introduce a new factor $\PP^1$, with coordinates $t_1,t'_1$ and let us consider the diagonal inclusion $i:X\times\PP^1\hto X\times\PP^1\times\PP^1$ sending $(x,t)$ to $(x,t,t)$, and similarly in the chart $t'$. Denote by $j_1:X\times\PP^1\times\Afu_{t_1}\hto X\times\PP^1\times\PP^1$ the open inclusion, and by $p_1:X\times\PP^1\times\PP^1\to X\times\PP^1$ the projection on the first two factors. Set $(\ccM_1,F_\bbullet\ccM_1)=i_*(\ccM,F_\bbullet\ccM)$ (push-forward in the sense of mixed Hodge modules). Then there exists a mixed Hodge module $(\ccM'_1,F_\bbullet\ccM'_1)$ such that 
\begin{itemize}
\item
$(\ccM'_1,F_\bbullet\ccM'_1)=j_{1,*}j_1^*(\ccM'_1,F_\bbullet\ccM'_1)$,
\item
$p_{1,*}(\ccM'_1,F_\bbullet\ccM'_1)=0$,
\item
there is an exact sequence of mixed Hodge modules
\[
0\to j_{1,*}j_1^*p_1^*(\ccM,F_\bbullet\ccM)\to(\ccM'_1,F_\bbullet\ccM'_1)\to(\ccM_1,F_\bbullet\ccM_1)\to0.
\]
\end{itemize}

Such an object is classically obtained by the convolution operation of $(\ccM_1,F_\bbullet\ccM_1)$ with the mixed Hodge module on $\PP^1$ obtained by extending the constant mixed Hodge module on $\PP^1\setminus\{0,\infty\}$ by the functors $j_{\infty,*}$ and $j_{0,!}$, with $j_0:\PP^1\setminus\{0,\infty\}\hto \PP^1\setminus\{\infty\}$ and $j_\infty:\PP^1\setminus\{0,\infty\}\hto \PP^1\setminus\{0\}$.

From the exact sequence $0\to \cH^{-1}i_{0,+}i_0^+\cO_{\Afu}\to j_{0,!}j_0^*\cO_{\Afu}\to\cO_{\Afu}\to0$ of $\cD_{\Afu}$-modules and the corresponding sequence of mixed Hodge modules (see \cite[(3.5.8.1)]{MSaito86}) we deduce the exact sequence above, and a corresponding exact sequence of associated Rees modules
\[
0\to\cM''_1\to\cM'_1\to\cM_1\to0.
\]
We consider the partial Laplace transformation with respect to the variables $t_1,\tau$, and we get an exact sequence
\[
0\to\cFcM''_1\to\cFcM'_1\to\cFcM_1\to0
\]
of strictly specializable modules along $\tau=0$, according to \cite[Prop.\,4.1]{Bibi05b} and, by uniqueness of the $V$-filtration, the following sequence is also exact for any $\alpha$:
\begin{equation}\label{eq:gralphaM1}
0\to\gr_\alpha^{V^\tau}\!\cFcM''_1\to\gr_\alpha^{V^\tau}\!\cFcM'_1\to\gr_\alpha^{V^\tau}\!\cFcM_1\to0.
\end{equation}

That $\gr_0^{V^\tau}\!\cFcM$ is the Rees module of a filtered $\cD$-module underlying a mixed Hodge module (up to a shift of the filtration) now follows from the following lemma.

\begin{lemma}\mbox{}
\begin{enumerate}
\item
For each $\alpha\in[0,1]$, we have an identification $\gr_\alpha^{V^\tau}\!\cFcM=\cH^0p_{1,+}\gr_\alpha^{V^\tau}\!\cFcM_1$.
\item
For $\alpha\in[0,1)$ and any $j$ we have $\cH^jp_{1,+}\gr_\alpha^{V^\tau}\!\cFcM''_1=0$.
\end{enumerate}
\end{lemma}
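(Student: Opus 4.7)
The plan is to reduce both parts to three ingredients: (i) strict specializability of $\cFcM_1$ and $\cFcM''_1$ along $\tau=0$, established exactly as in \S\ref{subsec:partialLaplacespe} via \cite[Prop.\,4.1]{Bibi05b} since $\ccM_1$ and $\ccM''_1$ underlie mixed Hodge modules; (ii) the commutation of $\cH^jp_{1,+}$ with the $V^\tau$-filtration supplied by \cite[Th.\,3.1.8]{Bibi01c}, which in particular yields $\cH^jp_{1,+}\gr_\alpha^{V^\tau} = \gr_\alpha^{V^\tau}\cH^jp_{1,+}$ on these objects; and (iii) an explicit identification of the underlying pushforwards.

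For part (1), I would start from $\ccM_1=i_*\ccM$ and use $i^*t_1=t$ (as $i$ is the diagonal, extended trivially along $\Afuh$) together with the projection formula for the closed immersion $i$ to conclude $\cFcM_1 = i_*\cFcM$. Since $i$ does not touch the $\tau$-coordinate, the $V^\tau$-filtration commutes with $i_*$, giving $\gr_\alpha^{V^\tau}\cFcM_1 = i_*\gr_\alpha^{V^\tau}\cFcM$. Combined with $p_1\circ i=\id$ and Kashiwara's equivalence ($\cH^0p_{1,+}i_*=\id$), this yields the claimed identification with $\gr_\alpha^{V^\tau}\cFcM$.

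For part (2), the key observation is the external tensor product decomposition $\cFcM''_1 = \cM\boxtimes\mathcal{G}$ over $X\times\PP^1_t$, where $\mathcal{G}:=\cO_{\PP^1_{t_1}}(\ast\{t_1=\infty\})[\hb]\otimes\cE^{t_1\tau/\hb}$. The K\"unneth formula gives $\cH^jp_{1,+}\cFcM''_1 = \cM\boxtimes\cH^jp_{2,+}\mathcal{G}$ with $p_2:\PP^1_{t_1}\times\Afuh\to\Afuh$. The factor $\cH^jp_{2,+}\mathcal{G}$ is the partial Fourier-Laplace transform of $j_*\cO_{\Afu_{t_1}}$: for $\tau\neq 0$, the relative de~Rham complex $(\CC[t_1,\hb,\tau],\,d + (\tau/\hb)\,dt_1)$ is acyclic (one inverts $\partial_{t_1}+\tau/\hb$ termwise), so $\cH^jp_{2,+}\mathcal{G}$ is supported on $\{\tau=0\}$ and takes the form $\iota_{0,+}N$ for some module $N$. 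The standard computation $\tau\partial_\tau\cdot(n\partial_\tau^k) = -(k+1)(n\partial_\tau^k)$ then gives $V^\tau_\alpha(\iota_{0,+}N)=0$ for $\alpha<1$, hence $\gr_\alpha^{V^\tau}\cH^jp_{2,+}\mathcal{G}=0$ for $\alpha\in[0,1)$. Tensoring with $\cM$ and invoking the commutation in (ii) yields the vanishing claimed in (2).

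The main technical hurdle will be justifying the K\"unneth/projection formula in the partially Laplace-twisted $R_F\cD$-module setting and verifying that strict specializability is preserved by $\cH^jp_{1,+}$ after the exponential twist; both ingredients are already present in \cite{Bibi01c, Bibi05b}, so the remaining argument reduces to careful bookkeeping with those references.
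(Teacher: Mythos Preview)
Your proposal is correct and follows essentially the same approach as the paper's sketch: for (1) you identify $\cFcM_1=i_+\cFcM$ and use $p_1\circ i=\id$, exactly as in the paper; for (2) you use the external product decomposition $\cFcM''_1=\cM\boxtimes\cG$ and the fact that the Fourier transform of $\cO_{\Afu_{t_1}}$ is supported on $\{\tau=0\}$, which is precisely the paper's argument. The only minor difference is that you push forward first and then take $\gr_\alpha^{V^\tau}$ (invoking \cite[Th.\,3.1.8]{Bibi01c} for the commutation), whereas the paper's phrasing suggests taking $\gr_\alpha^{V^\tau}$ at the external-product level first; both orderings are valid and lead to the same computation.
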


Indeed, from the lemma and the exact sequence \eqref{eq:gralphaM1} we obtain
\[
\cH^0p_{1,+}\gr_0^{V^\tau}\!\cFcM'_1\isom\gr_0^{V^\tau}\!\cFcM.
\]
According to \eqref{eq:tau} for $\cFcM'_1$ and the vanishing of $p_{1,+}\cM'_1$, we obtain that $\tau$ induces an isomorphism
\[
\cH^0p_{1,+}\gr_1^{V^\tau}\!\cFcM'_1\isom\cH^0p_{1,+}\gr_0^{V^\tau}\!\cFcM'_1,
\]
hence we have an isomorphism
\[
\cH^0p_{1,+}\gr_1^{V^\tau}\!\cFcM'_1\isom\gr_0^{V^\tau}\!\cFcM.
\]
The argument of \cite[Prop.\,4.1]{Bibi05b} now applies to $\gr_1^{V^\tau}\!\cFcM'_1$, which is shown to underlie a mixed Hodge module (up to a shift of the filtration), and therefore $\cH^0p_{1,+}\gr_1^{V^\tau}\!\cFcM'_1$ also, by \cite{MSaito87}, hence $\gr_0^{V^\tau}\!\cFcM$ too, as wanted.\qed

\begin{proof}[Sketch of proof of the lemma]
For the first point one checks that, similarly to \eqref{eq:iotaE}, we have $\cFcM_1=i_+\cFcM$ (with an obvious extension of the meaning of $i$) and also $\gr_\alpha^{V^\tau}\!\cFcM_1=i_+\gr_\alpha^{V^\tau}\!\cFcM$ for each $\alpha$, so the result follows by using $p_1\circ i=\id$.

For the second point, we can regard $\cM''_1$ as the external product of $\cM$ with the Rees module $j_{1,*}R_F\cO_{\Afu_{t_1}}$ so that the operation $\cF$ only concerns the latter, and the assertion relies on the property that the Fourier transform of $\cO_{\Afu_{t_1}}$ as a $\cD_{\Afu_{t_1}}$-module is supported on $\tau=0$.
\end{proof}

\section{Proof of Proposition \ref{prop:Kontsevich2b}}\label{app:proof152}
Let $\varpi_X:\wt X\to X$ (\resp $\varpi:\wt\PP^1\to\PP^1$) be the real oriented blowing up of the irreducible components of $D$ (\resp of $\infty$). It induces an isomorphism $\wt X\setminus\nobreak\varpi_X^{-1}(D)\isom X \setminus D=U$. Recall that one can construct $\wt X$ by gluing local charts as follows. Let~$X_\alpha$ be charts of $X$ in which $D$ is equal to a union of hyperplane coordinates.

In the local setting of \S\ref{subsec:setup}, we set $x_i=\rho_i\exp(\ri\theta_i)$ ($i=1,\dots,\ell$), $y_j=\eta_j\exp(\ri\tau_j)$ ($j=1,\dots,m$). Then
\begin{equation}\label{eq:realblowup}
\wt X_\alpha=(S^1)^\ell\times(\RR_{\geq0})^\ell\times\wt\Delta^m\times\Delta^p,\quad\text{where }\wt\Delta^m=(S^1)^m\times(\RR_{\geq0})^m.
\end{equation}
Any holomorphic gluing between $X_\alpha$ and $X_\beta$ which is compatible with $D$ induces a holomorphic gluing between $X_\alpha\setminus D$ and $X_\beta\setminus D$ which extends in a unique way as a real analytic gluing between $\wt X_\alpha$ and $\wt X_\beta$. It satisfies therefore the cocycle condition, from which we obtain the real oriented blow-up map $\varpi_X$.

In a similar way one checks that the morphism $f:X\to\PP^1$ induces a map $\wt f:\wt X\to\wt\PP^1$. Set $S^1_\infty=\varpi^{-1}(\infty)$ and $\wt X_\infty=\wt f{}^{-1}(S^1_\infty)$. In the neighbourhood of $P_\red$ we will replace $X$ with the inverse image $X'$ of a disc $\Delta\subset\PP^1$ centered at $\infty$, with coordinate $t'$. We then denote by $g:X'\to\Delta$ the map induced by $1/f$, so that $P=(g)$. This map $g$ can be lifted as a map $\wt g:\wt X'\to\wt\Delta$, where $\wt\Delta$ has coordinates $(\exp\ri\arg t',\rho')$. In the local setting of \S\ref{subsec:setup}, we have
\[
\wt X_\infty=(S^1)^\ell\times\{\prod_{i=1}^\ell\rho_i=0\}\times\wt\Delta^m\times\Delta^p,
\]
and if $g=x^{\bme}$, then $|\wt g|=\rho^{\bme}$ and $\arg\wt g=\sum_{i=1}^\ell e_i\theta_i$. It follows that \hbox{$\wt g_{|\wt X_\infty}:\wt X_\infty\!\to\! S^1_\infty$} is a topological fibration (since the natural stratification of $\wt X_\infty$ is obviously Whitney and $\wt g_{|\wt X_\infty}$ is smooth on each stratum, and proper).

Denote by $Z\subset\wt X_\infty$ the closed subset whose complement consists of points in the neighbourhood of which $e^{1/g}$ has moderate growth (\ie in the neighbourhood of which $\text{Re}(g)<0$) and let $Z^0\subset Z$ be the closed subset $\ov{\wt g^{-1}(\RR_{\geq0})}\cap\wt X_\infty$. We have $Z=\wt X_\infty\cap\{\arg \wt g\in[-\pi/2,\pi/2]\}$ and $Z^0=\wt X_\infty\cap\{\arg \wt g=0\}$. Since $\wt g_{|\wt X_\infty}$ is a topological fibration, $Z^0$ is a deformation retract of $Z$. We consider the inclusions
\[
\xymatrix{
U\ar@{=}[d]\ar@<-2pt>@{^{ (}->}[r]^-{\alpha}&\wt X\setminus Z\ar@{^{ (}->}[d]\ar@<-2pt>@{^{ (}->}[r]^-{\beta}&\wt X\ar@{=}[d]&\ar@<2pt>@{_{ (}->}[l]Z\\
U\ar@<-2pt>@{^{ (}->}[r]^-{~\alpha^\pi}&\wt X\setminus Z^0\ar@<-2pt>@{^{ (}->}[r]^-{~\beta^\pi}&\wt X&\ar@<2pt>@{_{ (}->}[l]Z^0\ar@{_{ (}->}[u]
}
\]
and the exact sequences
\begin{gather*}
0\to \cF=\beta_!\alpha_*\CC_U\to \CC_{\wt X}=(\beta\circ\alpha)_*\CC_U\to \CC_Z\to 0,\\
0\to \cF'=\beta^\pi_!\alpha^\pi_*\CC_U\to \CC_{\wt X}=(\beta^\pi\circ\alpha^\pi)_*\CC_U\to \CC_{Z^0}\to 0.
\end{gather*}
In these exact sequences, it can be seen that the $*$ push-forwards are equal to the corresponding derived push-forward, that is, for example,  $R^k\alpha_*\CC_U=0$ for $k>0$.

\begin{lemma}
There is a natural quasi-isomorphism \hbox{$(\Omega^\cbbullet_X(*D),\nabla\!:=\!\rd\!+\!\rd f)\simeq\bR\varpi_*\cF$}.
\end{lemma}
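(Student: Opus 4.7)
The plan is to factor the claimed quasi-isomorphism through a $C^\infty$ de~Rham resolution of $\cF$ on $\wt X$ that pushes forward to the algebraic meromorphic de~Rham complex, in the spirit of the Malgrange-Sibuya-Hien asymptotic analysis on real blow-ups.

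First, I would introduce on $\wt X$ the sheaf $\mathcal{A}$ of $C^\infty$ complex functions $h$ such that $h\cdot e^f$ extends to a $C^\infty$ function on $\wt X$ with moderate growth along $\partial\wt X=\varpi_X^{-1}(D)$ and rapid decay (vanishing to infinite order) along $Z$; set
\[
\mathcal{E}^\cbbullet_{\mathcal{A}}:=\varpi_X^{-1}\Omega^\cbbullet_X(*D)\otimes_{\varpi_X^{-1}\cO_X}\mathcal{A}\otimes_{C^\infty_{\wt X}}\mathcal{E}^\cbbullet_{\wt X},
\]
where $\mathcal{E}^\cbbullet_{\wt X}$ denotes the sheaf of complex $C^\infty$-forms. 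The rapid-decay condition on $Z$ absorbs the polynomial blow-up of $\rd f$ along $P$, so that $(\mathcal{E}^\cbbullet_{\mathcal{A}},\rd+\rd f)$ is a well-defined complex of fine sheaves on $\wt X$.

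Second, I would prove the key local claim: $(\mathcal{E}^\cbbullet_{\mathcal{A}},\rd+\rd f)$ is a fine resolution of $\cF$. Multiplication by $e^f$ intertwines $\rd+\rd f$ with $\rd$ and identifies the complex with the $C^\infty$-de~Rham complex of the sheaf $\mathcal{A}\cdot e^f$ of moderate-growth functions with rapid decay on $Z$. At a point of $U$ or of $\partial\wt X\setminus Z$, the $(\rd+\rd f)$-flat germs $\CC\cdot e^{-f}$ satisfy the asymptotic conditions (since $e^{-f}\cdot e^f=1$ has moderate growth and $Z$ is disjoint from a neighborhood), giving a stalk $\CC$ matching $\alpha_*\CC_U|_{\wt X\setminus Z}$. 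At a point of $Z$, the rapid-decay condition forces the constant to vanish, so the stalk is $0$, matching the effect of $\beta_!$. Vanishing of the higher cohomology follows from the standard asymptotic Poincar� lemma for moderate-growth/rapid-decay $C^\infty$ forms.

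Third, I apply $\bR\varpi_{X,*}$. Since $\mathcal{E}^\cbbullet_{\mathcal{A}}$ consists of fine sheaves, $\bR\varpi_{X,*}$ reduces to $\varpi_{X,*}$. Using the classical Malgrange theorem (moderate $C^\infty$ functions on the real blow-up descend to meromorphic functions on $X$, the rapid-decay condition on the real-codimension-one subset $Z\subset\partial\wt X$ imposing no further obstruction), together with a Dolbeault-type resolution argument, one identifies $\varpi_{X,*}(\mathcal{E}^\cbbullet_{\mathcal{A}},\rd+\rd f)$ with $(\Omega^\cbbullet_X(*D),\rd+\rd f)$. Combined with Step~2, this yields the lemma.

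The main obstacle is the Poincar� lemma of Step~2 near the Stokes hypersurface $Z^0=\partial Z$, where $\Re(f)=0$ exactly and $e^{-f}$ transitions between rapid decay on $Z$ and exponential growth on $\wt X\setminus Z$. Producing primitives of $(\rd+\rd f)$-closed forms with the correct asymptotic behavior across $Z^0$ requires integration along gradient-like flows of $\Re(f)$ with explicit control of growth constants, a standard but delicate piece of asymptotic analysis on real blow-ups.
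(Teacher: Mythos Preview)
Your approach is an attempt at the first of the two methods the paper mentions (Majima's multivariable asymptotic analysis, as used by Hien), but Step~3 contains a genuine gap that is not just a matter of missing details.

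The sheaf $\mathcal{A}$ you introduce is tailored to make Step~2 work: since $\nabla$-flat sections are $c\,e^{-f}$, the growth conditions you impose on $h\,e^f$ translate into conditions on the constant $c$, and you correctly get stalk $\CC$ off $Z$ and $0$ on $Z$, matching $\cF$. But this same design choice destroys Step~3. The Malgrange-type statement you invoke (``moderate $C^\infty$ functions on the real blow-up descend to meromorphic functions'') concerns functions with moderate growth, whereas a section $h$ of your $\mathcal{A}$ need not have moderate growth at all---only $h\,e^f$ does. Worse, \emph{no nonzero meromorphic germ belongs to $\varpi_*\mathcal{A}$}: if $h\in\cO_X(*D)$ is nonzero near a point of $P_\red$, then on the portion of the boundary torus lying in $Z$ (where $e^f$ blows up exponentially) the product $h\,e^f$ also blows up exponentially, violating already the moderate-growth condition, let alone rapid decay. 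Hence $\cO_X(*D)\cap\varpi_*\mathcal{A}=0$, and there is no natural comparison morphism between $\varpi_*(\mathcal{E}^\cbbullet_{\mathcal{A}},\rd+\rd f)$ and $(\Omega^\cbbullet_X(*D),\rd+\rd f)$. The Dolbeault argument does not rescue this: after the gauge change $k=h\,e^f$, your complex becomes $(\mathcal{E}^\cbbullet_{\mathcal{B}},\rd)$ with the \emph{untwisted} differential, and any push-forward comparison would land in $(\Omega^\cbbullet_X(*D),\rd)$, not in the twisted complex.

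The version of this argument that actually works uses the \emph{plain} moderate-growth sheaf $\mathcal{A}^{\mathrm{mod}}$ (holomorphic functions on sectors with moderate growth at $\partial\wt X$, no reference to $e^f$). Then Step~3 is the content of Majima's results: $\bR\varpi_*\mathcal{A}^{\mathrm{mod}}\simeq\cO_X(*D)$, whence $\bR\varpi_*(\mathcal{A}^{\mathrm{mod}}\otimes\varpi^{-1}\Omega^\cbbullet_X(*D),\nabla)\simeq(\Omega^\cbbullet_X(*D),\nabla)$. The local Hukuhara--Turrittin/Majima Poincar\'e lemma then identifies the complex on $\wt X$ with a constructible sheaf supported where the flat section $e^{-f}$ has moderate growth---and one still has to match this with $\cF$. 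The paper sidesteps this last comparison by instead citing \cite[Th.\,5.1]{Bibi94} on the intermediate blow-up $\wh X$ of $P_\red$ alone, and then checking that the map $p:\wt X\to\wh X$ carries $\cF$ to the sheaf appearing there.
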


\begin{proof}[Sketch of proof]
We can argue in two ways. Either we use the theorems of multi-variable asymptotic analysis of Majima \cite{Majima84} as in \cite[Prop.\,1]{Hien09}, or we factorize~$\varpi$ through the real blow-up space $\wh\varpi:\wh X\to X$ of the single divisor $P_\red$. Let us sketch the latter method. Using notation as above for $\wh X$, \cite[Th.\,5.1]{Bibi94} gives a quasi-isomorphism $(\Omega^\cbbullet_X(*D),\nabla)\simeq\bR\wh\varpi_*\bR\wh\beta_!\bR\wh\alpha_*\bR j_*\CC_U$, where $j$ denotes here the inclusion $U\hto X\setminus P_\red$. Let $p:\wt X\to\wh X$ denote the natural map, so that $\wh\alpha\circ j=p\circ\alpha$ and $\wh\beta\circ p=p\circ\beta$. Then the right-hand term above is isomorphic to $\bR\varpi_*\cF$.
\end{proof}

As a consequence, $H^k_{\DR}(U,\nabla)\simeq H^k(\wt X,\cF)$. Since $Z$ retracts to $Z^0$, the natural map $H^k(Z,\CC)\to H^k(Z^0,\CC)$ is an isomorphism, and therefore so is the map $H^k(\wt X,\cF')\to H^k(\wt X,\cF)$ for each $k$. The proof now reduces to finding a morphism $(\Omega_f^\cbbullet,\rd)\to\bR\varpi_*\cF'$ (in the derived category $D^b(\CC_X)$) and to proving that it is an isomorphism. Equivalently, we should find a morphism $(\Omega^\cbbullet_X(\log D)/\Omega_f^\cbbullet,\rd)\to\bR\varpi_*\CC_{Z^0}$ which should be an isomorphism, and should make the following diagram commutative:
\begin{equation}\label{eq:diagOmegaf}
\begin{array}{c}
\xymatrix{
(\Omega^\cbbullet_X(\log D),\rd)\ar[r]\ar[d]_{\wr}&(\Omega^\cbbullet_X(\log D)/\Omega_f^\cbbullet,\rd)\ar@{-->}[d]^{?}\\
\bR j_*\CC_U=\bR\varpi_*\CC_{\wt X}\ar[r]&\bR\varpi_*\CC_{Z^0}
}
\end{array}
\end{equation}

The question is now local around $P_\red$ and we can work with $g:X'\to\Delta$ already considered above. We will also denote by $(\Omega_g^\cbbullet,\rd)$ the complex $(\Omega_f^{\cbbullet,\an},\rd)_{|X'}$. We can describe it as follows. Working on~$\Delta$, we denote by $(\Omega_{t'}^\cbbullet,\rd)$ the complex $\{t'\cO_\Delta\To{\rd}\Omega^1_\Delta(\log0)\}$. Then $\Omega_g^\cbbullet=g^*\Omega_{t'}^\cbbullet\otimes\nobreak\Omega^\cbbullet_{X'}(\log D)$, according to \eqref{eq:Omegafloc}, and  the quotient complex $\Omega^\cbbullet_{X'}(\log D)/\Omega_g^\cbbullet$ can be obtained from the relative logarithmic de~Rham complex $\Omega^\cbbullet_{X'/\Delta}(\log D)$ by the formula
\[
\Omega^\cbbullet_{X'}(\log D)/\Omega_g^\cbbullet=\Omega^\cbbullet_{X'/\Delta}(\log D)/g\Omega^\cbbullet_{X'/\Delta}(\log D)
\]
(\cf Appendix \ref{app:saito}). Recall that $\Omega^\cbbullet_{X'/\Delta}(\log D)$ was defined by Steenbrink in \cite{Steenbrink76}:
\begin{align*}
\Omega^p_{X'/\Delta}(\log D)&=\bigwedge^p\big(\Omega^1_{X'}(\log D)/g^*\Omega^1_\Delta(\log0)\big)\\
&=\Omega^p_{X'}(\log D)\Big/\frac{\rd g}{g}\wedge\Omega^{p-1}_{X'}(\log D).
\end{align*}

The proof now decomposes in three steps, in order to treat the case of a non reduced divisor $P$. We first analyze the behaviour of the various objects by a ramification of the value of $g$, following \cite{Steenbrink77}. We then treat the case when the pole divisor is reduced but within the framework of V-manifolds. We finally treat the general case by pushing forward along the ramification morphism and taking invariants with respect to the corresponding group action.

\subsubsection*{Step one: ramification}
Inspired by the approach of M.\,Saito in Appendix \ref{app:saito}, we will argue as in \cite{Steenbrink77}. Let $e$ be a common multiple of the numbers $e_i$ ($e_i$ is the multiplicity of the $i$-th component of $P$) and let us consider the commutative diagram
\[
\xymatrix@C=1.5cm@R=1.2cm{
{}_eX'\ar@/_2.5pc/[dd]_{{}_eg}\ar[rd]^-{\epsilon}\ar[d]_\nu&\\
X'\times_\Delta{}_e\Delta\ar[d]\ar[r]^\epsilon\ar@{}[dr]|\square&X'\ar[d]^g\\
{}_e\Delta\ar[r]^-{\epsilon}&\Delta
}
\]
where ${}_e\Delta$ has coordinate $u$, $\epsilon$ is defined by $\epsilon(u)=u^e$, and $\nu:{}_eX'\to X'\times_\Delta{}_e\Delta$ is the normalization morphism. Then ${}_eP=({}_eg)$ is a reduced divisor with V-normal crossings in the V-manifold ${}_eX'$. Set ${}_eX^{\prime*}={}_eX'\setminus {}_eP$. Then $\epsilon:{}_eX^{\prime*}\to X^{\prime*}$ is a covering of group $G=\ZZ/e\ZZ$, which also acts on ${}_eX'$ above the corresponding action on ${}_e\Delta$. Recall that $D=P_\red\cup H$. Then ${}_eD:={}_eP\cup\epsilon^{-1}H$ is also a reduced divisor with V-normal crossings.

We have the following local description of $({}_eX',{}_eP)$ (\cf\cite[Proof of Lem.\,2.2]{Steenbrink77}). We keep the notation of \S\ref{subsec:setup}. Set $d=\gcd(e_1,\dots,e_\ell)$, $e'=e/d$, $e'_i=e_i/d$, $c_i=e/e_i=e'/e'_i$ ($i=1,\dots,\ell$). Then ${}_eX'$ is the disjoint union of $d$ copies of the space $Y'$ obtained as the quotient of the space $Y$ having coordinates $((x'_i)_{i=1,\dots,\ell},(y_j)_{j=1,\dots,m},(z_k)_{k=1,\dots,p})$ by the subgroup $G'$ of $G'':=\ZZ/c_1\ZZ\times\cdots\times\ZZ/c_\ell\ZZ$ consisting of the elements $\alphag=(\alpha_1,\dots,\alpha_\ell)$ such that $\exp(\sum2\pi\ri\alpha_i/c_i)=1$, acting as
\begin{equation}\label{eq:actionGprime}
\alphag\cdot(\bmx',\bmy,\bmz)=((e^{2\pi\ri\alpha_i/c_i}x'_i)_{i=1,\dots,\ell},\bmy,\bmz).
\end{equation}
Each component $Y'$ is identified with the normalization of the space with equation $u^{e'}=\prod_{i=1}^\ell x_i^{e'_i}$ and the composed map $\nu\circ\pi$ from $Y$ to the latter is given by $x_i=x^{\prime c_i}_i$ ($i=1,\dots,\ell$). Lastly, the composed map $h:Y\To\pi Y'\To{{}_e g}{}_e\Delta$ is given by $u=h(\bmx',\bmy,\bmz)=\prod_{i=1}^\ell x'_i$ and, by definition, the action of $G'$ preserves the fibres of~$h$. We visualize this in the following diagram:
\[
\xymatrix{
(\bmx',\bmy,\bmz)\ar@{|->}[d]\ar@{}[r]|{\in}&Y\ar[r]^-{\pi}\ar[dr]\ar@/_2pc/[ddr]_h&Y'\ar[d]^\nu\\
(\prod x'_i,\bmx^{\prime\bmc},\bmy,\bmz)\ar@{|->}[d]&&\{(u,\bmx,\bmy,\bmz\mid u^{e'}=\prod x_i^{e'_i}\}\ar[d]^{{}_eg}\\
\prod x'_i&&{}_e\Delta
}
\]
As a consequence, the pull-back of the local component $P_i$ of $P_\red$ defined by~$x_i$ is, in each local connected component $Y'$ of ${}_eX'$, the V-smooth component ${}_eP_i$ which is the quotient of $\{x'_i=0\}$ by the induced action of $G'$.

\begin{lemma}
The natural morphisms of complexes
\begin{multline*}
(\Omega^\cbbullet_{X'}(\log D),\rd)\to(\Omega^\cbbullet_{X'/\Delta}(\log D),\rd)\\
\to(\Omega^\cbbullet_{X'/\Delta}(\log D)/g\Omega^\cbbullet_{X'/\Delta}(\log D),\rd)=(\Omega^\cbbullet_{X'}(\log D)/\Omega^\cbbullet_g,\rd)
\end{multline*}
can be obtained by taking $G$-invariants of the morphisms of complexes
\begin{multline*}
\epsilon_*(\Omega^\cbbullet_{{}_eX'}(\log{}_eD),\rd)\to\epsilon_*(\Omega^\cbbullet_{{}_eX'/{}_e\Delta}(\log{}_eD),\rd)
\\
\to\epsilon_*(\Omega^\cbbullet_{{}_eX'/{}_e\Delta}(\log{}_eD)/{}_eg\Omega^\cbbullet_{{}_eX'/{}_e\Delta}(\log{}_eD),\rd)=\epsilon_*(\Omega^\cbbullet_{{}_eX'}(\log{}_eD)/\Omega^\cbbullet_{{}_eg},\rd).
\end{multline*}
\end{lemma}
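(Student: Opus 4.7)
The plan is to reduce the lemma to a local termwise computation of $G''$-invariants on the smooth chart $Y$ provided just above the statement, and then to verify the three required identifications of sheaves on $X'$ one by one.

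First, the assertion being local on $X'$, one may work in the local analytic chart of \S\ref{subsec:setup}. There $\epsilon^{-1}$ of a small open set decomposes into $d$ copies of $Y'=Y/G'$, and $G$ acts transitively on the $d$ copies with stabilizer $G_{Y'}=G''/G'\cong\ZZ/e'\ZZ$. Thus for any sheaf $\cF$ on ${}_eX'$, a $G$-invariant section of $\epsilon_*\cF$ is determined by its restriction to one copy $Y'$ and must be $G_{Y'}$-fixed there; pulling back further along $\pi\colon Y\to Y'$, this identifies $(\epsilon_*\cF)^G$ with the $G''$-invariants on $Y$ of the corresponding lift. The lemma is then equivalent to the three termwise identifications
\begin{align*}
\Omega_{X'}^k(\log D) &\isom \bigl(\Omega_Y^k(\log D_Y)\bigr)^{G''},\\
\Omega_{X'/\Delta}^k(\log D) &\isom \bigl(\Omega_{Y/h}^k(\log D_Y)\bigr)^{G''},\\
\Omega_{X'/\Delta}^k(\log D)/g\,\Omega_{X'/\Delta}^k(\log D) &\isom \bigl(\Omega_{Y/h}^k(\log D_Y)/h\,\Omega_{Y/h}^k(\log D_Y)\bigr)^{G''},
\end{align*}
compatible with the natural morphisms and the de~Rham differentials.

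For the first, $\Omega_Y^k(\log D_Y)$ is a free $\cO_Y$-module with $G''$-invariant basis formed by wedge products of the $1$-forms $\rd\log x'_i$, $\rd\log y_j$, $\rd z_k$, so taking invariants reduces to computing $\cO_Y^{G''}=\CC\{x_i^{\prime c_i},y,z\}=\cO_{X'}$; using $\rd\log x'_i=c_i^{-1}\rd\log x_i$ one then recovers $\Omega_{X'}^k(\log D)$. The second identification follows by applying the exactness of $(-)^{G''}$ in characteristic zero to the short exact sequence
\[
0\to\rd\log h\wedge\Omega_Y^{k-1}(\log D_Y)\to\Omega_Y^k(\log D_Y)\to\Omega_{Y/h}^k(\log D_Y)\to0,
\]
the $G''$-invariance of $\rd\log h=e^{-1}\rd\log g$ yielding the invariant subquotient $\Omega_{X'}^k(\log D)/\rd\log g\wedge\Omega_{X'}^{k-1}(\log D)=\Omega_{X'/\Delta}^k(\log D)$.

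The main obstacle will be the third identification. Applying once more the exactness of $(-)^{G''}$ reduces it to the equality $\bigl(h\,\Omega_{Y/h}^k(\log D_Y)\bigr)^{G''}=g\,\Omega_{X'/\Delta}^k(\log D)$ inside $\Omega_{X'/\Delta}^k(\log D)$. The inclusion $\supset$ is immediate from the identity $h^e=g$, which realises $g$ as a $G''$-invariant element of $h\,\cO_Y$. For the reverse inclusion, I would decompose $\Omega_{Y/h}^k(\log D_Y)$ into isotypic components under the abelian group $G''$: since $h$ transforms by a character $\chi_h$, an element $h\,\omega$ is $G''$-invariant exactly when $\omega$ lies in the $\chi_h^{-1}$-isotypic summand. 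Using the explicit description of this summand in terms of monomials in the $x'_i$, together with the relation $h^e=g$, one would rewrite each such product $h\,\omega$ as $g$ times a $G''$-invariant relative form, hence as an element of $g\,\Omega_{X'/\Delta}^k(\log D)$. The compatibility of the three identifications with the natural morphisms (inclusion, quotient by $\rd\log g$, quotient by $g$) and with the de~Rham differentials then follows from the $G''$-equivariance of all the constructions on $Y$, giving the lemma.
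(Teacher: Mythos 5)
Your strategy---localizing to the chart $Y$ of Step one and computing $G''$-invariants of the three sheaves term by term---is genuinely different from the paper's. The paper does not argue on $Y$ at all: it quotes \cite[Rem.\,2.3 \& 2.5]{Steenbrink77} for the facts that $\epsilon_*\cO_{{}_eX'}$ is $\cO_{X'}$-locally free of rank $e$ with $(\epsilon_*\cO_{{}_eX'})^G=\cO_{X'}$ and that $\Omega^k_{{}_eX'}(\log{}_eD)$, $\Omega^k_{{}_eX'/{}_e\Delta}(\log{}_eD)$ are locally $\epsilon^*$ of the corresponding sheaves on $X'$, so the first two identifications follow from the projection formula; for the third it detours through the non-normalized fiber product $X'\times_\Delta{}_e\Delta$, where $\cO_{X'\times_\Delta{}_e\Delta}/u\,\cO_{X'\times_\Delta{}_e\Delta}=\cO_{X'}/g\,\cO_{X'}$ holds on the nose with trivial $G$-action, and then compares with the normalization $\nu$. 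Your first two identifications are essentially fine (modulo the unproved, but correct, assertion that $G$ permutes the $d$ components of ${}_eX'$ transitively with stabilizer $G''/G'$).

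The genuine gap is in your third identification, and it cannot be closed along the lines you sketch: the equality $\bigl(h\,\Omega_{Y/h}^k(\log D_Y)\bigr)^{G''}=g\,\Omega^k_{X'/\Delta}(\log D)$ is false as soon as $P$ is not reduced. Already for $k=0$, $n=\ell=1$ and $g=x^2$ one has $e=d=2$, $c_1=1$, so $G''$ is trivial, $Y\to X'$ is an isomorphism ($x=x'$) and $h=x'=x$; hence $\bigl(h\,\cO_Y\bigr)^{G''}=x\,\cO_{X'}$, which strictly contains $g\,\cO_{X'}=x^2\cO_{X'}$. For an example with $G''$ nontrivial, take $g=x_1x_2^2$: then $h=x'_1x'_2$ with $x_1=x_1^{\prime 2}$, $x_2=x'_2$, and $h\cdot x'_1=x_1x_2$ is $G''$-invariant but does not lie in $x_1x_2^2\,\cO_{X'}$. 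Your isotypic-component argument does not repair this: invariance of $h\,\omega$ only forces $\omega$ into the $\chi_h^{-1}$-isotypic summand, and such an $\omega$ need not be divisible by $h^{e-1}$, which is what extracting a factor $g=h^{e}$ would require. The effect is that your candidate for the third term is $\Omega^k_{X'/\Delta}(\log D)$ modulo a strictly larger submodule than $g\,\Omega^k_{X'/\Delta}(\log D)$ (in the first example one gets $\cO_{X'}/x$ instead of $\cO_{X'}/x^2$), so the two sides of your identification have different lengths along $P$ and no equivariance argument can identify them. This is precisely the delicate point of the lemma when the multiplicities $e_i$ are not all $1$---a naive invariants computation of the reduction modulo ${}_eg$ forgets the multiplicities of $P$---and it is the reason the paper treats the third term by a separate argument through $X'\times_\Delta{}_e\Delta$ rather than by a direct computation on the smooth chart $Y$.
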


\begin{proof}
According to \cite[Rem.\,2.3]{Steenbrink77}, $\epsilon_*\cO_{{}_eX'}$ is $\cO_{X'}$ locally free of rank $e$ and the~$G$\nobreakdash-action is induced by the natural action $u\mto u\cdot\exp(2\pi\ri k/e)$, so that $\cO_{X'}=(\epsilon_*\cO_{{}_eX'})^G$. This action is compatible with the $G$-action on $\epsilon_*\cO_{X'\times_\Delta{}_e\Delta}$ and the induced action on $\epsilon_*(\cO_{X'\times_\Delta{}_e\Delta}/u\cO_{X'\times_\Delta{}_e\Delta})=\cO_{X'}/g\cO_{X'}$ is trivial. The same holds then for
\begin{align*}
\epsilon_*(\cO_{{}_eX'}/u\cO_{{}_eX'})&=\epsilon_*(\nu_*(\cO_{{}_eX'}/u\cO_{{}_eX'}))\\
&=\epsilon_*(\nu_*\cO_{{}_eX'}/u\nu_*\cO_{{}_eX'}))=\epsilon_*(\cO_{X'\times_\Delta{}_e\Delta}/u\cO_{X'\times_\Delta{}_e\Delta}).
\end{align*}

The sheaves $\Omega^k_{{}_eX'}(\log{}_eD)$ (\resp $\Omega^k_{{}_eX'/{}_e\Delta}(\log{}_eD)$) are $\cO_{{}_eX'}$-locally free and are locally identified with $\epsilon^*\Omega^k_{X'}(\log D)$ (\resp $\epsilon^*\Omega^k_{X'/\Delta}(\log D)$), \cf \cite[Rem.\,2.5]{Steenbrink77}, so that we have natural identifications
\begin{align*}
(\Omega^\cbbullet_{X'}(\log D),\rd)&=\big[\epsilon_*(\Omega^\cbbullet_{{}_eX'}(\log{}_eD),\rd)\big]^G,\\
(\Omega^\cbbullet_{X'/\Delta}(\log D),\rd)
&=\big[\epsilon_*(\Omega^\cbbullet_{{}_eX'/{}_e\Delta}(\log{}_eD),\rd)\big]^G.\qedhere
\end{align*}
\end{proof}

Consider the real blow-up space $\wt Y$ of~$Y$ along the components of the divisor $(\prod_{i=1}^\ell x'_i\prod_{j=1}^m y_j)$. The action \eqref{eq:actionGprime} of $G'$ on $Y$ extends to an action on $\wt Y$ (in the presentation like \eqref{eq:realblowup}, $G'$ only acts on the arguments $\theta'_i$) and the quotient space is by definition the real blow-up space~$\wt Y'$ of~$Y'$ along the components of the pull-back~${}_eD$ of $D$ in $Y'$, which is a divisor with V\nobreakdash-normal crossings. By the gluing procedure described above one defines a global map $\varpi_{{}_eX'}:\wt{{}_eX'}\to{}_eX'$. Note that the map $\wt\pi:\wt Y\to \wt Y'$ is a covering with group $G'$, and so the local charts $\wt Y'$ can also be described by a formula like \eqref{eq:realblowup}.

The map $\epsilon:{}_eX'\to X'$ lifts as a map $\wt\epsilon:\wt{{}_eX'}\to\wt{X'}$, which is a covering map of degree $e$ with group $G$, and ${}_eg$ lifts as a map $\wt{{}_eg}:\wt{{}_eX'}\to\wt{{}_e\Delta}$ giving rise to an obvious commutative diagram. It induces therefore a homeomorphism ${}_eZ^0:=\wt{{}_eg}^{-1}(\arg u=\nobreak0,|u|=\nobreak0)\isom Z^0$.

More precisely, $\wt Y$ has the form \eqref{eq:realblowup} with coordinates $((\arg x'_i)_i,(|x'_i|)_i)$ on the first two factors, and the map $\wt\epsilon$ is given by $\arg x_i=c_i\arg x'_i$ (a covering map of group $G''$) and $|x_i|=|x'_i|^{c_i}$ (a homeomorphism). The factorization through $\wt Y'$ consists in taking the quotient by $G'$ first, and then by $G''/G'$ on the arguments.

\subsubsection*{Step two: the case of a reduced divisor with $V$-normal crossings}
In order to simplify notation we will denote by $X',g,Z^0,S$ the objects previously denoted by ${}_eX',{}_eg,{}_eZ^0,{}_eD$. Let $\cC^\infty_{\wt X'}$ denote the sheaf of $C^\infty$ functions on $\wt X'$ (well defined in each chart~$\wt Y'$ as above, due to the local form \eqref{eq:realblowup}). Since $G'$ acts on~$\wt Y$ through the only factor $(S^1)^\ell$, the functions $\rho'_i=|x'_i|$ descend to $\wt Y'$, and we have $\cC^\infty_{\wt Y'}=(\wt\pi_*\cC^\infty_{\wt Y})^{G'}$, where $\wt\pi:\wt Y\to\wt Y'$ is the quotient map (covering map with group~$G'$). Recall also that the action of~$G'$ preserves the fibres of the map $\wt h:\wt Y\to\wt\Delta$.

Let $\cC^\infty_{\wt X'}(\log D)$ be the subsheaf of $j_*\cC^\infty_{X'{}^*}$ locally generated by $\cC^\infty_{\wt X'}$, $\log\rho'_i$ (\hbox{$i=1,\dots,\ell$}) and $\log\eta_j$ ($j=1,\dots,m$) in the local setting above. The logarithmic $1$-forms $\cA_{\wt X'}^1(\log D)$ are the linear combination with coefficients in $\cC^\infty_{\wt X'}(\log D)$ of the forms $\rd\rho'_i/\rho'_i$, $\rd\theta'_i$, $\rd\eta_j/\eta_j$, $\rd\tau_j$, $\rd\bmz$, $\rd\ov\bmz$ and we set $\cA_{\wt X'}^p(\log D)=\bigwedge^p\cA_{\wt X'}^1(\log D)$. We therefore get a logarithmic de~Rham complex $(\cA_{\wt X'}^\cbbullet(\log D),\rd)$. We have $(\cA_{\wt Y'}^\cbbullet(\log D),\rd)=(\wt\pi_*\cA_{\wt Y}^\cbbullet(\log D),\rd)^{G'}$ (where we still denote by $D$ the pull-back of $D\subset Y'$ in $Y$).

\begin{lemma}\label{lem:resolA}
The complex $(\cA_{\wt X'}^\cbbullet(\log D),\rd)$ is a resolution of $\CC_{\wt X'}$.
\end{lemma}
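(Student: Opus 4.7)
My plan is to reduce the statement, which is local on $\wt X'$, to a Poincaré-type lemma on the real blow-up of a single disc, via two successive reductions.

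First, in a local chart $\wt Y'$, I would use the presentation $\wt Y' = \wt Y / G'$ with $\wt Y$ smooth and $G'$ finite. Since $G'$ acts through its natural action on $(S^1)^\ell$ alone, it preserves every generator $\rd\rho_i/\rho_i,\rd\theta_i,\rd\eta_j/\eta_j,\rd\tau_j,\rd\bmz,\rd\bar\bmz$ of $\cA^1_{\wt Y}(\log D)$. The identification $\cA^\cbbullet_{\wt Y'}(\log D)=(\wt\pi_*\cA^\cbbullet_{\wt Y}(\log D))^{G'}$ already recorded above is then compatible with $\rd$, and combined with the trivial equality $(\wt\pi_*\CC_{\wt Y})^{G'}=\CC_{\wt Y'}$ and the exactness of the functor of $G'$-invariants on sheaves of $\CC$-vector spaces (finite group in characteristic zero), it reduces the problem to the analogous statement on $\wt Y$.

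Second, using the product presentation \eqref{eq:realblowup}, the complex $(\cA^\cbbullet_{\wt Y}(\log D),\rd)$ decomposes as the external tensor product of the one-factor logarithmic complexes on the $(\ell+m)$ copies of $S^1\times\RR_{\geq 0}$ and the ordinary smooth de~Rham complex on $\Delta^p$. Since each factor admits soft resolutions via $\cC^\infty$ partitions of unity, a Künneth-type argument shows that the external tensor product resolves $\CC_{\wt Y}$ as soon as each factor resolves the constant sheaf. The case of $\Delta^p$ is the classical $C^\infty$ Poincaré lemma, so it suffices to establish the resolution property on $\wt\Delta = S^1\times\RR_{\geq 0}$ for the complex
\[
0\to\cC^\infty_{\wt\Delta}[\log\rho]\to\cC^\infty_{\wt\Delta}[\log\rho]\,\rd\theta\oplus\cC^\infty_{\wt\Delta}[\log\rho]\,\tfrac{\rd\rho}{\rho}\to\cC^\infty_{\wt\Delta}[\log\rho]\,\rd\theta\wedge\tfrac{\rd\rho}{\rho}\to 0.
\]

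Third and main step: verify this one-variable statement stalkwise. At a point with $\rho_0>0$, the form $\rd\rho/\rho$ is smooth and the complex is isomorphic to the usual smooth de~Rham complex, so acyclicity in positive degrees and $H^0=\CC$ follow from the classical Poincaré lemma. At a boundary point $(\theta_0,0)$, I would expand a local $1$-form as $\alpha=\sum_k(\log\rho)^k[\alpha_k(\theta,\rho)\,\rd\theta+\beta_k(\theta,\rho)\,\rd\rho/\rho]$ with $\alpha_k,\beta_k\in\cC^\infty$ and finitely many nonzero terms, translate closedness into the relations $\partial_\theta\beta_k=\rho\partial_\rho\alpha_k+(k+1)\alpha_{k+1}$, and then solve the triangular system $\partial_\theta f_k=\alpha_k$, $\rho\partial_\rho f_k+(k+1)f_{k+1}=\beta_k$ from the top down, absorbing into $f_{k+1}$ the unavoidable $\log\rho$ terms produced by integrating $\beta_k/\rho$ in the $\rho$-direction (via Taylor expansion of $\beta_k$ in $\rho$). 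The same recursion handles degree $2$, while $H^0=\CC$ is seen by the analogous top-down argument forcing all $f_k$ with $k\geq 1$ to vanish and $f_0$ to be constant.

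The main obstacle will be the boundary Poincaré lemma of the last step: one must keep careful track of the $\log\rho$-orders to ensure that the integrations along $\rho$ and $\theta$ stay inside the ring $\cC^\infty_{\wt\Delta}[\log\rho]$ (in particular, produce only finite sums in $\log\rho$ rather than essential singularities) and yield sections defined on a full neighborhood of $(\theta_0,0)$. Alternatively, this statement is a small instance of Majima's multivariable $C^\infty$ asymptotic analysis, essentially the same formalism used in \cite{Hien09} and \cite{Bibi94} in closely related contexts.
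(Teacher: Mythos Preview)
Your approach is exactly the paper's: prove the Poincar\'e lemma on the smooth chart $\wt Y$ and then pass to $\wt Y'$ by taking $G'$-invariants; the paper leaves the first step as ``standard'' and you are supplying that detail.

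Two small points of care in your sketch. First, the complex on $\wt Y$ is not literally an external tensor product of the one-variable complexes, since germs of $\cC^\infty$ functions on a product are not the algebraic tensor product of the factor germs; the reduction to one variable should be phrased as an iterated one-variable homotopy-operator argument rather than a K\"unneth formula for stalks. Second, the ``top-down'' recursion on $\log\rho$-degree you describe is awkward to terminate as written (modifying $f_{k+1}$ after it has been fixed). A cleaner route is to integrate $\omega$ along a path from a base point $(\theta_0,\rho_0)$ with $\rho_0>0$, split each coefficient $\beta_k=\beta_k|_{\rho=0}+\rho\tilde\beta_k$ by Hadamard, and use a single integration by parts on $\int\tilde\beta_k(\log r)^k\,dr$ to see that the primitive lands in $\cC^\infty_{\wt\Delta}[\log\rho]$ with $\log\rho$-degree raised by at most one. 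Your fallback to Majima covers this as well.
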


\begin{proof}
One first shows the result in the charts like $Y$, where it is proved in a standard way, and then one takes the $G'$-invariants.
\end{proof}

The sheaf $\cA_{\wt X',Z^0}^p(\log D)$ of logarithmic $p$-forms vanishing on $Z^0$ is the subsheaf of $\cA_{\wt X'}^p(\log D)$ locally defined as:
\begin{multline*}
\cA_{\wt X',Z^0}^p(\log D):=\big(|\wt g|,\log|\wt g|,(\exp(2\pi\ri\arg\wt g)-1)\big)\cA_{\wt X'}^p(\log D)\\[-3pt]
+\frac{\rd|\wt g|}{|\wt g|}\wedge\cA_{\wt X'}^{p-1}(\log D)
+\rd\arg\wt g\wedge\cA_{\wt X'}^{p-1}(\log D).
\end{multline*}
We will therefore set $\cA_{Z^0}^p(\log D)=\cA_{\wt X'}^p(\log D)/\cA_{\wt X',Z^0}^p(\log D)$.

Given a chart $Y'$ as above, let us set $Z^0_{Y'}=Z^0\cap Y'$ and $Z^0_Y=\wt\pi^{-1}(Z^0_{Y'})\subset\partial\wt Y$. Since $G'$ preserves the fibres of $\wt h$, it induces a covering $Z^0_Y\to Z^0_{Y'}$. As a consequence, if we define the sheaf $\cA_{\wt Y,Z^0_Y}^p(\log D)$ by the same formula as above, where we only replace $\wt X'$ with $\wt Y$ and $\wt g$ with $\wt h$, we have $(\cA_{\wt Y',Z^0_{Y'}}^\cbbullet(\log D),\rd)=(\wt\pi_*\cA_{\wt Y,Z^0_Y}^\cbbullet(\log D),\rd)^{G'}$. Defining $\cA_{Z^0_Y}^\cbbullet(\log D)$ similarly, we then also have $(\cA_{Z^0_{Y'}}^\cbbullet(\log D),\rd)=(\wt\pi_*\cA_{Z^0_Y}^\cbbullet(\log D),\rd)^{G'}$.

\begin{lemma}\label{lem:resolF}
The exact sequence of complexes
\[
0\to(\cA_{\wt X',Z^0}^\cbbullet(\log D),\rd)\to(\cA_{\wt X'}^\cbbullet(\log D),\rd)\to(\cA_{Z^0}^\cbbullet(\log D),\rd)\to0
\]
is a resolution of the exact sequence of sheaves
\[
0\to\cF'\to\CC_{\wt X'}\to\CC_{Z^0}\to0.
\]
\end{lemma}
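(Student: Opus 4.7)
The plan is to prove the lemma in three steps, leveraging that the middle complex is already a resolution of $\CC_{\wt X'}$ by Lemma \ref{lem:resolA}. First I would check that the short sequence of complexes is well-defined and exact. Exactness in each degree is immediate from the definition of $\cA^p_{Z^0}(\log D)$ as a quotient. That $\rd$ preserves $\cA^\cbbullet_{\wt X',Z^0}(\log D)$ reduces to the three identities $\rd|\wt g|=|\wt g|\,\rd\log|\wt g|$, $\rd(\log|\wt g|)=\rd\log|\wt g|$ (already a generator), and $\rd(\exp(2\pi\ri\arg\wt g)-1) = 2\pi\ri\bigl(1+(\exp(2\pi\ri\arg\wt g)-1)\bigr)\,\rd\arg\wt g$, each of which stays in the subsheaf.

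Once the short exactness of complexes is established, and because $\cF'=\ker(\CC_{\wt X'}\to\CC_{Z^0})$ is built into the definition of $\beta^\pi_!\alpha^\pi_*\CC_U$, the snake lemma on cohomology sheaves reduces the statement to showing that the natural augmentation $\CC_{Z^0}\to(\cA^\cbbullet_{Z^0}(\log D),\rd)$ is a quasi-isomorphism. This assertion is local on $\wt X'$. Since $\wt\pi\colon \wt Y\to\wt Y'$ is a finite covering of group $G'$ preserving the fibres of $\wt h$, and both $\cA^\cbbullet_{Z^0_{Y'}}(\log D)$ and $\CC_{Z^0_{Y'}}$ are obtained as $G'$-invariants of the corresponding objects on $\wt Y$, and $G'$-invariants of a finite group action over $\CC$ commute with cohomology sheaves, it suffices to work in the chart $\wt Y$.

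There one has $|\wt h|=\prod_i\rho'_i$ and $\arg\wt h=\sum_i\theta'_i$, so I would decompose $\cA^p_{\wt Y}(\log D)$ by separating off the distinguished differentials $\rd\log|\wt h|$ and $\rd\arg\wt h$, using $\ell-1$ transverse radial and angular directions together with the logarithmic forms along $H$ and the $(\bmz,\ov\bmz)$ part. In the quotient, these two distinguished differentials are annihilated, while the generators $|\wt h|$ and $\exp(2\pi\ri\arg\wt h)-1$ of the ideal collapse functions in the normal-to-$Z^0_Y$ directions. This identifies $(\cA^\cbbullet_{Z^0_Y}(\log D),\rd)$ with the $C^\infty$ logarithmic de~Rham complex on the real-analytic manifold-with-corners $Z^0_Y$, logarithmic along $H\cap Z^0_Y$; a standard Poincar\'e lemma for $C^\infty$ forms with logarithmic singularities on manifolds with corners then yields the desired quasi-isomorphism $\CC_{Z^0_Y}\simeq(\cA^\cbbullet_{Z^0_Y}(\log D),\rd)$.

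The main obstacle lies in this last local computation: one must verify rigorously that the algebraic quotient defined by the ideal $(|\wt h|,\log|\wt h|,\exp(2\pi\ri\arg\wt h)-1)$ together with the two distinguished wedge ideals coincides with the intrinsic $C^\infty$ logarithmic de~Rham complex on $Z^0_Y$, and that the resulting complex is acyclic in positive degrees via a $C^\infty$ Poincar\'e lemma compatible with both the corner structure and the logarithmic singularities along $H\cap Z^0_Y$.
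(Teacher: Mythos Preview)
Your overall strategy matches the paper's: use Lemma~\ref{lem:resolA} for the middle term, reduce the whole statement to showing that $(\cA_{Z^0}^\cbbullet(\log D),\rd)$ resolves $\CC_{Z^0}$, and pass to the local model $Y$ by taking $G'$-invariants. The difference is in the final local step. You attempt to identify the quotient complex globally with an intrinsic logarithmic de~Rham complex on $Z^0_Y$ viewed as a ``manifold-with-corners'' and then invoke a Poincar\'e lemma. But $Z^0_Y=\{\prod_i\rho'_i=0,\ \sum_i\theta'_i\equiv 0\}$ is not a manifold with corners when $\ell\geq 2$: it is the union of the faces (``octants'') $\{\rho'_i=0\}$, which meet along lower-dimensional strata. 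So the identification and the Poincar\'e lemma you invoke would have to be formulated and proved for such a stratified space, which is exactly the work you flag as the obstacle.

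The paper avoids this entirely. It restricts to each octant $\{\rho'_i=0\}$ separately and observes that there $Z^0$ is identified with $(S^1)^{\ell-1}\times(\RR_{\geq0})^{\ell-1}\times\wt\Delta^m\times\Delta^p$ (the constraint $\sum_j\theta'_j=0$ eliminates one angular coordinate, and $\rho'_i=0$ eliminates one radial coordinate), and that the restriction of $(\cA_{Z^0}^\cbbullet(\log D),\rd)$ to this octant is \emph{precisely} the complex $(\cA_{\wt Y}^\cbbullet(\log D),\rd)$ of Lemma~\ref{lem:resolA} for the model with $\ell-1$ in place of $\ell$. One then simply reapplies Lemma~\ref{lem:resolA}. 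This recursive use of the already-proved lemma is the key shortcut you are missing; it replaces your bespoke Poincar\'e lemma on a singular space by a direct reduction to the smooth case.
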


\begin{proof}
In view of Lemma \ref{lem:resolA}, it is enough to prove that $(\cA_{Z^0}^\cbbullet(\log D),\rd)$ is a resolution of $\CC_{Z^0}$, and by the same argument as above, it is enough to show the result in charts like $Y$. On each octant $\rho'_i=0$ ($i=1,\dots,\ell$) of $\rho'_1\cdots\rho'_\ell=0$, one identifies~$Z^0$ with $(S^1)^{\ell-1}\times(\RR_{\geq0})^{\ell-1}\times\wt\Delta^m\times\Delta^p$ with coordinates $e^{2\pi\ri\theta_{\neq i}},\rho'_{\neq i}$ on the first two factors, and the restriction of $(\cA_{Z^0}^\cbbullet(\log D),\rd)$ to this subset is equal to the complex defined as above for $\wt Y$ with the corresponding variables. We can then apply Lemma \ref{lem:resolA}.
\end{proof}

We have a natural morphism of complexes $\varpi^{-1}(\Omega^\cbbullet_{X'}(\log D),\rd)\!\to\!(\cA_{\wt X'}^\cbbullet(\log D),\rd)$.

\begin{lemma}
The image of $\varpi^{-1}(\Omega_g^\cbbullet,\rd)$ is contained in $(\cA_{\wt X',Z^0}^\cbbullet(\log D),\rd)$.
\end{lemma}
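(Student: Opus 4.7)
The plan is to unpack the local description of $\Omega_g^\cbbullet$ from the tensor-product formula $\Omega_g^\cbbullet = g^*\Omega_{t'}^\cbbullet \otimes \Omega_{X'}^\cbbullet(\log D)$ and then check inclusion in $\cA^p_{\wt X',Z^0}(\log D)$ term by term. Since $(g^*\Omega_{t'})^0 = g\cO_{X'}$ and $(g^*\Omega_{t'})^1 = \cO_{X'}\cdot\rd g/g$, the total complex embedded in $\Omega^\cbbullet_{X'}(\log D)$ gives, as subsheaves,
\[
\Omega_g^p = g\cdot\Omega^p_{X'}(\log D) + \frac{\rd g}{g}\wedge\Omega^{p-1}_{X'}(\log D).
\]
So it suffices to treat each summand separately under the natural morphism $\varpi^{-1}\Omega^\cbbullet_{X'}(\log D)\to\cA^\cbbullet_{\wt X'}(\log D)$.

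For the first summand, one uses the identity $\wt g = |\wt g|\cdot e^{2\pi\ri\arg\wt g}$ on $\wt X'$. Thus $\varpi^{-1}g = |\wt g|\cdot e^{2\pi\ri\arg\wt g}$ belongs to $|\wt g|\cdot\cC^\infty_{\wt X'}(\log D)$, which is contained in the ideal $\bigl(|\wt g|,\log|\wt g|,e^{2\pi\ri\arg\wt g}-1\bigr)\cC^\infty_{\wt X'}(\log D)$. Consequently,
\[
g\cdot\Omega^p_{X'}(\log D)\ \subset\ |\wt g|\cdot\cA^p_{\wt X'}(\log D)\ \subset\ \cA^p_{\wt X',Z^0}(\log D).
\]

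For the second summand, the key computation is the pull-back identity
\[
\varpi^{-1}\!\left(\frac{\rd g}{g}\right) \;=\; \rd\log\wt g \;=\; \frac{\rd|\wt g|}{|\wt g|} + 2\pi\ri\cdot\rd\arg\wt g,
\]
which makes sense on $\wt X'$ since both $\log|\wt g|$ and $\arg\wt g$ are locally well-defined real-valued functions in the logarithmic $C^\infty$-structure. Wedging either of these two terms with $\cA^{p-1}_{\wt X'}(\log D)$ lands, by the very definition of $\cA^p_{\wt X',Z^0}(\log D)$, inside that subsheaf. Hence $\frac{\rd g}{g}\wedge\Omega^{p-1}_{X'}(\log D)\subset\cA^p_{\wt X',Z^0}(\log D)$, and combining with the first inclusion completes the proof.

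The calculation is essentially a bookkeeping exercise; the only genuinely non-formal point is the first step, namely the identification of $\Omega_g^p$ as the local sum $g\Omega^p_{X'}(\log D)+\frac{\rd g}{g}\wedge\Omega^{p-1}_{X'}(\log D)$. This could be extracted either directly from the tensor product definition or from the local expression \eqref{eq:Omegafloc}, and it is compatible with the differential because $\rd(g\omega)=g\,\rd\omega+g\,(\rd g/g)\wedge\omega\in\Omega_g^{p+1}$ and $\rd\bigl((\rd g/g)\wedge\eta\bigr)=-(\rd g/g)\wedge\rd\eta\in\Omega_g^{p+1}$. Once this decomposition is in hand, the remaining inclusions are immediate from the explicit form of the ideal defining $\cA^p_{\wt X',Z^0}(\log D)$.
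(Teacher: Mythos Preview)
Your proof is correct and follows exactly the approach the paper indicates: the paper's proof is the single sentence ``This follows immediately by expressing \eqref{eq:Omegafloc} in polar coordinates,'' and you have carried this out in detail. Your decomposition $\Omega_g^p = g\,\Omega^p_{X'}(\log D) + (\rd g/g)\wedge\Omega^{p-1}_{X'}(\log D)$ is precisely a restatement of \eqref{eq:Omegafloc} (or equivalently of the tensor-product description $\Omega_g^\cbbullet = g^*\Omega_{t'}^\cbbullet\otimes\Omega^\cbbullet_{X'}(\log D)$), and the polar-coordinate identities $g=|\wt g|\,e^{\ri\arg\wt g}$ and $\rd g/g = \rd|\wt g|/|\wt g| + \ri\,\rd\arg\wt g$ immediately place each summand inside the defining ideal of $\cA^p_{\wt X',Z^0}(\log D)$.
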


\begin{proof}
This follows immediately by expressing \eqref{eq:Omegafloc} in polar coordinates.
\end{proof}

We conclude that we have a commutative diagram:
\[
\xymatrix{
\varpi^{-1}(\Omega^\cbbullet_{X'}(\log D),\rd)\ar[d]\ar[r]&
\varpi^{-1}(\Omega^\cbbullet_{X'}(\log D)/\Omega_g^\cbbullet,\rd)\ar[d]\\
(\cA_{\wt X'}^\cbbullet(\log D),\rd)\ar[r]&
(\cA_{Z^0}^\cbbullet(\log D),\rd)
}
\]
and by using the adjunction $\id\to\varpi_*\varpi^{-1}$, we get the desired commutative diagram:
\begin{equation}\label{eq:diagpreviousstep}
\begin{array}{c}
\xymatrix@R=.3cm{
(\Omega^\cbbullet_{X'}(\log D),\rd)\ar[dd]\ar[r]&
(\Omega^\cbbullet_{X'}(\log D)/\Omega_g^\cbbullet,\rd)\ar[dd]\\ &\\
\varpi_*(\cA_{\wt X'}^\cbbullet(\log D),\rd)\ar[r]\ar@{-}[d]^-\wr&
\varpi_*(\cA_{Z^0}^\cbbullet(\log D),\rd)\ar@{-}[d]^-\wr\\
\bR\varpi_*\CC_{\wt X'}\ar[r]&\bR\varpi_*\CC_{Z^0}
}
\end{array}
\end{equation}

That the right vertical morphism is a quasi-isomorphism can now be checked fiberwise at points of $P_\red$. We will thus check this at the center of each chart~$Y'$. Since the variables~$\bmz$ do not play any role, we will simply forget them. Moreover, we can work in the corresponding chart $Y$ with the function $h$, and we take $G'$\nobreakdash-invariants to obtain the desired isomorphism in the chart $Y'$.

We have $\varpi^{-1}(0)=(S^1)^\ell\times(S^1)^m$ and $Z^0_o:=Z^0\cap\varpi^{-1}(0)$ is the fibre of the map $(e^{\ri\theta'_1},\dots,e^{\ri\theta'_\ell},e^{\ri\tau_1},\dots,e^{\ri\tau_m})\mto e^{\ri\sum_i\theta'_i}$ above~$1$. If we represent $H^p(\varpi^{-1}(0),\CC)$ as $\bigwedge^p\big\langle\rd\theta,\rd\tau\big\rangle$ (where $\langle\cbbullet\rangle$ denotes the $\CC$\nobreakdash-vector space generated by $\cbbullet$), then the map $H^p(\varpi^{-1}(0),\CC)\to H^p(Z^0_o,\CC)$ is represented by the quotient map
\[
\bigwedge^p\big\langle\rd\theta',\rd\tau\big\rangle\mto \bigwedge^p\big\langle\rd\theta',\rd\tau\big\rangle\Big/({\textstyle\sum_i\rd\theta'_i})\wedge\bigwedge^{p-1}\big\langle\rd\theta',\rd\tau\big\rangle.
\]

Let us now denote by $\Omega^\cbbullet(\log D),\Omega_h^\cbbullet$ the germs at the origin of the corresponding complexes. Then $(\Omega^\cbbullet(\log D),\rd)$ is quasi-isomorphic to the complex $(\bigwedge^\cbbullet\langle\rd x'/x',\rd y/y\rangle,0)$ and the identification $H^p(\Omega^\cbbullet(\log D),\rd)\simeq H^p(\varpi^{-1}(0),\CC)$ is by the isomorphism $\rd x'/x'\mto\ri\,\rd\theta'$, $\rd y/y\mto\ri\,\rd\tau$. We can now conclude thanks to the following lemma:

\begin{lemma}\label{lem:Omegafinjects}
For each $p$ we have
\[
H^p(\Omega^\cbbullet(\log D)/\Omega_h^\cbbullet,\rd)=\bigwedge^p\langle\rd x'/x',\rd y/y\rangle\big/\big(\textstyle\sum_i\rd x'_i/x'_i\big)\wedge\bigwedge^{p-1}\langle\rd x'/x',\rd y/y\rangle.
\]

\end{lemma}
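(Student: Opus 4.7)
My overall plan is to compute the stalk cohomology of the quotient complex at the origin by a multi-weight decomposition followed by a mapping-cone argument. First, I would recast the local formula \eqref{eq:Omegafloc} applied to $h=\prod_{i=1}^\ell x'_i$ in the coordinate-symmetric shape
\[
\Omega^p_h = h\cdot\Omega^p(\log D) + \eta\wedge\Omega^{p-1}(\log D),\qquad \eta:=\rd h/h=\textstyle\sum_{i=1}^\ell \rd x'_i/x'_i.
\]
The inclusion $\supset$ is immediate from $\rd h=h\eta$ and $\eta\wedge\eta=0$; the converse reduces the generators of the first summand of \eqref{eq:Omegafloc} to elements of $\eta\wedge\Omega^{p-1}(\log D)$ after rewriting $\rd x'_1/x'_1=\eta-\sum_{i\geq 2}\rd x'_i/x'_i$. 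Setting $V:=\langle\rd x'_i/x'_i,\rd y_j/y_j\rangle_\CC$, this identifies the quotient sheaf as
\[
\Omega^p(\log D)/\Omega^p_h = (\cO/h)\otimes_\CC\ov\Lambda^p,\qquad \ov\Lambda^p:=\textstyle\bigwedge^p V\big/\eta\wedge\bigwedge^{p-1}V,
\]
with induced differential $\ov d(f\otimes\omega)=\rd f\wedge\omega$, where $V$ acts on $\ov\Lambda^\cbbullet$ by wedge product (well-defined modulo $\eta$). Since the right-hand side of the lemma is exactly $\ov\Lambda^p$, the remaining task is to compute the cohomology of this quotient complex.

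The next step is to exploit the $\NN^\ell$-grading $\cO=\bigoplus_{a\in\NN^\ell}x'^a\,\CC\{y\}$, which descends to $\cO/h=\bigoplus_{a:\,\exists i,\,a_i=0}x'^a\,\CC\{y\}$. Because $\rd(x'^a f)=x'^a(\rd f+f\cdot v_a)$ with $v_a:=\sum_i a_i\,\rd x'_i/x'_i\in V$, each weight summand is preserved by $\ov d$, and on it the differential reads
\[
\ov d_a(f\otimes\omega)=\rd_y f\wedge\omega+f\cdot[v_a]\wedge\omega,
\]
where $\rd_y f=\sum_j y_j\partial_j f\cdot\rd y_j/y_j$ and $[v_a]\in W:=V/\CC\eta$ denotes the class of $v_a$. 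For $a=0$ the differential reduces to $\rd_y\wedge$, and writing $W=W_x\oplus W_y$ with $W_y=\langle\rd y_j/y_j\rangle$ the weight-$0$ complex factors as $\bigwedge^\cbbullet W_x\otimes(\CC\{y\}\otimes\bigwedge^\cbbullet W_y,\rd_y)$; the classical log Poincar\'e lemma for $\CC\{y\}$ gives cohomology $\bigwedge^\cbbullet W=\ov\Lambda^\cbbullet$, already accounting for the full right-hand side of the lemma.

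\emph{The main obstacle} is the acyclicity of every weight-$a\ne 0$ piece. The key algebraic fact is that $[v_a]=0$ in $W$ forces $a_1=\cdots=a_\ell$, which combined with $\exists i:\,a_i=0$ forces $a=0$; hence $[v_a]\ne 0$ in $W$ for every nontrivial weight $a$ of $\cO/h$. Choosing a complement $W=\CC[v_a]\oplus W'$ with $W_y\subset W'$, the decomposition $\ov\Lambda^\cbbullet=\bigwedge^\cbbullet W'\oplus[v_a]\wedge\bigwedge^\cbbullet W'$ exhibits the weight-$a$ complex as the mapping cone of the chain map
\[
[v_a]\wedge\colon\big(\CC\{y\}\otimes\textstyle\bigwedge^\cbbullet W',\rd_y\big)\longrightarrow\big(\CC\{y\}\otimes[v_a]\wedge\textstyle\bigwedge^\cbbullet W',\rd_y\big)[1].
\]
Both source and target have cohomology $\bigwedge^\cbbullet W'$ (by the same log Poincar\'e lemma), and the induced map on cohomology is the tautological isomorphism $\omega\mapsto[v_a]\wedge\omega$, so the cone is acyclic. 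Combining this with the weight-zero contribution yields $H^p(\Omega^\cbbullet(\log D)/\Omega^\cbbullet_h)=\ov\Lambda^p$, as required.
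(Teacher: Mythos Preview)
Your argument is correct and rests on the same linear-algebra fact the paper uses---that for every nonzero multi-exponent $a$ with some $a_i=0$, the vector $v_a=\sum_i a_i\,\rd x'_i/x'_i$ is linearly independent of $\eta=\sum_i\rd x'_i/x'_i$---but the two proofs are organized differently. The paper expands a cocycle $\omega\in\Omega^p(\log D)$ as a full monomial series $\sum_{a,b}\omega_{a,b}\,x'^{a}y^{b}$ with constant coefficients $\omega_{a,b}\in\bigwedge^pV$, and uses the independence of $d\log(x'^ay^b)$ and $\eta$ to argue term by term that each non-constant contribution already lies in $\Omega_h^p+\rd\Omega^{p-1}(\log D)$, leaving only $\omega_{0,0}$. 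You instead first identify the quotient complex globally as $(\cO/h)\otimes_\CC\ov\Lambda^\cbbullet$ via the symmetric rewriting $\Omega_h^p=h\,\Omega^p(\log D)+\eta\wedge\Omega^{p-1}(\log D)$, grade by the $x'$-exponent alone, and dispose of each $a\neq0$ piece by recognizing it as the mapping cone of $[v_a]\wedge$ acting on the $y$-logarithmic complex. Your route is more structural---the acyclicity is a clean Koszul-type statement---while the paper's is more hands-on; both ultimately need the (routine) check that the weight-wise primitives sum to a convergent form, which you may want to make explicit since your $x'$-grading of $\cO/h$ is a topological rather than an algebraic direct sum.
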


\begin{proof}
For $\omega\in\Omega^p(\log D)$ such that $\rd\omega\in\Omega_h^{p+1}$, let us write $\omega$ as a power series $\sum_{\bma,\bmb}\omega_{\bma,\bmb}x^{\prime\bma} y^{\bmb}$ with $\omega_{\bma,\bmb}\in\bigwedge^p\langle\rd x'/x',\rd y/y\rangle$. According to \eqref{eq:Omegafloc}, we can restrict the sum to $\bma\not\geq(1,\dots,1)$. Then the condition $\rd\omega\in\Omega_h^{p+1}$ reads
\[
\frac{\rd(x^{\prime\bma} y^{\bmb})}{x^{\prime\bma} y^{\bmb}}\wedge\omega_{\bma,\bmb}\in\frac{\rd(\prod_ix'_i)}{\prod_ix'_i}\wedge\bigwedge^{p-1}\Big\langle\frac{\rd x'}{x'},\frac{\rd y}{y}\Big\rangle,\quad\forall\bma\not\geq(1,\dots,1),\forall\bmb.
\]
Since $\sfrac{\rd(x^{\prime\bma} y^{\bmb})}{x^{\prime\bma} y^{\bmb}}$ and $\sum_i\rd x'_i/x'_i$ are $\CC$-linearly independent in $\langle\sfrac{\rd x'}{x'},\sfrac{\rd y}{y}\rangle$ whenever $(\bma,\bmb)\neq(0,0)$ and $\bma\not\geq(1,\dots,1)$, we also have $\omega_{\bma,\bmb}\in(\sum_i\rd x'_i/x'_i)\wedge\bigwedge^{p-1}\langle\sfrac{\rd x'}{x'},\sfrac{\rd y}{y}\rangle$. As a consequence, $\omega-\omega_{0,0}$ belongs to the image of~$\rd$. Since
\begin{align*}
\bigwedge^p\Big\langle\frac{\rd x'}{x'},\frac{\rd y}{y} \Big\rangle\cap\rd\Omega^{p-1}(\log D)&=0\\
\tag*{and}
\bigwedge^p \Big\langle\frac{\rd x'}{x'},\frac{\rd y}{y} \Big\rangle\cap\Omega_h^p&=\frac{\rd(\prod_ix'_i)}{\prod_ix'_i}\wedge\bigwedge^{p-1}\Big\langle\frac{\rd x'}{x'},\frac{\rd y}{y}\Big\rangle,
\end{align*}
we obtain the desired identification of $H^p(\Omega^\cbbullet(\log D)/\Omega_h^\cbbullet,\rd)$ with $H^p(Z^0_o,\CC)$.
\end{proof}

\subsubsection*{Step three: end of the proof}
We now go back to the notation of the beginning of the proof. The group $G$ acts on $\wt\epsilon_*\cA_{\wt{{}_eX'}}^\cbbullet(\log{}_eD)$ in the following way. Let $\gamma:{}_eX'\to{}_eX'$ be induced by $u\mto\zeta u$ for some $\zeta$ with $\zeta^e=1$ and let $\wt\gamma$ be the corresponding lifting on $\wt{{}_eX'}$. Then, for a local section $\varphi$ of $\wt\epsilon_*\cA_{\wt{{}_eX'}}^\cbbullet(\log{}_eD)$, the correspondence $\varphi\mto\varphi\circ\wt\gamma$ induces an isomorphism $\wt\gamma^*:\wt\epsilon_*\cA_{\wt{{}_eX'}}^\cbbullet(\log{}_eD)\isom\wt\epsilon_*\wt\gamma_*\cA_{\wt{{}_eX'}}^\cbbullet(\log{}_eD)$. Since $\wt\epsilon\circ\wt\gamma=\wt\epsilon$, we get an action of $G$ on $\wt\epsilon_*\cA_{\wt{{}_eX'}}^\cbbullet(\log{}_eD)$ which satisfies $(\wt\epsilon_*\cA_{\wt{{}_eX'}}^\cbbullet(\log{}_eD))^G=\cA_{\wt X'}^\cbbullet(\log D)$. This action induces an action of $G$ on $\varpi_{X',*}\wt\epsilon_*\cA_{\wt{{}_eX'}}^\cbbullet(\log{}_eD)=\epsilon_*\varpi_{{}_eX',*}\cA_{\wt{{}_eX'}}^\cbbullet(\log{}_eD)$, and this action is compatible with the action of $G$ on $\epsilon_*\Omega^\cbbullet_{{}_eX'}(\log{}_eD)$ through the natural morphism considered in the previous step.

The diagram \eqref{eq:diagpreviousstep} of the previous step gives rise to a commutative diagram
\[
\xymatrix{
\epsilon_*\Omega^\cbbullet_{{}_eX'}(\log{}_eD)\ar[r]\ar[d]_\wr&\epsilon_*(\Omega^\cbbullet_{{}_eX'}(\log{}_eD)/\Omega^\cbbullet_{{}_eg})\ar[d]^\wr\\
\epsilon_*\varpi_{{}_eX',*}\cA_{\wt{{}_eX'}}^\cbbullet(\log{}_eD)\ar[r]&\epsilon_*\varpi_{{}_eX',*}\cA_{{}_eZ^0}^\cbbullet(\log{}_eD)
}
\]
Since $\wt\epsilon_*\cA_{{}_eZ^0}^\cbbullet(\log{}_eD)=\cA_{Z^0}^\cbbullet(\log D)$, the right vertical morphism gives an isomorphism
\[
(\Omega^\cbbullet_{X'}(\log D)/\Omega^\cbbullet_{g})\isom\varpi_{X',*}\cA_{Z^0}^\cbbullet(\log D)\simeq\bR\varpi_{X',*}\CC_{Z^0}.
\]
On the other hand, restricting the left vertical isomorphism to $G$-invariants induces an isomorphism
\[
\Omega^\cbbullet_{X'}(\log D)\isom\varpi_{X',*}\cA_{\wt{X'}}^\cbbullet(\log D)\simeq\bR\varpi_{X',*}\CC_{\wt X'}
\]
We thus have completed the diagram \eqref{eq:diagOmegaf}.\qed

\numberwithin{equation}{subsection} \section{Proof of \eqref{eq:Kontsevich}, after M.\,Kontsevich}\label{sec:Kontsevich}

Under a restricted condition, we proceed the Deligne-Illusie approach as in \cite{E-V92} for the Kontsevich complex $(\Omega_f^\cbbullet,\rd)$ by listing the necessary modification. We shall follow closely the notations therein. \smallskip

Let $\kappa$ be a perfect field of characteristic $p>0$. Fix $f:U\to\AA^1$ over $\kappa$ and a compactification $f:X\to\PP^1$ such that $D:=X\setminus U$ is a normal crossing divisor and the pole divisor $P$ of $f$ on $X$ has multiplicity one (\ie $P=P_{\mathrm{red}}$). We consider the sheaves $\Omega_f^\cbbullet \subset \Omega^\cbbullet(\log D)$ on $X$ defined as before.

\subsection{The Cartier isomorphism}
Consider the commutative diagram with Cartesian square
\[
\xymatrix{
X \ar[r]^F\ar[dr] & X' \ar[r]\ar[d] & X \ar[d] \\ & \Spec\kappa \ar[r] & \Spec\kappa
}
\]
defining the relative Frobenius $F$. Here the lower arrow is the absolute Frobenius. We have the Cartier isomorphism
\begin{equation}\label{Eq:Cartier} C^{-1}: \bigoplus_a \Omega^a_{X'}(\log D') \to \bigoplus_a \cH^a\big(F_*(\Omega^\cbbullet_X(\log D),\rd) \big)
\end{equation}
of $\cO_{X'}$-algebras. If $x$ is a local section of $\cO_X$, then
\begin{equation}\label{Eq:Cartier-map}
C^{-1}:
\begin{aligned}
x' &\mto x^p \\
\rd x' &\mto x^{p-1}\rd x.
\end{aligned}
\end{equation}
Here $x'=1\otimes x$ is the pullback in $\cO_{X'} = \kappa\otimes_{\kappa}\cO_X$.

\begin{lemma}\label{Lemma:HKont-HLog}
The inclusion $(\Omega_{X,f}^\cbbullet,\rd) \to (\Omega^\cbbullet_X(\log D),\rd)$ induces an inclusion
\[
\cH^a\big(F_*(\Omega_{X,f}^\cbbullet,\rd)\big) \to \cH^a\big(F_*(\Omega^\cbbullet_X(\log D),\rd)\big).
\]
\end{lemma}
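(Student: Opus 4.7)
The plan is to restrict the Cartier isomorphism \eqref{Eq:Cartier} to the subcomplex $\Omega_{X,f}^\cbbullet$. I define on $X'$ the analogue $\Omega_{X',f'}^a \subset \Omega^a_{X'}(\log D')$ by applying the recipe \eqref{eq:Omegafloc} to $f' = 1 \otimes f$, and I claim the Cartier map restricts to an isomorphism
\[
C^{-1}_f \colon \Omega_{X',f'}^a \xrightarrow{\sim} \cH^a F_*(\Omega_{X,f}^\cbbullet,\rd),
\]
compatibly with \eqref{Eq:Cartier} through the inclusions $\Omega_{X',f'}^a \hookrightarrow \Omega^a_{X'}(\log D')$ and $\cH^a F_*\Omega_{X,f}^\cbbullet \to \cH^a F_*\Omega^\cbbullet_X(\log D)$. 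Since the source inclusion is evidently injective, so is the target map on cohomology, which is the lemma.

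The well-definedness of $C^{-1}_f$ is a direct calculation using \eqref{Eq:Cartier-map}. Specializing \eqref{eq:Omegafloc} to $\bme = (1,\ldots,1)$ (the case $P = P_{\mathrm{red}}$), a local section of $\Omega_{X',f'}^a$ takes the form
\[
\omega' = h_1'\, \frac{\rd g'}{g'}\wedge \eta_1' + h_2'\, g'\, \eta_2',
\]
with $g' = \prod_i x'_i$, $h_i' \in \cO_{X'}$, and $\eta_i'$ a wedge of elements of $\{\rd x'_j/x'_j,\, \rd y'_k/y'_k,\, \rd z'_\ell\}$. Applying \eqref{Eq:Cartier-map} term by term yields the specific representative cocycle
\[
h_1^p\, \frac{\rd g}{g}\wedge \eta_1^{[p]} + h_2^p\, g^p\, \eta_2^{[p]},
\]
where each $\rd z'_\ell$ is replaced by $z_\ell^{p-1}\rd z_\ell$ and each $h'$ by $h^p$. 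The first summand is divisible by $\rd g/g = \rd x^{\bme}/x^{\bme}$ and hence lies in $\Omega^a_{X,f}$ by the first clause of \eqref{eq:Omegafloc}; the second summand factors through $g^p \in g\cO$ and so lies in the second clause. Thus the representative lies in $F_*\Omega^a_{X,f}$ and defines the class $C^{-1}_f(\omega')$.

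To promote $C^{-1}_f$ to an isomorphism, I would carry out the parallel analysis on the quotient complex $Q^\cbbullet := \Omega^\cbbullet_X(\log D)/\Omega^\cbbullet_{X,f}$. Locally $Q^a \simeq (\cO/g\cO)\cdot V^a$ with $V^a = \bigwedge^a\{\rd x_j/x_j\,(j\geq 2),\, \rd y_k/y_k,\, \rd z_\ell\}$, and the $p$-th power map on coefficients descends to $\cO/g\cO \to \cO/g\cO$ because $g^p \in g\cO$. The same local calculation then produces a morphism $C^{-1}_Q \colon Q^{\prime a} \xrightarrow{\sim} \cH^a F_*Q^\cbbullet$, where $Q^{\prime a} := \Omega^a_{X'}(\log D')/\Omega^a_{X',f'}$. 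The commutative diagram
\[
\xymatrix@C=.55cm{
0 \ar[r] & \Omega_{X',f'}^a \ar[d]_{C^{-1}_f} \ar[r] & \Omega^a_{X'}(\log D') \ar[d]^{\wr}_{C^{-1}} \ar[r] & Q^{\prime a} \ar[d]^{\wr}_{C^{-1}_Q} \ar[r] & 0 \\
& \cH^a F_*\Omega_{X,f}^\cbbullet \ar[r] & \cH^a F_*\Omega^\cbbullet_X(\log D) \ar[r] & \cH^a F_*Q^\cbbullet &
}
\]
combined with the long exact sequence attached to $0 \to \Omega^\cbbullet_{X,f} \to \Omega^\cbbullet_X(\log D) \to Q^\cbbullet \to 0$ yields, by a diagram chase, that $C^{-1}_f$ is an isomorphism and that the bottom-left arrow is injective.

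The main obstacle is verifying that $C^{-1}_Q$ is indeed an isomorphism. This requires careful tracking of the induced differential on $Q^\cbbullet$ modulo $g\cO\cdot V^\cbbullet$ together with the observation that the Frobenius-semilinear decomposition of $F_*\cO_X$ into monomial pieces $x^\alpha \cO_{X'}$ ($0\leq\alpha_i<p$) respects the ideal $g\cO$ thanks to $g^p \in g\cO$; one must check that the Cartier-type symbol does not leak out of $Q^{\prime a}$ and, dually, that the Cartier-exact forms discarded in the construction stay within $\Omega^\cbbullet_{X,f}$ up to exact pieces of $\Omega^\cbbullet_{X,f}$. Once this is established, the remainder of the argument is formal.
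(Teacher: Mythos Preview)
Your approach has a genuine gap, and it also inverts the logical flow of the paper in a way that makes the argument harder rather than easier.

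The paper's proof is a direct, elementary local computation: it shows the equality
\[
\rd\Omega^a_X(\log D)\cap\Omega_{X,f}^{a+1}=\rd\Omega_{X,f}^a,
\]
which is exactly the statement that the map on cohomology sheaves is injective. Since $\Omega_{X,f}^{a+1}$ is locally free and the singular locus $Z$ of $D$ has codimension~$\geq 2$, one may work on $X\setminus Z$, where $D$ is smooth and $f=x_1^{-1}$ in suitable coordinates. Writing $\omega=\tfrac{\rd x_1}{x_1}\alpha+\beta$ and passing to the $x_1$-adic completion, one checks in two lines that if $\rd\omega\in\Omega_{X,f}^{a+1}$ then $\omega$ may be corrected by a form in $\Omega_{X,f}^a$ with the same exterior derivative. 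No Cartier isomorphism is needed.

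Your route, by contrast, asks for a Cartier-type isomorphism $C^{-1}_Q$ on the quotient complex $Q^\cbbullet$, and you correctly identify this as the ``main obstacle'' --- but you do not actually prove it. Observing that $g^p\in g\cO$ so that the $p$-th power map descends to $\cO/g\cO$ is far from enough: one must compute the induced differential on $Q^\cbbullet\simeq(\cO/g\cO)\cdot V^\cbbullet$, identify its cohomology sheaves explicitly, and match them with $Q'^a$ under the proposed map. This is at least as much work as the lemma itself. Moreover, in the paper the Cartier restriction $C^{-1}\colon\Omega^a_{X',f'}\to\cH^a F_*(\Omega_{X,f}^\cbbullet,\rd)$ (the Proposition immediately following the lemma) is proved \emph{using} the lemma to guarantee well-definedness of the target inclusion; your strategy reverses this dependence and therefore cannot borrow that result.

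In short: the missing verification of $C^{-1}_Q$ is the whole content of the statement you are trying to prove, repackaged, while the paper's argument avoids the Cartier map entirely and settles the question in a few elementary lines.
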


\begin{proof}
We need to show that
\begin{equation}\label{Eq:Kont-Om-closed} \rd\Omega^a_X(\log D)\cap\Omega_{X,f}^{a+1} = \rd \Omega_{X,f}^a.
\end{equation}
Let $Z$ be the singular locus of the divisor $D$ and $j: X\setminus Z \to X$ the inclusion. Since~$\Omega_{X,f}^{a+1}$ is locally free and $Z$ is of codimension at least two, we have $j_*j^*(\Omega_{X,f}^{a+1}) = \Omega_{X,f}^{a+1}$. Thus one only needs to prove \eqref{Eq:Kont-Om-closed} on $X\setminus Z$ and hence we may assume that $D$ is smooth.

Let $\{x_i\}_{i=1}^n$ be local coordinates and assume that $f = x_1^{-1}$. Any element $\omega\in\Omega^a_X(\log D)$ can be written as
\[
\omega = \frac{\rd x_1}{x_1}\,\alpha + \beta
\]
with $\alpha\in\Omega_X^{a-1}$, $\beta\in\Omega_X^a$. To see the divisibility of $\beta$ by $x_1$, we may pass to the completion along $x_1$ and write
\[
\beta = \gamma + x_1\delta
\]
where $\gamma$ is not divisible by $x_1$. Now if $\rd\omega\in\Omega_{X,f}^{a+1}$, then $(\sfrac{\rd x_1}{x_1^2})\wedge\rd\beta \in \Omega^{a+2}_X(\log D)$. The latter implies that $\rd\gamma = 0$. We obtain $\rd\omega = \rd\eta$ with $\eta = \frac{\rd x_1}{x_1}\alpha+x_1\delta \in \Omega_{X,f}^a$.
\end{proof}

\begin{proposition}
The Cartier isomorphism \eqref{Eq:Cartier} sends $\Omega_{X',f'}^a$ to $\cH^a\big(F_*(\Omega_{X,f}^\cbbullet,\rd)\big)$ and induces an isomorphism
\[
C^{-1}: \bigoplus_a \Omega_{X',f'}^a \to \bigoplus_a \cH^a\left(F_*(\Omega_{X,f}^\cbbullet,\rd)\right)
\]
of $\cO_{X'}$-algebras.
\end{proposition}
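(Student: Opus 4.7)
The strategy is to verify the statement locally, combining the explicit description \eqref{eq:Omegafloc} of $\Omega_f^a$, the Cartier formula \eqref{Eq:Cartier-map}, and the full Cartier isomorphism \eqref{Eq:Cartier}. Under the hypothesis $P=P_{\mathrm{red}}$, the local model of \S\ref{subsec:setup} has $\bme=(1,\ldots,1)$, so $\Omega_{X,f}^a$ is $\cO_X$-generated by Type~1 forms $\frac{\rd x^{\bme}}{x^{\bme}}\wedge\eta$ and Type~2 forms $x^{\bme}\cdot\omega$, with $\eta,\omega$ wedges of $\{\frac{\rd x_{\geq 2}}{x_{\geq 2}},\frac{\rd y}{y},\rd z\}$ of appropriate ranks. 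Since $\Omega_{X,f}^a$ is $\cO_X$-locally free and $\cH^a(F_*(\Omega^\cbbullet_{X,f},\rd))$ embeds, by Lemma~\ref{Lemma:HKont-HLog}, into the locally free sheaf $\cH^a(F_*(\Omega^\cbbullet_X(\log D),\rd))\cong\Omega^a_{X'}(\log D')$, both source and target of the candidate map are torsion-free, and the codimension-$2$ argument from the proof of Lemma~\ref{Lemma:HKont-HLog} permits working on the smooth locus $X\setminus Z$ of $D$.

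First I would check that $C^{-1}$ carries $\Omega_{X',f'}^a$ into $\cH^a(F_*(\Omega_{X,f}^\cbbullet,\rd))$ (viewed as a subsheaf of $\cH^a(F_*(\Omega^\cbbullet_X(\log D),\rd))$). Applying \eqref{Eq:Cartier-map}, a Type~1 generator $\phi'\cdot\frac{\rd x'^{\bme}}{x'^{\bme}}\wedge\eta'$ of $\Omega^a_{X',f'}$ has Cartier representative $F^*\phi'\cdot\frac{\rd x^{\bme}}{x^{\bme}}\wedge\eta\cdot\prod_k z_k^{p-1}$, still of Type~1; a Type~2 generator $x'^{\bme}\phi'\cdot\omega'$ has representative $x^{p\bme}F^*\phi'\cdot\omega\cdot\prod_k z_k^{p-1}=x^{\bme}\cdot\bigl(x^{(p-1)\bme}F^*\phi'\prod_k z_k^{p-1}\bigr)\omega$, of Type~2. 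By the $\cO_{X'}$-linearity of $C^{-1}$ (with $\cO_{X'}$ acting on the cohomology sheaf via Frobenius pullback, and $\Omega^a_{X,f}$ stable under this action), the entire submodule $\Omega^a_{X',f'}$ is mapped into $\cH^a(F_*(\Omega_{X,f}^\cbbullet,\rd))$. Injectivity of the restricted map is then immediate from that of the full Cartier isomorphism; for surjectivity I would form the commutative diagram
\[
\xymatrix@C=0.3cm{
0\ar[r]&\Omega^a_{X',f'}\ar[r]\ar[d]&\Omega^a_{X'}(\log D')\ar[r]\ar[d]_{\wr}^{C^{-1}}&\Omega^a_{X'}(\log D')/\Omega^a_{X',f'}\ar[r]\ar[d]&0\\
0\ar[r]&\cH^a(F_*\Omega^\cbbullet_{X,f})\ar[r]&\cH^a(F_*\Omega^\cbbullet_X(\log D))\ar[r]&\cH^a(F_*\Omega^\cbbullet_X(\log D))/\cH^a(F_*\Omega^\cbbullet_{X,f})\ar[r]&0
}
\]
whose rows are short exact (the bottom by Lemma~\ref{Lemma:HKont-HLog}). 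Since the middle vertical arrow is an isomorphism, the snake lemma shows that the left vertical arrow is an isomorphism if and only if the right one is injective.

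The main obstacle is this injectivity of the right vertical arrow. At a generic point of a component of $P_{\mathrm{red}}$ one takes $f=x_1^{-1}$; a direct monomial computation then shows that if $\omega'\in\Omega^a_{X'}(\log D')$ has Cartier image admitting a representative in $\Omega^a_{X,f}$, then $\omega'\in\Omega^a_{X',f'}$, the essential input being that $z_k^{p-1}\rd z_k$ is closed but not exact in characteristic $p$, so factors of $z_k^{p-1}$ appearing in a Cartier representative cannot be killed by modifications coming from $\rd\Omega^{a-1}_X(\log D)$. Away from $P_{\mathrm{red}}$ one has $\Omega^a_{X,f}=\Omega^a_X(\log D)$ and the claim is trivial, and the codimension-$2$ reduction then yields the global result. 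The resulting $\cO_{X'}$-algebra structure is automatic, since $\bigoplus_a\Omega^a_{X',f'}$ and $\bigoplus_a\cH^a(F_*\Omega^\cbbullet_{X,f})$ are graded sub-algebras of the corresponding logarithmic total algebras, and the full Cartier map is already an $\cO_{X'}$-algebra isomorphism, so its restriction is also one.
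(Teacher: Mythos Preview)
Your approach is correct and reaches the same conclusion, but the route differs from the paper's. After establishing that $C^{-1}$ carries $\Omega^a_{X',f'}$ into $\cH^a(F_*(\Omega^\cbbullet_{X,f},\rd))$ (which both you and the paper do via the explicit local lift~\eqref{Eq:Cartier-map}), the paper does \emph{not} invoke a codimension-$2$ reduction. Instead it passes to an \'etale chart $X\to\AA^n$ with $f=(x_1\cdots x_\ell)^{-1}$, then uses the K\"unneth decomposition of $(\Omega^\cbbullet_f,\rd)$ to reduce to the extreme case $\ell=n$, i.e.\ $D=P$ with no horizontal or $z$-variables at all. In that case the image $\cH^a(F_*\Omega^\cbbullet_{X,f})\subset\Omega^a_{X'}(\log D')$ is computed as an explicit intersection of $\kappa[x^p]$-modules, and the hypothesis $P=P_\red$ enters exactly in evaluating that intersection. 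Your route instead reduces to the opposite extreme $\ell=1$ (smooth locus of $D$), where the monomial check is lighter; the trade-off is that you must justify the codimension-$2$ reduction. Your phrase ``torsion-free'' is not quite the right hypothesis here: what makes the reduction valid is that $\Omega^a_{X',f'}$ is \emph{locally free}, hence $j_*j^*\Omega^a_{X',f'}=\Omega^a_{X',f'}$, so the reverse inclusion $\cH^a(F_*\Omega^\cbbullet_{X,f})\subset\Omega^a_{X',f'}$ (both viewed inside $\Omega^a_{X'}(\log D')$ via $C^{-1}$) can indeed be checked outside~$Z'$. This is precisely the mechanism used in Lemma~\ref{Lemma:HKont-HLog}, and you already note the local freeness, so the fix is cosmetic. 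Each approach has its advantage: the paper's K\"unneth argument is self-contained at a single chart and makes the role of $P=P_\red$ transparent; yours avoids K\"unneth and the module intersection at the cost of the extension step.
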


\begin{proof}
Locally we have an explicit lifting
\[
\widetilde{C}^{-1}: \Omega^a_{X'}(\log D') \to Z^a(\Omega^\cbbullet_X(\log D),\rd)
\]
of \eqref{Eq:Cartier} given by the formula \eqref{Eq:Cartier-map} in the chain level. Here $Z^a(\Omega^\cbbullet_X(\log D),\rd)$ denotes the $\cO_{X'}$-module of cocycles. It is then clear that $\widetilde{C}^{-1}$ sends $\Omega_{X',f'}^a$ to $Z^a(\Omega_{X,f}^\cbbullet,\rd)$. As by Lemma \ref{Lemma:HKont-HLog}, the natural map $\cH^a\big(F_*(\Omega_{X,f}^\cbbullet,\rd)\big) \to \cH^a\big(F_*(\Omega^\cbbullet_X(\log D),\rd)\big)$ is injective, $C^{-1}$ restricts to a well-defined map $\Omega^a_{X',f'}\to\cH^a\big(F_*(\Omega_{X,f}^\cbbullet,\rd)\big)$, so one has the commutative diagram
\begin{equation}\label{Eq:CartierSq}
\xymatrix{
\Omega_{X',f'}^a \ar@{^{ (}->}[r]\ar[d] & \Omega^a_{X'}(\log D') \ar[d]^{C^{-1}}_\simeq \\
\cH^a\big(F_*(\Omega_{X,f}^\cbbullet,\rd) \big) \ar@{^{ (}->}[r] & \cH^a \big(F_*(\Omega^\cbbullet_X(\log D),\rd) \big)
}		
\end{equation}
Therefore the problem reduces to showing that the left vertical arrow is surjective.

We regard the involved sheaves as coherent $\cO_{X'}$-modules. The statement is local, so we may assume that there exists a Cartesian diagram
\[
\xymatrix{ X \ar[r]^F\ar[d]_\pi & X' \ar[d] \\ \AA^n \ar[r]^F & (\AA^n)' }
\]
with \'etale vertical morphisms such that $f = \pi^*(x_1\cdots x_\ell)^{-1}$ for some $\ell\leq n$. Also notice that
\[
\Omega_{X,f}^a = \pi^*\Omega_{\AA^n,(x_1\cdots x_\ell)^{-1}}^a \quad\text{and}\quad \Omega^a_X(\log D) = \pi^*\Omega_{\AA^n}^a(\log (x_1\cdots x_\ell)).
\]
Thus to prove the statement, we may assume $X = \AA^n$ and work with global sections of the sheaves. Moreover by the K\"unneth formula for the complex $(\Omega_f^\cbbullet,\rd)$ and the classical Cartier isomorphism for $(\Omega^\cbbullet(\log D),\rd)$, we only need to consider the case where $\ell=n$, \ie $f = (x_1\cdots x_n)^{-1}$ and $D=(x_1\cdots x_n)$.

The sheaf $\cH^a\big(F_*(\Omega_{X,f}^\cbbullet,\rd)\big)$ is equal to the image of $Z^a(\Omega_{X,f}^\cbbullet,\rd)$ into the sheaf in the lower right of the diagram \eqref{Eq:CartierSq}. Via the isomorphism in the right side of \eqref{Eq:CartierSq}, the $\cO_{X'}$-module $\cH^a\big(F_*(\Omega_{X,f}^\cbbullet,\rd)\big)$ corresponds to the intersection of $\kappa[x^p]$-modules\enlargethispage{\baselineskip}%
\[
\kappa[x]\cdot\left\{\frac{df}{f}\bigwedge^{a-1} \left\{\frac{\rd x_1}{x_1},\dots,\frac{\rd x_n}{x_n}\right\}, \frac{1}{f}\bigwedge^a \left\{\frac{\rd x_1}{x_1},\dots,\frac{\rd x_n}{x_n} \right\}\right\} \bigcap \kappa[x^p]\cdot\bigwedge^a \left\{\frac{\rd x_1}{x_1},\dots,\frac{\rd x_n}{x_n} \right\}
\]
where the left and right modules correspond to $\Omega_{X,f}^a$ and $\Omega^a_{X'}(\log D')$, respectively. Since the pole orders of $f$ are equal to one, the intersection equals
\[
\kappa[x^p]\cdot\left\{\frac{df}{f}\bigwedge^{a-1} \left\{\frac{\rd x_1}{x_1},\dots,\frac{\rd x_n}{x_n}\right\}, \frac{1}{f^p}\bigwedge^a \left\{\frac{\rd x_1}{x_1},\dots,\frac{\rd x_n}{x_n} \right\}\right\}
\]
and this completes the proof.
\end{proof}

\subsection{The lifting and the splitting}

Let $W_2$ be the ring of Witt vectors of length two of $\kappa$. We now make the assumption that $(U\subset X,f)$ has a lifting $(\wt{U}\subset\wt{X},\wt{f})$ to~$W_2$. Again $\wt{X}',\wt{f}',\dots$ will denote the base-change of $\wt{X},\wt{f},\dots$ under the absolute Frobenius $W_2\to W_2$.

\begin{proposition}\label{Prop:LiftFrob}
Locally in the Zariski topology there is a lifting $\wt{F}: \wt{X}\to\wt{X}'$ of the relative Frobenius $F$ such that
\[
\wt{F}^*\cO_{\wt{X}'}(-\wt{D}') = \cO_{\wt{X}}(-p\wt{D}) \quad\text{and}\quad \wt{F}^*(\wt{f}') = \wt{f}^p.
\]
\end{proposition}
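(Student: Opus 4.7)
The plan is to work locally, choose a system of étale coordinates on $\wt X$ adapted both to $\wt D$ and to $\wt f$, and then define $\wt F$ by raising these coordinates to the $p$-th power.

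First, around any closed point of $\wt X$ I would construct an affine open neighbourhood $\wt V \subset \wt X$ and étale coordinates $\wt x_1,\dots,\wt x_n \in \Gamma(\wt V,\cO_{\wt X})$ with the following properties: there exists $\ell \leq r \leq n$ such that $\wt D \cap \wt V = (\wt x_1\cdots \wt x_r)$, the divisor $\wt P \cap \wt V$ is cut out by $\wt x_1\cdots\wt x_\ell$, and on $\wt V \cap \wt f^{-1}(\wt\Afu_{t'})$ one has $\wt g := 1/\wt f = \wt x_1 \cdots \wt x_\ell$. Such a system exists because $\wt X$ is smooth over $W_2$ and the components of $\wt D$ meet transversally, so local equations for them can be completed to an étale coordinate system; the assumption $P = P_{\mathrm{red}}$ guarantees that $\wt g$ is locally of the form $u \cdot \wt x_1\cdots\wt x_\ell$ for a unit $u$, and by replacing $\wt x_1$ with $u \wt x_1$ (which remains part of an étale coordinate system) one absorbs the unit.

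Next I would define the morphism $\wt F : \wt V \to \wt V'$ of $W_2$-schemes on the level of structure sheaves by
\[
\wt F^*(\wt x'_i) = \wt x_i^p, \qquad i = 1,\dots,n,
\]
where $\wt V'$ denotes the Frobenius twist of $\wt V$ over $W_2$. This is well defined precisely because $\wt x'_1,\dots,\wt x'_n$ form an étale coordinate system on $\wt V'$; and its reduction modulo $p$ is the relative Frobenius $F$ on $V$ because $F$ is also determined on coordinates by $x'_i \mapsto x_i^p$.

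Finally I would check the two compatibilities. For the divisor, since $\wt D' \cap \wt V' = (\wt x'_1\cdots \wt x'_r)$, one has $\wt F^*(\wt x'_1\cdots \wt x'_r) = (\wt x_1\cdots \wt x_r)^p$, hence $\wt F^*\cO_{\wt X'}(-\wt D') = \cO_{\wt X}(-p\wt D)$ on $\wt V$. For the function, from $\wt g = \wt x_1\cdots\wt x_\ell$ one gets $\wt F^*(\wt g') = (\wt x_1\cdots\wt x_\ell)^p = \wt g^p$, which after inversion in the chart $\wt\Afu_t$ yields $\wt F^*(\wt f') = \wt f^p$. The only nontrivial point is the existence of the adapted coordinate system where $\wt g$ becomes a monomial; this is exactly where the hypothesis $P = P_{\mathrm{red}}$ is used, and is the main step on which the rest of the argument rests.
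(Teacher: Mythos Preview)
Your argument is correct and follows essentially the same route as the paper: choose \'etale coordinates $\wt x_1,\dots,\wt x_n$ in which $\wt D=(\wt x_1\cdots\wt x_m)$ and $\wt f=(\wt x_1\cdots\wt x_\ell)^{-1}$, then set $\wt F^*(\wt x_i')=\wt x_i^p$. The paper simply cites \cite[Prop.\,9.7]{E-V92} for the existence of such coordinates, while you spell out the unit-absorption step where $P=P_{\mathrm{red}}$ enters; the only point you leave slightly implicit is that the prescription $\wt x_i'\mapsto\wt x_i^p$ really defines a morphism $\wt V\to\wt V'$, which uses the infinitesimal lifting property of the \'etale map $\wt V'\to\AA^n_{W_2}$.
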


\begin{proof}
This is shown in the proof of \cite[Prop.9.7]{E-V92}. Indeed locally there is an \'etale morphism
\[
\wt{\pi}: \wt{X} \to \AA^n = \Spec W_2[\wt{t}_1,\dots,\wt{t}_n]
\]
with $\wt{x}_i := \wt{\pi}^*\wt{t}_i$ such that
\[
\wt{f} = \frac{1}{\wt{x}_1\cdots \wt{x}_\ell} \quad\text{and}\quad \wt{D} = (\wt{x}_1\cdots \wt{x}_m)
\]
for some $\ell\leq m\leq n$. The morphism $\wt{F}:\wt{X} \to \wt{X}'$ defined by $\wt{F}^*(\wt{x}_i') = \wt{x}_i^p$ then has the desired property.
\end{proof}

\begin{theorem}\label{Thm:Splitting-I}
Fix a positive integer $i$ with $i < p$. The lifting $(\wt{U}\subset\wt{X},\wt{f})$ defines a splitting
\[
\bigoplus_{a=0}^i \cH^a\left(F_*(\Omega_{X,f}^\cbbullet,\rd)\right)[-a] \isom \tau_{\leq i}\left(F_*(\Omega_{X,f}^\cbbullet,\rd)\right)
\]
of the $i$-th truncation of $F_*(\Omega_{X,f}^\cbbullet,\rd)$ in the derived category of $\cO_{X'}$-modules.
\end{theorem}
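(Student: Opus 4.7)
The plan is to adapt the Deligne-Illusie splitting argument, as presented for the log de~Rham complex in \cite{E-V92}, to the Kontsevich subcomplex. The key observation is that the special property $\wt{F}^*(\wt{f}') = \wt{f}^p$ of the Frobenius lifts granted by Proposition \ref{Prop:LiftFrob} forces all the local constructions of Deligne-Illusie to respect the inclusion $\Omega_f^\cbbullet \subset \Omega^\cbbullet_X(\log D)$.

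First, I would choose a Zariski cover $\{\wt{U}_\alpha\}$ of $\wt{X}$ fine enough that on each $\wt{U}_\alpha$ a Frobenius lift $\wt{F}_\alpha$ satisfying the two conditions of Proposition \ref{Prop:LiftFrob} exists. The standard Deligne-Illusie formula $\varphi_\alpha^1(\omega) := (1/p)\,\wt{F}_\alpha^*\wt{\omega} \pmod{p}$ defines an $\cO_{X'}$-linear map $\varphi_\alpha^1: \Omega^1_{X'}(\log D')_{|U_\alpha'} \to F_*Z^1(\Omega^\cbbullet_X(\log D),\rd)_{|U_\alpha'}$ inducing $C^{-1}$ on cohomology, and extends multiplicatively to all degrees as $\varphi_\alpha^a$. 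I would then check that $\varphi_\alpha^a$ sends $\Omega^a_{X',f'}$ into $F_*\Omega^a_{X,f}$: the identity $\varphi_\alpha^1(\rd f') = f^{p-1}\rd f$, which is immediate from $\wt{F}_\alpha^*\wt{f}' = \wt{f}^p$, combined with the local description \eqref{eq:Omegafloc} in the reduced-pole case $P = P_\red$, yields by a direct calculation that $\rd f \wedge \varphi_\alpha^a(\omega)$ has only logarithmic poles along $D$ whenever $\omega \in \Omega^a_{X',f'}$.

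Next, for two Frobenius lifts $\wt{F}_\alpha, \wt{F}_\beta$ over $\wt{U}_{\alpha\beta} := \wt{U}_\alpha \cap \wt{U}_\beta$, the difference $(1/p)(\wt{F}_\alpha^* - \wt{F}_\beta^*)$ yields an $\cO_X$-linear derivation $h_{\alpha\beta}: \Omega^1_{X'}(\log D')_{|U_{\alpha\beta}'} \to (F_*\cO_X)_{|U_{\alpha\beta}'}$, extended to higher forms by the graded Leibniz rule combined with $\varphi_\alpha$ and $\varphi_\beta$. Since both lifts satisfy $\wt{F}^*\wt{f}' = \wt{f}^p$, one has $h_{\alpha\beta}(\rd f') = 0$, and a local verification analogous to the previous step shows that $h_{\alpha\beta}^a$ restricts to a map $\Omega^a_{X',f'} \to F_*\Omega^{a-1}_{X,f}$. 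Assembling $\{\varphi_\alpha^a\}$ and $\{h_{\alpha\beta}^a\}$, together with the higher correction terms on multiple overlaps, into the \v{C}ech-theoretic procedure of \cite{E-V92} then produces a morphism $\bigoplus_{a \leq i} \cH^a\big(F_*(\Omega_f^\cbbullet, \rd)\big)[-a] \to \tau_{\leq i} F_*(\Omega_f^\cbbullet, \rd)$ in the derived category, whose composition with the Cartier isomorphism is the identity; the hypothesis $i < p$ enters here to ensure that the combinatorics of the higher \v{C}ech correction terms remains valid in characteristic~$p$.

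The main obstacle will be the bookkeeping in the third step: verifying that the chain homotopy $h_{\alpha\beta}^a$ in each degree, together with the higher-order correction maps on triple and quadruple overlaps, genuinely respect the Kontsevich subcomplex. The underlying reason they do is the vanishing $h_{\alpha\beta}(\rd f') = 0$, which is precisely what the condition $\wt{F}^*\wt{f}' = \wt{f}^p$ of Proposition \ref{Prop:LiftFrob} was engineered to guarantee, but propagating this vanishing through the Leibniz-rule extensions requires a careful local analysis using \eqref{eq:Omegafloc}. This is also where the reducedness assumption $P = P_\red$ enters essentially, since without it $\varphi_\alpha^1(\rd f')$ would acquire additional polar contributions obstructing a clean preservation of the subcomplex.
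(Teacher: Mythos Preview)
Your approach is essentially the paper's: show that the Deligne--Illusie splitting of \cite{E-V92} for $(\Omega_X^\cbbullet(\log D),\rd)$, built from the Frobenius lifts of Proposition~\ref{Prop:LiftFrob}, restricts to the subcomplex $\Omega_f^\cbbullet$. The paper carries this out via the explicit local identity $\sum_{j=1}^{\ell} u_{\alpha,j}=0$ (where $\wt F_\alpha^*\wt x_j'=\wt x_j^p(1+pu_{\alpha,j})$), which is precisely the coordinate incarnation of your compatibility conditions; one caveat is that $\rd f'$ is not itself a logarithmic form, so your identities should be phrased for $\rd f'/f'$ rather than $\rd f'$.
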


By the standard thickening and base change arguments (\cf\cite[Cor.10.23]{E-V92}), we obtain the following.

\begin{corollary}
Let $U$ be a smooth quasi-projective variety defined over a field of characteristic zero and $f\in\cO_U(U)$. Suppose that $U$ has a compactification $X$ such that $X\setminus U$ is a normal crossing divisor and $f$ extends to $f:X\to\PP^1$ with only simple poles on $X$. Then the spectral sequence
\[
E_1^{pq} = H^q\big(X,\Omega_f^p\big) \Longrightarrow \bH^{p+q}\left(X,(\Omega_f^\cbbullet,\rd)\right)
\]
degenerates at $E_1$.
\end{corollary}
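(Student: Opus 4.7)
The plan is to deduce the corollary from Theorem D.2.2 by the classical spreading-out and reduction modulo $p$ technique of Deligne–Illusie, as adapted in \cite{E-V92}. First I would choose a finitely generated $\ZZ$-subalgebra $A$ of the ground field $k$ over which the whole datum $(U\subset X,f\colon X\to\PP^1)$ is defined, obtaining a model $(\cU\subset\cX,f_A\colon\cX\to\PP^1_A)$. By generic flatness and generic smoothness, after inverting finitely many elements of $A$ I may assume that $\cX\to\Spec A$ is smooth projective, that $\cX\setminus\cU$ is a relative normal crossing divisor over $A$, and that the pole divisor of $f_A$ is simple (\ie reduced) on every geometric fibre. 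The formation of the coherent sheaves $\Omega^p_{\cX/A,f_A}$ and of the complex $(\Omega^\cbbullet_{\cX/A,f_A},\rd)$ commutes with base change, and the hypercohomology sheaves $\bR^kp_*(\Omega^\cbbullet_{\cX/A,f_A},\rd)$ and $R^qp_*\Omega^p_{\cX/A,f_A}$ (with $p\colon\cX\to\Spec A$) are coherent on $\Spec A$; after further shrinking they are locally free, and their formation commutes with arbitrary base change.

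Next I would pick a closed point $s\in\Spec A$ whose residue field $\kappa=\kappa(s)$ has characteristic $p>\dim X$, and work with the fibre $(U_s\subset X_s, f_s)$. The reduction modulo the square of the maximal ideal furnishes a lifting of $(U_s\subset X_s,f_s)$ over the Witt vectors $W_2(\kappa)$ (here I use that $A/\gotm_s^2$ surjects onto $W_2(\kappa)$ after possibly enlarging $A$, or more simply that, since the data come by reduction from characteristic $0$, the $W_2$-lifting exists tautologically). By Theorem D.2.2 applied with $i=\dim X<p$, this lifting yields a splitting
\[
\bigoplus_{a=0}^{\dim X}\cH^a\bigl(F_*(\Omega^\cbbullet_{X_s,f_s},\rd)\bigr)[-a]\;\isom\;F_*(\Omega^\cbbullet_{X_s,f_s},\rd)
\]
in the derived category of $\cO_{X_s'}$-modules (the truncation is trivial since the complex sits in degrees $\leq\dim X$). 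Taking hypercohomology and using the Cartier-type isomorphism $\cH^a\bigl(F_*(\Omega^\cbbullet_{X_s,f_s},\rd)\bigr)\simeq\Omega^a_{X_s',f_s'}$, I obtain
\[
\dim_{\kappa}\bH^k\bigl(X_s,(\Omega^\cbbullet_{X_s,f_s},\rd)\bigr)=\sum_{p+q=k}\dim_{\kappa}H^q(X_s',\Omega^p_{X_s',f_s'})=\sum_{p+q=k}\dim_{\kappa}H^q(X_s,\Omega^p_{X_s,f_s}),
\]
the last equality because $X_s'\to\Spec\kappa$ is obtained by base change along Frobenius, which is a field automorphism of $\kappa$. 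This is exactly the $E_1$-degeneration in characteristic $p$.

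Finally I would transport the equality back to characteristic zero. Since the coherent sheaves $R^qp_*\Omega^p_{\cX/A,f_A}$ and $\bR^kp_*(\Omega^\cbbullet_{\cX/A,f_A},\rd)$ are locally free and compatible with base change on the (shrunk) $\Spec A$, their ranks are constant on $\Spec A$; in particular the ranks at the closed point~$s$ and at the generic point coincide. Combined with the inequality
\[
\dim_k\bH^k\bigl(X,(\Omega^\cbbullet_{X,f},\rd)\bigr)\leq\sum_{p+q=k}\dim_k H^q(X,\Omega^p_{X,f})
\]
coming from the spectral sequence, the equality over $\kappa$ forces equality over $k$, which is the desired $E_1$-degeneration. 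The main obstacle is ensuring the $W_2$-lifting hypothesis of Theorem D.2.2 is met at a suitable closed point; this is handled by the standard spreading-out argument, but one must take care that all the auxiliary shrinkings of $\Spec A$ (smoothness, flatness, local freeness, reducedness of the pole divisor) leave a dense open subset containing closed points of arbitrarily large residue characteristic, which is automatic since $\Spec A$ is of finite type over $\ZZ$.
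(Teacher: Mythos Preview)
Your proposal is correct and is exactly the approach the paper takes: the paper's entire proof of this corollary is the sentence ``By the standard thickening and base change arguments (\cf\cite[Cor.\,10.23]{E-V92}), we obtain the following,'' and you have simply written out those standard arguments in detail. One small remark: once you have arranged local freeness and base change for both $\bR^kp_*(\Omega^\cbbullet_{\cX/A,f_A},\rd)$ and $R^qp_*\Omega^p_{\cX/A,f_A}$, the equality of dimensions at the closed point~$s$ transfers directly to the generic point, so the final appeal to the spectral-sequence inequality is redundant (though harmless).
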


In the rest, we prove the above theorem by showing that with the choice of local liftings of the Frobenius given by Proposition \ref{Prop:LiftFrob}, the splitting
\begin{equation}\label{Eq:Splitting}
\bigoplus_{a=0}^i \cH^a \big(F_*(\Omega^\cbbullet_X(\log D),\rd) \big)[-a] \isom \tau_{\leq i} \big(F_*(\Omega^\cbbullet_X(\log D),\rd) \big)
\end{equation}
constructed in \cite[\S 10]{E-V92} actually induces the desired splitting. \smallskip

We thus fix a collection $\{X_\alpha,\wt{F}_\alpha\}$ where $\{X_\alpha\}$ is a covering of $X$ and $\wt{F}_\alpha: \wt{X}_\alpha \to\nobreak \wt{X}'_\alpha$ is a lifting of the relative Frobenius such that
\begin{equation}\label{Eq:Frob-fS}
\begin{aligned}
\wt{F}_\alpha^*(\wt{f}') &= \wt{f}^p && \text{on each $X_\alpha$ with $X_\alpha\cap P \neq \emptyset$} \\
\wt{F}_\alpha^*\cO_{\wt{X}_\alpha}(-\wt{D}_\alpha') &= \cO_{\wt{X}_\alpha}(-p\wt{D}_\alpha) && \text{on each $X_\alpha$ with $D_\alpha := X_\alpha\cap D \neq \emptyset$}.
\end{aligned}
\end{equation}

Attached to the covering $\{X_\alpha\}$, let
\[
Z\cC^j \subset \cC^j\big(\tau_{\leq i}F_*\big(\Omega^\cbbullet_X(\log D),\rd\big) \big),\quad Z\cC_f^j \subset \cC^j \big(\tau_{\leq i}F_*\big(\Omega_{X,f}^\cbbullet,\rd\big) \big)
\]
be the corresponding sheaves of \v{C}ech cocycles contained in the sheaves of \v{C}ech cochains at degree $j$. We trivially have $Z\cC_f^j = Z\cC^j \cap \cC^j\big(\tau_{\leq i}F_*\big(\Omega_{X,f}^\cbbullet,\rd\big)\big)$. Let $\mathfrak{S}_j$ be the symmetric group of $j$ letters. For $j<p$ consider the $\cO_{X'}$-linear map
\begin{align*}
\delta^j: \Omega^j_{X'}(\log D') &\to \big(\Omega^1_{X'}(\log D') \big)^{\otimes j} \\
\omega_1\wedge\cdots\wedge\omega_j &\mto \frac{1}{j!}\sum_{\sigma\in\mathfrak{S}_j}\mathrm{sign}(\sigma)\cdot \omega_{\sigma(1)}\otimes\cdots\otimes\omega_{\sigma(j)}.
\end{align*}
One defines a map $(\phi,\psi)^{\otimes j}$ (recalled below) sitting in the factorization of $C^{-1}$
\[
\xymatrix@C=1.5cm{
\big(\Omega^1_{X'}(\log D') \big)^{\otimes j} \ar[r]^-{(\phi,\psi)^{\otimes j}} & Z\cC^j \ar[d]^{\text{natural quotient}} \\
\Omega^j_{X'}(\log D') \ar[u]^{\delta^j}\ar[r]^-{C^{-1}} & \cH^j \big(F_*(\Omega^\cbbullet_X(\log D),\rd) \big).
}
\]
The splitting \eqref{Eq:Splitting} is then given by the collection $\theta:=\{ (\phi,\psi)^{\otimes j}\circ\delta^j\circ C \}_{j=0}^i$. We now show that $(\phi,\psi)^{\otimes j}\circ\delta^j$ sends $\Omega_{X',f'}^j$ to $Z\cC_f^j$ and thus $\theta$ induces the desired splitting for $\tau_{\leq i}F_*\big(\Omega_{X,f}^\cbbullet,\rd\big)$.

The map $(\phi,\psi)^{\otimes 0}$ is just the pullback $F^*$; while $(\phi,\psi)^{\otimes 1}$ is given by the pair
\[
\Omega^1_{X'}(\log D') \To{(\phi_{\alpha\beta},\psi_\alpha)} \bigoplus (F_*\cO_X)_{\alpha\beta} \oplus \bigoplus (F_*\Omega^1_X(\log D))_\alpha
\]
defined as follows. Take a system of local coordinates $\{\wt{x}_1,\dots,\wt{x}_n\}$ on $\wt{X}$ such that
\begin{equation}\label{Eq:f-S}
\wt{f} = \frac{1}{\wt{x}_1\cdots \wt{x}_\ell} \quad\text{and}\quad \wt{D} = (\wt{x}_1\cdots \wt{x}_m)
\end{equation}
for some $\ell\leq m\leq n$, and write $\wt{F}_\alpha^*(\wt{x}_j') = \wt{x}_j^p + pv_{\alpha,j}$
for some $v_{\alpha,j} \in \cO_{X_\alpha}$.
The second equation in \eqref{Eq:Frob-fS} says that there exists a unit $v$ on $\wt{X}_\alpha$ such that
$\wt{F}_\alpha^*\big(\prod_{j=1}^m \wt{x}_j'\big)
	= v\prod_{j=1}^m \wt{x}_j^p$,
which implies that
\[ (v-1)\prod_{j=1}^m \wt{x}_j^p
	= p\sum_{j=1}^m v_{\alpha,j}\prod^{i\neq j}_{1\leq i\leq m}\wt{x}_i^p. \]
Reducing mod $p$ and computing in the domain $\cO_{X_\alpha}$
implies that $v = 1+ pv'$ for some $v'\in\cO_{X_\alpha}$.
Thus the above identity reduces to
\[ v'\prod_{j=1}^m x_j^p
	= \sum_{j=1}^m v_{\alpha,j}\prod^{i\neq j}_{1\leq i\leq m}x_i^p \]
on $X_\alpha$
(cf.,~\cite[\S 8.7]{E-V92}).
Since $X_\alpha$ is smooth and the $x_i$ forms local coordinates,
one concludes that $x_j^p$ divides $v_{\alpha,j}$ for $1\leq j\leq m$.
We obtain that, for all $\alpha$,
\[
\wt{F}_\alpha^*(\wt{x}_j') =
\begin{cases}
\wt{x}_j^p(1+pu_{\alpha,j}) & \text{for $1\leq j\leq m$} \\
\wt{x}_j^p + pu_{\alpha,j} & \text{for $m< j\leq n$}
\end{cases}
\]
for some $u_{\alpha,j} \in\cO_{X_\alpha}$. Then the pair $(\phi_{\alpha\beta},\psi_\alpha)$ is defined by
\begin{align*}
\phi_{\alpha\beta}\Big(\frac{\rd x_j'}{x_j'} \Big) &= u_{\alpha,j}-u_{\beta,j},
& \psi_\alpha \Big(\frac{\rd x_j'}{x_j'} \Big) &= \frac{\rd x_j}{x_j} + \rd u_{\alpha,j} \\
\phi_{\alpha\beta}\left(\rd x_k'\right) &= u_{\alpha,k}-u_{\beta,k},
& \psi_\alpha\left(\rd x_k'\right) &= x_k^{p-1}\rd x_k + \rd u_{\alpha,k}
\end{align*}
for $1\leq j\leq m<k\leq n$, and it lands in $Z\cC^1$.

On the other hand, conditions \eqref{Eq:Frob-fS} and \eqref{Eq:f-S} imply that for all $\alpha$,
\[
\sum_{j=1}^r u_{\alpha,j} = 0.
\]
With this equation and the explicit description of the generators, a direct computation reveals that $(\phi,\psi)^{\otimes 1}$ indeed sends $\Omega_{X',f'}^1$ to $Z\cC_f^1$.

In general, $(\phi,\psi)^{\otimes j}$ is constructed as a product of $(\phi,\psi)^{\otimes 1}$ using the product structure on $\bigoplus_k Z\cC^k$ (see \cite[p.116]{E-V92}). In particular, $(\phi,\psi)^{\otimes j}$ is a sum of certain $j$-term products of $\phi_{\alpha\beta}$ and $\psi_\alpha$, which send $\Omega_{X',f'}^1$ to $F_*\Omega_{X,f}^0$ and $F_*\Omega_{X,f}^1$, respectively. On the other hand, notice that $\delta^j$ sends $\Omega_{X',f'}^j$ to the subspace
\[
\sum_{a=1}^j \left\{\omega_1\otimes\cdots\otimes\omega_j \mid \omega_a \in\Omega_{X',f'}^1 \right\} \subset \big(\Omega^1_{X'}(\log D') \big)^{\otimes j}.
\]
Since for any $k$ the exterior product $(\Omega^1_X(\log D))^{\otimes k} \to \Omega^k_X(\log D)$ sends both $\Omega_{X,f}^0\cdot(\Omega^1_X(\log D))^{\otimes k}$ and $\Omega_{X,f}^1\otimes(\Omega^1_X(\log D))^{\otimes k-1}$ to $\Omega_{X,f}^k$, one obtains that $(\phi,\psi)^{\otimes j}\circ\delta^j$ maps $\Omega_{X',f'}^j$ to $Z\cC_f^j$, which completes the proof.\qed

\subsection{The case $\dim X = p$}
\begin{theorem}
If $\dim X=p$,
the splitting of Theorem \ref{Thm:Splitting-I} extends to $i=p$.
\end{theorem}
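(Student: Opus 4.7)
The plan is to extend the splitting $\theta=\{(\phi,\psi)^{\otimes j}\circ\delta^j\circ C\}_{j=0}^{p-1}$ of Theorem \ref{Thm:Splitting-I} by constructing an additional top-degree component $\theta_p\colon\Omega^p_{X',f'}[-p]\to F_*(\Omega_{X,f}^\cbbullet,\rd)$ in $D(X')$. The obvious obstacle to reusing the previous recipe at $j=p$ is the factor $1/j!$ appearing in $\delta^j$, which is ill-defined when $j=p$ in characteristic~$p$. The decisive simplification is that, when $\dim X=p$, the top term $\Omega_{X,f}^p$ coincides with $\Omega^p_X(\log D)$ and the outgoing differential at degree $p$ vanishes, so that the top Cartier isomorphism
$$
C^{-1}\colon\Omega^p_{X',f'}=\Omega^p_{X'}(\log D')\isom\cH^p\big(F_*(\Omega_{X,f}^\cbbullet,\rd)\big)
$$
agrees with the classical top Cartier isomorphism for the logarithmic de~Rham complex, and $\theta_p$ is uniquely determined (up to homotopy) by the requirement that its composition with the projection onto $\cH^p[-p]$ recovers $C^{-1}$.

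To produce $\theta_p$ I would use the local Frobenius lifts $\wt{F}_\alpha$ of Proposition \ref{Prop:LiftFrob}. On each chart $X_\alpha$, the pullback $\wt{F}_\alpha^*\colon\Omega^p_{X'}(\log D')\to F_*\Omega^p_X(\log D)$ lifts $C^{-1}$ and lands directly in the top term $F_*\Omega_{X,f}^p$ of the Kontsevich complex. On overlaps, the differences $\wt{F}_\alpha^*-\wt{F}_\beta^*$ take values in $\rd F_*\Omega^{p-1}_X(\log D)$; the primitives are chosen canonically by taking \v Cech cup products of the pairs $(\phi_{\alpha\beta},\psi_\alpha)$ already used in the proof of Theorem \ref{Thm:Splitting-I}, producing a \v Cech cocycle of total degree~$p$ in $\cC^\cbbullet(F_*\Omega^\cbbullet_X(\log D))$ whose class recovers $C^{-1}$. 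The critical verification is then that, under the hypotheses \eqref{Eq:Frob-fS}---and in particular the relation $\wt{F}_\alpha^*(\wt{f}')=\wt{f}^{\,p}$, which exploits the reducedness of~$P$---the various \v Cech components of this cocycle actually belong to $F_*\Omega_{X,f}^{p-k}$ rather than only to $F_*\Omega^{p-k}_X(\log D)$. This uses the same mechanism that showed $(\phi,\psi)^{\otimes j}\circ\delta^j$ maps $\Omega^j_{X',f'}$ into $Z\cC^j_f$ in the proof of Theorem \ref{Thm:Splitting-I}, adapted to the case $j=p$ through the direct construction above that bypasses the $1/p!$ factor.

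Assembling $\theta_p$ with the previously constructed $\theta_0,\dots,\theta_{p-1}$ produces an isomorphism
$$
\bigoplus_{a=0}^{p}\Omega^a_{X',f'}[-a]\isom F_*(\Omega_{X,f}^\cbbullet,\rd),
$$
the target being the whole complex since $\Omega_{X,f}^q=0$ for $q>p=\dim X$. The main obstacle is the compatibility verification of the previous paragraph: one must check that every \v Cech primitive encountered when glueing the local $\wt{F}_\alpha^*$ together at the top degree lies in the Kontsevich subcomplex, which is ultimately a consequence of the specific form of the Frobenius lifts allowed by the reducedness assumption $P=P_\red$.
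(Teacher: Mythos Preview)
Your proposal has a genuine gap at the key step. You correctly identify that $\delta^p$ is unavailable because of the $1/p!$ factor, and you propose to bypass it by constructing the top component directly from the local Frobenius lifts. But the construction you sketch does not actually bypass the problem. First, the pullback $\wt F_\alpha^*$ on differential forms vanishes identically modulo $p$; what lifts $C^{-1}$ locally is $\psi_\alpha=\tfrac1p\wt F_\alpha^*$, and then $\psi_\alpha^{\wedge p}$ on the rank-one sheaf $\Omega^p_{X'}(\log D')$. The real difficulty is the gluing: on overlaps the difference $\psi_\alpha^{\wedge p}-\psi_\beta^{\wedge p}$ is exact, but to promote the collection $\{\psi_\alpha^{\wedge p}\}$ to a \v Cech cocycle you need explicit primitives satisfying higher cocycle conditions. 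Your suggestion is to take them as the cup products of the pairs $(\phi_{\alpha\beta},\psi_\alpha)$; however, these cup products are only defined on $(\Omega^1_{X'}(\log D'))^{\otimes p}$, and descending to $\Omega^p_{X'}(\log D')=\bigwedge^p\Omega^1_{X'}(\log D')$ is exactly what required the antisymmetrizer $\delta^p$. The \v Cech cup product is graded-commutative only up to homotopy, so $(\phi,\psi)^{\otimes p}$ applied to $\omega_{\sigma(1)}\otimes\cdots\otimes\omega_{\sigma(p)}$ is not $\mathrm{sign}(\sigma)$ times its value on $\omega_1\otimes\cdots\otimes\omega_p$, and you have not supplied any alternative mechanism. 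This is precisely the obstruction Deligne--Illusie encountered; it is not a matter of checking that things land in the Kontsevich subcomplex.

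The paper's proof avoids this entirely by a duality argument, which is the classical way around the $1/p!$ obstruction. One shows that the wedge pairing $F_*\Omega^{n-i}_{X,f}\otimes_{\cO_{X'}}F_*\Omega^i_{X,f}(-H)\to F_*\Omega^n_X$ followed by the Cartier operator $C:F_*\Omega^n_X/\rd\Omega^{n-1}_X\to\Omega^n_{X'}$ is a perfect pairing. The already-constructed maps for $j<p$ then dualize to a quasi-isomorphism $\tau_{\geq 1}F_*\Omega^\cbbullet_{X,f}(-H)\simeq\bigoplus_{i\geq 1}\cC^\cbbullet(\Omega^i_{X',f'}(-H'))[-i]$. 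Since $n=p$, the kernel of $F_*\Omega^\cbbullet_{X,f}(-H)\to\tau_{\geq 1}F_*\Omega^\cbbullet_{X,f}(-H)$ is the single coherent sheaf $\cH^0$ in degree $0$, so by cohomological dimension the induced map on $\bH^p(X',-\otimes\cM)$ is surjective for every coherent $\cM$. Taking $\cM=(\cH^p)^\vee$ one lifts the global Cartier section $C\in H^0(X',\Omega^p_{X'}(\log P)\otimes(\cH^p)^\vee)$ to a class $\wt C$ in $\bH^p$, and this $\wt C$ furnishes the missing splitting $\cH^p[-p]\to F_*\Omega^\cbbullet_{X,f}(-H)$, hence by duality the splitting for $i=p$ on $F_*\Omega^\cbbullet_{X,f}$. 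In short, the extension to $i=p$ is obtained not by a new local formula but by dualizing the splitting already in hand and invoking a global cohomological-dimension argument.
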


\begin{proof}
Let $n=\dim X$.
We set $D=P+H$, where $H$ is the horizontal divisor of $f$. Recall that $P$ is assumed to be reduced. The wedge product of forms
\[
F_*\Omega^{n-i}_{X,f} \otimes_{\cO_{X'}} F_*\Omega^{i}_{X,f}(-H)
	\to F_*\Omega^{n}_{X}
\]
followed by the projection to the cohomology sheaf and then the Cartier operator
\[ F_*\Omega^{n}_{X} \to F_*\Omega^n_{X}/d\Omega^{n-1}_{X}
	\To{C} \Omega^n_{X'} \]
induces a perfect duality
(see proof of \cite[Lemma 9.20]{E-V92},
which adapts word by word to the sheaves here).

For $0\leq j <p$ we constructed $\cO_{X'}$-linear maps
\[ \Omega^j_{X',f'} \to \cC^j
	\big(\tau_{\leq j}(F_*(\Omega^\cbbullet_{X,f},\rd) \big), \]
which dualize to
\[ \tau_{\geq n-j} F_*\Omega^\cbbullet_{X,f}(-H)[n-j] \to
	 \cC^\cbbullet \big(\Omega^{n-j}_{X',f'}(-H')\big) \]
and induce a quasi-isomorphism
\begin{equation}\label{Eq:Trunc>1}
\tau_{\geq n-p+1} F_*\Omega^\cbbullet_{X,f}(-H) \to
	\bigoplus_{i=n-p+1}^n \cC^\cbbullet\big(\Omega^i_{X',f'}(-H')\big)[-i]
\end{equation}
(see \cite[p.\,119]{E-V92}).
We now use $n=p$ to conclude that
the kernel of the $\cO_{X'}$-linear surjective map
\begin{equation}\label{Eq:To-Trunc>0}
F_*\Omega^\cbbullet_{X,f}(-H) \to \tau_{\geq 1} F_*\Omega^\cbbullet_{X,f}(-H)
\end{equation}
is the single $\cO_{X'}$-coherent cohomology sheaf $\cH^0$
concentrated in degree $0$.
Thus, by cohomological dimension of coherent sheaves,
\eqref{Eq:To-Trunc>0} induces a surjection
\[ \bH^p(X', F_*\Omega^\cbbullet_{X,f}(-H) \otimes_{\cO_{X'}} \mathcal{M}) \to \bH^p(X', \tau_{\geq 1}
	F_*\Omega^\cbbullet_{X,f}(-H) \otimes_{\cO_{X'}}\mathcal{M}) \]
for any coherent sheaf $\mathcal{M}$ on $X'$.
Therefore by \eqref{Eq:Trunc>1} a surjection
\[
\bH^p(X', F_*\Omega^\cbbullet_{X,f}(-H) \otimes_{\cO_{X'}} \mathcal{M}) \to
H^0(X', \Omega^p_{X'}(\log P)\otimes_{\cO_{X'}} \mathcal{M}).
\]
Taking for $\mathcal{M}$ the $\cO_{X'}$-dual
of the last cohomology sheaf $\cH^p$ of $F_*\Omega^p_{X,f}(-H)$
yields the Cartier operator $C$ as a non-vanishing global section of
$\Omega^p_{X'}(\log P)\otimes_{\cO_{X'}} (\cH^p)^{\vee}$.
Any lifting
$\tilde{C} \in
	\bH^p(X',  F_*\Omega^\cbbullet_{X,f}(-H) \otimes_{\cO_{X'}} (\cH^p)^\vee)$
of $C$ defines then a splitting
of the natural $\cO_{X'}$-linear surjection
$F_*\Omega^\cbbullet_{X,f}(-H) \to \cH^p[-p]$.
\end{proof}

\numberwithin{equation}{section}
\let\oldthefootnote\thefootnote
\def\thefootnote{(*)}
\section{On the Kontsevich-de Rham complexes and Beilinson's maximal extensions, by Morihiko Saito\protect\footnotemark}\label{app:saito}
\markboth{\MakeUppercase{M. Saito}}{\MakeUppercase{Appendix} \ref{app:saito}. \MakeUppercase{On the Kontsevich-de Rham complexes}}
\makeatletter
\g@addto@macro\authors{Morihiko Saito}%
\g@addto@macro\addresses{\author{}}%
\g@addto@macro\shortauthors{Morihiko Saito}%
\g@addto@macro\addresses{\address{M.~Saito}{RIMS, Kyoto University, Kyoto 606-8502, Japan}}%
\makeatother

We will use the same notation as in the main part of the article.\footnotetext{This work is partially supported by Kakenhi 24540039.}
\let\thefootenote\oldthefootnote

Let $f:X\to S$ be a proper morphism of smooth complex algebraic varieties with $S=\PP^1$. Let $U$ be a Zariski open subset of $X$ such that $D:=X\setminus U$ is a divisor with simple normal crossings which contains $P:=X_{\infty}$. Here $X_s:=f^{-1}(s)$ for $s\in S$. M.\,Kontsevich defined a subcomplex $(\Omega_f^\cbbullet,\rd)$ of the logarithmic de Rham complex $(\Omega_X^\cbbullet(\log D),\rd)$ by
$$\Omega_f^j:=\ker\bigl(\rd f\wedge:\Omega_X^j(\log D)\to \Omega_X^{j+1}(*P)/\Omega_X^{j+1}(\log D)\bigr),$$
where $f$ is identified with a meromorphic function on $X$. (In this paper, we denote by $\Omega_X^j(\log D)$ the analytic sheaf on the associated analytic space. The reader can also use the algebraic sheaf in the main theorems by GAGA.) Kontsevich considered more generally the differentials $u\,\rd+v\,\rd f\wedge$ for $u,v\in\CC$, although we consider only the case $v=0$ in this Appendix. This is the reason for which we call $(\Omega_f^\cbbullet,\rd)$ the Kontsevich-de Rham complex (instead of the Kontsevich complex). Note that $\Omega_f^\cbbullet$ coincides with $\Omega_X^\cbbullet(\log D)$ on the complement of $P$, and we have $\rd f=-\rd g/g^2$ by setting $g:=f^*t'=1/f$, which is holomorphic on a neighborhood of $P$, where $t':=1/t$ with $t$ the affine coordinate of $\CC\subset\PP^1$.

Kontsevich showed that the filtered complex $\bR\Gamma(X,(\Omega_f^\cbbullet,F))$ is strict by using a method of Deligne-Illusie \cite{D-I87} in case $P$ is reduced (with $v=0$), where the Hodge filtration $F^p$ is defined by $\sigma_{\geq p}$ as in \cite{DeligneHII}. In this Appendix~\ref{app:saito} we prove this assertion without assuming $P$ reduced by using the theory of relative logarithmic de Rham complexes in \cite{Steenbrink76}, \cite{Steenbrink77}, \cite{S-Z85}. (This is quite different from the method in the main part of this paper.)

Set
$$\ov\Omega{}_{X/S}^\cbbullet(\log D):=\Omega_{X/S}^\cbbullet(\log D)/ g\,\Omega_{X/S}^\cbbullet(\log D)|_P,$$
where $\Omega_{X/S}^\cbbullet(\log D)$ is the relative logarithmic de Rham complex, see \cite{Steenbrink76}, \cite{S-Z85}. Let $j:U\hto X$ be the canonical inclusion. We have the following.

\begin{thsaito}\label{th:C1}
There is a short exact sequence of filtered complexes
\begin{equation}\label{eq:C1}
0\to(\Omega_f^\cbbullet,F)\to(\Omega_X^\cbbullet(\log D),F)\To\rho (\ov\Omega{}_{X/S}^\cbbullet(\log D),F)\to 0, 
\end{equation}
where the filtration $F^p$ is defined by $\sigma_{\geq p}$ as above. Moreover, there is a decreasing filtration $V$ on $\ov\Omega{}_{X/S}^\cbbullet(\log D)$ indexed discretely by $\QQ\cap[0,1]$ such that the Hodge filtration of the mixed Hodge complex calculating the nearby cycle sheaf $\psi_gj_*\QQ_U$ is given by
$$\bigoplus_{\alpha\in[0,1)}\gr^{\alpha}_V(\ov\Omega{}_{X/S}^\cbbullet(\log D),F),$$
where $\gr^{\alpha}_V\ov\Omega{}_{X/S}^\cbbullet(\log D)$ corresponds to $\psi_{g,\lambda}j_*\QQ_U$ with $\lambda:=\exp(-2\pi i\alpha)$.
\end{thsaito}

Here $\psi_{g,\lambda}$ denotes the $\lambda$-eigenspace of the monodromy on the nearby cycle functor~$\psi_g$, see \cite{Deligne73}. A variant of the second assertion of Theorem \ref{th:C1} is noted in \cite[Prop.\,2.1]{MSaito83c}. The proof uses as in \loccit the normalization of the unipotent base change together with the theory of logarithmic forms on $V$-manifolds in \cite{Steenbrink77}, and the relation between the $V$-filtration and the multiplier ideals is not used here.

\begin{thsaito}\label{th:C2}
After taking the cohomology over $X$ or $P$, the filtration $V$ in Theorem \ref{th:C1} splits by the action of $t'\partial_{t'}$ on the relative logarithmic de Rham cohomology groups forgetting the filtration $F$, and the image of the morphism $\rho_j$ between the $j$-th cohomology groups induced by $\rho$ is contained in the unipotent monodromy part so that $\rho_j$ is identified with the natural morphism $H^j(U,\CC)\to\psi_{t',1} R^j(f_U)_*\CC_U$ where $f_U:U\to S$ is the restriction of $f$ to $U$.
\end{thsaito}

This implies the following.

\begin{corsaito}\label{cor:C1}
The Hodge filtration $F$ on $\bR\Gamma(X,(\Omega_f^\cbbullet,F))$ is strict.
\end{corsaito}

We denote by $\QQ_{h,U}[n]$ the pure Hodge module of weight $n$ whose underlying perverse sheaf is $\QQ_U[n]$, where $n:=\dim X$. Let $\Xi_g(j'_*\QQ_{h,U}[n])$ be Beilinson's maximal extension \cite{Beilinson87} as a mixed Hodge module. This is defined by generalizing the definition in loc.~cit., so that we have a short exact sequence of mixed Hodge modules
\begin{equation}\label{eq:C2}
0\to\psi_{g,1}j_*\QQ_{h,U}[n-1]\to\Xi_g(j'_*\QQ_{h,U}[n]) \to j_*\QQ_{h,U}[n]\to 0,
\end{equation}
where $j':U\hto X\setminus P$ is the canonical inclusion. (In this paper, $\psi_g$ for mixed Hodge modules is compatible with the one for the underlying perverse sheaves without any shift of complex; hence $\psi_g[-1]$ preserves perverse sheaves and also mixed Hodge modules.)

\begin{thsaito}\label{th:C3}
Assume $P$ reduced so that $\psi_{g,1}j_*\QQ_{h,U}=\psi_gj_*\QQ_{h,U}$, and $V$ in Theorem \ref{th:C1} is trivial. By the filtered de Rham functor $\DR_X$, the short exact sequence \eqref{eq:C2} corresponds to the associated distinguished triangle of the short exact sequence \eqref{eq:C1} up to a shift of triangles. More precisely,
$$\psi_{g,1}j_*\QQ_{h,U}[n-1],\quad\Xi_g(j'_*\QQ_{h,U}[n]),\quad j_*\QQ_{h,U}[n]$$
in \eqref{eq:C2} respectively correspond to
$$\ov\Omega{}_{X/S}^\cbbullet(\log D)[n-1],\quad(\Omega_f^\cbbullet,F)[n],\quad (\Omega_X^\cbbullet(\log D),F)[n],$$
so that the extension class of the short exact sequence \eqref{eq:C2} corresponds to $\rho$ in~\eqref{eq:C1}.
\end{thsaito}

We have the inverse functor $\DR_X^{-1}$ which associates a complex of filtered $\cD$-module to a filtered differential complex (see \cite{MSaito86}). By this functor, the surjective morphism $\rho$ between the filtered differential complexes in \eqref{eq:C1} corresponds to an extension class between the corresponding filtered $\cD$-modules or mixed Hodge modules in \eqref{eq:C2} so that the {\it kernel} of $\rho$ corresponds to an {\it extension} of filtered $\cD$-modules or mixed Hodge modules (because of the difference in $t$-structures).

We thank C.\,Sabbah for useful discussions about the Kontsevich-de Rham complexes.

\medskip
In Section \ref{subsec:C2} we recall some basic facts from the theory of relative logarithmic de Rham complexes and Beilinson's maximal extension. In Section \ref{subsec:C3} we give the proofs of the main theorems and their corollary.

\subsection{Relative logarithmic de Rham complexes}\label{subsec:C2}
\numberwithin{equation}{subsubsection}

In this section we recall some basic facts from the theory of relative logarithmic de Rham complexes and Beilinson's maximal extension.

\subsubsection{Kontsevich-de Rham complexes}\label{subsubsec:C2.1}
With the notation of the introduction, set
$$\Omega_X^j\lDr:=\Omega_X^j(\log D),\quad \ov\Omega{}_X^j\lDr:=\Omega_X^j\lDr/g\,\Omega_X^j\lDr\big|_P.$$
The Kontsevich-de Rham complex is defined by
$$\aligned \Omega_f^j:=\,\,&\ker\bigl(\rd g/g^2\wedge:\Omega_X^j\lDr\to \Omega_X^{j+1}\lDr(*P)/\Omega_X^{j+1}\lDr\bigr)\\ =\,\,&\ker\bigl(\rd g/g\wedge:\Omega_X^j\lDr\to \ov\Omega{}_X^{j+1}\lDr\bigr).\endaligned$$
As for the morphism in the last term, we have
\begin{equation}\label{eq:C.2.1.1}
\Imm\big(\rd g/g\wedge:\Omega_X^j\lDr\ra\ov\Omega{}_X^{j+1}\lDr\big)= \coker\big(\rd g/g\,\wedge:\ov\Omega^{j-1}_X\lDr\ra\ov\Omega^j_X\lDr\big).
\end{equation}
Indeed, $(\Omega^\cbbullet_X\lDr,\rd g/g\wedge)\big|_P$ is {\it acyclic}, and so is $(\ov\Omega^\cbbullet_X\lDr,\rd g/g\,\wedge)$.

Let $\ov\Omega^j_{X/S}\lDr$ denote the right-hand side of the isomorphism \eqref{eq:C.2.1.1}. Then
\begin{equation}\label{eq:C.2.1.2}
\ov\Omega^j_{X/S}\lDr=\Omega^j_{X/S}\lDr/g\,\Omega^j_{X/S}\lDr\big|_P,
\end{equation}
where $\Omega^j_{X/S}\lDr:=\coker\big(\rd g/g\,\wedge:\Omega^{j-1}_X\lDr\to \Omega^j_X\lDr\big)$ as in \cite{Steenbrink76}, \cite{S-Z85}. Indeed, \eqref{eq:C.2.1.2} follows from the diagram of the snake lemma by applying it to the action of $\rd g/g\wedge$ on the short exact sequences
$$0\to\Omega^j_X\lDr\big|_P\To{g}\Omega^j_X\lDr\big|_P\to\ov\Omega^j_X\lDr\to 0.$$
So we get the short exact sequence of complexes \eqref{eq:C1} by \eqref{eq:C.2.1.1}-\eqref{eq:C.2.1.2}.

\subsubsection{Unipotent base change}\label{subsubsec:C2.2}
Let $P_i$ be the irreducible components of $P$ with $e_i$ the multiplicity of $P$ along $P_i$. Set $e:={\rm LCM}\{e_i\}$. Let $\Delta$ be a sufficiently small open disk around $\infty\in\PP^1$. Let $\pi:\wt\Delta\to\Delta$ be the $e$-fold ramified covering such that $\pi^*t'=\wt t^e$, where $t'=1/t$ with $t$ the affine coordinate of $\CC\subset\PP^1$, and $\wt t$ is an appropriate coordinate of $\wt\Delta$. Let $\wt X$ be the normalization of the base change $X\times_S\wt\Delta$, with $\wt D\subset\wt X$ the pull-back of $D\subset X$ by the natural morphism $\pi_{\wt X}:\wt X\to X$. Set
$$\wt\Omega_{\wt X}^\cbbullet\lDtr:=\wt\Omega_{\wt X}^\cbbullet(\log\wt D),\quad \wt\Omega_{\wt X/\wt\Delta}^\cbbullet\lDtr:=\wt\Omega_{\wt X/\wt\Delta}^\cbbullet(\log\wt D).$$
These are the logarithmic de Rham complex and the relative logarithmic de Rham complex defined in \cite{Steenbrink77}. Let $X'\subset X$ be the inverse image of $\Delta\subset\PP^1$, and $G$ be the covering transformation group of $\pi_{X'}:\wt X\to X'$ which is isomorphic to $\ZZ/e\ZZ$. Then
\begin{equation}
\big((\pi_{X'})_*\wt\Omega_{\wt X}^\cbbullet\lDtr\big)^G=\Omega_X^\cbbullet\lDr\big|_{X'},\quad \big((\pi_{X'})_*\wt\Omega_{\wt X/\wt\Delta}^\cbbullet\lDtr\big)^G= \Omega_{X/S}^\cbbullet\lDr\big|_{X'}. \label{eq:C.2.2.1}
\end{equation}
This can be shown by taking a local coordinate system $(x_1,\dots,x_n)$ of a unit polydisk $\Delta^n$ in $X'$ such that $g$ is locally written as
$$c\,\prod_{j=1}^\ell\,x_j^{e_j}\quad\text{with}\,\,\,c\in\CC^*,\,e_j\geq 1,\, \ell\in[1,n],$$
and then taking a finite ramified covering $\Delta^n\to\Delta^n$ such that the pull-back of $x_j$ is $x_j^{e/e_j}$ if $j\leq \ell$, and $x_j$ otherwise.

\subsubsection{Koszul complexes}\label{subsubsec:C2.3}
Assume $g=\prod_{i=1}^nx_i^{e_i}$ with local coordinates $x_1,\dots,x_n$, where $e_i\geq 1$ for any $i$. The logarithmic de Rham complex $(\Omega^\cbbullet_X\lDr,\rd)_0$ at $0\in X$ is isomorphic to the Koszul complex associated with $x_i\partial/\partial x_i$ ($i\in[1,n]$) acting on $\CC\{x_1,\dots,x_n\}$. This is quasi-isomorphic to the subcomplex associated with the zero actions on $\CC\subset\CC\{x_1,\dots,x_n\}$.

Setting $\delta_i:=dx_i/x_i$, this Koszul complex has a basis defined by $\delta_J:=\bigwedge_{i\in J}\delta_i$ for $J\subset[1,n]$. The relative logarithmic de Rham complex $(\Omega^\cbbullet_{X/S}\lDr,\rd)$ is a quotient complex defined by the relation (see \cite{Steenbrink76})
$$\sum_i\,e_i\delta_i=0.$$
This is quasi-isomorphic to a subcomplex associated to the zero actions on the subspace\enlargethispage{1.5\baselineskip}%
$$\CC\{z\}=\big\{\textstyle\sum_{\nu}c_{\nu}x^{\nu}\,\big|\,c_{\nu}=0\,\,\,\text{if} \,\,\,e_i\nu_j\ne e_j\nu_i\,\,\,\text{for some}\,\,\,i,j\big\}\subset \CC\{x_1,\dots,x_n\},$$
where $z=\prod_{i=1}^nx_i^{e_i/d}$ with $d:={\rm GCD}\{e_i\}$, see \loccit

\subsubsection{External products}\label{subsubsec:C2.4}
Let $D_i$ be a divisor with simple normal crossings on a complex manifold $X_i$ ($i=1,2$). Set $X:=X_1\times X_2$, $D:=D_1\times X_2\cup X_1\times D_2$. Then
\begin{equation}\label{eq:C.2.4.1}
\Omega_X^\cbbullet(\log D)=\Omega_{X_1}^\cbbullet(\log D_1)\boxtimes \Omega_{X_2}^\cbbullet(\log D_2).
\end{equation}

Let $g_1$ be a (possible non-reduced) defining equation of $D_1$. Let $g$ be the pull-back of $g_1$ by the projection. These can be viewed as morphisms to $S=\CC$. So we have the relative logarithmic de Rham complexes $\Omega_{X_1/S}^\cbbullet(\log D_1)$, $\Omega_{X/S}^\cbbullet(\log D)$, and
\begin{equation}\label{eq:C.2.4.2}
\Omega_{X/S}^\cbbullet(\log D)=\Omega_{X_1/S}^\cbbullet(\log D_1)\boxtimes \Omega_{X_2}^\cbbullet(\log D_2).
\end{equation}
These can be shown by using a basis as in \eqref{subsubsec:C2.3}.

\subsubsection{Beilinson's maximal extension}\label{subsubsec:C2.5}
For $a,b\in\ZZ$ with $a\leq b$, let $E_{a,b}$ be the variation of mixed $\QQ$-Hodge structure of rank $b-a+1$ on $S':=\CC^*$ having an irreducible monodromy and such that
$$\gr^W_{2i}E_{a,b}=\QQ_{S'}(-i)\quad\text{if $i\in[a,b]$, and $0$ otherwise.}$$
There are natural inclusions
$$E_{a,b}\hto E_{a,b'}\quad(a\leq b\leq b'),$$
and $\{E_{a,b}\}_{b\geq a}$ is an inductive system for each fixed $a$.

Let $g:X\to S$ be a function on a complex algebraic variety, where $S=\CC$ in this subsection. Set $X':=X\setminus g^{-1}(0)$ with $j:X'\hto X$ the natural inclusion. Let $g':X'\to S'$ be the morphism induced by $g$. For a mixed Hodge module $M'$ on~$X'$, it is known that
\begin{equation}\label{eq:C.2.5.1}
^p\psi_{g,1}M'=\ker\bigl(j_!(M'\otimes g'{}^*E_{0,b})\to j_*(M'\otimes g'{}^*E_{0,b})\bigr)\quad\text{for }b\gg 0, 
\end{equation}
where $^p\psi_{g,1}M':=\psi_{g,1}M'[-1]\,(:=\psi_{g,1}j_*M'[-1])$, which is a mixed Hodge module on $X$. More precisely, the kernels for $b\gg 0$ form a constant inductive system, and the images of the morphisms form an inductive system $\{I_b\}$ whose inductive limit vanishes, \ie the image of $I_b$ in $I_{b'}$ vanishes for $b'\gg b$, see the proof of \cite[Prop.\,1.5]{MSaito90b}.

Beilinson's maximal extension functor $\Xi_g$ (see \cite{Beilinson87}) can be defined for mixed Hodge modules $M'$ on $X'$ by
\begin{equation}\label{eq:C.2.5.2}
\Xi_gM':=\ker\bigl(j_!(M'\otimes g'{}^*E_{0,b})\to j_*(M'\otimes g'{}^*E_{1,b})\bigr)\quad\text{for}\,\,\,b\gg 0,
\end{equation}
so that there is a a short exact sequence of mixed Hodge modules on $X$
\begin{equation}\label{eq:C.2.5.3}
0\to \psi_{g,1}M'\to\Xi_gM'\to j_*M'\to 0.
\end{equation}
In fact, the kernel of the morphism in the right-hand side of \eqref{eq:C.2.5.2} is the same as $H^{-1}$ of the mapping cone of this morphism, and is identified with the kernel of the morphism
$$j_!(M'\otimes g'{}^*E_{0,b})\oplus j_*M'\to j_*(M'\otimes g'{}^*E_{0,b}).$$
So we get \eqref{eq:C.2.5.3} by using the diagram of the snake lemma, except for the surjectivity of the last morphism. The latter is shown by passing to the underlying $\QQ$-complexes, where the inductive limit $E_{a,+\infty}$ exists and the kernel in \eqref{eq:C.2.5.1}-\eqref{eq:C.2.5.2} can be replaced by the mapping cone with $b=+\infty$ as in \cite{Beilinson87}. This argument implies that the extension class defined by \eqref{eq:C.2.5.3} is induced by the natural inclusion
\begin{equation}\label{eq:C.2.5.4}
j_*M'\hto C\bigl(j_!(M'\otimes g'{}^*E_{0,b})\to j_*(M'\otimes g'{}^*E_{0,b})\bigr)\quad(b\gg 0).
\end{equation}

\subsection{Proof of the main theorems}\label{subsec:C3}

In this section we give the proofs of the main theorems and their corollary.

\subsubsection{Proof of Theorem \ref{th:C1}}\label{subsubsec:C3.1}
By \eqref{eq:C.2.1.1}-\eqref{eq:C.2.1.2} we get the short exact sequence of complexes \eqref{eq:C1}, which implies the strict compatibility with the filtration \hbox{$F^p=\sigma_{\geq p}$} (\ie we get short exact sequences after taking $\gr_F^p$). So the first assertion follows.

The second assertion follows from \eqref{eq:C.2.2.1} by using \cite{Steenbrink77}. Here the filtration~$V$ is induced by the $\wt t$-adic filtration on $(\pi_{X'})_*\wt\Omega_{\wt X/\wt\Delta}^\cbbullet\lDtr$, see also \cite[Prop.\,2.1]{MSaito83c}. This finishes the proof of Theorem \ref{th:C1}.\qed

\subsubsection{Proof of Theorem \ref{th:C2}}\label{subsubsec:C3.2}
The first assertion follows from the splitting of the $V$-filtration in the case $n=1$ by using a coordinate. So it remains to show that the composition
$$\overline{\rho}:\Omega_X^\cbbullet\lDr\To{\rho} \overline{\Omega}_{X/S}^\cbbullet\lDr\to \gr_V^0\overline{\Omega}_{X/S}^\cbbullet\lDr$$
represents the canonical morphism
$$\bR j_*\CC_U\to\psi_{g.1}\bR j_*\CC_U\quad\text{in}\,\,\,D^b_c(X,\CC).$$

We have the short exact sequence of differential complexes
\begin{equation}\label{eq:C.3.2.1}
0\to\Omega_X^\cbbullet\lDr(-P_\red)\to \Omega_X^\cbbullet\lDr\to \Omega_X^\cbbullet\lDr_{P_\red}\to 0,
\end{equation}
where
$$\Omega_X^\cbbullet\lDr(-P_\red):=\Omega_X^\cbbullet \lDr\otimes_{\cO_X}{\mathcal I}_{P_\red},\quad \Omega_X^\cbbullet\lDr_{P_\red}:=\Omega_X^\cbbullet \lDr\otimes_{\cO_X}\cO_{P_\red},$$
with ${\mathcal I}_{P_\red}$ the ideal of $P_\red$. Note that
$$\gr_V^0\overline{\Omega}_{X/S}^\cbbullet\lDr= \Omega_{X/S}^\cbbullet\lDr_{P_\red}.$$
Locally the three complexes in \eqref{eq:C.3.2.1} are the Koszul complexes of the action of $x_i\partial/\partial x_i$ on
$${\mathcal I}_{P_\red},\quad\cO_X,\quad\cO_X/{\mathcal I}_{P_\red},$$
if $D=\bigcup_{i=1}^n\{x_i=0\}$ with $x_1,\dots,x_n$ local coordinates of $X$.

It is well-known that \eqref{eq:C.3.2.1} represents the distinguished triangle
$$j''_!\bR j'_*\CC_U\to\bR j_*\CC_U\to i^*\bR j_*\CC_U \To{+1}\quad\text{in}\,\,\,D^b_c(X,\CC),$$
where $i:P\hto X$, $j':U\hto X\setminus P$, $j'':X\setminus P\hto X$ denote the inclusions so that $j=j''\circ j'$.

Set $\tau:=\log g$. Let $\Omega_X^\cbbullet\lDr[\tau]$ be the tensor product of $\Omega_X^\cbbullet\lDr$ with $\CC[\tau]$ over $\CC$, where the differential of the complex is given by
$$d(\omega\tau^k)=(d\omega)\tau^k+k(\rd g/g)\wedge\omega\tau^{k-1}\quad(k\geq 0),$$
and $\Omega_X^\cbbullet\lDr[\tau]$ can be identified with a double complex whose differentials are given by $d$ and $\rd g/g\wedge$. (This construction corresponds to the tensor product of the corresponding $\QQ$-complex with $E_{0,+\infty}$ in \ref{subsubsec:C2.5}.) We can define $\Omega_X^\cbbullet\lDr(-P_\red)[\tau]$, $\Omega_X^\cbbullet\lDr_{P_\red}[\tau]$ similarly, and get the quasi-isomorphism
\begin{multline}\label{eq:C.3.2.2}
\eta:C\bigl(\Omega_X^\cbbullet\lDr(-P_\red)[\tau] \to\Omega_X^\cbbullet\lDr[\tau]\bigr)\isom \Omega_X^\cbbullet\lDr_{P_\red}[\tau]\\
\isom \Omega_{X/S}^\cbbullet\lDr_{P_\red},
\end{multline}
where the last morphism sends $\sum_{i\geq 0}\omega_i\tau^i$ to the class of $\omega_0$. In fact, the last morphism is a morphism of complexes by the double complex structure explained above. It is a quasi-isomorphism in case $D=P$ by \cite[2.6]{Steenbrink76} (see also \cite{Deligne73}). The general case is reduced to this case by using the compatibility with the external product as in \ref{subsubsec:C2.4}. Note also that the compatibility of nearby cycles with external products implies that
\begin{equation}\label{eq:C.3.2.3}
\psi_{g,1}\bR j_*\CC_U=\bR(j_{X\setminus H})_*j_{X\setminus H}^* (\psi_g\bR j_*\CC_U),
\end{equation}
where $H$ is the closure of $D\setminus P$ in $X$ with $j_{X\setminus H}:X\setminus H\hto X$ the natural inclusion.

By the argument in \ref{subsubsec:C2.5} the extension class given by \eqref{eq:C.2.5.4} for $M'=j'_*\QQ_U[n]$ corresponds to the natural inclusion
\begin{equation}\label{eq:C.3.2.4}
\iota:\Omega_X^\cbbullet\lDr\hto C\bigl(\Omega_X^\cbbullet\lDr(-P_\red)[\tau]\to \Omega_X^\cbbullet\lDr[\tau]\bigr),
\end{equation}
and the composition of \eqref{eq:C.3.2.4} with \eqref{eq:C.3.2.2} coincides with the canonical morphism
$$\rho:\Omega_X^\cbbullet\lDr\to\Omega_{X/S}^\cbbullet \lDr_{P_\red},$$
\ie we have the commutative diagram
\begin{equation}\label{eq:C.3.2.5}
\begin{array}{c}
\xymatrix{
\Omega_X^\cbbullet\lDr\ar[r]^-{\iota}\ar@{=}[d]& C\bigl(\Omega_X^\cbbullet\lDr(-P_\red)[\tau]\to \Omega_X^\cbbullet\lDr[\tau]\bigr)\ar[d]^\eta\\
\Omega_X^\cbbullet\lDr\ar[r]^-{\rho}&\Omega_{X/S}^\cbbullet\lDr_{P_\red}
}
\end{array}
\end{equation}
So the assertion follows. This finishes the proof of Theorem \ref{th:C2}.\qed

\subsubsection{Proof of Corollary \ref{cor:C1}}\label{subsubsec:C3.3}
By \cite{Steenbrink76}, \cite{Steenbrink77}, \cite{S-Z85}, we have the filtered relative logarithmic de Rham cohomology sheaves
$$\cH^jf_*(\Omega_{X/S}^\cbbullet\lDr,F),$$
which are locally free sheaves forgetting $F$. Moreover the graded quotients $\gr_F^\cbbullet$ of the Hodge filtration $F$ commute with the cohomological direct images $\cH^j$ (\hbox{\ie $F$ is} strict), and give also locally free sheaves. (Indeed, these can be reduced to the unipotent monodromy case by using a unipotent base change together with logarithmic forms on $V$-manifolds as in \cite{Steenbrink77}.) These imply the strictness of the Hodge filtration $F$ on
$$\bR\Gamma(P,(\ov\Omega{}_{X/S}^\cbbullet\lDr,F)),$$
by using the short exact sequence of filtered complexes
$$0\to(\Omega_{X/S}^\cbbullet\lDr,F)\To{g}(\Omega_{X/S}^\cbbullet\lDr,F)\to(\ov\Omega{}_{X/S}^\cbbullet\lDr,F) \to 0.$$
Moreover, we have the strictness of
$$\bR\Gamma(P,\gr^0_V(\ov\Omega{}_{X/S}^\cbbullet\lDr,F)),$$
and $H^j(P,\gr^0_V(\ov\Omega{}_{X/S}^\cbbullet\lDr,F))$ gives the Hodge filtration on the unipotent monodromy part of the limit mixed Hodge structure of the variation of mixed Hodge structure
$$\cH^jf_*(\Omega_{X/S}^\cbbullet\lDr,F)|_{S'},$$
where $S'$ is the Zariski-open subset of $S$ over which $\cH^jf_*\CC_U$ is a local system. Hence the induced morphism $\rho_j$ in Theorem \ref{th:C2} is strictly compatible with $F$.

Corollary \ref{cor:C1} then follows from the assertion that the filtration on a mapping cone of filtered complexes $C((K,F)\to(K',F))$ is strict if $(K,F),(K',F)$ are strict and the morphisms $H^i(K,F)\to H^i(K',F)$ are strict. (This is a special case of a result of Deligne for bifiltered complexes \cite{DeligneHII}, since the mapping cone has a filtration $G$ such that $\gr_G^0=K$, $\gr_G^1=K'$ and the associated spectral sequence degenerates at $E_2$.) This finishes the proof of Corollary \ref{cor:C1}.\qed

\subsubsection{Proof of Theorem \ref{th:C3}}\label{subsubsec:C3.4}
Since $(\Omega_f^\cbbullet,F)$ is defined by the mapping cone of $\rho$, it is enough to show that the extension class defined by \eqref{eq:C2} corresponds to the morphism $\rho$ by the de Rham functor $\DR$, \ie the commutative diagram \eqref{eq:C.3.2.5} is compatible with $F$ in an appropriate sense. (Here we cannot define the filtration~$F$ on each term of \eqref{eq:C.3.2.5} since the condition $F_p=0$ for $p\ll 0$ cannot be satisfied.) Note, however, that the isomorphism between the mapping cones may be non-unique.

We will use the inverse functor $\DR^{-1}$ of $\DR$ which gives an equivalence of categories (see \cite[Prop.\,2.2.10]{MSaito86}):
\begin{equation}\label{eq:C3.4.1}
\DR^{-1}:D^bF(\cO_X,{\rm Diff})\cong D^bF(\cD_X),
\end{equation}
where the left-hand side is the bounded derived category of filtered differential complexes in the sense of \loccit, and the right-hand side is that of filtered left $\cD$-modules (by using the transformation between left and right $\cD$-modules).

For the proof of the assertion we may assume $D=P$ by the compatibility with the external product as in \ref{subsubsec:C2.4}. In fact, the direct image of filtered regular holonomic $\cD$-modules for the open inclusion of the complement of each irreducible component of the closure of $D\setminus P$ can be defined by the argument as in \cite[Prop.\,2.8]{MSaito87}. We apply this to each term of the short exact sequences associated with the given extension classes by using the exactness of the direct image functor in this case. Here the argument is much simpler than in loc.~cit., since we can show that the direct image in this case is analytic-locally isomorphic to an external product with the open direct image of a constant sheaf.

Set
$$(C^\cbbullet_k,F):=C\bigl((\Omega_X^\cbbullet\lDr(-P) [\tau]^{\leq k},F)\to(\Omega_X^\cbbullet\lDr[\tau]^{\leq k},F) \bigr)\quad(k\gg 0),$$
where $[\tau]^{\leq k}$ means the tensor product over $\CC$ with $\CC[\tau]^{\leq k}$ which is the subspace spanned by monomials of degree $\leq k$, and the Hodge filtration $F$ on $\CC[\tau]^{\leq k}$ is defined in a compatible way with $F$ on $E_{0,k}$ in \ref{subsubsec:C2.5}. By the same argument as in \ref{subsubsec:C3.2} there are canonical morphisms
\begin{equation}\label{eq:C.3.4.2}
\eta_k:(C^\cbbullet_k,F)\to(\Omega_{X/S}^\cbbullet \lDr_P,F),
\end{equation}
together the commutative diagram
\begin{equation}\label{eq:C.3.4.3}
\begin{array}{c}
\xymatrix{
(\Omega_X^\cbbullet\lDr,F)\ar[r]^-{\iota_k}\ar@{=}[d]&(C^\cbbullet_k,F)\ar[d]^{\eta_k}\\
(\Omega_X^\cbbullet\lDr,F)\ar[r]^-{\rho}&(\Omega_{X/S}^\cbbullet\lDr_P,F)
}
\end{array}
\end{equation}
By the argument in \ref{subsubsec:C2.5}, $\eta_k$ induces a morphism of filtered regular holonomic $\cD_X$-modules
\begin{equation}\label{eq:C.3.4.4}
\cH^{n-1}(\DR^{-1}\eta_k):\psi^{\cD}_{g,1}(\cO_X,F)\to\DR^{-1} (\Omega_{X/S}^\cbbullet\lDr_P,F)[n-1],
\end{equation}
where the source denotes the underlying filtered $\cD_X$-module of $\psi_{g,1}\QQ_{h,X}[n-1]$ (and it will be shown soon that the target is a filtered regular holonomic $\cD_X$-module). By \ref{subsubsec:C2.5} and \ref{subsubsec:C3.2}, it is enough to show that \eqref{eq:C.3.4.4} is a filtered isomorphism. In fact, this implies that the Hodge filtration on $\Omega_{X/S}^\cbbullet\lDr_P$ is the correct one. Moreover we have the vanishing of
$$\cH^n(\DR^{-1}\iota_k):\DR^{-1}(\Omega_X^\cbbullet\lDr,F)[n]\to \cH^n\DR^{-1}(C_k,F)\quad(k\gg 0),$$
by the compatibility with the transition morphism of the inductive system $\{(C_k,F)\}_{k\gg0}$. Hence $\DR^{-1}\iota_k$ defines an extension class of filtered regular holonomic $\cD$-modules in
$$\mathrm{Ext}^1\bigl(\DR^{-1}(\Omega_X^\cbbullet\lDr,F)[n], \cH^{n-1}\DR^{-1}(C_k,F)\bigr),$$
where $\DR^{-1}(\Omega_X^\cbbullet\lDr,F)[n]$ is isomorphic to the underlying filtered $\cD$-modules of $j_*\QQ_{h,U}[n]$ as is well-known. So the assertion follows from the commutativity of the diagram \eqref{eq:C.3.4.3} if \eqref{eq:C.3.4.4} is a filtered isomorphism.

By the argument in \ref{subsubsec:C3.2}, \eqref{eq:C.3.4.4} is an isomorphism if $F$ is forgotten. So the source and the target of \eqref{eq:C.3.4.4} have the common $\QQ$-structure $\psi_{g,1}\QQ_X[n-1]$. Let $W$ be the monodromy filtration on $\psi_{g,1}\QQ_X[n-1]$ shifted by $n-1$. The source of \eqref{eq:C.3.4.4} with this $\QQ$-structure and this weight filtration is the mixed Hodge module $\psi_{g,1}\QQ_{h,X}[n-1]$ by definition. By the construction in \cite{Steenbrink76}, the target with this $\QQ$-structure and this weight filtration belongs to ${\rm MHW}(X)$ where the latter category consists of successive extensions of pure Hodge modules without assuming any conditions on the extensions. Then \eqref{eq:C.3.4.4} is a filtered isomorphism by \cite[Prop.\,5.1.14]{MSaito86}. So Theorem \ref{th:C3} follows.\qed

\subsubsection{Example}\label{subsubsec:C3.5}
Assume
$$U=U_1\times U_2\subset X=\PP^1\times\PP^1,$$
with $U_1,U_2$ Zariski-open subsets of $\CC\subset\PP^1$, and $f:X\to S=\PP^1$ the second projection. Let $j_1:U_1\hto\PP^1$, $j_2:U_2\hto\PP^1$ be the natural inclusions. We assume $U_2=\CC$ since the assertion is only on a neighborhood of $P=X\times\{\infty\}$. We have
$$\bR j_*\CC_U=\bR(j_1)_*\CC_{U_1}\boxtimes\bR(j_2)_*\CC_{U_2},\quad \psi_g\bR j_*\CC_U=\bR(j_1)_*\CC_{U_1}\boxtimes\CC_{\{\infty\}},$$
and hence
$$\Xi_g\bR j'_*\CC_U[2]=\bR(j_1)_*\CC_{U_1}[1]\boxtimes\Xi_{g''}\CC_{U_2}[1].$$
Here $g''$ is a local coordinate of $\PP^1$ at $\infty$, and there is a non-splitting short exact sequence of perverse sheaves on $\PP^1$
$$0\to\CC_{\{\infty\}}\to\Xi_{g''}\CC_{U_2}[1]\to\bR(j_2)_*\CC_{U_2}[1]\to 0.$$
In this case we have
$$\aligned\mathrm{Ext}^1\bigl(\bR j_*\CC_U[2],\psi_g\bR j_*\CC_U[1]\bigr)&= \Hom\bigl(\bR(j_1)_*\CC_{U_1}\oplus\bR(j_1)_*\CC_{U_1}[-1], \bR(j_1)_*\CC_{U_1}\bigr)\\ &=H^0(U_1,\CC)\oplus H^1(U_1,\CC),\endaligned$$
and one cannot prove the main theorems of Appendix \ref{app:saito} by using this extension group.

\backmatter
\def\og{}\def\fg{}
\def\smfedsname{eds.}
\def\smfedname{ed.}
\newcommand{\SortNoop}[1]{}\newcommand{\eprint}[1]{\href{http://arxiv.org/abs/#1}{\texttt{arXiv\string:#1}}}
\providecommand{\smfandname}{\&}

\end{document}